\documentclass[a4paper,10pt]{article}  
\usepackage{amsmath,amsthm,amssymb}
\usepackage{mathtools}
\usepackage{graphicx}  
\usepackage{thmtools}  
\usepackage[linktocpage=true, colorlinks=true, urlcolor=blue, linkcolor=blue, citecolor=blue]{hyperref} 

\DeclareFontFamily{OT1}{pzc}{}
\DeclareFontShape{OT1}{pzc}{m}{it}%
              {<-> s * [1.000] pzcmi7t}{}
\DeclareMathAlphabet{\mathpzc}{OT1}{pzc}%
                                 {m}{it}
\newcommand{\bd}{{\ell^\ast_\BD}}
\newcommand{\bda}{{\ell^{\ast a}_\BD}}
\newcommand{\bdtil}{{\ell^\ast_{\tilde\BD}}}

\newcommand{\PRbig}{\mathcal{P}}
\newcommand{\ELbig}{\mathcal{E}}
\newcommand{\Fbig}{\mathcal{F}}
\newcommand{\BDbig}{\mathcal{B}}

\newcommand{\APbig}{\mathcal{A}_\kappa} 
\newcommand{\BDkappabig}{\mathcal{B}_\kappa}
\newcommand{\PPbig}{\mathcal{P}_\delta}

\newcommand{\PR}{{\scriptscriptstyle{\PRbig}}}
\newcommand{\EL}{{\scriptscriptstyle{\ELbig}}}
\newcommand{\F}{{\scriptscriptstyle{\Fbig}}}
\newcommand{\BD}{{\scriptscriptstyle{\BDbig}}}

\newcommand{\cc}{{cc}}

\newcommand{\refPR}{\ref{EQ:intro-primal}}
\newcommand{\refF}{\ref{EQ:intro-fenchel}}
\newcommand{\refBD}{\ref{EQ:intro-BD}}
\newcommand{\refAP}{\ref{EQ:APinner}}
\newcommand{\refBDkappa}{\ref{EQ:BDkappa}}
\newcommand{\refPP}{\ref{EQ:PP}}
\newcommand{\refEL}{\ref{EQ:EL}}
\newcommand{\refPRtilde}{$\tilde\PRbig$}
\newcommand{\refBDtilde}{$\tilde\BDbig$}

\newcommand{\SOL}{S}
\newcommand{\card}{\operatorname{card}}

\newcommand{\A}{\mathcal{X}}
\newcommand{\Aa}{\A^a}
\newcommand{\Ap}{\A^p}

\newcommand{\sx}[1]{\Delta_{#1}}

\renewcommand{\sp}[2]{\left\langle #1, #2 \right\rangle}

\newcommand{\abs}[1]{\lvert #1 \rvert}
\newcommand{\RRR}{\mathbb{R}}
\newcommand{\NNN}{\mathbb{N}}
\newcommand{\RRRex}{\bar{\RRR}}
\newcommand{\IIi}{\mathcal{I}}
\newcommand{\HHh}{\mathcal{H}}

\newcommand{\dom}{\operatorname{dom}}
\newcommand{\epi}{\operatorname{epi}}

\newcommand{\E}{\operatorname{E}}

\newcommand{\eps}{\varepsilon}
\newcommand{\supp}{\operatorname{supp}}

\newcommand{\conv}{\operatorname{conv}}
\newcommand{\norm}[1]{\|#1\|}

\newcommand{\rank}{\operatorname{rank}}
\newcommand{\diag}{\operatorname{diag}}
\newcommand{\adj}{\operatorname{adj}}

\newcommand{\ri}{\operatorname{ri}}
\newcommand{\aff}{\operatorname{aff}}

\newcommand{\hatmu}{\bar{\mu}}
\newcommand{\Lip}{\operatorname{Lip}}

\newtheorem{theorem}{Theorem}
\newtheorem{definition}[theorem]{Definition}
\newtheorem{lemma}[theorem]{Lemma}
\newtheorem{corollary}[theorem]{Corollary}
\newtheorem{proposition}[theorem]{Proposition}

\theoremstyle{remark}

\newtheorem{example}[theorem]{Example}
\newtheorem*{assumption}{Standing Assumption}
\numberwithin{equation}{section}

\DeclareMathOperator*{\argmin}{argmin}
\DeclareMathOperator*{\argmax}{argmax}
\DeclareMathOperator*{\elim}{e-lim}
\DeclareMathOperator*{\eliminf}{e-liminf}
\DeclareMathOperator*{\elimsup}{e-limsup}

\newcommand{\w}{\vartheta}
\newcommand{\cC}{\sigma}
\newcommand{\B}{M}

\providecommand{\keywords}[1]{\noindent\emph{{Keywords and phrases:}} #1}
\providecommand{\AMScls}[1]{\noindent\emph{AMS 2010 subject classifications:} #1}

\textwidth 129.9mm 
\oddsidemargin 14.9mm \evensidemargin 10.1mm
\topmargin 0mm 

%
%

\begin{document}
\bibliographystyle{abbrv}
\renewcommand\thmcontinues[1]{cont'd}

\title{Multinomial and empirical likelihood under convex constraints: directions of recession, Fenchel~duality, perturbations}

\author{M. Grend\'ar, V. \v Spitalsk\'y}
\date{}
\maketitle

\begin{abstract}
The primal problem of multinomial likelihood maximization restricted to a convex closed subset of~the probability simplex is studied. Contrary to widely held belief, a solution of this problem may assign a positive mass to an outcome with zero count. Related flaws in the simplified Lagrange and Fenchel dual problems, which arise because the recession directions are ignored, are identified  and corrected.

A solution of the primal problem can be obtained by the PP (perturbed primal) algorithm, that is, as the limit of a sequence of solutions of perturbed primal problems. The PP algorithm may be implemented by the simplified Fenchel dual.

The results permit us to specify linear sets and data such that the empirical~likelihood-maximizing distribution exists and is the same as the multinomial likelihood-maximizing distribution. The multinomial likelihood ratio reaches, in general, a different conclusion than the empirical likelihood ratio.

Implications for minimum discrimination information, compositional data analysis, Lindsay geometry, bootstrap with auxiliary information, and Lagrange multiplier test are discussed.
\end{abstract}

\medskip\medskip 

\keywords{closed multinomial distribution, maximum likelihood, convex set,
estimating equations, zero counts, Fenchel dual, El~Barmi Dykstra dual,
Smith dual, perturbed primal, PP algorithm, epi-convergence, empirical likelihood, Fisher likelihood}

\medskip\medskip 
\AMScls{Primary $\mathrm{62H12}$, $\mathrm{62H17}$; secondary $\mathrm{90C46}$}
%
\setcounter{tocdepth}{2}
\tableofcontents

\section{Introduction}\label{S:Intro}

Zero counts are a source of difficulties in the maximization of the multinomial likelihood
for log-linear models. A considerable literature has been devoted to this issue,
culminating in the recent papers by Fienberg and Rinaldo~\cite{Rinaldo}
and Geyer~\cite{geyer09}. In these studies, convex analysis considerations play a key role.

Less well recognized is that the zero counts also cause difficulties in the maximization
of the multinomial likelihood under linear constraints, or, in general,
when the cell probabilities are restricted to a convex closed subset
of the probability simplex; see~Section~\ref{St:P} for a formal statement
of the considered primal optimization problem~\refPR{}.
Though in this case the nature of the difficulties is different than in the log-linear case,
the convex analysis considerations are
important here as well, because they permit
developing a correct solution of~\refPR{}
-- one of the main objectives of the present work.

The problem of finding the maximum multinomial likelihood
under linear constraints dates back to, at least, Smith~\cite{Smith},
and continues through the work of Aitchison and Silvey~\cite{Aitchison},
Gokhale~\cite{Gokhale}, Klotz~\cite{Klotz}, Haber~\cite{Haber},
Stirling~\cite{Stirling}, Pelz and Good~\cite{Pelz}, Little and Wu~\cite{LittleWu},
El~Barmi and Dykstra~\cite{elbarmi-dykstra,BD2}, to the recent studies by
Agresti and Coull~\cite{Agresti2}, Lang~\cite{Lang}, and Bergsma et al.~\cite{BergsmaBook}, among others.
Linear constraints on the cell probabilities appear naturally in
marginal homogeneity models, isotonic cone models, mean response models,
multinomial-Poisson homogeneous models, and many others; cf.~Agresti~\cite{Agresti}, Bergsma et al.~\cite{BergsmaBook}.
They also arise in the context of estimating equations.

Contrary to widely-held belief,
a solution of \refPR{} may assign a positive weight to an
outcome with zero count; cf.~Theorem~\ref{T:P}, Examples~\ref{EX:nonunique-min} and
\ref{EX:positive-passive}, as well as the examples in~Section~\ref{St:BD-examples}.
This fact also affects the Lagrange and Fenchel dual problems to \refPR{}.

The restricted maximum of the multinomial likelihood defined through
the primal problem \refPR{} is not
amenable to asymptotic analysis, and the primal form is not ideal for numerical optimization.
Thus, it is common to consider the Lagrange
dual problem instead of the primal problem.
This permits the asymptotic analysis (cf.~Aitchison and Silvey~\cite{Aitchison}),
and reduces the dimension of the optimization problem,
because the number of linear constraints is usually much smaller than the
cardinality of the sample space.
Smith \cite[Sects.~6, 7]{Smith} has developed a solution of the
Lagrange dual problem, under the hidden assumption that every outcome
from the sample space appears in the sample at least once; that is,~$\nu > 0$,
where $\nu$ is the vector of the observed relative frequency of outcomes.
The same solution was later considered by several authors;
see, in particular,~Haber~\cite[p.~3]{Haber}, Little and Wu~\cite[p.~88]{LittleWu},
Lang~\cite[Sect.~7.1]{Lang}, Bergsma et al.~\cite[p.~65]{BergsmaBook}.
It remained unnoticed that, if the assumption $\nu > 0$ is
not satisfied, then the solution of Smith's Lagrange dual problem does not
necessarily lead to a solution of the primal problem
\refPR{}.

El~Barmi and Dykstra~\cite{elbarmi-dykstra} studied the maximization
of the multinomial likelihood under more general, convex set constraints,
where it is natural to replace the Lagrange duality with the Fenchel duality.
When the feasible set is defined by the linear constraints, El~Barmi and Dykstra's (BD) dual \refBD{}
reduces to Smith's Lagrange dual.
The BD-dual~\refBD{} leads to a solution of the primal \refPR{} if $\nu > 0$.
The authors overlooked that this is not necessarily the case if a zero count occurs.

Taken together, the decisions obtained from El~Barmi and Dykstra's simplified Fenchel dual \refBD{}
can be severely compromised. It is thus important to know
the correct Fenchel dual \refF{} to \refPR{}.
This is provided by Theorem~\ref{T:F}, which also characterizes the solution set of \refF{}.
It is equally important to know the conditions under which
the BD-dual \refBD{} leads to a solution of \refPR{}.
The answer is provided by Theorem~\ref{T:BD}.
An analysis of directions of recession is crucial for establishing the theorem.

Since obtaining a solution of the Fenchel dual is numerically demanding,
a simple algorithm for solving the primal is proposed.
The PP algorithm forms a sequence of \emph{p}erturbed \emph{p}rimal problems.
Theorem~\ref{T:PP}
demonstrates that the PP algorithm epi-converges to a solution of~\refPR{}.
Even stronger, pointwise convergence can be established for a linear constraint set;
see Theorem~\ref{T:PP-linear}.
The convergence theorems imply that the common practice
of replacing the zero counts by a small, arbitrary value can be
supplanted by a sequence of perturbed primal problems,
where the $\delta-$perturbed relative frequency vectors $\nu(\delta) > 0$
are such that $\lim_{\delta\searrow 0} \nu(\delta) = \nu$.
Because each $\nu(\delta)$ is strictly positive, the  PP algorithm
can be implemented through the BD-dual to the perturbed primal, by the Fisher scoring algorithm,
Gokhale's algorithm \cite{Gokhale}, or similar methods.

The findings have implications  for the empirical likelihood.
Recall that `in most settings, empirical likelihood is a multinomial likelihood on the sample';
cf.~Owen~\cite[p.~15]{Owen}.
As the empirical likelihood inner problem \refEL{} (cf.~Section~\ref{St:EL})
is a convex optimization problem,
it has its Fenchel dual formulation.
If the feasible set is linear, then the Fenchel dual to~\refEL{} is equivalent to
El~Barmi and Dykstra's dual~\refBD{} to~\refEL{}. Thanks to this
connection,
Theorem~\ref{T:BD} provides conditions under which the solution set
$\SOL_\PR$ of the multinomial likelihood primal problem \refPR{} and the solution set $\SOL_\EL$ of the
empirical likelihood inner problem \refEL{} are the same, and the maximum
$\hat L$ of the multinomial likelihood  is equal to the maximum $\hat L_\EL$
of the empirical likelihood.
Consequently:
\begin{itemize}
  \item If $C$ is an H-set or a Z-set with respect to the type $\nu$
  (for the definition, see Section~\ref{St:BD-thm}),
  the maximum empirical likelihood does not exist, though
  the maximum multinomial likelihood exists.
  The notion of H-set corresponds to the convex hull problem (cf.~Owen~\cite[Sect.~10.4]{Owen1988})
  and the notion of Z-set corresponds to the zero likelihood problem (cf.~Bergsma et al.~\cite{Bergsma}).
  By Theorem~\ref{T:BD},
  these are the only ways the empirical likelihood inner problem may fail to have a solution; cf.~Section~\ref{St:EL}.

  \item If any of conditions (i)--(iv) in Theorem~\ref{T:BD}(b) are not satisfied,
    then $\hat L_\EL < \hat L$, and the empirical likelihood may lead to
    different inferential and evidential conclusions than those suggested by the multinomial likelihood.
\end{itemize}

Fisher's \cite{Fisher} original concept of the likelihood carries the discordances
between the multinomial  and empirical likelihoods also into
the continuous \emph{iid} setting; cf.~Section~\ref{St:ELcontinuous}.

The findings also affect other methods, such as the minimum discrimination information,
compositional data analysis, Lindsay geometry of multinomial mixtures,
bootstrap with auxiliary information, and Lagrange multiplier test,
which explicitly or implicitly ignore information
about the support and are restricted to the observed outcomes.

\subsection{Organization of the paper}
The multinomial likelihood primal problem \refPR{}
and its characterization (cf.~Theorem~\ref{T:P}) are presented in Section~\ref{St:P}.
The Fenchel dual problem \refF{} to \refPR{} is introduced in Section~\ref{St:F}.
A Lagrange dual formulation of the convex conjugate (cf.~Theorem~\ref{T:cc}) serves as a ground for
Theorem~\ref{T:F}, one of the main results,  which provides a relation between the solutions of \refPR{} and \refF{}.
If the feasible set $C$ is polyhedral, a solution of \refF{} can be obtained
also from a different Lagrange dual to \refPR{}; cf.~Section~\ref{S:Fenchel-Polyhedral-C}.
A special case of the single inequality constraint is discussed in detail in Section~\ref{St:Fsingle},
where a flaw in Klotz's~\cite{Klotz} Theorem~1 is noted.
In Section~\ref{St:BD}, El~Barmi and Dykstra's \cite{elbarmi-dykstra} dual \refBD{} is recalled;
Section~\ref{St:Smith} introduces its special case, the Smith dual problem.
Theorem~2.1 of El~Barmi and Dykstra \cite{elbarmi-dykstra}, and its flaws are
presented in Section~\ref{St:BD-examples}, where they are also illustrated by simple examples.
Section~\ref{St:BD-thm} studies the scope of validity of the BD-dual \refBD{}; cf.~Theorem~\ref{T:BD}.
Sequential, active-passive dualization is proposed and analyzed in Section~\ref{St:AP}.
Perturbed primal problem \refPP{} is introduced in Section~\ref{St:PP},
where the epi-convergence of a sequence of the perturbed primals for a general, convex $C$,
and the pointwise convergence for the linear $C$ are formulated (cf.~Theorems~\ref{T:PP}, \ref{T:PP-linear}) and illustrated.
Implications of the results for the empirical likelihood method are discussed in Section~\ref{St:EL}.
A brief discussion of implications of the findings for the minimum discrimination information, compositional data analysis,
Lindsay geometry of multinomial mixtures, bootstrap with auxiliary information
and Lagrange multiplier test is contained in~Section~\ref{St:other-implications}.
Finally, Section~\ref{St:proofs} comprises detailed proofs of the results.

An \texttt{R} code and data to reproduce the numerical examples can be found
in~\cite{Rcode}.

\section{Multinomial likelihood primal problem \refPR}\label{St:P}

Let $\A$ denote a finite \emph{alphabet} (sample space)
consisting of $m$ \emph{letters} (outcomes) and $\sx{\A}$
denote the \emph{probability simplex}
\begin{equation*}
    \sx{\A} \triangleq \left\{q\in\RRR^m:\ q\ge 0, \sum q = 1 \right\};
\end{equation*}
identify $\RRR^m$ with $\RRR^\A$.
Suppose that $(n_i)_{i\in\A}$ is a \emph{realization} of the \emph{closed
multinomial distribution} $\mathrm{Pr}((n_i)_{i\in\A}; n, q) = n!\prod{q_i^{n_i}/n_i!}$
with parameters $n\in\NNN$ and $q=(q_i)_{i\in\A}\in\sx{\A}$.
Then the \emph{multinomial likelihood kernel} $L(q) = L_\nu(q) \triangleq e^{-n\,\ell(q)}$,
where $\ell=\ell_\nu:\sx\A\to \RRRex$, \emph{Kerridge's inaccuracy} \cite{Kerridge}, is
\begin{equation}\label{EQ:ell(q)}
    \ell(q) \triangleq  -\sp{\nu}{\log q},
\end{equation}
and $\nu\triangleq (n_i/n)_{i\in\A}$ is the \emph{type}
(the vector of the relative frequency of outcomes).
The conventions $\log 0=-\infty$, $0\cdot (-\infty)=0$ apply; $\RRRex$ denotes
the extended real line $[-\infty,\infty]$ and $\sp{a}{b}$ is
the \emph{scalar product} of $a$, $b\in\RRR^{m}$.
Functions and relations on vectors are taken component-wise;
for example, $\log q=(\log q_i)_{i\in\A}$.
For $x\in\RRR^m$, $\sum x$ is a shorthand for $\sum_{i\in\A} x_i$.

Consider the problem \refPR{} of minimization of $\ell$,
restricted to a convex closed set $C\subseteq\sx{\A}$:
\begin{equation}\label{EQ:intro-primal}
    \hat{\ell}_{\PR} \triangleq \inf_{q\in C} \ell(q),
    \qquad
    \SOL_{\PR} \triangleq \{\hat{q}\in C:\ \ell(\hat q)=\hat\ell_\PR\}.
  \tag{$\PRbig$}
\end{equation}
The goal is to find the \emph{solution set} $\SOL_{\PR}$ as well as the infimum
$\hat\ell_\PR$ of the objective function $\ell$ over $C$.
The problem \refPR{} will be called the \emph{multinomial likelihood primal problem},
or \emph{primal}, for short.

Special attention is paid to the class of \emph{polyhedral} feasible sets $C$
\begin{equation}\label{EQ:C-polyhedral}
   C=\{q\in\sx{\A}:  \sp{q}{u_h}\le 0 \text{ for } h=1,2,\dots,r\},
\end{equation}
or to its subclass of sets $C$ given by (a finite number of) linear equality constraints
\begin{equation}\label{EQ:C-linear}
   C=\{q\in\sx{\A}:  \sp{q}{u_h} = 0 \text{ for } h=1,2,\dots,r\},
\end{equation}
where $u_h$ are vectors from $\RRR^m$.
These feasible sets are particularly interesting from the applied point of view and
permit to establish stronger results.

Without loss of generality
it is assumed that $\A$ is the \emph{support} $\supp(C)$ of $C$,
that is, for every $i\in\A$ there is $q\in C$ with $q_i>0$;
in other words, the structural zeros (cf.~Baker et al.~\cite[p.~34]{Baker}) are excluded.
Due to the convexity of $C$
this is equivalent to the existence of $q\in C$ with $q>0$.
Under this assumption, Theorem~\ref{T:P} gives a basic characterization of
the solution set of \refPR{}.
Before stating it, some useful notions are introduced.

\begin{definition} For a type $\nu$ (or, more generally,
for any $\nu\in\sx\A$), the \emph{active} and
\emph{passive alphabets} are
\begin{equation*}
    \Aa=\Aa_\nu \triangleq  \{i\in\A: \nu_i>0\}
    \qquad\text{and}\qquad
    \Ap=\Ap_\nu \triangleq  \{i\in\A: \nu_i=0\}.
\end{equation*}
The elements of $\Aa,\Ap$ are called \emph{active, passive letters}, respectively.
\end{definition}

Put $m_a\triangleq\card \Aa>0$, $m_p\triangleq\card\Ap\ge 0$.
Let $\pi^a:\RRR^m\to \RRR^{m_a}$, $\pi^p:\RRR^m\to \RRR^{m_p}$ be the
natural projections; identify $\RRR^{m_a}$ with $\RRR^{\Aa}$ and $\RRR^{m_p}$ with $\RRR^{\Ap}$.
Note that if $\Ap=\emptyset$ then $m_p=0$ and
$\RRR^{m_p}=\{0\}$.
For $x\in\RRR^m$, $x^a$ and $x^p$ are the shorthands for $\pi^a(x)$ and $\pi^p(x)$, respectively.
(If no ambiguity can occur, the elements of $\RRR^{m_a},\RRR^{m_p}$ will be denoted
also by $x^a,x^p$.)
Identify $\RRR^m$ with $\RRR^{m_a}\times\RRR^{m_p}$, so that it is possible to write $x=(x^a,x^p)$
for every $x\in\RRR^m$.
Finally, for a subset $M$ of $\RRR^m$ and $x\in M$ let
\begin{equation*}
    M^a \triangleq \pi^a(M)
    \qquad\text{and}\qquad
    M^a(x^p) \triangleq \{x^a\in\RRR^{m_a}: (x^a,x^p)\in M\}
\end{equation*}
be the \emph{active projection} and the \emph{$x^p$-slice} of $M$; analogously define $M^p$ and $M^p(x^a)$.

\begin{theorem}[Primal problem]\label{T:P}
Let $\nu\ge 0$ be from $\sx\A$.
Let $C$ be a convex closed subset of $\sx\A$ with support $\A$.
Then $\hat\ell_\PR$ is finite, $\SOL_{\PR}$ is compact and there is $\hat q^a_\PR\in C^a$,
$\hat q^a_\PR>0$, such that
\begin{equation*}
    \SOL_{\PR} = \{\hat{q}^a_\PR\} \times C^p(\hat q^a_\PR).
\end{equation*}
Moreover,
\begin{enumerate}
  \item[(a)]
    If $\sum \hat{q}^a_\PR=1$ then $C^p(\hat q^a_\PR)=\{0^p\}$ and $\SOL_{\PR}=\{(\hat{q}^a_\PR, 0^p)\}$ is a singleton.
  \item[(b)]
    If $\sum \hat{q}^a_\PR<1$ then $0^p\not\in C^p(\hat q^a_\PR)$, and $\SOL_\PR$ is a singleton
     if and only if the $\hat q^a_\PR$-slice $C^p(\hat q^a_\PR)$ of $C$ is a singleton.
\end{enumerate}
\end{theorem}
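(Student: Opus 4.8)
The plan is to work on the active coordinates only, observing that $\ell$ depends on $q$ through $q^a$ alone: $\ell(q)=-\sp{\nu^a}{\log q^a}$. First I would show $\hat\ell_\PR$ is finite. Finiteness from below follows from Jensen's inequality (or the Gibbs inequality): for $q\in C$ with $q>0$ (such a $q$ exists by the support assumption), $\ell(q)=-\sp{\nu}{\log q}\ge -\log\sp{\nu}{\ones}=0$ is not the right bound in general, so more carefully one uses that $-\sp{\nu^a}{\log q^a}\ge -\sp{\nu^a}{\log \nu^a}$ minus a term controlled by $\sum q^a\le 1$; in any case $\ell$ is bounded below on $\sx\A$ by $-\sp{\nu^a}{\log\nu^a}$ via the log-sum inequality. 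Finiteness from above is immediate by evaluating at any $q\in C$ with $q>0$. The function $\ell$ is convex, lower semicontinuous on the compact set $C$, and (restricted to the compact set) attains its infimum, so $\SOL_\PR$ is nonempty; it is closed as a sublevel set of a lsc function intersected with the compact $C$, hence compact.

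Next I would establish the product structure. Let $g:\RRR^{m_a}\to\RRRex$, $g(q^a)=-\sp{\nu^a}{\log q^a}$; this is strictly convex on $\RRR^{m_a}_{>0}$ because $-\log$ is strictly convex and every $\nu_i^a>0$. The feasible active projections form the convex compact set $C^a=\pi^a(C)$, and $\hat\ell_\PR=\inf_{q^a\in C^a}g(q^a)$. Strict convexity of $g$ on the positive orthant, together with the fact that the minimum cannot occur on the boundary $\{q^a: q^a_i=0 \text{ for some } i\}$ (there $g=+\infty$, while there is a feasible point with $g$ finite), forces the minimizer $\hat q^a_\PR\in C^a$ to be unique and to satisfy $\hat q^a_\PR>0$. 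Then $q=(q^a,q^p)\in C$ is a solution of \refPR{} if and only if $q^a=\hat q^a_\PR$ and $q^p\in C^p(\hat q^a_\PR)$, which is exactly $\SOL_\PR=\{\hat q^a_\PR\}\times C^p(\hat q^a_\PR)$; this slice is nonempty by definition of $\hat q^a_\PR\in C^a$ and is convex and compact as a slice of the convex compact $C$.

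Finally, parts (a) and (b) are bookkeeping on the constraint $\sum q=1$. If $\sum\hat q^a_\PR=1$, then any $(\hat q^a_\PR,q^p)\in C\subseteq\sx\A$ must have $\sum q^p=0$ with $q^p\ge 0$, hence $q^p=0^p$; so $C^p(\hat q^a_\PR)=\{0^p\}$ and $\SOL_\PR$ is the claimed singleton. If $\sum\hat q^a_\PR<1$, then $0^p\notin C^p(\hat q^a_\PR)$ since $(\hat q^a_\PR,0^p)$ would violate $\sum q=1$; and $\SOL_\PR$ is a singleton precisely when the slice $C^p(\hat q^a_\PR)$ is a singleton, which is the stated equivalence. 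I expect the main obstacle to be the careful justification that the minimizer lies in the open positive orthant $\{q^a>0\}$ rather than on its boundary — one must argue that moving a small amount of mass onto a coordinate $i$ with $\hat q^a_{\PR,i}=0$ strictly decreases $g$ (the directional derivative is $-\infty$), using convexity of $C$ to stay feasible; the rest is standard compactness, lower semicontinuity, and strict convexity.
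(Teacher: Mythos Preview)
Your proposal is correct and follows essentially the same route as the paper: reduce to the active coordinates via $\ell(q)=\ell^a(q^a)$, use compactness and lower semicontinuity for existence, and strict convexity of $q^a\mapsto -\sp{\nu^a}{\log q^a}$ on $\{q^a>0\}$ for uniqueness of $\hat q^a_\PR$; parts (a) and (b) are then the simplex bookkeeping you describe. The only cosmetic difference is that the paper packages the existence step through Bertsekas' recession-cone theorem (trivially applicable since $C$ is compact), whereas you invoke Weierstrass directly; and your worry about the boundary is already resolved by your own observation that $g=+\infty$ there while a feasible interior point has finite value, so no directional-derivative argument is needed.
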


Thus the primal has always a solution $\hat q_{\PR}$.
Its active coordinates $\hat q^a_\PR$ are unique
and the passive coordinates $\hat q^p_\PR$ are arbitrary such that $\hat q_\PR\in C$.
It is worth stressing that $C^p(\hat q^a_\PR)$ need not be equal to $\{0^p\}$,
that is, a solution of~\refPR{} may put positive mass to passive letter(s).
The following couple of simple examples illustrates the points; see also the examples in~Section~\ref{St:BD-examples}.
Hereafter $X$ denotes a random variable supported on $\A$.

\begin{example}\label{EX:nonunique-min}
Take $\A=\{-1,0,1\}$ and $C=\{q\in\sx{\A}:\ \E_q(X^2)=\sum_{i\in \A  } i^2 q_i=1/2\}$.
Let $\nu=(0,1,0)$, so that $\Aa=\{0\}$ and $\Ap=\{-1,1\}$.
Then $C=\{q\in\sx{\A}:\ q_0=1/2\}$,
the minimum of $\ell$ over $C$ is $\hat\ell_\PR{}=\log2$, and
$\SOL_\PR=C$.
Here, $\hat q^a_\PR=1/2$ is (trivially) unique and $C^p(\hat q^a_\PR)=\{(a,1/2-a): a\in[0,1/2]\}$.
\end{example}

\begin{example}\label{EX:positive-passive}
Motivated by Wets~\cite[p.~88]{Wets}, let $\A = \{1,2,3\}$, $C = \{q\in\sx{\A}: q_1 \le q_2 \le q_3\}$ and $\nu = (0, 1, 0)$.
Then $\Aa = \{2\}$, $\Ap = \{1, 3\}$. Since $\SOL_\PR = \{(0, 1, 1)/2\}$, the positive weight $1/2$
is assigned to the passive, unobserved letter $3$.
\end{example}

The Fisher scoring algorithm which is commonly used to solve \refPR{}
when $C$ is a linear set may fail to converge when the zero counts
are present; cf.~Stirling~\cite{Stirling}.
Other numerical methods, such as the augmented Lagrange multiplier methods,
which are used to solve the convex optimization problem under polyhedral
and/or linear $C$ may have difficulties to cope with large $m$. Thus, it is desirable to approach \refPR{}
also from another direction.

\section{Fenchel dual problem \refF{} to \refPR}\label{St:F}

Consider the \emph{Fenchel dual problem}~\refF{} to the primal \refPR{}:
\begin{equation}\label{EQ:intro-fenchel}
    \hat{\ell}_\F \triangleq \inf_{y\in C^\ast} \ell^\ast(-y),
    \qquad
    \SOL_{\F} \triangleq \{\hat{y}\in C^\ast:\ \ell^\ast(-\hat y)=\hat\ell_\F\},
  \tag{$\Fbig$}
\end{equation}
where
\begin{equation*}
    C^\ast \triangleq \{y\in\RRR^m: \sp{y}{q}\le 0 \text{ for every } q\in C\}
\end{equation*}
is the \emph{polar cone} of $C$ and
\begin{equation*}
    \ell^\ast:\RRR^m\to \RRRex,\qquad
    \ell^\ast(z) \triangleq \sup_{q\in\sx\A} (\sp{q}{z} - \ell(q))
\end{equation*}
is the \emph{convex conjugate} of $\ell$ (in fact, the convex conjugate
of $\tilde\ell:\RRR^m\to\RRRex$ given by
$\tilde\ell(x)=\ell(x)$ for $x\in\sx\A$ and $\tilde\ell(x)=\infty$ otherwise).

The Fenchel dual \refF{} is often more tractable than the primal \refPR{},
in particular when $C$ is given by linear equality and/or inequality constraints.
This is the case of the models for contingency tables, mentioned in Introduction.
Also an \emph{estimating equations} model $C_\Theta$ leads to a linear feasible set $C_\theta$,
when $\theta\in\Theta$ is fixed.
The model is $C_\Theta \triangleq \bigcup_{\theta\in\Theta} C_\theta$, where
\begin{equation}\label{EQ:EE}
  C_\theta \triangleq \bigcap_{h=1}^r \left\{q\in\sx{\A}:
  \sp{q}{u_h(\theta)} = 0\right\},
\end{equation}
and $u_h: \Theta \rightarrow \RRR^m$, $h=1,2,\dots,r$, are the
\emph{estimating functions}. There $\theta\in\Theta\subseteq\RRR^d$ and $d$
need not be equal to $r$.
Since~$r$ is usually much smaller than~$m$, \refF{} may be
easier to solve numerically than \refPR{}.

Observe that the convex conjugate itself is defined through an optimization
problem, the \emph{convex conjugate primal problem} (\emph{cc-primal}, for short),
whose solution set is
\begin{equation}\label{EQ:F-cc-finite-sol}
 S_{\cc}(z) \triangleq \{q\in\sx{\A}: \sp{q}{z} - \ell(q) = \ell^\ast(z)\}.
\end{equation}
The structure of $S_{\cc}(z)$ is described by Proposition~\ref{P:cc-SOL}.
The conjugate $\ell^\ast$ can be evaluated by means of the Lagrange duality, where
the Lagrange function is
$$
 K_z(x,\mu)=\sp{x}{z} - \ell(x) -\mu\left(\sum x-1 \right).
$$
It holds that (cf.~Lemma~\ref{L:F-cc-via-Lagrange-dual})
$$
 \ell^\ast(z) = \inf_{\mu\in\RRR} k_z(\mu),
 \qquad\text{where}\quad
 k_z(\mu) = \sup_{x\ge 0} K_z(x,\mu).
$$

For every $\mu\in\RRR$ and $a,b\in\RRR^{m_a}$, $a>0$, $b> -\mu$, define
\begin{equation*}
  I_\mu(a\,\|\,b) \triangleq \sp{a}{\log\frac{a}{\mu + b}}.
\end{equation*}

\begin{theorem}[Convex conjugate by Lagrange duality]\label{T:cc}
Let $\nu\in\sx{\A}$ and $z\in\RRR^m$. Then
\begin{equation*}
    \ell^\ast(z) = -1 + \hat\mu(z) + I_{\hat\mu(z)}(\nu^a\,\|\,-z^a),
\end{equation*}
where
\begin{equation}\label{EQ:mu-hat}
\hat\mu(z)\triangleq \max\{ \hatmu(z^a), \max(z^p) \},
\end{equation}
and $\hatmu(z^a)$ is the unique solution of
\begin{equation}\label{EQ:mu-bar}
    \sum \frac{\nu^a}{\mu-z^a} = 1,
    \qquad
    \mu\in(\max(z^a),\infty).
\end{equation}
\end{theorem}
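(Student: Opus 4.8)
The plan is to start from the Lagrangian reduction $\ell^\ast(z)=\inf_{\mu\in\RRR}k_z(\mu)$, with $k_z(\mu)=\sup_{x\ge 0}K_z(x,\mu)$, provided just before the statement by Lemma~\ref{L:F-cc-via-Lagrange-dual}, then to compute $k_z$ in closed form and minimise the resulting one-dimensional convex function over its effective domain.

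\emph{Step 1 (closed form for $k_z$).} I would write $K_z(x,\mu)=\sp{x}{z}+\sp{\nu}{\log x}-\mu\sum x+\mu$ and split $\sup_{x\ge 0}$ coordinate-wise into an active and a passive block. Each passive coordinate contributes $\sup_{x_i\ge 0}x_i(z_i-\mu)$, which is $0$ when $\mu\ge z_i$ and $+\infty$ otherwise; so the passive block equals $0$ precisely when $\mu\ge\max(z^p)$, and adds nothing to the value. Each active term $\sup_{x_i\ge 0}\bigl(x_i(z_i-\mu)+\nu_i\log x_i\bigr)$ is $+\infty$ unless $\mu>z_i$, in which case stationarity gives the maximiser $x_i=\nu_i/(\mu-z_i)$ with value $-\nu_i+\nu_i\log\tfrac{\nu_i}{\mu-z_i}$. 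Summing over $i\in\Aa$, using $\sum\nu^a=1$, and adding the constant $\mu$, one obtains
$$ k_z(\mu)=-1+\mu+I_\mu(\nu^a\,\|\,-z^a)\quad\text{whenever }\mu>\max(z^a)\text{ and }\mu\ge\max(z^p), $$
and $k_z(\mu)=+\infty$ otherwise; note the condition $-z^a>-\mu$ needed for $I_\mu$ to be defined is exactly $\mu>\max(z^a)$.

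\emph{Step 2 (minimisation).} Since $I_\mu(\nu^a\,\|\,-z^a)=\sp{\nu^a}{\log\nu^a}-\sp{\nu^a}{\log(\mu-z^a)}$, I would compute $k_z'(\mu)=1-\sum\nu^a/(\mu-z^a)$ and $k_z''(\mu)=\sum\nu^a/(\mu-z^a)^2>0$, so $k_z$ is strictly convex on $(\max(z^a),\infty)$; moreover the map $\mu\mapsto\sum\nu^a/(\mu-z^a)$ decreases continuously from $+\infty$ to $0$ on that interval, so \eqref{EQ:mu-bar} has a unique root $\hatmu(z^a)$, which is the unconstrained minimiser of $k_z$ and obeys $\hatmu(z^a)>\max(z^a)$. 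A two-case argument over the effective domain $D=(\max(z^a),\infty)\cap[\max(z^p),\infty)$ then finishes the proof. If $\hatmu(z^a)\ge\max(z^p)$ — in particular whenever $\Ap=\emptyset$, with the convention $\max\emptyset=-\infty$ — then $\hatmu(z^a)\in D$ is the minimiser and $\hat\mu(z)=\hatmu(z^a)$. If $\hatmu(z^a)<\max(z^p)$, then $\max(z^p)>\hatmu(z^a)>\max(z^a)$, so $D=[\max(z^p),\infty)$, on which $k_z$ is increasing by convexity, whence the minimiser is $\mu=\max(z^p)=\hat\mu(z)$. In either case $\hat\mu(z)=\max\{\hatmu(z^a),\max(z^p)\}\in D$ with $\hat\mu(z)>\max(z^a)$, so $\ell^\ast(z)=k_z(\hat\mu(z))=-1+\hat\mu(z)+I_{\hat\mu(z)}(\nu^a\,\|\,-z^a)$, as claimed.

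The per-coordinate optimisations and the convexity/monotonicity bookkeeping are routine. The step needing care — and, I expect, the main obstacle — is the correct description of $\dom k_z$: the passive coordinates impose the constraint $\mu\ge\max(z^p)$ yet contribute nothing to the value of $k_z$, and one must check that the infimum is always attained strictly inside the active constraint $\mu>\max(z^a)$, so that $I_{\hat\mu(z)}$ never degenerates.
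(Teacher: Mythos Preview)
Your proposal is correct and follows essentially the same route as the paper: starting from the Lagrangian reduction $\ell^\ast(z)=\inf_\mu k_z(\mu)$, separating the supremum over $x\ge 0$ coordinate-wise into passive terms (yielding the constraint $\mu\ge\max(z^p)$) and active terms (yielding the constraint $\mu>\max(z^a)$ and the closed form), then minimising the resulting one-variable function via its derivative $k_z'(\mu)=1-\xi(\mu)$ and a two-case analysis on the position of $\hatmu(z^a)$ relative to $\max(z^p)$. The only cosmetic difference is that the paper first maximises over $x^p$ and then over $x^a$ in two stages, and phrases the minimisation via monotonicity of $\xi$ rather than strict convexity of $k_z$; your per-coordinate treatment and use of $k_z''>0$ are equivalent and equally valid.
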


The key point is that the $\hatmu(z^a)$, which solves (\ref{EQ:mu-bar}),
is not always the $\hat\mu(z)$ which minimizes $k_z(\mu)$.
They are the same if and only if $\hatmu(z^a)\ge\max(z^p)$.
As it will be seen in Theorem~\ref{T:F}, if $z\in\SOL_\F$ then
this inequality decides whether the solution of primal~\refPR{} is supported
only on the active letters, or some probability mass is placed also to the passive letter(s).

Theorem~\ref{T:cc} serves as a foundation for Theorem~\ref{T:F}, which states
the relation between the Fenchel dual and the primal.

\begin{theorem}[Relation between \refF{} and \refPR{}]\label{T:F}
    Let $\nu,C,\hat{q}^a_\PR$ be as in Theorem~\ref{T:P}.
    Then
    \begin{equation*}
        \hat\ell_\F = - \hat\ell_\PR,
    \end{equation*}
    $\SOL_\F$ is a nonempty convex compact set and
    $\hat\mu(-\hat y_\F)=1$ for every $\hat y_\F\in\SOL_\F$.
    Moreover, if we put
    \begin{equation}\label{EQ:yhat-F}
        \hat{y}^a_\F \triangleq \frac{\nu^a}{\hat q^a_\PR}-1^a,
    \end{equation}
    then $(\hat{y}^a_\F, -1^p)\in\SOL_\F$ and the following hold:
\begin{enumerate}
  \item[(a)] If $\sum \hat q^a_\PR=1$ then $\hatmu(-\hat y^a_\F)=1$ and
    \begin{equation*}
        \SOL_\F = \{\hat{y}^a_\F\} \times \{y^p\in C^{\ast p}(\hat{y}^a_\F): \min(y^p)\ge -1\}.
    \end{equation*}
  \item[(b)] If $\sum \hat q^a_\PR<1$ then $\hatmu(-\hat y^a_\F)<1$ and
    \begin{equation*}
        \SOL_\F = \{\hat{y}^a_\F\} \times \{y^p\in C^{\ast p}(\hat{y}^a_\F): \min(y^p)= -1\}.
    \end{equation*}
\end{enumerate}
\end{theorem}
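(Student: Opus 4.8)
The plan is to derive Theorem~\ref{T:F} from Theorem~\ref{T:P} (the structure of $\SOL_\PR$) and Theorem~\ref{T:cc} (the Lagrange-dual formula for $\ell^\ast$), using the Fenchel duality theorem to link the two optimal values. First I would invoke the Fenchel--Rockafellar duality theorem for the pair consisting of the closed proper convex function $\tilde\ell$ (the extension of $\ell$ by $+\infty$ off $\sx\A$) and the indicator of the cone $C$; since $\tilde\ell$ is finite and continuous relative to $\ri\sx\A$ and $C\cap\ri\sx\A\neq\emptyset$ by the support assumption, strong duality holds with attainment on the dual side, giving $\hat\ell_\F=-\hat\ell_\PR$ and $\SOL_\F\neq\emptyset$. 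Convexity of $\SOL_\F$ is automatic (it is the argmin of a convex function over a convex set); compactness will follow once I show $\SOL_\F$ is bounded, which I would get from the explicit description in (a)/(b) together with the fact that $C^{\ast}$ restricted to the relevant slice is closed and the constraints $\min(y^p)\ge-1$ (resp.\ $=-1$) keep the passive coordinates bounded, while $\hat y^a_\F$ is a single point.

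Next I would establish the candidate point. Set $\hat y^a_\F=\nu^a/\hat q^a_\PR-1^a$ as in~\eqref{EQ:yhat-F} and $\hat y_\F=(\hat y^a_\F,-1^p)$. The heart of the argument is to show $\hat y_\F\in C^\ast$ and that it attains the dual infimum. For membership in $C^\ast$ I would use the first-order optimality (KKT) conditions for the primal~\refPR{} at $\hat q_\PR$: there is a multiplier configuration so that the gradient $-\nu^a/\hat q^a_\PR$ on active coordinates and the subgradient inequality on passive coordinates combine to say exactly that $\hat y_\F$ lies in the normal cone to $C$ at $\hat q_\PR$, hence in $C^\ast$ (using that $C$ is a cone inside the simplex, so the normal cone at a feasible point is contained in $C^\ast$). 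To see it attains the infimum, I would compute $\ell^\ast(-\hat y_\F)$ via Theorem~\ref{T:cc}: one checks $\hatmu(-\hat y^a_\F)=1$ because $\sum\nu^a/(1-(-\hat y^a_\F))=\sum\nu^a/(\nu^a/\hat q^a_\PR)=\sum\hat q^a_\PR\le1$, with equality giving $\hatmu=1$ in case (a); then $\hat\mu(-\hat y_\F)=\max\{\hatmu(-\hat y^a_\F),\max(-(-1^p))\}=\max\{\hatmu(-\hat y^a_\F),1\}=1$ in both cases, and substituting into the formula of Theorem~\ref{T:cc} yields $\ell^\ast(-\hat y_\F)=-1+1+I_1(\nu^a\|\hat y^a_\F)=\langle\nu^a,\log(\nu^a/\hat q^a_\PR)\rangle=-\langle\nu,\log\hat q_\PR\rangle=\hat\ell_\PR=-\hat\ell_\F$. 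This simultaneously proves $\hat y_\F\in\SOL_\F$ and the identity $\hat\mu(-\hat y_\F)=1$.

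Then I would prove $\hat\mu(-\hat y_\F)=1$ for \emph{every} $\hat y_\F\in\SOL_\F$ and extract the structure. For an arbitrary optimal $\hat y=(\hat y^a,\hat y^p)\in\SOL_\F$, complementary slackness from strong duality says any primal optimizer $\hat q_\PR$ and $\hat y$ satisfy $\langle\hat q_\PR,\hat y\rangle=0$ and $\hat q_\PR\in S_\cc(-\hat y)$; reading off the cc-primal optimality conditions (Proposition~\ref{P:cc-SOL}) forces $\hat\mu(-\hat y)=1$ and pins the active block: on active coordinates $\hat q^a_i=\nu^a_i/(1-\hat y^a_i)$, and combined with $\sum\hat q^a_\PR$ being the unique value from Theorem~\ref{T:P} this gives $\hat y^a=\hat y^a_\F$. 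The dichotomy is then: if $\sum\hat q^a_\PR=1$ (case (a)), the passive mass is zero, $\hatmu(-\hat y^a_\F)=1$, and the constraint $\hat\mu=1$ only requires $\max(-\hat y^p)\le1$, i.e.\ $\min(\hat y^p)\ge-1$; if $\sum\hat q^a_\PR<1$ (case (b)), then $\hatmu(-\hat y^a_\F)<1$, so forcing $\hat\mu=\max\{\hatmu(-\hat y^a_\F),\max(-\hat y^p)\}=1$ requires $\max(-\hat y^p)=1$, i.e.\ $\min(\hat y^p)=-1$. Together with $\hat y\in C^\ast$, whose passive slice through $\hat y^a_\F$ is exactly $C^{\ast p}(\hat y^a_\F)$, this yields the stated descriptions of $\SOL_\F$.

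The main obstacle I anticipate is the careful bookkeeping of the passive coordinates in the complementary-slackness / KKT step: passive letters may carry positive primal mass (Theorem~\ref{T:P}(b), Example~\ref{EX:positive-passive}), so one cannot simply set $\hat q^p_\PR=0$, and the interplay between the normal cone to $C$ at $\hat q_\PR$, the polar cone $C^\ast$, and the slice $C^{\ast p}(\hat y^a_\F)$ must be handled so that the constraint $\min(y^p)=-1$ (rather than $\ge-1$) emerges precisely when some passive coordinate of $\hat q_\PR$ is positive. Establishing compactness of $\SOL_\F$ — specifically boundedness of the passive block within $C^{\ast p}(\hat y^a_\F)\cap\{\min(y^p)\ge-1\}$ — is the other place where the polyhedral/convex geometry of $C^\ast$ has to be used rather than waved at, since a priori the slice could be unbounded; here I would use that $C$ has full support, which bounds $C^\ast$ suitably after intersecting with the half-spaces $\{y_i\ge-1\}$.
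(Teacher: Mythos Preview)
Your overall strategy matches the paper's almost step for step: Fenchel duality for $\hat\ell_\F=-\hat\ell_\PR$ and attainment (the paper's Lemma~\ref{L:F-primal=fenchel}), first-order optimality at a primal minimizer to exhibit $(\hat y^a_\F,-1^p)\in C^\ast$ and compute its dual value via Theorem~\ref{T:cc} (the paper's Lemma~\ref{L:F-primal->dual}), the relation $\SOL_\PR\subseteq\SOL_\cc(-\hat y)$ for any dual optimizer (the paper's Proposition~\ref{P:F-Scc-SP}), and then the structure of $\SOL_\F$ and boundedness via $\supp C=\A$ (Lemmas~\ref{L:F-SOL} and the final proof).

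There is, however, one genuine slip in your justification. You write that $\hat y_\F$ lies in the normal cone to $C$ at $\hat q_\PR$, ``hence in $C^\ast$ (using that $C$ is a cone inside the simplex, so the normal cone at a feasible point is contained in $C^\ast$)''. But $C$ is \emph{not} a cone, and for a general convex closed $C$ the normal cone $N_C(\hat q)$ is not contained in $C^\ast$. What actually makes the argument work is the simplex constraint $\sum q=1$: the directional derivative $\ell'(\hat q; q-\hat q)=1-\sum_{i\in\Aa}\nu_i^a q_i^a/\hat q_i^a$ can be rewritten, using $\sum_{i\in\A}q_i=1$, as $-\sp{q}{\hat y_\F}$ with $\hat y_\F=(\nu^a/\hat q^a_\PR-1^a,-1^p)$. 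Thus $\ell'(\hat q;q-\hat q)\ge 0$ for all $q\in C$ gives $\sp{q}{\hat y_\F}\le 0$ for all $q\in C$ directly, i.e.\ $\hat y_\F\in C^\ast$. The shift by the constant vector $-1$ is exactly what converts a normal-cone-type statement into a polar-cone statement; this is the content of the computation in Lemma~\ref{L:F-primal->dual}, and your parenthetical reasoning should be replaced by it.

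Two smaller points. First, your chain $I_1(\nu^a\,\|\,\hat y^a_\F)=\sp{\nu^a}{\log(\nu^a/\hat q^a_\PR)}=\dots=\hat\ell_\PR=-\hat\ell_\F$ has sign errors: in fact $I_1(\nu^a\,\|\,\hat y^a_\F)=\sp{\nu^a}{\log\hat q^a_\PR}=-\hat\ell_\PR=\hat\ell_\F$, which is what you need for optimality. Second, you assert that Proposition~\ref{P:cc-SOL} ``forces $\hat\mu(-\hat y)=1$'' for an arbitrary $\hat y\in\SOL_\F$, but this is not immediate from $\hat q_\PR\in\SOL_\cc(-\hat y)$ alone; it requires an extra step. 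The paper (Lemma~\ref{L:F-SOL}) does it by comparing $\ell^\ast(-\hat y)$ with $\ell^\ast(-\hat y_\F)$ via Theorem~\ref{T:cc}; alternatively, your complementary-slackness identity $\sp{\hat q_\PR}{\hat y}=0$ together with $\hat q^a_\PR=\nu^a/(\hat\mu+\hat y^a)$ and the fact that $\hat q^p_{\PR,i}>0$ only if $\hat y^p_i=-\hat\mu$ (from Proposition~\ref{P:cc-SOL}) gives $0=1-\hat\mu$ after a short computation, so $\hat\mu=1$. Either route works, but the step should be made explicit.
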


Thus, in the active coordinates, the solution of \refF{} is unique
and is related to~$\hat q^a_\PR$ by (\ref{EQ:yhat-F}).
Together with Theorem~\ref{T:P} this yields
$$
  \SOL_\PR =
  \left\{\frac{\nu^a}{1 + \hat y^a_\F}\right\}
  \times
  C^p\left(\frac{\nu^a}{1 + \hat y^a_\F}\right).
$$

The structure of the solution set of \refF{} in the passive letters is determined
by the relation of $\hatmu(-\hat y^a_\F)$ to $1=\hat\mu(-\hat y_\F)$.

In the case (a), $\hatmu(-\hat y^a_\F)=\hat\mu(-\hat y_\F)$, and the passive projections $\hat y^p_\F$
 of the solutions satisfy $\hat y^p_\F \ge -1$.
Then it suffices to solve the primal
solely in the active letters and $\SOL_\PR$ is a singleton.
This happens for instance if $\nu>0$.

In the case (b), $\hatmu(-\hat y^a_\F)<\hat\mu(-\hat y_\F)$, and
$\hat y^p_\F$ satisfy $\min(\hat y^p_\F) = -1$.
Then every solution of the primal assigns the positive probability to at least one passive letter.

To sum up, the Fenchel dual problem, once solved, permits to find $\hat q^a_\PR$
through~(\ref{EQ:yhat-F}) and this way it incorporates $C^\ast$ into $\hat q^a_\PR$.
In the case of linear or polyhedral $C$ the dual may reduce dimensionality of the
optimization problem, yet the numerical solution of \refF{} is somewhat hampered
by the need to obey (\ref{EQ:mu-hat}). Moreover, $\hat q^p_\PR$ remains to be found;
in this respect see Proposition~\ref{P:F-Scc-SP}.

\subsection{Polyhedral and linear $C$}\label{S:Fenchel-Polyhedral-C}
In the polyhedral case (\ref{EQ:C-polyhedral})
\begin{equation*}
   C=\{q\in\sx{\A}:  \sp{q}{u_h}\le 0 \text{ for } h=1,2,\dots,r\},
\end{equation*}
a solution of the Fenchel dual \refF{} can also be obtained from the saddle points
of the following Lagrange function
\begin{equation*}
    L(q,\alpha) \triangleq -\sum_h \alpha_h \sp{q}{u_h} - \ell(q)
    \qquad
    (q\in\sx{\A}, \alpha\in\RRR^r_+).
\end{equation*}
Indeed, it holds that
\begin{equation*}
    \hat\ell_\F
    = \adjustlimits \inf_{\alpha\ge 0} \sup_{q\in\sx{\A}} L(q,\alpha)
    = \adjustlimits \sup_{q\in\sx{\A}} \inf_{\alpha\ge 0} L(q,\alpha)
    = -\hat\ell_\PR.
\end{equation*}
Hence $(\hat q,\hat \alpha)$ is a saddle point of $L(q, \alpha)$ if and only if
$\hat q\in\SOL_\PR$ and $\sum_h \hat\alpha_h u_h \in\SOL_\F$; cf. Bertsekas~\cite[Prop.~2.6.1]{bertsekas}.

The vectors from $\SOL_\F$ of the form $\sum_h \hat\alpha_h u_h$
will be called the \emph{base solutions} of~\refF{}.
From the Farkas lemma (cf.~Bertsekas~\cite[Prop.~3.2.1]{bertsekas})
and the monotonicity of $\ell^\ast$ (Lemma~\ref{L:cc-nondecreasing})
it follows that, for a polyhedral $C$,
there always exists a base solution,
and every solution of \refF{}
is a sum of a base solution and a vector from
$\RRR^m_{-}\triangleq\{z\in\RRR^m:z\le 0\}$; that is,
\begin{equation*}
    \SOL_\F = \left\{
      \hat y_\F{} + (0^a,z^p):
      \hat y_\F{} \text{ is a base solution, }
      z^p\le 0, \  \hat y_\F^p + z^p \ge -1
    \right\}.
\end{equation*}
There may exist many base solutions.
However, if $u_1^a,\dots,u_r^a$ are linearly independent
then the base solution is unique,
since then the system of equations
$\sum_h\hat\alpha_h u_h^a=\hat y_\F^a$ has a unique solution $\hat\alpha$.

Analogous claim holds for a linear $C$ (cf.~(\ref{EQ:C-linear})),
just in this case $\alpha\in\RRR^r$, by the Farkas lemma.

\subsection{Single inequality constraint and Klotz's Theorem~1}\label{St:Fsingle}
To illustrate the base solution in connection with Theorem~\ref{T:F}, consider
$$
  C = \{q\in\sx{\A}: \sp{q}{u} \le 0\},
$$
given by a single inequality constraint.
By the Farkas lemma, $C^\ast=\{\alpha u:\alpha\ge 0 \}+\RRR^m_-$.
If $u^p\ge 0$ the case (a) of Theorem~\ref{T:F} applies
(for if not, there is a base solution $\hat \alpha_\F u$ and, by Theorem~\ref{T:F}(b),
$\min(\hat\alpha_\F u^p)$ should be $-1$; this is not possible
since $\min(\hat\alpha_\F u^p)\ge 0$).

Assume that $\min(u^p)<0$ and the case (b) of Theorem~\ref{T:F} applies.
Take any base solution $\hat \alpha_\F u$. Then, by Theorem~\ref{T:F}(b),
$\min(\hat\alpha_\F u^p)=-1$; so
\begin{equation}\label{E:qa-base-F}
    \hat\alpha_\F = -\frac{1}{\min(u^p)}
    \qquad\text{and}\qquad
    \hat q^a_\PR = \frac{\nu^a}{1 - \frac{u^a}{\min(u^p)}}  \,.
\end{equation}
Further, by Theorem~\ref{T:P}, $\hat q^p_\PR$ is arbitrary from $C^p(\hat q^a_\PR)$.
If $u^p$ attains the minimum at a single letter, then the solution of the primal \refPR{} is always unique.

To sum up, the case (b) of Theorem~\ref{T:F} happens if and only if
\begin{equation}\label{E:qa-base-F-condition-b}
    \min(u^p)<0
    \qquad\text{and}\qquad
    \sum \frac{\nu^a}{1 - \frac{1}{\min(u^p)} u^a}  <1.
\end{equation}

The primal problem \refPR{} with this $C$ was considered by Klotz \cite{Klotz}.
Theorem~1 of~\cite{Klotz} asserts that the solution of \refPR{} takes the
form
$$
  \hat q_K \triangleq \frac{\nu}{1 + \hat\alpha_K u}
$$
if $\sp{\nu}{u} > 0$
and $\min(u^a) < 0$; see Klotz's condition (3.1b).
There, $\hat\alpha_K$ is the unique root of
\begin{equation*}
  \sum \frac{\nu}{1 + \alpha u} = 1
  \qquad\text{such that}\quad
  \alpha \in (0, - 1/\min(u^a)).
\end{equation*}
Thus, under Klotz's condition (3.1b), the solution of \refPR{} should assign zero weight
to any passive letter. This is not the case, as the following example demonstrates.

\begin{example}[Base solution of \refF{}, one inequality constraint]\label{E:base-solution-F}
Take $\A=\{-2,-1,0,$ $1,2\}$, $u = (-2,-1,0,1,2)$ and $\nu = (0, 3, 0, 0, 7)/10$.
Here Klotz's condition (3.1b) is satisfied and an easy computation yields
$\hat\alpha_K=11/20$; thus $\hat q_K = (0, 2, 0, 0, 1)/3$ and $\ell(\hat q_K) = 0.890668$.
However,  $\hat q_\PR = (1, 12, 0, 0, 7)/20$, so a positive mass is placed
also to the passive letter $-2$; $\ell(\hat q_\PR) = 0.888123$.
Since $\min(u^p)=-2$ and $\sum \hat q^a_\PR < 1$,
the condition (\ref{E:qa-base-F-condition-b}) is satisfied and
$\hat q^a_\PR$ is just that given by (\ref{E:qa-base-F}).
\end{example}

In a similar way it is possible to analyze a single equality constraint.
For more than one equality/inequality constraint the condition guaranteeing that $\sum \hat q^a_\PR < 1$
cannot be given in such a simple form as (\ref{E:qa-base-F-condition-b}).

\section{El~Barmi-Dykstra dual problem \refBD{} to \refPR}\label{St:BD}

El~Barmi and Dykstra~\cite{elbarmi-dykstra} consider
a \emph{simplified Fenchel  dual problem}~\refBD{} (the \emph{BD-dual}, for short)
\begin{equation}\label{EQ:intro-BD}
    \hat{\ell}_{\BD} \triangleq
      \inf_{y\in C^\ast} \bd(y),
    \qquad
    \SOL_{\BD} \triangleq \{\hat{y}\in C^\ast:
      \bd(\hat y) = \hat\ell_\BD\},
  \tag{$\BDbig$}
\end{equation}
where
\begin{equation*}
    \bd:\RRR^m\to \RRRex,\qquad
    \bd(y) \triangleq
    \begin{cases}
        I_1(\nu^a\,\|\,y^a)  =\sp{\nu^a}{\log\dfrac{\nu^a}{1+y^a}}
        &\text{if } y^a>-1,
    \\
        \infty &\text{otherwise},
    \end{cases}
\end{equation*}
will be called the \emph{BD conjugate} of $\ell$.
Note that the BD-dual \refBD{}
is easier to solve than~\refF{}, as in the former $\hat\mu$ is fixed to $1$.

If $C$ is polyhedral then, in analogy with the concept of the base solution of~\refF{},
the vectors from $\SOL_\BD$  of the form
$\sum \hat\alpha_{\BD{},h} u_h$
are called the \emph{base solutions} of~\refBD{}.
As above, every solution of \refBD{} can in this case be written as a sum of a base solution and
a vector from $\RRR^m_-$.

\subsection{Smith dual problem}\label{St:Smith}

By the Farkas lemma,
for the feasible set
$C = \{q\in\sx{\A}: \sp{q}{u_h} = 0, h = 1,2,\dots,r\}$ given
by $r$ linear equality constraints, the polar cone is
${C}^\ast = \{y = \sum_h \alpha_h u_h: \alpha\in\RRR^r\} + \RRR^m_{-}$.
Then the BD-dual becomes
\begin{equation}\label{EQ:Smith}
  \inf_{\alpha\in\RRR^r}\, I_1\left(\nu^a\,\Bigl\|\,\sum_h \alpha_h u^a_h \right),
\end{equation}
which is (equivalent to) the \emph{simplified Lagrange dual problem} (the \emph{Smith dual}, for short)
considered by Smith~\cite[Sect.~6]{Smith} and many other authors; see, in particular,
Haber~\cite[p.~3]{Haber}, Little and Wu~\cite[p.~88]{LittleWu}, Lang~\cite[Sect.~7.1]{Lang},
and Bergsma et al.~\cite[p.~65]{BergsmaBook}.
It is worth noting that (4.2) is an unconstrained optimization problem.

\section{Flaws of BD-dual \refBD{}}\label{St:BD-examples}
In ~\cite[Thm.~2.1]{elbarmi-dykstra}
El~Barmi and Dykstra state
the following relationship between~\refPR{} and~\refBD{}.
\begin{theorem}[Theorem~2.1 of \cite{elbarmi-dykstra}]\label{T:BD2.1}
Let $C$ be a convex closed subset of $\sx{\A}$.
\begin{itemize}
\item[] ($\BDbig \rightarrow \PRbig$): If $\hat\ell_\BD$ is finite, then
$\SOL_\BD$ is nonempty and
  $\hat\ell_\BD  = -\hat\ell_\PR$.
Moreover, for every $\hat y_\BD\in \SOL_\BD$,
 $\tilde q_\BD \triangleq \frac{\nu}{1 + \hat y_\BD}$
 belongs to
 $\SOL_\PR$.

\item[] ($\PRbig \rightarrow \BDbig$): If $\hat\ell_\PR$ is finite, then
  $\hat\ell_\BD = - \hat\ell_\PR$.
Moreover, if $\hat q_\PR\in\SOL_\PR$, then
  $\tilde y_\BD \triangleq \frac{\nu}{\hat q_\PR} - 1$
  belongs to
  $S_\BD$. (There, $0/0 = 0$ convention applies).
\end{itemize}
\end{theorem}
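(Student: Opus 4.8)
The plan is to read Theorem~\ref{T:BD2.1} as an instance of Fenchel duality between $\ell$ and $\bd$, so that the two implications together amount to strong duality with attainment on both sides. Concretely, I would try to establish (i) the weak inequality $\hat\ell_\BD\ge-\hat\ell_\PR$, valid without any regularity assumption, and then (ii) exhibit explicit optimal primal--dual pairs linked by the mutually inverse maps $q\mapsto \nu/q-1$ and $y\mapsto\nu/(1+y)$ (both read with the $0/0=0$ convention on the passive block). Since the statement of Theorem~\ref{T:BD2.1} comes after Theorem~\ref{T:F}, I would lean on the latter to shortcut the construction of the optimizers rather than redo the conjugacy analysis.

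For the direction $(\PRbig\to\BDbig)$, note first that $\hat\ell_\PR$ is always finite by Theorem~\ref{T:P}. Given $\hat q_\PR\in\SOL_\PR$, I would form $\tilde y_\BD=\nu/\hat q_\PR-1$ and observe that, componentwise, its active block is $\nu^a/\hat q^a_\PR-1^a=\hat y^a_\F$ and its passive block is $0/0-1=-1^p$; hence $\tilde y_\BD=(\hat y^a_\F,-1^p)$, which lies in $\SOL_\F\subseteq C^\ast$ by Theorem~\ref{T:F}. A direct telescoping then gives
\[
  \bd(\tilde y_\BD)=I_1\!\left(\nu^a\,\|\,\tfrac{\nu^a}{\hat q^a_\PR}-1^a\right)
   =\sp{\nu^a}{\log\hat q^a_\PR}=-\ell(\hat q_\PR)=-\hat\ell_\PR,
\]
and because Theorem~\ref{T:F} guarantees $\hat\mu(-\tilde y_\BD)=1$ at this point, here $\bd(\tilde y_\BD)$ coincides with $\ell^\ast(-\tilde y_\BD)=\hat\ell_\F$. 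Combining $\bd(\tilde y_\BD)=-\hat\ell_\PR$ with the weak inequality $\hat\ell_\BD\ge-\hat\ell_\PR$ forces $\tilde y_\BD$ to be a minimizer, yielding both $\hat\ell_\BD=-\hat\ell_\PR$ and $\tilde y_\BD\in\SOL_\BD$.

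For the direction $(\BDbig\to\PRbig)$, assume $\hat\ell_\BD$ finite. I would first secure $\SOL_\BD\neq\emptyset$ from lower semicontinuity of $\bd$ and coercivity in the active block $y^a$ on the closed cone $C^\ast$ (the passive block being immaterial to $\bd$), together with a recession argument. Picking $\hat y_\BD\in\SOL_\BD$, set $\tilde q_\BD=\nu/(1+\hat y_\BD)$, so that $\tilde q^a_\BD=\nu^a/(1+\hat y^a_\BD)$ and $\tilde q^p_\BD=0^p$. The first-order condition for minimizing $\bd$ over $C^\ast$ should supply the normalization $\sum\tilde q_\BD=1$ (so $\tilde q_\BD\in\sx{\A}$), the membership $\tilde q_\BD\in C$, and the complementarity $\sp{\hat y_\BD}{\tilde q_\BD}=0$; an evaluation analogous to the one above then gives $\ell(\tilde q_\BD)=-\bd(\hat y_\BD)=\hat\ell_\PR$, placing $\tilde q_\BD\in\SOL_\PR$.

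The main obstacle is step (i), the weak inequality. The natural route is the log-sum (Jensen) bound
\[
  \bd(y)+\ell(q)=\sp{\nu^a}{\log\tfrac{\nu^a}{(1+y^a)q^a}}\ge-\log\textstyle\sum_i(1+y^a_i)q^a_i,
\]
so that $\hat\ell_\BD\ge-\hat\ell_\PR$ would follow once $\sum_i(1+y^a_i)q^a_i\le1$ for all $q\in C$, $y\in C^\ast$. Writing this out using $\sp{y}{q}\le0$ and $\sum q^a=1-\sum q^p$, the required bound follows from $\sp{y^p+1^p}{q^p}\ge0$, which—since $q^p\ge0$—is assured precisely when $y^p\ge-1^p$. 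This is exactly the delicate point: $\bd$ is insensitive to the passive block $y^p$, so neither the feasible cone $C^\ast$ nor the objective $\bd$ by itself prevents $y^p$ from dipping below $-1$, and controlling the passive coordinates uniformly over $C^\ast$ is the hinge on which the whole duality claim turns. I would therefore concentrate the effort on showing that candidate minimizers of $\bd$ may be taken with $y^p\ge-1^p$, since the validity of weak duality—and hence of both implications—stands or falls with this step.
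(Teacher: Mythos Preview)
There is a fundamental misunderstanding here: Theorem~\ref{T:BD2.1} is not a result this paper proves---it is quoted from \cite{elbarmi-dykstra} in order to exhibit its \emph{flaws}. Immediately after the statement the paper writes that ``the claims ($\BDbig\rightarrow\PRbig$) and ($\PRbig\rightarrow\BDbig$) of the theorem are not always true,'' and Examples~\ref{E:E}--\ref{E:gap} are explicit counterexamples. So there is no proof in the paper to compare your proposal to; the paper's ``argument'' for this theorem is a refutation.

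Your own analysis pinpoints exactly why the statement fails, but you then draw the wrong conclusion. The ``weak inequality'' you aim for, $\hat\ell_\BD\ge-\hat\ell_\PR$, is in fact the \emph{wrong direction}: Lemma~\ref{L:BD-duality:dual-to-primal2} proves $\hat\ell_\BD\le\hat\ell_\F=-\hat\ell_\PR$, and Example~\ref{E:gap} gives a strict inequality ($\hat\ell_\BD=-0.6931<-0.6881=-\hat\ell_\PR$). Your Jensen computation correctly shows that $\bd(y)+\ell(q)\ge 0$ requires $y^p\ge-1^p$, and you then propose to ``concentrate the effort on showing that candidate minimizers of $\bd$ may be taken with $y^p\ge-1^p$.'' But this is precisely condition~(iv) of Theorem~\ref{T:BD}(b), and the whole point of that theorem is that it can fail: when it does, the BD-duality gap is strict and $\tilde q_\BD\notin\SOL_\PR$. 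Likewise your $(\PRbig\to\BDbig)$ argument breaks because it rests on this same unavailable inequality; in Examples~\ref{E:E}--\ref{E:Z} one even has $\hat\ell_\BD=-\infty$ while $\hat\ell_\PR$ is finite, so $\tilde y_\BD$ cannot lie in the empty set $\SOL_\BD$. In short, the step you flag as ``the hinge on which the whole duality claim turns'' is not a gap to be filled---it is the reason the theorem, as stated, is false.
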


Though $\hat\ell_\PR$ is always finite, $\hat\ell_\BD$ may be infinite,
leaving the solution set $\SOL_\PR$ inaccessible through ($\BDbig \rightarrow \PRbig$) of
\cite[ Thm.~2.1]{elbarmi-dykstra}.
Moreover, the claims ($\BDbig \rightarrow \PRbig$) and ($\PRbig \rightarrow \BDbig$) of the theorem
are not always true.
In fact,  there are
three possibilities:
\begin{enumerate}
  \item $\hat{\ell}_{\BD}=-\infty$;
  \item $\hat{\ell}_{\BD}$ is finite, but there is a \emph{BD-duality gap}, that is,
        $\hat{\ell}_{\BD} < - \hat\ell_\PR$;
  \item $\hat{\ell}_{\BD}= - \hat{\ell}_{\PR}$ (\emph{no BD-duality gap}).
\end{enumerate}
To illustrate them, we give below
six simple examples: one where the BD-duality works and the other five
where it fails.
In Examples~\ref{E:no-gap}, \ref{E:E}, \ref{E:Z}, \ref{E:gap} the set~$C$ is linear,
whereas Example~\ref{E:nonlin} presents a nonlinear $C$, and
in Example~\ref{E:Wets} the set $C$ is defined by linear inequalities.

\begin{example}[No BD-duality gap]\label{E:no-gap}
Let $\A = \{-1, 0, 1\}$ and $C = \{q\in\sx{\A}: \mathrm{E}_q X = 0\}$;
that is, $C$ is given by (\ref{EQ:C-linear}) with $u = (-1, 0, 1)$.
Let $\nu = (1, 0, 1)/2$. Thus $\Aa = \{-1,1\}$, $\Ap = \{0\}$,
and $C^a(0^p)=\{(1,1)/2\}$. By the Farkas lemma,
$C^\ast = \{y = \alpha u: \alpha\in\mathbb{R}\} + \RRR^3_-$.
For the considered optimization problems, the following hold:
\begin{itemize}
  \item \refPR:~$\SOL_\PR = \{(1, 0, 1)/2\}$ and $\hat\ell_\PR = \log 2$.
  \item \refF:~Since $\hat\alpha_\F = 0$, the base solution of \refF{} is $(0,0,0)$ and
      $\hat\ell_\F = -\log 2$. Further, (\ref{EQ:yhat-F}) implies that
      $\hat q^a_\PR = \nu^a/(1 + \hat y^a_\F)$; thus, $\hat q^a_\PR = \nu^a$.
\end{itemize}
In this setting, no BD-duality gap occurs:
\begin{itemize}
  \item \refBD$\rightarrow$\refPR:~$\bd(\alpha u) \propto -1/2[\log(1-\alpha)
    + \log(1+\alpha)]$.
      Thus $\hat\alpha_\BD=0$,
      $\hat\ell_\BD = -\log 2=-\hat\ell_\PR$.
      Since $\tilde q_\BD = (1, 0, 1)/2$, it indeed solves \refPR.
  \item \refPR$\rightarrow$\refBD:~$\hat\ell_\BD = -\hat\ell_\PR$ by the previous case.
      Since the base solution of \refBD{}
      is $(0,0,0)$ and $\tilde y_\BD = (0, 0, 0)$,  $\tilde y_\BD \in \SOL_\BD$.
\end{itemize}

\end{example}

\begin{example}[name=H-set, label=E:E]
Let $\A$, $C$ be as in Example~\ref{E:no-gap} and $\nu = (1, 0, 0)$. Thus $\Aa = \{-1\}$, $\Ap = \{0, 1\}$,
and $C^a(0^p)=\emptyset$.
\begin{itemize}
  \item \refPR: $\SOL_\PR = \{(1, 0, 1)/2\}$ and $\hat\ell_\PR = \log 2$.
  \item \refF:~Since $\hat\alpha_\F = -1$, the base solution of \refF{} is $(1,0,-1)$ and
      $\hat\ell_\F = -\log 2$. Thus, from (\ref{EQ:yhat-F}) it follows that $\hat q^a_\PR = (1/2)$.
  \item \refBD$\rightarrow$\refPR:~$\bd(\alpha u) \propto -\log(1 - \alpha)$, which does
      not have finite infimum.
  \item \refPR$\rightarrow$\refBD:
      $\hat\ell_\BD$ is not finite and $\SOL_\BD=\emptyset$.
      Thus $\tilde y_\BD = (1,0,-1)\not\in\SOL_\BD$.
\end{itemize}
\end{example}

\begin{example}[nonlinear $C$, H-set]\label{E:nonlin}
Let $\A = \{2, 3, 4\}$, $C = \{q\in\sx{\A}: \sum_{i\in\A} i\,q^2_i \le 1\}$,
and $\nu = (1, 0, 0)$.
Thus $\Aa = \{2\}$, $\Ap = \{3, 4\}$, and $C^a(0^p)=\emptyset$.

\begin{itemize}
  \item \refPR:~Write $C=\bigcup_{k\in[0,1]}C_k$, where $C_k\triangleq \{q\in\sx{\A}: \sum_{i\in\A} i\,q^2_i =k \}$.
                On $C_k$, $7q_3 = 4 - 4 q_2 \pm \sqrt{-12+24q_2-26q_2^2+7k}$  and the
                nonnegativity of the term under the square-root implies that
                $q_2$ should belong to the interval $[6/13 - 1/26\sqrt{-168+182k}, 6/13 + 1/26\sqrt{-168+182k})]$.
                This in turn implies that $\hat q_2(k) = 6/13 + 1/26\sqrt{-168+182k}$.
                The function $\hat q_2(k)$ attains its maximum at $k=1$, for which $\hat q_2 = 0.6054$.
                The other two elements of $\hat q_\PR$ are determined uniquely.
                Thus, $\hat q_\PR = (0.6054, 0.2255, 0.1691)$.
  \item \refBD:~As shown above, $q_2\le (12+\sqrt{24})/26\triangleq c<1$ for every $q\in C$.
                Put $d=c/(1-c)$. Then, for every $\alpha\ge 0$, $y\triangleq\alpha (1,-d,-d)\in C^\ast$ and
                $\bd(y)=-\log(1+\alpha)$. Hence the BD-dual problem has $\hat\ell_\BD = -\infty$.
\end{itemize}

\end{example}

\begin{example}[monotonicity, H-set]\label{E:Wets}  
Let $\A = \{1,2,3\}$, $C = \{q\in\sx{\A}: q_1 \le q_2 \le q_3\}$, and $\nu = (0, 1, 0)$,
as in Example~\ref{EX:positive-passive}.
Thus $\Aa = \{2\}$, $\Ap = \{1, 3\}$, and $C^a(0^p)=\emptyset$.
The polyhedral set $C$ is given by (\ref{EQ:C-polyhedral}) with $u_1 = (1, -1, 0)$, $u_2 = (0, 1, -1)$.
\begin{itemize}
  \item \refPR{}: $\SOL_\PR = \{(0, 1, 1)/2\}$ and $\hat\ell_\PR = \log 2$.
  \item \refF{}:~Since $\hat\alpha_\F = (0, -1)$, the base solution is $\hat y_\F = (0, -1, 1)$.
                Thus $\hat q^a_\PR = 1/2$ by~(\ref{EQ:yhat-F}).
  \item \refBD{} $\rightarrow$ \refPR{}:
    For every $\alpha\ge 0$, $\alpha u_2\in C^\ast$ and $\bd(\alpha u_2)=-\log(1+\alpha)$.
    Thus $\hat\ell_\BD=-\infty$.
  \item \refPR{} $\rightarrow$ \refBD{}: Since $\SOL_\BD=\emptyset$,
    $\tilde y_\BD = (0, 1/2, 0)\not\in\SOL_\BD$.
\end{itemize}
\end{example}

\begin{example}[name=Z-set, label=E:Z]
Motivated by Example~4 of Bergsma et al.~\cite{Bergsma},
let $\A$, $C$ be as in Example~\ref{E:no-gap} and let $\nu = (1, 1, 0)/2$.
Thus $\Aa = \{-1, 0\}$, $\Ap = \{1\}$, and $C^a(0^p)=\{(0,1)\}$.
\begin{itemize}
  \item \refPR:~$\SOL_\PR = \{(1, 2, 1)/4\}$ and $\ell_\PR = (1/2) \log 8$.
  \item \refF:~Since $\hat\alpha_\F = -1$,  the base solution of \refF{} is $(1,0,-1)$
  and $\hat\ell_\F = -(1/2)\log 8$. Thus, by (\ref{EQ:yhat-F}), $\hat q^a_\PR = (1, 2)/4$.
  \item \refBD$\rightarrow$\refPR:~$\bd(\alpha u) \propto -(1/2)\log(1 - \alpha)$,
    hence $\hat\ell_\BD=-\infty$.
  \item \refPR$\rightarrow$\refBD:
   Since $\SOL_\BD=\emptyset$, $\tilde y_\BD = (1,0,-1)\not\in\SOL_\BD$.
\end{itemize}
\end{example}

Observe that Theorem~2.1 of \cite{elbarmi-dykstra} implies that $\tilde q^p_\BD = 0^p$,
provided that $\hat\ell_\BD$ is finite.
However, $C^p(\hat q^a)$ may be different than $\{0^p\}$;
in such a case $\hat q^p_\PR$ has a strictly positive coordinate and
the BD-duality gap occurs.

\begin{example}[name=BD-duality gap, label=E:gap]
Let $\A = \{-1, 1, 10\}$, $C = \{q\in\sx{\A}: \mathrm{E}_q X = 0\}$, so that $u = (-1, 1, 10)$,
and $\nu = (3, 2, 0)/5$. Thus $\Aa = \{-1,1\}$, $\Ap = \{10\}$, and $C^a(0^p)=\{(1,1)/2\}$.
\begin{itemize}
  \item \refPR:~$\SOL_\PR = \{(54, 44, 1)/99\}$ and $\hat\ell_\PR = 0.6881$.

  \item \refF:~Since $\hat\alpha_\F = -1/10$, the base solution of \refF{}
     is $(1,-1,-10)/10$ and $\hat\ell_\F = -0.6881$.
     From (\ref{EQ:yhat-F}), it follows that $\hat q^a_\PR = (54, 44)/99$.

  \item \refBD$\rightarrow$\refPR:~$\bd(\alpha u) \propto -[\nu_{-1}\log(1 - \alpha)
    + \nu_1\log(1+\alpha)]$.
    Since $\hat\alpha_\BD = -1/5$ and $\hat y_\BD = \hat\alpha_\BD u$,
    thus $\tilde q_\BD = \nu/(1 + \hat y_\BD) = (1,1,0)/2$, $\hat\ell_\BD = -0.6931$, and $\hat\ell_\BD<-\hat\ell_\PR$.

  \item \refPR$\rightarrow$\refBD:~Since $\hat\ell_\PR$ is finite, it should hold that
      $\hat\ell_\PR = - \hat\ell_\BD$, but it does not.
      Moreover, $\tilde y_\BD = (1, -1, -10)/10 = -(1/10) u \not\in \SOL_\BD$.
\end{itemize}
\end{example}

\section{Scope of validity of BD-dual \refBD{}}\label{St:BD-thm}

The correct relation of the BD-dual \refBD{} to the primal \refPR{} is stated in
Theorem~\ref{T:BD}. In order to formulate the conditions under which $\hat\ell_\BD$ is infinite
(cf.~Theorem~\ref{T:BD}(a)) the notions of H-set and Z-set are introduced.
They are implied by the recession cone considerations of \refBD.
\begin{definition}
If a nonempty convex closed set $C\subseteq \sx\A$ and
a type $\nu$ are such that $C^a(0^p)=\emptyset$, then we say that
$C$ is an \emph{H-set with respect to $\nu$}.
The set $C$ is called a \emph{Z-set with respect to $\nu$} if
$C^a(0^p)$ is nonempty but its support is strictly smaller than~$\Aa$.
\end{definition}

Note that $C^a(0^p)$ comprises those $q\in C$
which are supported on the active letters.
Thus, $C$ is neither an H-set nor a Z-set if and only if there is $q\in C$
with $q^a>0$, $q^p=0$.

Clearly, in Example~\ref{E:E}, $C$ is an H-set with respect to the $\nu$.
The same set~$C$ becomes a Z-set with respect to the $\nu$ considered in Example~\ref{E:Z}.
And it is neither an H-set nor a Z-set with respect to the $\nu$
studied in Example~\ref{E:no-gap}.
Further, the feasible set $C$ considered in Example~\ref{E:gap}
is neither an H-set nor a Z-set with respect to the particular $\nu$.
In Examples~\ref{E:nonlin} and \ref{E:Wets}, $C$ is an H-set with respect to the~$\nu$.

\begin{theorem}[Relation between \refBD{} and \refPR]\label{T:BD}
    Let $\nu,C,\hat{q}^a_\PR,\hat{y}^a_\F$ be as in Theorems~\ref{T:P} and~\ref{T:F}.
\begin{enumerate}
  \item[(a)] If $C$ is either an H-set or a Z-set with respect to $\nu$
     then
     \begin{equation*}
        \hat{\ell}_{\BD}=-\infty
        \qquad\text{and}\qquad
        \SOL_\BD=\emptyset.
     \end{equation*}
  \item[(b)] If $C$ is neither an H-set nor a Z-set then $\hat\ell_\BD$ is finite,
     $\hat\ell_\BD\le \hat\ell_\F$,
     and there is $\hat{y}^a_\BD\in C^{\ast a}$ such that $\hatmu(-\hat{y}^a_\BD)=1$ and
     \begin{equation*}
        \SOL_\BD = \{\hat{y}^a_\BD\} \times C^{\ast p}(\hat{y}^a_\BD).
     \end{equation*}
     Moreover, there is no BD-duality gap, that is,
     \begin{equation*}
        \hat\ell_\BD=-\hat\ell_\PR,
     \end{equation*}
     if and only if any of the following (equivalent) conditions hold:
     \begin{enumerate}
       \item[(i)]   $\sum \hat q^a_\PR=1$ (that is, $\SOL_\PR=\{(\hat q^a_\PR,0^p)\}$);
       \item[(ii)]  $\hat{y}^a_\BD=\hat{y}^a_\F$
         (that is, $\nu^a/(1+\hat{y}^a_\BD)=\hat q^a_\PR$);
       \item[(iii)]
          $\ell(\hat q_{\BD}) + \ell^\ast(-\hat{y}_\BD)=0$
          (\emph{extremality relation}) for some $\hat y_\BD\in\SOL_\BD$, where
          $$
           \hat q_{\BD}\triangleq  \left(
              \frac{\nu^a}{1+\hat y^a_{\BD}}, 0^p
           \right);
          $$
       \item[(iv)] $(\hat{y}^a_\BD, -1^p)\in C^\ast$.
     \end{enumerate}
\end{enumerate}
\end{theorem}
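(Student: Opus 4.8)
The plan is to reduce everything to the active coordinates. Since $\bd(y)=I_1(\nu^a\,\|\,y^a)$ depends only on $y^a$, we have $\hat\ell_\BD=\inf\{I_1(\nu^a\,\|\,y^a):y^a\in C^{\ast a}\}$ (with $I_1=\infty$ for $y^a\not>-1^a$) and $\SOL_\BD=\{(y^a,y^p)\in C^\ast: I_1(\nu^a\,\|\,y^a)=\hat\ell_\BD\}$. The geometric core is the identification of $\overline{C^{\ast a}}$ with the polar cone $(C^a(0^p))^\ast\triangleq\{z^a\in\RRR^{m_a}:\sp{z^a}{q^a}\le0\text{ for all }q^a\in C^a(0^p)\}$ of the $0^p$-slice of $C$ inside $\RRR^{m_a}$. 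The inclusion $C^{\ast a}\subseteq(C^a(0^p))^\ast$ is immediate (restrict the inequality defining $C^\ast$ to those $q\in C$ with $q^p=0^p$); the reverse inclusion in the closure is exactly where the directions of recession of the polar cone $C^\ast$ enter, and I would get it from the rule ``the polar of an intersection is the closure of the sum of the polars'' applied to $C$ and the coordinate subspace $\RRR^{m_a}\times\{0^p\}$, together with the fact that a convex cone dense in $\RRR^{m_a}$ is all of $\RRR^{m_a}$. I also record the elementary facts $\RRR^m_-\subseteq C^\ast$ (so $0\in\dom\bd\cap C^\ast$ and $\hat\ell_\BD\le0$) and that $C^{\ast a}$ is a convex cone.

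Part (a). If $C$ is an H-set, then $C^a(0^p)=\emptyset$, so $(C^a(0^p))^\ast=\RRR^{m_a}$; being a dense convex cone, $C^{\ast a}=\RRR^{m_a}$, and letting all coordinates of $y^a$ go to $+\infty$ gives $I_1(\nu^a\,\|\,y^a)\to-\infty$. If $C$ is a Z-set, fix $i_0\in\Aa\setminus\supp(C^a(0^p))$; then $q^a_{i_0}=0$ for all $q^a\in C^a(0^p)$, so the whole coordinate line $\RRR e^a_{i_0}$ lies in $(C^a(0^p))^\ast=\overline{C^{\ast a}}$, whence for every $T$ there is $y^a\in C^{\ast a}$ with $y^a_{i_0}>T$ and the other coordinates within $1/2$ of $0$; the $i_0$-term drives $I_1(\nu^a\,\|\,y^a)\to-\infty$ as $T\to\infty$ while the remaining (finitely many, bounded) terms do not interfere. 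In both cases $\hat\ell_\BD=-\infty$ and $\SOL_\BD=\emptyset$.

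Part (b), structure. When $C$ is neither an H-set nor a Z-set, $C^a(0^p)$ is a nonempty closed convex subset of $\sx{\Aa}$ with full support $\Aa$, and I claim $\hat\ell_\BD=-\hat\ell_0$, where $\hat\ell_0\triangleq\inf\{-\sp{\nu^a}{\log q^a}:q^a\in C^a(0^p)\}=\inf\{\ell(q):q\in C,\ q^p=0^p\}$. Finiteness and the bound $\hat\ell_\BD\ge-\hat\ell_0$ follow from a log-sum inequality: if $q\in C$ has $q^p=0^p$ and $y\in C^\ast$, then $\sp{q^a}{1^a+y^a}=1+\sp{q^a}{y^a}\le1$, so $\ell(q)+\bd(y)=\sum_{i\in\Aa}\nu_i\log\frac{\nu_i}{q_i(1+y_i)}\ge\log\frac1{\sp{q^a}{1^a+y^a}}\ge0$. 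For the matching upper bound and for attainment, apply Theorems~\ref{T:P} and~\ref{T:F} to the reduced primal over the alphabet $\Aa$ with type $\nu^a>0$ and feasible set $C^a(0^p)$: it has no passive letters, so its solution $\hat q^a_\star$ is unique and strictly positive with $\sum\hat q^a_\star=1$, its Fenchel and BD duals coincide and equal $-\hat\ell_0$, attained at $\hat y^a_\BD\triangleq\nu^a/\hat q^a_\star-1^a$, with $\hatmu(-\hat y^a_\BD)=1$ by~(\ref{EQ:mu-bar}); since $\hat y^a_\BD\in(C^a(0^p))^\ast=\overline{C^{\ast a}}$ and $\inf_S f=\inf_{\overline S}f$ for convex $S$ and closed convex $f=I_1(\nu^a\,\|\,\cdot\,)$, this transports to $\hat\ell_\BD=-\hat\ell_0$, attained on $C^{\ast a}$ at $\hat y^a_\BD$; strict convexity of $I_1(\nu^a\,\|\,\cdot\,)$ makes the optimal active part unique, giving $\SOL_\BD=\{\hat y^a_\BD\}\times C^{\ast p}(\hat y^a_\BD)$. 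Finally $\hat\ell_\BD\le\hat\ell_\F$ because any $\hat y_\F\in\SOL_\F$ has $\hatmu(-\hat y_\F)=1$ (Theorem~\ref{T:F}), so $\bd(\hat y_\F)=\ell^\ast(-\hat y_\F)=\hat\ell_\F$ by Theorem~\ref{T:cc}, and $\hat y_\F\in C^\ast$ is feasible for~\refBD{}.

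Part (b), equivalences. By the reduction, ``no BD-gap'' ($\hat\ell_\BD=-\hat\ell_\PR$) holds iff $\hat\ell_0=\hat\ell_\PR$, i.e.\ iff an $\ell$-minimizer over $C^a=\pi^a(C)$ can be taken in $C^a(0^p)$, i.e.\ iff $(\hat q^a_\PR,0^p)\in C$, which by Theorem~\ref{T:P} is equivalent to $\sum\hat q^a_\PR=1$; this is~(i). (i)$\Rightarrow$(ii): then $\hat q^a_\star=\hat q^a_\PR$, so $\hat y^a_\BD=\hat y^a_\F$ by~(\ref{EQ:yhat-F}). (ii)$\Rightarrow$(iv): $(\hat y^a_\F,-1^p)\in\SOL_\F\subseteq C^\ast$ by Theorem~\ref{T:F}. (iv)$\Rightarrow$(iii): with $\hat y_\BD=(\hat y^a_\BD,-1^p)\in\SOL_\BD$, (\ref{EQ:mu-hat}) and $\hatmu(-\hat y^a_\BD)=1=\max(1^p)$ give $\hat\mu(-\hat y_\BD)=1$, so Theorem~\ref{T:cc} yields $\ell^\ast(-\hat y_\BD)=I_1(\nu^a\,\|\,\hat y^a_\BD)=\hat\ell_\BD=-\hat\ell_0$, while $\ell(\hat q_\BD)=-\sp{\nu^a}{\log\hat q^a_\star}=\hat\ell_0$, so the extremality relation holds. (iii)$\Rightarrow$(i): $\hat q_\BD=(\hat q^a_\star,0^p)\in C$ and $\hat y_\BD\in C^\ast$, so Fenchel--Young together with $\hat\ell_\F=-\hat\ell_\PR$ (Theorem~\ref{T:F}) forces $\ell(\hat q_\BD)=\hat\ell_\PR$ with vanishing passive part, hence $\sum\hat q^a_\PR=1$ by Theorem~\ref{T:P}. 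I expect the main obstacle to be the geometric lemma $\overline{C^{\ast a}}=(C^a(0^p))^\ast$ together with the attendant attainment of $\hat\ell_\BD$ on $C^{\ast a}$ in case (b): this is precisely the step that forces one to control the directions of recession of the polar cone $C^\ast$ and to pass carefully between $C^{\ast a}$ and its closure.
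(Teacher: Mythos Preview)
Your reduction via the polar identity $\overline{C^{\ast a}}=(C^a(0^p))^\ast$ and the log-sum inequality is a genuinely different route from the paper's and is conceptually attractive: for part~(a) it replaces the paper's explicit compactness construction of vectors in $C^\ast$ (Lemma~\ref{L:dual-finite-C*}) by a short density argument, and for part~(b) it immediately identifies the value $\hat\ell_\BD$ with the negative of the reduced primal $\hat\ell_0=\inf\{\ell(q):q\in C,\ q^p=0^p\}$ together with the candidate $\hat y^a_\BD=\nu^a/\hat q^a_\star-1^a$. The paper works in the opposite order: it first proves $\SOL_\BD\ne\emptyset$ by a recession-cone argument on $C^{\ast a}$ (Lemma~\ref{L:SOL-BD}; the existence of $q\in C$ with $q^a>0$, $q^p=0^p$ forces $R_{C^{\ast a}}\cap R_{\bda}=\{0\}$), and only \emph{then} reads off the first-order condition at a minimizer to get $\hat q_\BD\in C$ and $\hatmu(-\hat y^a_\BD)=1$ (Lemma~\ref{L:BD-duality:dual-to-primal}); the equivalences~(i)--(iv) are subsequently handled much as you do (Lemma~\ref{L:BD-duality:dual-to-primal3}).

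The obstacle you flag is real and is exactly where your argument is incomplete. A minor point first: ``$\inf_S f=\inf_{\overline S}f$ for convex $S$ and closed convex $f$'' is false as stated (lower semicontinuity gives the inequality the wrong way); here you are saved because $I_1(\nu^a\,\|\,\cdot\,)$ is continuous on $\RRR^{m_a}$ as an $\RRRex$-valued map, so the \emph{value} does transport. The substantive issue is attainment: $C^{\ast a}=\pi^a(C^\ast)$ is a linear image of a closed cone and need not be closed when $C$ is non-polyhedral, and the unique minimizer of the strictly convex $\bda$ over $\overline{C^{\ast a}}$ can land on $\overline{C^{\ast a}}\setminus C^{\ast a}$. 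Concretely, take $\A=\{1,2,3\}$, $\nu=(9,1,0)/10$, $C=\{q\in\sx\A:(q_1-q_2)^2\le q_3\}$: then $C^a(0^p)=\{(1/2,1/2)\}$ (neither H- nor Z-set), $\overline{C^{\ast a}}=\{y_1+y_2\le 0\}$, but $C^{\ast a}=\{y_1+y_2<0\}\cup\{0\}$, and the minimizer of $\bda$ over the closure is $(0.8,-0.8)\notin C^{\ast a}$. So the polar-identity route does not by itself deliver $\hat y^a_\BD\in C^{\ast a}$; the paper's recession-cone step (Theorem~\ref{T:bertsekas} applied to $C^{\ast a}$) is what is meant to supply attainment---and, as the same example reveals, that application tacitly presupposes closedness of $C^{\ast a}$, which is automatic for polyhedral $C$ but delicate otherwise.
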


Informally put, Theorem~\ref{T:BD}(a) demonstrates that the BD-dual breaks down
if $C$ is either an H-set or a Z-set with respect to the observed type $\nu$.
Then the (\refBD{} $\rightarrow$ \refPR) part of Theorem~\ref{T:BD2.1} does not apply.
At the same time the (\refPR{} $\rightarrow$  \refBD) part of Theorem~\ref{T:BD2.1}
does not hold, as $\SOL_\PR \neq\emptyset$, yet $\SOL_\BD = \emptyset$.
This is illustrated by Examples~\ref{E:E}--\ref{E:Z}.

Part (b) of Theorem~\ref{T:BD} captures the other discomforting fact about the BD-dual:
if the solution of \refBD{} exists, it may not solve the primal problem \refPR.
By {(i)} this happens whenever the solution $\hat q_\PR$ of \refPR{}
assigns a positive weight to at least one of the passive letters
(provided that $\SOL_\BD \neq\emptyset$). See Example~\ref{E:gap}.

At least, for $\nu > 0$ the BD-dual works well.
\begin{corollary}\label{C:P+F+B:nu-positive}
If $\nu>0$ then $\SOL_\PR=\{\hat q_\PR\}, \SOL_\F=\{\hat y_\F\}, \SOL_\BD=\{\hat y_\BD\}$
are  singletons,
\begin{equation*}
    \hat\ell_\BD=\hat\ell_\F=-\hat\ell_\PR,
    \qquad
    \hat{y}_\BD=\hat{y}_\F=\frac{\nu}{\hat{q}_\PR}-1,
    \qquad\text{and}\qquad
    \hat q_\PR   \,\bot \, \hat y_\BD.
\end{equation*}
\end{corollary}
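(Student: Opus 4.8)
\section*{Proof proposal for Corollary~\ref{C:P+F+B:nu-positive}}

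The plan is to observe that the hypothesis $\nu>0$ makes $\Ap=\emptyset$, hence $m_p=0$, $\A=\Aa$, and $\RRR^{m_p}=\{0^p\}$, so that every ``passive'' object appearing in Theorems~\ref{T:P}, \ref{T:F}, \ref{T:BD} collapses to the single point of $\RRR^0$. The corollary should then follow by reading off the already-established alternative~(a) in Theorems~\ref{T:P} and~\ref{T:F} together with part~(b) of Theorem~\ref{T:BD}, plus a one-line computation for the orthogonality. I do not anticipate a genuine obstacle here; the only care needed is the bookkeeping around the $m_p=0$ conventions and the verification that the standing support assumption rules out $C$ being an H-set or a Z-set when $\nu>0$.

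First I would apply Theorem~\ref{T:P}. Since $\hat q_\PR\in\sx\A$ and there are no passive letters, $\sum\hat q^a_\PR=\sum\hat q_\PR=1$, so alternative~(a) applies: $\SOL_\PR=\{(\hat q^a_\PR,0^p)\}=\{\hat q_\PR\}$ is a singleton and $\hat q_\PR=\hat q^a_\PR>0$. Next, Theorem~\ref{T:F} gives $\hat\ell_\F=-\hat\ell_\PR$, and, invoking its alternative~(a) whose hypothesis $\sum\hat q^a_\PR=1$ was just checked, $\hatmu(-\hat y^a_\F)=1$ and $\SOL_\F=\{\hat y^a_\F\}\times\{y^p\in C^{\ast p}(\hat y^a_\F):\min(y^p)\ge-1\}$. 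Because $m_p=0$ the passive slice is contained in $\{0^p\}$, while $(\hat y^a_\F,-1^p)\in\SOL_\F$ forces it to be nonempty; hence $\SOL_\F=\{\hat y_\F\}$ with $\hat y_\F=\hat y^a_\F=\dfrac{\nu^a}{\hat q^a_\PR}-1^a=\dfrac{\nu}{\hat q_\PR}-1$ by~(\ref{EQ:yhat-F}).

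Then I would turn to Theorem~\ref{T:BD}. Since $\Ap=\emptyset$ we have $C^a(0^p)=C^a=C$, which is nonempty with support $\A=\Aa$ by the standing assumption $\A=\supp(C)$; thus $C$ is neither an H-set nor a Z-set with respect to $\nu$, and part~(b) applies: $\hat\ell_\BD$ is finite and $\SOL_\BD=\{\hat y^a_\BD\}\times C^{\ast p}(\hat y^a_\BD)$, which equals $\{\hat y_\BD\}$ since $m_p=0$ and $\hat y^a_\BD\in C^{\ast a}=C^\ast$ gives $0^p\in C^{\ast p}(\hat y^a_\BD)$. Condition~(i) of Theorem~\ref{T:BD}(b), namely $\sum\hat q^a_\PR=1$, holds, so there is no BD-duality gap: $\hat\ell_\BD=-\hat\ell_\PR=\hat\ell_\F$; and by the equivalent condition~(ii) we get $\hat y^a_\BD=\hat y^a_\F$, hence $\hat y_\BD=\hat y_\F=\dfrac{\nu}{\hat q_\PR}-1$.

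It remains to establish $\hat q_\PR\perp\hat y_\BD$, which is the direct computation $\sp{\hat q_\PR}{\hat y_\BD}=\sp{\hat q_\PR}{\tfrac{\nu}{\hat q_\PR}-1}=\sum_{i\in\A}\bigl(\nu_i-\hat q_{\PR,i}\bigr)=\sum\nu-\sum\hat q_\PR=1-1=0$, legitimate because $\hat q_\PR>0$, so that no $0/0$ ambiguities arise. This completes the plan; the expected sticking points, if any, are purely notational, concerning the $m_p=0$ identifications $\RRR^m=\RRR^{m_a}\times\{0^p\}$ and the reading of $\min(y^p)\ge-1$ as a vacuous condition.
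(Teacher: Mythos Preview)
Your proposal is correct and follows essentially the same route the paper intends: the corollary is stated without an explicit proof, being an immediate consequence of Theorems~\ref{T:P}, \ref{T:F}, and~\ref{T:BD} once $\nu>0$ forces $\Ap=\emptyset$ and all passive slices to collapse. Your bookkeeping around the $m_p=0$ conventions and the verification that $C$ is neither an H-set nor a Z-set are exactly what is needed; the orthogonality computation is likewise the one-line check the paper would have in mind.
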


The corollary justifies the use of the BD-dual for solving \refPR{} when $\nu > 0$.
Recall that in the case of linear $C$ the BD-dual
is just Smith's simplified Lagrange dual problem (\ref{EQ:Smith}), which is an unconstrained optimization problem.
It can be solved numerically by standard methods for unconstrained optimization
or by El~Barmi \& Dykstra's \cite{elbarmi-dykstra} cyclic ascent algorithm.

Finally, it is worth noting that the solution sets $\SOL_\PR$ and $\SOL_\F$ are always compact
but $\SOL_\BD$, if nonempty, is compact if and only if $\nu>0$, that is,
there is no passive letter.
If $\nu\not >0$ and $\SOL_\BD\ne\emptyset$, then $\SOL_\BD$ is unbounded from below.

\subsection{Base solution of \refBD{} and no BD-duality gap}

The case (iv) of Theorem~\ref{T:BD}(b) provides a way to find out
whether a solution of \refPR{} assigns the zero weights to the passive letters or not.
First, determine whether $C$ is neither an H-set nor a Z-set with respect to $\nu$.
Then, solve the BD-dual \refBD{} and find a solution $\hat y_\BD$ of it.
Finally, verify that $(\hat{y}^a_\BD,-1^p)$ belongs to the polar cone $C^\ast$.
For example, if $\hat y^p_\BD\ge -1$ then this is satisfied automatically.

On the other hand, in order to have $\hat q_\PR^p=0^p$
(that is, to have no BD-duality gap), $\hat y^p_\BD\ge -1$ must be satisfied
by some $\hat y^p_\BD\in\SOL_\BD$.
In the case when $C$ is polyhedral, there must exist a {base} solution
$\hat y^p_\BD=\sum_h \hat\alpha_h u_h$ of \refBD{} with $\hat y^p_\BD\ge -1$.
The next example illustrates the point.

\begin{example}[Base solution of \refBD{}]\label{EX:when-passive-needed}
Let $\A = \{-1, 1, a, b\}$, where $b > a > 1$. Let $C = \{q\in\sx{\A}: \E_q X = 0\}$,
so that $u = (-1, 1, a, b)$. Let $\nu = (\nu_{-1}, \nu_1, 0, 0)$; hence $\Aa = \{-1, 1\}$,
$\Ap = \{a, b\}$, and $C^a(0^p)=\{(1,1)/2\}$.
For what values of $\nu_1$ the solution of \refPR{}
assigns zero weights to the passive letters? First, $\hat\alpha_\BD$
solves $\sp{\tilde q_\BD^a}{u^a} = 0$, where $\tilde q_\BD^a = \frac{\nu^a}{1 + \hat\alpha_\BD u^a}$.
This gives $\hat\alpha_\BD = 2\nu_1 - 1$. Thus, $\hat\alpha_\BD < 0$ when $\nu_1 < 1/2$.
Then, in order to have $\hat q^p_\PR = 0^p$,
the condition $\hat y^p_\BD\ge -1$ gives that $\nu_1 \ge \frac{b-1}{2b}$.
In the other case ($\hat\alpha_\BD \ge 0$)
the condition is not binding, hence $\hat q^p_\PR = 0^p$ for $\nu_1 \ge 1/2$.
Taken together, for $\nu_1 < \frac{b-1}{2b}$ it holds that $\tilde q_\BD \neq \hat q_\PR$,
since $\hat q^p_\PR$ has a positive coordinate and $\tilde q_\BD^p = 0^p$.
To give a numeric illustration,
let $a = 2$, $b = 5$, and $\nu_1=3/10 <4/10$.
Then $\hat q_\PR = (14, 9, 0,1)/24$, that is, a positive weight is
assigned to a passive letter.  For $\nu_1 = 9/20$, which is above the threshold,
$\hat q_\PR = (1, 1, 0, 0)/2$.
\end{example}

\smallskip

To sum up, El~Barmi and Dykstra's dual may fail to lead to the solution of
the multinomial likelihood primal \refPR{} in different ways.
For a particular type $\nu$ the feasible set $C$ may be an H-set or a Z-set,
and then the BD-dual fails to attain finite infimum.
Even if this is not the case \refBD{} may fail to provide a solution of \refPR{},
due to the BD-duality gap.
Theorem~\ref{T:BD}(b) states equivalent conditions under which the BD-dual
is in the extremality relation with \refPR{} and leads to a solution of \refPR{}; see also Lemma~\ref{L:BD-duality:dual-to-primal3}.

In the next two sections other possibilities of solving \refPR{} are explored.
First, an active-passive dualization is considered. Then, a perturbed primal problem and the PP algorithm are studied.
Interestingly, a solution of the perturbed primal problem may be obtained from the BD-dual problem.

\section{Active-passive dualization}\label{St:AP}

The \emph{active-passive dualization}
is based on a reformulation of the primal \refPR{}
as a sequence of partial minimizations
\begin{equation*}
    \hat\ell_\PR = \inf_{q^p\in C^p} \inf_{q^a\in C^a(q^p)} \ell(q^a,q^p).
\end{equation*}
Assume that $q^p$ is such that the slice $C^a(q^p)$ has support $\Aa$
(this is not a restriction, since otherwise the inner infimum is $\infty$).
Since $\nu^a > 0$, Corollary~\ref{C:P+F+B:nu-positive} gives that
a solution of the inner (active) primal problem~\refAP{}
\begin{equation}\label{EQ:APinner}
\begin{split}
    \hat{\ell}_\PR(q^p) &\triangleq \inf_{q^a\in C^a(q^p)} \ell(q^a,q^p),
  \\
    \SOL_{\PR}(q^p) &\triangleq \{\hat{q}^a\in C^a(q^p):\
    \ell(\hat{q}^a, q^p) = \hat\ell_\PR(q^p)\},
\end{split}
\tag{$\APbig$}
\end{equation}
can be obtained from its BD-dual problem \refBDkappa{}
\begin{equation}\label{EQ:BDkappa}
\begin{split}
    \hat{\ell}_\BD(q^p) &\triangleq
      \sup_{y^a\in (C^a(q^p))^\ast} I_\kappa(\nu^a\,\|\,y^a),
  \\
    \SOL_\BD(q^p) &\triangleq \{\hat{y}^a\in (C^a(q^p))^\ast:
      I_\kappa(\nu^a\,\|\,\hat{y}^a) = \hat\ell_\BD(q^p)\},
\end{split}
\tag{$\BDkappabig$}
\end{equation}
where $\kappa=\kappa(q^p) \triangleq 1/(1-\sum q^p)$.

\begin{theorem}[Relation between \refBDkappa{} and \refAP{}]\label{T:AP}
Let $q^p\in C^p$ be such that the support of $C^a(q^p)$ is $\Aa$.
Then there is a unique solution $\hat y^a(q^p)$ of $\BDkappabig$,
and
\begin{equation}\label{EQ:BD-duality-active}
  \hat{q}^a(q^p) \triangleq \frac{\nu^a}{\kappa(q^p) + \hat{y}^a(q^p)}
\end{equation}
is the unique member of $\SOL_\PR(q^p)$.
Moreover, ${\hat q^a(q^p)}\bot \,{\hat y^a(q^p)}$ and $\hat\ell_\BD(q^p)=-\hat\ell_\PR(q^p)$.
\end{theorem}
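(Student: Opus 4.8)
The statement is essentially a specialization of Corollary~\ref{C:P+F+B:nu-positive} to the inner (active) problem \refAP{}, so the plan is to verify that the hypotheses of that corollary are met once we pass to the reduced alphabet $\Aa$ and rescale the simplex. First I would observe that, for a fixed feasible $q^p\in C^p$ whose slice $C^a(q^p)$ has support $\Aa$, minimizing $\ell(q^a,q^p)$ over $q^a\in C^a(q^p)$ is --- after the affine substitution $q^a = (1-\sum q^p)\,\tilde q^a$ with $\tilde q^a\in\sx{\Aa}$ --- a multinomial-likelihood primal problem on the alphabet $\Aa$ with type $\nu^a/\sum\nu^a=\nu^a$ (recall $\sum\nu^a=1$) and feasible set $\tilde C \triangleq \{\tilde q^a\in\sx{\Aa}: (1-\sum q^p)\tilde q^a \in C^a(q^p)\}$, which is convex, closed, and has support $\Aa$ by assumption. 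Crucially, on this reduced problem the type is strictly positive, $\nu^a>0$, so there are no passive letters.

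Next I would invoke Corollary~\ref{C:P+F+B:nu-positive} (equivalently Theorems~\ref{T:P}, \ref{T:F}, \ref{T:BD} in the $\nu>0$ regime) for this reduced problem: it yields a unique primal solution, a unique BD-dual solution $\hat y^a$, the no-gap identity, the formula relating them through division by $\kappa + \hat y^a$ with $\kappa = 1/(1-\sum q^p)$ absorbing the rescaling factor, and the orthogonality $\hat q^a(q^p)\bot\hat y^a(q^p)$. Here I would be careful to track how the scaling constant $1-\sum q^p$ propagates: the conjugate/Lagrange computations of Section~\ref{St:F} with the simplex constraint $\sum x = 1-\sum q^p$ (rather than $=1$) produce the shifted parameter $\kappa$ in place of $1$, which is exactly why the divisor in~(\ref{EQ:BD-duality-active}) is $\kappa(q^p)+\hat y^a(q^p)$ and why the relevant divergence functional is $I_\kappa$ rather than $I_1$. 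The polar cone $(C^a(q^p))^\ast$ and the polar of $\tilde C$ differ only by the positive scaling, so the feasible set of \refBDkappa{} matches that of the BD-dual of the reduced problem.

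Finally I would assemble the pieces: uniqueness of $\hat y^a(q^p)$ and of the member of $\SOL_\PR(q^p)$, the extremality/orthogonality relation, and $\hat\ell_\BD(q^p) = -\hat\ell_\PR(q^p)$ all transfer back verbatim (the objective values are unchanged under the affine reparametrization up to the additive constant $-\log(1-\sum q^p)\cdot\sum\nu^a = -\log(1-\sum q^p)$, which cancels consistently on both sides of the duality identity, or can be folded into the definition of $I_\kappa$). The main obstacle --- really the only non-bookkeeping point --- is to check carefully that rescaling the simplex by $1-\sum q^p$ sends the convex-conjugate computation of Theorem~\ref{T:cc} to one with multiplier-variable shifted by $\kappa$, i.e.\ that $I_\kappa$ is indeed the correct conjugate-type functional on the rescaled simplex; everything else is a direct application of the $\nu>0$ case already established.
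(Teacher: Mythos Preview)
Your proposal is correct and follows essentially the same approach as the paper: rescale the slice $C^a(q^p)$ by $\kappa=1/(1-\sum q^p)$ to obtain a convex closed subset $\tilde C\subseteq\sx{\Aa}$ with full support, apply Corollary~\ref{C:P+F+B:nu-positive} to the resulting standard primal/BD-dual pair on $\Aa$ with strictly positive type $\nu^a$, and then unscale, observing that the polar cone is invariant under positive scaling and that $I_\kappa(\nu^a\,\|\,y^a)=I_1(\nu^a\,\|\,\kappa^{-1}y^a)-\log\kappa$, so $\SOL_\BD(q^p)=\kappa\SOL_{\tilde\BD}$ and the divisor $1$ becomes $\kappa$ in~(\ref{EQ:BD-duality-active}). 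The only bookkeeping you flag as the ``main obstacle'' is exactly what the paper verifies in one line via the identity $I_\kappa(\nu^a\,\|\,y^a)=\bdtil(\kappa^{-1}y^a)-\log\kappa$.
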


Thanks to (\ref{EQ:BD-duality-active}) and the extremality relation between \refAP{} and \refBDkappa,
the \emph{active-passive (AP) dual form} of the active-passive primal is
$$
  \adjustlimits\sup_{q^p\in C^p}\sup_{y^a\in (C^a(q^p))^\ast} \, I_\kappa(\nu^a\,\|\,{y}^a).
$$
The active-passive dualization is illustrated by the following example.

\begin{example}[continues=E:Z]
Here $C^p = [0,1/2]$ and the AP dual can be written in the form
$$
  \hat q^p_1 = \argmax_{q^p_1\in C^p}\,
     \sup_{\alpha\in\RRR}\, I_{\kappa(q^p_1)}(\nu^a\,\|\,\alpha v^a),
$$
where $v^a = u^a + (\kappa(q_1^p)\,q^p_1 u^p_1)  1^a$.
The inner optimization gives
$$
  \hat\alpha(q^p_1) = \frac{1-3q^p_1}{2q^p_1(2q^p_1-1)}.
$$
This $\hat\alpha(q^p_1)$, plugged into
$I_{\kappa(q^p_1)}(\nu^a\,\|\,\alpha v^a)$, yields
$$
  (1/2)\log\left[1 + \hat\alpha(q^p_1)(2q^p_1 - 1)\right]
  \ + \
  (1/2)\log\left[1 + \hat\alpha(q^p_1) q^p_1 \right]
  \ - \
  \log[1-q^p_1],
$$
which has to be maximized over $q^p_1\in C^p = [0,1/2]$.
The maximum is attained at $\hat q^p_1 = 1/4$.
Thus $\hat\alpha(\hat q^p_1) = -1$, $\kappa(\hat q^p_1) = 4/3$, $v^a_{-1} = -2/3$,
and $v^a_0 = 1/3$;
since $\hat q^a(\hat q^p_1) = \nu^a/(\kappa(\hat q^p_1) + \hat\alpha(\hat q^p_1) v^a)$
by~(\ref{EQ:BD-duality-active}), $\hat q^a_{-1} = 1/4$
and $\hat q^a_0 = 1/2$. Hence, $\hat q = (1, 2, 1)/4$.
\end{example}

In the outer, passive optimization,
it is possible to exploit the structure of $\SOL_\PR$
(cf.~Theorem~\ref{T:P}), and this way reduce the dimension of the problem.
This is the case, for instance, when $C$ is polyhedral.

\section{Perturbed primal \refPP{} and PP algorithm}\label{St:PP}
For $\delta > 0$ let $\nu(\delta)\in\sx{\A}$ be a perturbation of the type $\nu$; we assume that
\begin{equation}\label{EQ:PP-nu(delta)}
    \nu(\delta) >0
    \qquad\text{and}\qquad
    \lim_{\delta\searrow 0} \nu(\delta) = \nu.
\end{equation}
The perturbation activates passive, unobserved letters.
For every $\delta>0$ consider the \emph{perturbed primal problem} \refPP{}
\begin{equation}\label{EQ:PP}
\begin{split}
    \hat\ell_{\PR}(\delta)
    &\triangleq
    \inf_{q\in C} \ell_{\delta}(q)
    = \inf_{q\in C}  -\sp{\nu(\delta)}{\log q},
\\
    \SOL_{\PR}(\delta)
    &\triangleq
    \{\hat{q}\in C:\ \ell_{\delta}(\hat q)=\hat\ell_\PR(\delta)\},
\end{split}
    \tag{$\PPbig$}
\end{equation}
where $\ell_\delta\triangleq \ell_{\nu(\delta)}$.

Since the activated type $\nu(\delta)$ has no passive coordinate,
the perturbed primal problem \refPP{}
can be solved, for instance, via the BD-dualization; recall Corollary~\ref{C:P+F+B:nu-positive}.
Thus, for every $\delta>0$,
\begin{equation}\label{EQ:PP-solutions}
    \SOL_{\PR}(\delta) = \{\hat{q}_\PR(\delta)\},
    \quad
    \SOL_{\F}(\delta)=\SOL_{\BD}(\delta) = \{\hat{y}_\BD(\delta)\},
    \quad
    \hat{q}_\PR(\delta) = \frac{\nu(\delta)}{1+\hat{y}_\BD(\delta)} \,.
\end{equation}

How is $\hat{q}_\PR(\delta)$ related to $\SOL_\PR$, and $\hat\ell_\PR(\delta)$ to $\hat\ell_\PR$?
Theorem~\ref{T:PP} asserts that $\ell_{\delta}^C$
epi-converges to $\ell_\nu^C$ when $\delta\searrow 0$.
(There, for a map $f:\RRR^m\to\RRRex$ and a set $C\subseteq\RRR^m$, the map $f^C:\RRR^m\to\RRRex$
is given by $f^C(x)=f(x)$ if $x\in C$, and $f^C(x)=\infty$ if $x\not\in C$.)
The epi-convergence (cf.~Rockafellar and Wets~\cite[Chap.~7]{rockafellar2009variational})
is used in convex analysis to study a limiting behavior of perturbed optimization problems.
It is an important modification of the uniform convergence (cf.~Kall~\cite{Kall},
Wets~\cite{Wets}).

\begin{theorem}[Epi-convergence of \refPP{} to \refPR{}]\label{T:PP}
 Assume that $C$ is a convex closed subset of $\sx{\A}$ with support $\A$, 
 $\nu\in\sx{\A}$, and $(\nu(\delta))_{\delta>0}$
 is such that (\ref{EQ:PP-nu(delta)}) is true.
 Then
 \begin{equation*}
    \ell_{\delta}^C
    \quad\text{epi-converges to}\quad
    \ell^C_{\nu}
    \qquad\text{for}\quad
    \delta\searrow 0.
 \end{equation*}
 Consequently,
 \begin{equation*}
    \lim_{\delta\searrow 0} \inf_{\hat q_\PR\in\SOL_\PR} d(\hat q_\PR(\delta), \hat q_\PR) = 0
 \end{equation*}
 and the active coordinates of solutions of \refPP{} converge
 to the unique point $\hat q^a_\PR$ of~$\SOL_\PR^a$:
 \begin{equation*}
    \lim_{\delta\searrow 0} \hat{q}^a_\PR(\delta) = \hat{q}^a_\PR \in \SOL_\PR^a.
 \end{equation*}
 Moreover, if $\SOL_\PR$ is a \emph{singleton} (particularly, if $\nu>0$)
 then also the passive coordinates converge
 and
 \begin{equation*}
    \lim_{\delta\searrow 0} \hat{q}_\PR(\delta) \in \SOL_\PR.
 \end{equation*}
\end{theorem}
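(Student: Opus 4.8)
\textbf{Proof plan for Theorem~\ref{T:PP}.}
The plan is to establish epi-convergence via the two defining conditions (cf.~Rockafellar and Wets~\cite[Chap.~7]{rockafellar2009variational}): the \emph{liminf inequality} $\liminf_{\delta\searrow 0}\ell_\delta^C(q_\delta)\ge \ell_\nu^C(q)$ for every $q_\delta\to q$, and the existence of a \emph{recovery sequence} $q_\delta\to q$ with $\limsup_{\delta\searrow 0}\ell_\delta^C(q_\delta)\le \ell_\nu^C(q)$. Since $C$ is closed, $q_\delta\in C$ forces $q\in C$, so off $C$ both sides of both inequalities are $+\infty$ and there is nothing to prove; the whole argument takes place on $C$, where $\ell_\delta^C=\ell_{\nu(\delta)}$. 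For the recovery sequence one may simply take the constant sequence $q_\delta\equiv q$: then $\ell_\delta(q)=-\sp{\nu(\delta)}{\log q}\to -\sp{\nu}{\log q}=\ell_\nu(q)$, because each coordinate $-\nu(\delta)_i\log q_i$ converges to $-\nu_i\log q_i$ (using the convention $0\cdot(-\infty)=0$: if $q_i=0$ and $\nu_i=0$ the term is $0$ for $\delta$ small since $\nu(\delta)_i\to 0$, wait---no, $\nu(\delta)_i>0$, so the term is $+\infty$). This is exactly where care is needed: if $q$ has a zero coordinate at a passive letter, then $\ell_\delta(q)=+\infty$ for every $\delta>0$, yet $\ell_\nu(q)$ may be finite, so the constant sequence fails as a recovery sequence. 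The fix is to perturb $q$ itself: choose a reference point $q^\circ\in C$ with $q^\circ>0$ (available by the support assumption) and set $q_\delta\triangleq(1-t_\delta)q+t_\delta q^\circ$ for a suitable $t_\delta\searrow 0$; then $q_\delta\in C$ by convexity, $q_\delta\to q$, and a Dini/monotone-convergence type estimate shows $\ell_\delta(q_\delta)\to\ell_\nu(q)$ provided $t_\delta\to 0$ slowly enough relative to $\delta$ (concretely, $t_\delta\cdot\sum_i|\log q^\circ_i|$ and the comparison of $\nu(\delta)$ with $\nu$ must both be controlled; a diagonal choice of $t_\delta$ works).

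For the liminf inequality, fix $q_\delta\in C$ with $q_\delta\to q\in C$. Write $\ell_\delta(q_\delta)=-\sp{\nu(\delta)}{\log q_\delta}=\sum_i \nu(\delta)_i\,(-\log q_{\delta,i})$. Split the sum into active and passive letters. On the active letters, $\nu(\delta)_i\to\nu_i>0$, and $-\log$ is lower semicontinuous and bounded below on $[0,1]$ by $0$, so $\liminf \nu(\delta)_i(-\log q_{\delta,i})\ge \nu_i(-\log q_i)$ by lsc of $-\log$ together with $\nu(\delta)_i\to\nu_i$. On the passive letters, $\nu_i=0$ and each term $\nu(\delta)_i(-\log q_{\delta,i})\ge 0$ since $\nu(\delta)_i>0$ and $q_{\delta,i}\le 1$; hence the passive contribution is nonnegative and can be dropped in the liminf. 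Summing gives $\liminf_{\delta\searrow 0}\ell_\delta(q_\delta)\ge \sum_{i\in\Aa}\nu_i(-\log q_i)=-\sp{\nu^a}{\log q^a}=\ell_\nu(q)$, as required. (The only subtlety is the uniform lower bound $-\log q_{\delta,i}\ge 0$, which holds because $q_\delta\in\sx\A\subseteq[0,1]^m$; this is what makes the passive terms harmless.)

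Having established epi-convergence, the consequences follow from standard epi-convergence machinery. The convergence $\hat\ell_\PR(\delta)\to\hat\ell_\PR$ and the convergence of minimizers to the minimizer set---i.e.~$\lim_{\delta\searrow 0}\inf_{\hat q_\PR\in\SOL_\PR} d(\hat q_\PR(\delta),\hat q_\PR)=0$---use that the epi-limit $\ell_\nu^C$ has a nonempty compact minimizer set (Theorem~\ref{T:P}) together with the fact that the functions $\ell_\delta^C$ are eventually uniformly level-bounded on the compact simplex, so~\cite[Thm.~7.33]{rockafellar2009variational} (convergence of infima and of optimal solutions under epi-convergence plus a mild tightness/level-boundedness condition) applies; here tightness is automatic because everything lives in the compact set $\sx\A$. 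Finally, since $\SOL_\PR^a=\{\hat q^a_\PR\}$ is a single point (Theorem~\ref{T:P}), the convergence of $\hat q_\PR(\delta)$ to the set $\SOL_\PR$ in the product metric forces $\hat q^a_\PR(\delta)\to\hat q^a_\PR$; and if in addition $\SOL_\PR$ itself is a singleton, set convergence to a singleton is ordinary convergence, giving $\lim_{\delta\searrow 0}\hat q_\PR(\delta)\in\SOL_\PR$.

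\textbf{Main obstacle.} The genuinely delicate point is the recovery sequence when $q$ assigns zero mass to a passive letter that nonetheless carries positive mass in some minimizer (the phenomenon highlighted by Examples~\ref{EX:nonunique-min} and~\ref{EX:positive-passive}): the naive constant sequence gives $\ell_\delta(q)=+\infty$, and one must interpolate toward an interior point of $C$ at a rate tuned to the perturbation $\nu(\delta)$. Making the rate $t_\delta$ explicit---so that $t_\delta\to 0$ yet $-t_\delta\log q^\circ_i$ and $(\nu(\delta)_i-\nu_i)(-\log q_{\delta,i})$ both vanish simultaneously---via a diagonal argument is the technical heart of the proof; everything else is soft.
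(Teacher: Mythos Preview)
Your proof is correct and takes a genuinely different route from the paper's. You use the sequential (liminf/limsup) characterization of epi-convergence: the liminf inequality follows by splitting into active and passive letters and discarding the nonnegative passive contribution, while the limsup inequality is obtained by interpolating $q_\delta=(1-t_\delta)q+t_\delta q^\circ$ toward an interior point $q^\circ>0$ of $C$, with $t_\delta\searrow 0$ chosen by a diagonal argument so that $\nu(\delta)_i\,(-\log t_\delta)\to 0$ for every passive~$i$. The paper instead works with the neighborhood characterization $\elim_n g_n(x)=\lim_{\eps\to 0}\lim_n\inf_{B(x,\eps)} g_n$ (Rockafellar--Wets, Ex.~7.3) and, for the delicate case $x\in C$, $x^a>0$, $x^p_i=0$, relies on two auxiliary lemmas: a uniform bound $\hat q_\PR(\nu)\ge\beta\nu$ on minimizers (Lemma~\ref{L:PP-qhat-nu-ineq}) and joint continuity of $(\nu,q)\mapsto\ell_\nu(q)$ on the region $\{q\ge\beta\nu\}$ (Lemma~\ref{L:PP-continuity-of-F(nu,q)}), which together show that the infimum over $C\cap B(x,\eps)$ is attained at points where the limit can be taken safely. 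Your approach is more elementary and sidesteps these lemmas entirely; the paper's route, on the other hand, produces the $\beta$-bound as a byproduct, which is independent information about how perturbed minimizers stay away from the boundary relative to $\nu(\delta)$. For the consequences, both proofs invoke the same epi-convergence machinery (you cite \cite[Thm.~7.33]{rockafellar2009variational}; the paper cites the equivalent result), and level-boundedness is automatic since all functions are $+\infty$ off the compact set~$C$.
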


Thus, if $\delta$ is small enough, the (unique) solution $\hat{q}_\PR(\delta)$
of the perturbed primal~\refPP{} is close to a solution of the primal problem \refPR{}.
Theorem~\ref{T:PP} also states that in the active coordinates the convergence
is pointwise, to the unique $\hat{q}^a_\PR$ of $\SOL_\PR^a$.
The next theorem demonstrates that if $C$ is given by linear constraints and $\nu(\delta)$ is defined in a `uniform' way
in $\delta$, also the passive coordinates of $\hat{q}_\PR(\delta)$ converge pointwise.

\begin{theorem}[Convergence of \refPP{} to \refPR{}, linear $C$]\label{T:PP-linear}
    Let $C$ be given by (\ref{EQ:C-linear}),
    $\nu\in\sx{\A}$, and $(\nu(\delta))_{\delta>0}$
    be such that (\ref{EQ:PP-nu(delta)}) is true.
    Further, assume that $\nu(\cdot)$ is continuously differentiable
    and that there is a constant $c>0$ such that, for every $i\in\Ap$,
    \begin{equation}\label{EQ:PP-nu-bounded-differ}
      \abs{\w_i'(\delta)}\le c\,\w_i(\delta),
      \qquad\text{where}\quad
      \w_i(\delta) \triangleq \frac{\nu_i(\delta)}{\sum_{j\in\Ap} \nu_j(\delta)}\,.
    \end{equation}
    Then
    \begin{equation*}
        \lim_{\delta\searrow 0} \hat{q}_\PR(\delta)
        \quad\text{exists and belongs to }
        \SOL_\PR.
    \end{equation*}
\end{theorem}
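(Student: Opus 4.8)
The plan is to exploit the explicit formula \eqref{EQ:PP-solutions}, namely $\hat q_\PR(\delta)=\nu(\delta)/(1+\hat y_\BD(\delta))$ where $\hat y_\BD(\delta)\in C^\ast$ can be taken of the base form $\hat y_\BD(\delta)=\sum_h\hat\alpha_h(\delta)u_h$ (here $C$ is linear, so $\hat\alpha\in\RRR^r$). By Theorem~\ref{T:PP} the active coordinates already converge, $\hat q^a_\PR(\delta)\to\hat q^a_\PR$, so the whole task is to show that the passive coordinates $\hat q^p_\PR(\delta)$ converge to a point $\hat q^p_\PR$ with $(\hat q^a_\PR,\hat q^p_\PR)\in C$; compactness of $\SOL_\PR$ then forces the limit into $\SOL_\PR$. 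First I would reduce to the case $\sum\hat q^a_\PR<1$ (case (b) of Theorem~\ref{T:F}); if $\sum\hat q^a_\PR=1$ then $\SOL_\PR$ is a singleton and Theorem~\ref{T:PP} already gives the conclusion. In case (b), on the passive block we have $\hat q^p_\PR(\delta)=\nu^p(\delta)/(1+\hat y^p_\BD(\delta))$, and the total passive mass is $1-\sum\hat q^a_\PR(\delta)\to 1-\sum\hat q^a_\PR>0$; so it is the \emph{shape} of $\hat q^p_\PR(\delta)$ — equivalently the vector $\w(\delta)$ of normalized passive frequencies from \eqref{EQ:PP-nu-bounded-differ} together with the multipliers $\hat\alpha(\delta)$ — that must be controlled.

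The key step is a bounded-variation / Grönwall argument using hypothesis \eqref{EQ:PP-nu-bounded-differ}. From $\abs{\w_i'(\delta)}\le c\,\w_i(\delta)$ one gets, for $0<\delta_1<\delta_2$, the two-sided comparison $e^{-c(\delta_2-\delta_1)}\le \w_i(\delta_2)/\w_i(\delta_1)\le e^{c(\delta_2-\delta_1)}$; hence each $\w_i(\delta)$ is monotone-comparable and in particular $\lim_{\delta\searrow0}\w_i(\delta)=:\w_i(0)$ exists and is strictly positive for every $i\in\Ap$, and $\sum_{i\in\Ap}\w_i(0)=1$. Thus the direction of the passive perturbation stabilizes. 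I would then analyse the optimality conditions for $\hat\alpha(\delta)$: $\hat y_\BD(\delta)=\sum_h\hat\alpha_h(\delta)u_h$ is characterised by $\langle \hat q_\PR(\delta),u_h\rangle=0$ for all $h$ together with $\hat\mu=1$, i.e. $\sum\hat q_\PR(\delta)=1$. Using $\hat q^a_\PR(\delta)\to\hat q^a_\PR$, $\hat q^a_\PR>0$, one shows $\hat\alpha(\delta)$ stays bounded (the constraints, linear independence of the relevant $u^a_h$ after quotienting by the lineality, and strict positivity of $\hat q^a_\PR$ keep $1+\hat y^a_\BD(\delta)$ bounded away from $0$ and $\infty$), extracts a convergent subsequence $\hat\alpha(\delta_k)\to\hat\alpha^\star$, and passes to the limit in the defining equations — the limit point $\hat q^\star=(\hat q^a_\PR,\ \w(0)\,(1-\sum\hat q^a_\PR)\text{-type expression})$ lies in $C$ and its active block is the correct $\hat q^a_\PR$, so $\hat q^\star\in\SOL_\PR$. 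Finally, to upgrade subsequential convergence to genuine convergence, one shows the limit is unique: any subsequential limit must solve the same limiting system $\langle\hat q^\star,u_h\rangle=0$, $\hat q^{\star a}=\hat q^a_\PR$, with passive shape proportional to the fixed vector $\w(0)$; this pins down $\hat q^{\star p}$ uniquely (the total passive mass is fixed and the direction is fixed), hence $\lim_{\delta\searrow0}\hat q_\PR(\delta)$ exists and lies in $\SOL_\PR$.

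The main obstacle I expect is the boundedness of the multipliers $\hat\alpha(\delta)$ and the non-degeneracy of the limiting system when $C$ has a nontrivial lineality space or the $u^p_h$ are linearly dependent: a priori $\norm{\hat y^p_\BD(\delta)}$ could blow up while $\hat q^p_\PR(\delta)$ remains bounded (since $\hat q^p_\PR(\delta)$ only sees $1+\hat y^p_\BD(\delta)$ in a denominator). To handle this cleanly I would work not with $\hat\alpha(\delta)$ directly but with the vectors $z^a(\delta):=\hat y^a_\BD(\delta)$ and $w^p(\delta):=1/(1+\hat y^p_\BD(\delta))=\hat q^p_\PR(\delta)/\nu^p(\delta)$; the first is controlled by Theorem~\ref{T:PP} and the active optimality, and the second is exactly what \eqref{EQ:PP-nu-bounded-differ} is designed to control via $\w(\delta)$. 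Reformulating the passive constraints $\langle\hat q_\PR(\delta),u^p_h\rangle=-\langle\hat q^a_\PR(\delta),u^a_h\rangle$ as linear equations in the bounded, stabilising quantities $\hat q^p_\PR(\delta)$ (total mass and direction both converging) sidesteps any multiplier blow-up and makes the limit passage routine. One should also double-check the borderline possibility that some coordinate of $1+\hat y^p_\BD(\delta)$ tends to $0$, i.e. $\hat q^p_\PR(\delta)$ concentrates; here \eqref{EQ:PP-nu-bounded-differ} forbids the passive shape from degenerating, and $\min(\hat y^p_\BD(\delta))\ge-1$ (feasibility in $C^\ast$ together with $\hat\mu=1$) keeps the denominators nonnegative, so no coordinate escapes to $+\infty$ either.
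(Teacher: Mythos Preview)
Your reduction to the passive block and the observation that \eqref{EQ:PP-nu-bounded-differ} makes $\log\w_i$ Lipschitz on $(0,1)$, so that $\w_i(0):=\lim_{\delta\searrow 0}\w_i(\delta)$ exists and is strictly positive, are correct. The gap is the uniqueness step. Your assertion that any subsequential limit $\hat q^{\star p}$ has ``passive shape proportional to $\w(0)$'' is false: for each $\delta>0$, $\hat q^p_\PR(\delta)$ is the minimizer of $-\sum_{i\in\Ap}\w_i(\delta)\log q^p_i$ over the moving slice $C^p(\hat q^a_\PR(\delta))$, and a constrained minimizer of this type is in general \emph{not} proportional to the weight vector. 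Concretely, take $\A=\{1,2,3,4\}$, $\nu=(1,0,0,0)$, $C=\{q\in\sx\A:q_1-q_2-q_3+q_4=0\}$, and $\nu(\delta)=(1-3\delta,\delta,\delta,\delta)$; then $\w(\delta)\equiv(1/3,1/3,1/3)$, $\hat q^a_\PR=1/2$, but $\hat q^p_\PR(\delta)\to(1/4,1/4,0)$. The obvious repair --- characterise $\hat q^{\star p}$ as the minimizer of $-\sum_i\w_i(0)\log q^p_i$ over the limiting slice $C^p(\hat q^a_\PR)$ --- does not give uniqueness either, because $C^p(\hat q^a_\PR)$ may have support strictly smaller than $\Ap$ (in the example $q^p_4\equiv 0$ there), so the limiting objective is identically $+\infty$ and every point of the slice is a ``minimizer''. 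Nor can you pass to the limit in the dual formula $\hat q^p_\PR(\delta)=\nu^p(\delta)/(1+\hat y^p_\BD(\delta))$: on letters with $\hat q^{\star p}_i>0$ one has $\nu^p_i(\delta)\to 0$ and $1+\hat y^p_{\BD,i}(\delta)\to 0$ simultaneously, so the limit is $0/0$; equivalently, your fallback variables $w^p_i(\delta)=\hat q^p_{\PR,i}(\delta)/\nu^p_i(\delta)$ blow up.

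The paper avoids the degenerate limit by establishing a uniform Lipschitz estimate on the open set where everything is strictly positive. It parametrises the slice as $q^p=A\lambda+Bq^a+c$ (Lemma~\ref{L:PP-lin-Cp(qa)}), so that $\hat q^p_\PR(\delta)=A\varphi(\delta,\hat q^a_\PR(\delta))+B\hat q^a_\PR(\delta)+c$, where $\varphi(z)$ is the interior minimizer of $\psi_z(\lambda)=-\sum_i\w_i(\delta)\log(A\lambda+Bq^a+c)_i$. The implicit function theorem expresses $\nabla\varphi$ via $(A'DA)^{-1}A'D$ and $(A'DA)^{-1}A'E$ for certain positive diagonal matrices $D,E$ (Lemma~\ref{L:PP-lin-phi-der}). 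The crucial, non-obvious ingredient is Proposition~\ref{P:PP-bounded-ADAinvAD}: for any full-rank $A$, $\norm{(A'DA)^{-1}A'D}$ is bounded uniformly over \emph{all} positive diagonal $D$, even as diagonal entries degenerate. Combined with \eqref{EQ:PP-nu-bounded-differ}, which bounds $\norm{D^{-1}E}$, this gives $\sup_Z\norm{\nabla\varphi}<\infty$; hence $\varphi$ is Lipschitz and extends continuously to $\bar Z\ni(0,\hat q^a_\PR)$, delivering $\lim_\delta\hat q^p_\PR(\delta)$ without any non-degeneracy assumption on the limiting slice.
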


Notice that the condition (\ref{EQ:PP-nu-bounded-differ})
is satisfied if $\nu_i(\delta)=\nu_j(\delta)$
for every $i,j\in\Ap$; for example if
\begin{equation}\label{EQ:PP-nu(delta)-linear}
    \nu(\delta) = \frac{1}{1+\delta m_p} (\nu + \delta \xi),
\end{equation}
where $\xi\triangleq (0^a,1^p)$ is the vector
with $\xi_i=1$ if $i\in\Ap$ and $\xi_i=0$ if $i\in\Aa$.
This corresponds to the case when every passive coordinate is `activated' by equal weight.

The following example demonstrates that without the assumption
(\ref{EQ:PP-nu-bounded-differ}) the convergence in passive letters need not occur.

\begin{example}[Divergent $\hat q_\PR(\delta)$]
Consider the setting of Example~\ref{EX:nonunique-min}. That is,
$\A=\{-1,0,1\}$ and
\begin{equation*}
    C=\{q\in\sx{\A}: \sp{q}{u}=0\}
    =\{(a,1/2,1/2-a): a\in[0,1/2]\},
\end{equation*}
where $u=(1,-1,1)$.
For $\nu=(0,1,0)$, $\SOL_\PR=C$. Define the perturbed types $\nu(\delta)$ in such a way that
\begin{equation*}
    \nu(\delta)=\begin{cases}
      (\delta,1-3\delta,2\delta)  &\text{if } \delta\in\{1/(2n): n\in\NNN\},
    \\
      (2\delta,1-3\delta,\delta)  &\text{if } \delta\in\{1/(2n+1): n\in\NNN\},
    \end{cases}
\end{equation*}
and $\nu(\cdot)$ is $C^1$ on $(0,1)$. Then, for every $n\in\NNN$,
\begin{equation*}
    \hat q_\PR(1/(2n)) = (1/6,1/2,1/3)
    \qquad\text{and}\qquad
    \hat q_\PR(1/(2n+1)) = (1/3,1/2,1/6);
\end{equation*}
so the limit $\lim \hat q_\PR(\delta)$ does not exist. Note that in this case
the condition~(\ref{EQ:PP-nu-bounded-differ}) is violated. Indeed, $\w_1(1/(2n)) = 2/3$ and
$\w_1(1/(2n+1)) = 1/3$ for every $n$; hence, by the mean value theorem, for every $n$ there is
$\zeta_n\in(1/(2n+1),1/(2n))$ such that $\w_1'(\zeta_n)=2n(n+1)/3$. Since $0<\w_1(\zeta_n)\le 1$,
$\lim_n {\w_1'(\zeta_n)/\w_1(\zeta_n)}=\infty$.
\end{example}

In Examples~\ref{E:E}, \ref{E:Z}, \ref{E:gap},
with $\nu(\delta)$ given by (\ref{EQ:PP-nu(delta)-linear}),
the pointwise convergence can be demonstrated analytically.

\begin{example}[continues=E:E]
Here $\hat y_\BD(\delta) = \hat\alpha_\BD(\delta) u$,
where $\hat\alpha_\BD(\delta) = \frac{\delta  -1}{\delta + 1}$.
So, $\lim \hat{\alpha}_\BD(\delta) = -1$ and
$\lim \hat q_\PR(\delta) = (1, 0, 1)/2 \in \SOL_\PR$.
\end{example}

\begin{example}[continues=E:Z]
First, $\hat y_\BD(\delta) = \hat\alpha_\BD(\delta) u$ and
$$
 \hat\alpha_\BD(\delta) = \argmax_{\alpha\in\RRR}
 \left[
  \nu_{-1}(\delta)\log(1-\alpha)
  + \nu_0(\delta)\log 1 + \nu_1(\delta)\log(1+\alpha)
 \right]
  ;
$$
this leads to
$\hat\alpha_\BD(\delta) =  \frac{2\delta-1}{2\delta+1}$.
Thus, $\lim \hat{\alpha}_\BD(\delta) = -1$ and, since
$\hat q_\PR(\delta) = \frac{\nu(\delta)}{1 + \hat\alpha_\BD(\delta) u}$,
$\lim \hat q_\PR(\delta) = (1, 2, 1)/4 \in \SOL_\PR$.
\end{example}

\begin{example}[continues=E:gap]
Here $\hat y_\BD(\delta) = \hat\alpha_\BD(\delta) u$ with
$\hat\alpha_\BD(\delta) = -\frac{3 - \sqrt{1 + 392\delta + 400\delta^2}}{20(1 + \delta)}$.
Thus, $\lim \hat\alpha_\BD(\delta) = -1/10$, and
$\lim \hat q_\BD(\delta) = (54, 44, 1)/99\in \SOL_\PR$.
\end{example}

The next example provides a numeric illustration of the pointwise convergence
of a sequence of perturbed primals  to~\refPR{}.
The perturbed primal solutions are obtained through their BD-duals.
It is worth stressing that \refBD{} to \refPP{} is, for a linear $C$, an unconstrained optimization problem; cf.~Section~\ref{St:Smith}.

\begin{example}[Qin and Lawless \cite{QL}, Ex.~1]\label{E:PP-convergence}
Consider a discrete-case analogue of Example~1 from Qin and Lawless~\cite{QL}.
Let $\A = \{-2,-1, 0, 1, 2\}$ and
$$
 C_\theta = \{q\in\sx{\A}: \mathrm{E}_q (X-\theta) = 0, \mathrm{E}_q (X^2-2\theta^2-1) = 0\},
$$
where $\theta\in\Theta = [-2,2]$.
Then $C_\Theta = \bigcup_{\theta\in\Theta} C_\theta$ is the estimating equations model;
cf.~(\ref{EQ:EE}). Clearly,
$u_{1}(\theta) = (-2-\theta, -1-\theta, -\theta, 1-\theta, 2 -\theta)$
and $u_{2}(\theta) = (3-2\theta^2,-2\theta^2,-1-2\theta^2,-2\theta^2, 3-2\theta^2)$.
Let $\nu = (0,0,7,3,0)/10$.

For a fixed $\theta\in\Theta$ and a perturbed type $\nu(\delta)$,
the BD-dual to the perturbed primal \refPP{} is (equivalent to)
$$
  \hat{\alpha}_\BD(\delta) = \argmin_{\alpha\in\RRR^2} \,
  I_1(\nu^a\,\|\, \sp{\alpha}{u(\theta)})
$$
and the corresponding
$\hat q_\BD(\delta) = \frac{\nu(\delta)}{1 + \sp{\hat\alpha(\delta)}{u(\theta)}}$.
For $\theta = 0$ and $\nu(\delta)$ given by (\ref{EQ:PP-nu(delta)-linear})
with $\delta = 10^{-j}$ ($j = 3, 5, 7, 9)$,
Table~\ref{tab:PP} illustrates the pointwise convergence of $\hat q_\BD(\delta)$ to $\hat q_\PR$,
$-\hat\ell_\BD(\delta)$ to $\hat\ell_\PR$.

The optimal $\hat{\alpha}_\BD(\delta)$'s were computed by \texttt{optim} of \texttt{R}~\cite{R}.
The rightmost  three columns in Table~\ref{tab:PP} state orders of the precision $10^{-\gamma}$ of satisfaction
of the constraints $\E_{\hat q(\delta)} X = 0$, $\E_{\hat q(\delta)} (X^2 - 1) = 0$, and $\sum\hat q(\delta) - 1 = 0$.
The solution of~\refPR{}, obtained by \texttt{solnp} from the \texttt{R}
library \texttt{Rsolnp} (cf.~Ghalanos and Theussl~\cite{Rsolnp}; based on Ye~\cite{Ye}), is
$\hat q_\PR  = (0.1625, 0, 0.525, 0.3, 0.0125)$ and
$\hat\ell_\PR = 0.812242$.

\begin{table}[h!]
\centerline{
\resizebox{\linewidth}{!}{
\begin{tabular}{ |r|c|c|c|c|c|c|c|c|c| }
  \hline\rule{0pt}{1.25em}
  $j$ & \multicolumn{5}{|c|}{$\hat q_\BD(\delta)$} & $-\hat\ell_\BD(\delta)$
      & $\gamma_1$ & $\gamma_2$ & $\gamma_3$\\
  \hline\rule{0pt}{1.05em}
  3 & 0.161439 & 0.001013 &  0.528326 & 0.294553 & 0.014669 & 0.823788 & 7 & 7 & 8\\
  5 & 0.162488 & 0.000010 &  0.525041 & 0.299936 & 0.012525 & 0.812404 & 7 & 7 & 6\\
  7 & 0.162501 & 1e-7 &      0.525000 & 0.299999 & 0.012500 & 0.812242 & 6 & 6 & 6\\
  9 & 0.162501 & 1e-8 &      0.525000 & 0.300000 & 0.012502 & 0.812242 & 5 & 5 & 6\\
  \hline
\end{tabular}
}}
\caption{The pointwise convergence of \refPP{} to \refPR{}.}
\label{tab:PP}
\end{table}

For $j > 9$
the numerical effects become noticeable.
For instance, for $j = 20$, the precision of the constraints satisfaction is of the order $10^{-1}$.

As an aside, note that for this type $\nu$ the BD-dual to the original,
unperturbed primal \refPR{} breaks down, since $C_\theta$ is an H-set.
In fact, it is an H-set with respect to this $\nu$ for any $\theta\in\Theta$;
cf.~the empty set problem in Grend\'ar and Judge~\cite{ESP}.
\end{example}

The convergence theorems suggest that the practice
of replacing the zero counts by an `ad hoc' value can be
superseded by the \emph{PP algorithm}; i.e., by a sequence of the perturbed primal problems,
for $\nu(\delta) > 0$ such that $\lim_{\delta\searrow 0} \nu(\delta) = \nu$.
Since each $\nu(\delta) > 0$, the  PP algorithm
can be implemented through the BD-dual to~\refPP{}, by the Fisher scoring algorithm, or
by the Gokhale algorithm \cite{Gokhale}, among other methods.

\section{Implications for empirical likelihood}\label{St:EL}

Let us point out some of the consequences of the presented results
for the empirical likelihood (EL) method; cf.~Owen~\cite{Owen1988}. In most settings, including the discrete one,
empirical likelihood is `a multinomial likelihood on the sample',~Owen~\cite[p.~15]{Owen}.
It is usually applied to an \emph{empirical estimating equations} model,
which is in the discrete setting
defined as $C_{\Theta,\nu^a} \triangleq \bigcup_{\theta\in\Theta} C_{\theta,\nu^a}$,
where
$$
 C_{\theta,\nu^a} \triangleq \{p\in\sx{\Aa}: \sp{p}{u_h^a(\theta)} = 0
 \text{ for } h = 1, 2, \dots, r\}
$$
and $u_h^a:\Theta\to\RRR^{m_a}$ are the \emph{empirical estimating functions}.
The empirical likelihood estimator is defined through
\begin{equation}\label{EQ:MEL}
  \inf_{\theta\in\Theta}\inf_{p \in C_{\theta,\nu^a}} \,-\sp{\nu^a}{\log p}.
\end{equation}
For a fixed $\theta\in\Theta$  the data-supported feasible set
$C_{\theta,\nu^a}$ is a convex set and the inner optimization in (\ref{EQ:MEL})
is the \emph{empirical likelihood inner problem}
\begin{equation}\label{EQ:EL}
  \hat\ell_\EL \triangleq \inf_{p\in C_{\theta,\nu^a}} \ell(p),
  \qquad
  \SOL_\EL \triangleq \{\hat p_\EL \in C_{\theta,\nu^a}: \ell(\hat p_\EL) = \hat\ell_\EL\}.
  \tag{$\ELbig$}
\end{equation}
Since $C_{\theta,\nu^a}$ is just the $0^p$-slice $C^a_\theta(0^p)$
of $C_\theta$ (given by~(\ref{EQ:EE})), the EL inner problem~\refEL{} can equivalently be expressed as
\begin{equation*}
  \hat\ell_\EL = \inf_{q^a\in C^a_\theta(0^p)} \ell(q^a).
\end{equation*}
Its dual is
\begin{equation}\label{EQ:Fenchel-to-EL}
  \inf_{\alpha\in \RRR^r} I_1\left(\nu^a\,\Bigl\|\, \sum_h \alpha_{h} u_h^a(\theta)\right).
\end{equation}
Note that~(\ref{EQ:Fenchel-to-EL})
is just Smith's simplified Lagrangean (\ref{EQ:Smith}),
that is, the BD-dual~\refBD{} to the multinomial likelihood primal problem \refPR{}, for the linear set $C_\theta$.
This connection implies, through Theorem~\ref{T:BD}, that the maximum of
empirical likelihood does not exist if $C_\theta$ is
either an H-set or a Z-set with respect to $\nu$.
The two possibilities are recognized in the literature on EL, where an H-set
is referred to as the \emph{convex hull condition} (cf.~Owen~\cite[Sect.~10.4]{Owen}),
and a Z-set is known as the \emph{zero likelihood problem} (cf.~Bergsma et al.~\cite{Bergsma}).
Theorem~\ref{T:BD} also implies that these are the only ways the EL inner problem may fail to have a solution.
Note that the inner empirical likelihood problem may fail to have a solution for any $\theta\in\Theta$;
cf.~the \emph{empty set problem}, Grend\'ar and Judge~\cite{ESP}.

In addition, Theorem~\ref{T:BD} implies that, besides failing to exist,
the empirical likelihood inner problem may have different solution than the
multinomial likelihood primal problem. If $C_\theta$ is
neither an H-set nor a Z-set then, by Theorem~\ref{T:BD}(b),  it is possible that
\begin{enumerate}
\item either $\hat\ell_\PR = -\hat\ell_\BD$ (no BD-duality gap), or
\item $\hat\ell_\PR < -\hat\ell_\BD$ (BD-duality gap).
\end{enumerate}
Since $\hat\ell_\BD = -\hat\ell_\EL$,
in the latter case $\hat\ell_\PR < \hat\ell_\EL$ and $\SOL_\PR \neq \SOL_\EL$.
This happens when any of the conditions (i)--(iv) from Theorem~\ref{T:BD}(b) is not satisfied.
Then the distribution that maximizes empirical likelihood differs from
the distribution that maximizes multinomial likelihood. Moreover,
the multinomial likelihood ratio may lead to different inferential and
evidential conclusions than the empirical likelihood ratio. The next example illustrates the points.

\begin{example}[LR vs. ELR]\label{E:LR-ELR}
Let $\A = \{-2,-1, 0, 1, 2\}$, $\Theta=\{\theta_1, \theta_2\}$ with $\theta_1 = 1.01$ and $\theta_2 = 1.05$. Let
$C_{\theta_j} = \{q\in\sx{\A}: \mathrm{E}_q (X^2)  = \theta_j\}$.
Clearly, $u({\theta_j}) = (4 - \theta_j, 1 - \theta_j, -\theta_j, 1 - \theta_j, 4 - \theta_j)$
for $j=1,2$. Let $\nu = (6, 3, 0, 0, 1)/10$.

The solution of \refPR{} is
\begin{itemize}
\item $\hat q_\PR(\theta_1) = (0.1515, 0.3030, 0.52025, 0, 0.02525)$, for $\theta_1$,
\item $\hat q_\PR(\theta_2) = (0.1575,  0.3150, 0.50125,  0, 0.02625)$, for $\theta_2$.
\end{itemize}
As the two solutions are very close, the multinomial likelihood ratio is
\begin{itemize}
  \item $\mathrm{LR}_{21} = \exp\left(n[\ell(\hat q_\PR(\theta_1)) - \ell(\hat q_\PR(\theta_2))]\right) = 1.4746$,
\end{itemize}
which indicates {inconclusive} evidence.

However, the empirical likelihood ratio leads to a very different conclusion.
Note that the active letters are $\Aa = \{-2, -1, 2\}$, and $C_\theta$ is neither an
H-set nor a Z-set with respect to the observed type $\nu$, for the considered $\theta_j$ ($j=1,2$).
Hence for both $\theta$'s the solution of \refEL{} exists
and it is
\begin{itemize}
  \item $\hat q_{\EL}(\theta_1) = (0.00286, 0.99\bar{6}, 0.00048)$, for $\theta_1$,
  \item $\hat q_\EL(\theta_2) = (0.01429, 0.98\bar{3}, 0.00238)$, for $\theta_2$.
\end{itemize}

The weights given by EL to $-2$ are very different in the two models; the same holds for $2$.
The empirical likelihood ratio is
\begin{itemize}
  \item $\mathrm{ELR}_{21} = \exp\left(n[\ell(\hat q_\EL(\theta_1)) - \ell(\hat q_\EL{}(\theta_2))]\right) = 75031.31$,
\end{itemize}
which indicates {decisive} support for $\theta_2$; cf.~Zhang~\cite{Zhang}.
\end{example}

The BD-duality gap thus implies that in the discrete \emph{iid} setting, when $C$ is linear,
EL-based inferences from finite samples may be grossly misleading.

\subsection{Continuous case and Fisher likelihood}\label{St:ELcontinuous}

As far as the continuous random variables are concerned, due to the finite precision of any measurement
`all actual sample spaces are discrete, and all observable random variables have
discrete distributions', Pitman~\cite[p.~1]{Pitman}.
Already Fisher's original notion of the likelihood~\cite{Fisher}
(see also Lindsey~\cite[p.~75]{Lindsey}) reflects the finiteness of the sample space.
For an \emph{iid} sample $X_1^n\triangleq (X_1, X_2, \dots, X_n)$ and a finite partition
$\mathcal A = \{A_l\}_1^m$ of a sample space $\mathcal X$,
the \emph{Fisher likelihood} $L_\mathcal{A}(q; X_1^n)$ which the data $X_1^n$ provide to a pmf $q\in\sx{\A}$ is
$$
  L_\mathcal{A}(q; X_1^n) \triangleq \prod_{A_l\in\mathcal{A}}  e^{n(A_l)\log q(A_l)},
$$
where $n(A_l)$ is the number of observations in $X_1^n$ that belong to $A_l$.
Thus, this view carries the discordances between the multinomial  and empirical likelihoods
also to the continuous \emph{iid} setting.

\begin{example}[FL with estimating equations]
To give an illustration of the Fisher likelihood as well as yet another example
that $\hat q_\EL$ may be different than $\hat q_\PR$, consider the setting of Example~\ref{E:PP-convergence}
with $\A = \{-4, -3.9, \dots, 3.9, 4\}$ and $\theta\in\Theta = [-4,4]$.
The letters of the alphabet are taken to be the representative points of the partition
$\mathcal A \triangleq \{(-\infty, -3.95)$, $[-3.95,-3.85)$, $\dots$, $[3.85, 3.95)$, $[3.95, \infty)\}$ of $\RRR$.
This way the alphabet captures the finite precision of measurements of a continuous random variable. The type $\nu$ exhibited at the panel c) of~Figure~\ref{Fig:P-EL} is induced by a random sample of size $n=100$
from the $\mathcal A$-quantized standard normal distribution. 
The EL estimate of $\theta$ is $-0.052472$ and the associated EL-maximizing distribution $\hat q_\EL$ is different than the multinomial likelihood maximizing distribution $\hat q_\PR$, which
is associated with the estimated value $0.000015$ and assigns a positive weight also to the passive letters $-4$ and $4$.
\begin{figure}[h!]\label{Fig:P-EL}
	\centering
	\includegraphics[width=\linewidth]{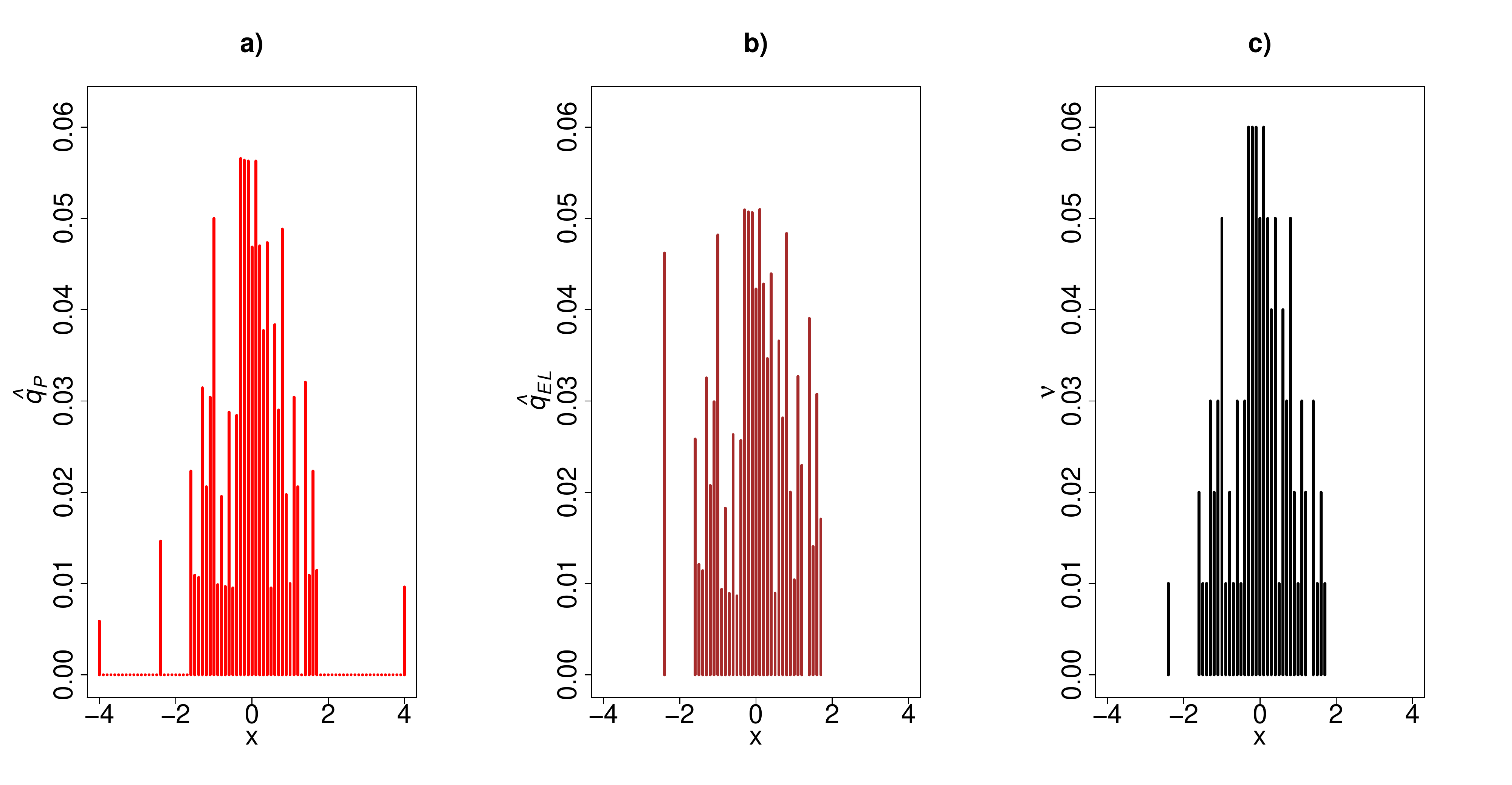}
	\caption{Panel a) the multinomial likelihood maximizing distribution $\hat q_\PR$; panel~b) the empirical likelihood maximizing distribution $\hat q_\EL$; and panel c) the observed type~$\nu$.}
	\label{fig:Fig_MmLred_MELbrown_NUblack__Aug3}
\end{figure}

\end{example}

\section{Implications for other methods}\label{St:other-implications}

Besides the empirical likelihood,
the minimum discrimination information,
compositional data analysis, Lindsay geometry, bootstrap in the presence of auxiliary information,
and Lagrange multiplier test ignore information about
the alphabet, and are restricted to the observed data.
Thus, they are affected by the above findings.

1) In the analysis of contingency tables with given marginals,
the minimum discrimination information (MDI)
method (cf.~Ireland and Kullback~\cite{IrelandKullback})
is more popular than the maximum multinomial likelihood method.
This is because the former is more computationally tractable,
thanks to the generalized iterative scaling algorithm (cf.~Ireland et al.~\cite{IrelandKuKullback}).
MDI minimizes $I_0(q\,\|\,\nu)$
over $q$, so that a solution of the MDI problem must assign a zero mass to a passive, unobserved letter.
Thus, MDI is effectively an empirical method.  This implies that the MDI-minimizing distribution
restricted to a convex closed set~$C$ may not exist;
however, the multinomial likelihood-maximizing distribution always exists
(cf.~Theorem~\ref{T:P}), and may assign a positive mass to an unobserved outcome(s).

\begin{example}[Contingency table with given marginals]
Consider a $3\times 3$ contingency table with given marginals.
Let $\A = \{1,2,3\}\times\{1,2,3\}$, and let the observed bi-variate type $\nu$ have all the mass
concentrated to $(1,1)$; the remaining eight possibilities have got zero counts.
Let the column and raw marginals be $f_c = (1, 2, 7)/10$, $f_r = (5, 4, 1)/10$, respectively.
One of the multinomial likelihood maximizing distributions  $\hat q_\PR$ is displayed in Table~\ref{tab:MDI}.
In the active letter $\hat q^a_\PR = 0.1$ is unique, in the passive letters $\hat q^p_\PR \in C^p(q^a_\PR)$.
The table exhibits the $\hat q^p_\PR$ which can also be obtained by the PP algorithm
with the uniform activation~(\ref{EQ:PP-nu(delta)-linear}).
Note that $C^a(0^p) = \emptyset$, so that the MDI-minimizing distribution does not exist.

\begin{table}[h!]
\centerline{
\begin{tabular}{|ccc|}
  \hline
  \multicolumn{3}{|c|}{$\hat q_\PR$}\\
  \hline\rule{0pt}{1.25em}%
  0.100 & 0.000 & 0.000\\
  0.094 & 0.080 & 0.026\\
  0.306 & 0.320 & 0.074\\
  \hline
\end{tabular}
}
\caption{A solution of \refPR{}.}
\label{tab:MDI}
\end{table}
\end{example}

It is worth stressing that the PP algorithm makes
the multinomial likelihood primal problem~\refPR{} computationally feasible.

\medskip

2) Multinomial likelihood maximization has the same solution regardless of wheth\-er the proportions
$\nu$ or the counts $(n_i)_{i \in \A}$ are used.
Note that the vector $\nu$ of normalized frequencies is an instance of the compositional data.
In the analysis of compositional data, it is assumed that
the compositional data $(x_1, \dots, x_m)$ belong to
$\{(x_1,\dots,x_m): x_1>0, \dots, x_m>0, \sum x_i = 1\}$;
cf.~Aitchison~\cite[Sect.~2.2]{AitchisonCompositional}.
This assumption transforms $\nu\in\sx{\A}$ into $\nu^a\in\sx{\A^a}$.
Consequently, the multinomial likelihood problem \refPR{} is replaced by the empirical likelihood problem \refEL{}.
However, this replacement is not without consequences, as the solution of the empirical likelihood problem \refEL{}
(if it exists) may differ from the solution of \refPR{}; cf.~Section~\ref{St:EL}.

\medskip

3) Lindsay~\cite[Sect.~7.2]{Lindsay} discusses  multinomial mixtures under linear constraints
on the mixture components, and assumes that it is sufficient to consider the distributions supported
in the data (i.e., in the active alphabet). Though the objective function $\ell(\cdot)$ in
\refPR{} is a `single-component' multinomial likelihood, the present results for the H-set, Z-set,
and BD-gap suggest that it would be more appropriate to work with the complete alphabet;
see also Anaya-Izquierdo et al.~\cite[Sect.~5.1]{Anaya}.

\medskip

4) Bootstrap in the presence of auxiliary information~(cf.~Zhang~\cite{ZhangBootstrap},~Hall
and Presnell~\cite{HallPresnell}) in the form of a convex closed set, resamples from the EL-maximizing distribution $\hat q_\EL$.
Hence, this method intentionally discards information about the alphabet.
Resampling from $\hat q_\PR$ seems to be a better option.

\medskip

5) The Lagrange multiplier (score) test (cf.~Silvey~\cite{Silvey}) of the linear restrictions on $q$
(cf.~$C$ given by~(\ref{EQ:C-linear})) fails if $C$ is an H-set or a Z-set with respect to $\nu$,
because the Lagrangean first-order conditions do not lead to a finite solution of \refPR{}. However, the multinomial likelihood ratio exists.

\section{Proofs}\label{St:proofs}

\subsection{Notation and preliminaries}\label{S:preliminaries}
In this section we introduce notation and recall notions and results which will be used
later; it is based mainly on Bertsekas \cite{bertsekas} and Rockafellar \cite{rockafellar,rockafellar1974}.
We will not repeat the definitions introduced in the previous part of the paper.

We assume that the extended real line $\RRRex=[-\infty,\infty]$ is equipped
with the order topology; so it is a compact metrizable space homeomorphic to the unit interval.
The arithmetic operations on $\RRRex$ are defined in a usual way; further we put
$0\cdot (\pm\infty)\triangleq 0$. For $\alpha\le 0$ we define $\log(\alpha)\triangleq -\infty$;
then $\log:\RRR\to\RRRex$ is continuous.

For $m\ge 0$ put $\RRR^m_+\triangleq\{x\in\RRR^m: x\ge 0\}$ and
$\RRR^m_-\triangleq\{x\in\RRR^m: x\le 0\}$ (recall that, for $m=0$, $\RRR^m=\{0\}$).
In the matrix operations, the members of $\RRR^m$ are considered to be column matrices.
If no confusion can arise, a vector with constant values is denoted by a scalar.

Let $C$ be a nonempty subset of $\RRR^m$. The \emph{convex hull} of $C$ is denoted by
$\conv(C)$.
The \emph{polar cone} of $C$ is the set
$C^\ast\triangleq\{y\in\RRR^m:\ \sp{y}{q}\le 0 \text{ for every } q\in C\}$. This is a
nonempty closed convex cone \cite[p.~166]{bertsekas}.
Assume that $C$ is convex. The \emph{relative interior} $\ri(C)$ of $C$ is
the interior of $C$ relative to the affine hull $\aff(C)$ of $C$
\cite[p.~40]{bertsekas}; it is nonempty and convex \cite[Prop.~1.4.1]{bertsekas}.

The \emph{recession cone} of a convex set $C$ is the convex cone
\begin{equation}\label{EQ:recession-cone-def}
    R_C\triangleq\{z\in \RRR^m:\ x+\alpha z\in C \text{ for every } x\in C, \alpha>0\}
\end{equation}
\cite[p.~50]{bertsekas}.
Every $z\in R_C$ is called a \emph{direction of recession} of $C$.
Clearly, $0\in R_C$; if $R_C=\{0\}$ it is said to be \emph{trivial}.
The \emph{lineality space $L_C$} of $C$ is defined by $L_C\triangleq R_C\cap (-R_C)$
\cite[p.~54]{bertsekas}; it is a linear subspace of $\RRR^m$.
Note that if $C$ is a cone then $R_C=C$ and $L_C=C\cap (-C)$.

\medskip

Let $X$ be a subset of $\RRR^m$ and $f:X\to\RRRex$ be a function.
By $f'(x;y)$ we denote the \emph{directional derivative} of $f$ at $x$
in the direction $y$ \cite[p.~17]{bertsekas}.
By $\nabla f(x)$ and $\nabla^2 f(x)$ we denote
the \emph{gradient} and the \emph{Hessian} of $f$ at $x$.
For a nonempty set $C\subseteq X$,
$\argmin_C {f}$ and $\argmax_C {f}$ denote the sets of all minimizing
and maximizing
points of $f$ over $C$, respectively; that is,
\begin{equation*}
\begin{split}
  &\argmin_C {f} \,= \left\{\bar{x}\in C:\ f(\bar{x})=\inf_{x\in C} f(x)\right\},\qquad
\\
  &\argmax_C {f} = \left\{\bar{x}\in C:\ f(\bar{x})=\sup_{x\in C} f(x)\right\}.
\end{split}
\end{equation*}
The \emph{(effective) domain} and the \emph{epigraph}
of $f$ are the sets \cite[p.~25]{bertsekas}
\begin{equation*}
    \dom(f) \triangleq \{x\in X:\ f(x)<\infty\}
    \quad\text{and}\quad
    \epi(f) \triangleq \{(x,w)\in X\times\RRR: f(x)\le w\}.
\end{equation*}
A function $f:X\to \RRRex$ is
\begin{itemize}
  \item \emph{proper} if $f>-\infty$ and there is $x\in\RRR^m$ with $f(x)<\infty$
    \cite[p.~25]{bertsekas}
   (this should not be confused with the properness associated with compactness of point preimages);
  \item \emph{closed} if $\epi(f)$ is closed in $\RRR^{m+1}$ \cite[p.~28]{bertsekas};
  \item \emph{lower semicontinuous (lsc)} if
    $f(x)\le\liminf_k f(x_k)$ for every $x\in X$ and every
    sequence $(x_k)_k$ in $X$ converging to $x$ \cite[p.~27]{bertsekas};
    analogously for the \emph{upper semicontinuity (usc)};
  \item \emph{convex} if both $X$ and $\epi(f)$ are convex \cite[Def.~1.2.4]{bertsekas};
  \item \emph{concave} if $(-f)$ is convex.
\end{itemize}
When dealing with closedness of $f$, we will often use the following simple lemma
\cite[Prop.~1.2.2 and p.~28]{bertsekas}.

\begin{lemma}\label{L:closed-f}
  Let $f:X\to\RRRex$ be a map defined on a set $X\subseteq \RRR^m$.
  Define
  \begin{equation*}
    \tilde{f}:\RRR^m\to\RRRex,
    \qquad
    \tilde{f}(x) \triangleq \begin{cases}
      f(x)    &\text{if }x\in X;
    \\
      \infty  &\text{if }x\not\in X.
    \end{cases}
  \end{equation*}
  Then the following are equivalent:
  \begin{enumerate}
    \item[(a)] $f$ is closed;
    \item[(b)] $\tilde{f}$ is closed;
    \item[(c)] $\tilde{f}$ is lower semicontinuous;
    \item[(d)] the level sets $V_\gamma\triangleq\{x\in\RRR^m:  \tilde{f}(x)\le\gamma\}
        = \{x\in X: f(x)\le\gamma\}$
      are closed for every $\gamma\in\RRR$.
  \end{enumerate}
\end{lemma}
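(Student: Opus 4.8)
The plan is to reduce the whole statement to a single identification of epigraphs, together with the classical correspondence between closed epigraphs, lower semicontinuity, and closed sublevel sets.

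\emph{Step 1: (a) $\Leftrightarrow$ (b), and the form of the level sets.} Since $\epi$ is defined with the second coordinate ranging over $\RRR$ (not $\RRRex$), a pair $(x,w)$ with $x\notin X$ can never lie in $\epi(\tilde f)$, because there $\tilde f(x)=\infty>w$ for every $w\in\RRR$. For $x\in X$ we have $\tilde f(x)=f(x)$, so $\tilde f(x)\le w$ is the same inequality as $f(x)\le w$. Hence $\epi(\tilde f)=\epi(f)$ as subsets of $\RRR^{m+1}$, and the two are closed (or not) simultaneously, which is (a) $\Leftrightarrow$ (b). The same remark shows $V_\gamma=\{x\in X: f(x)\le\gamma\}$, since $\tilde f(x)=\infty>\gamma$ for $x\notin X$; this is the identification asserted in (d).

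\emph{Step 2: (b) $\Leftrightarrow$ (c).} This is the standard fact that an extended-real-valued function on $\RRR^m$ is lower semicontinuous exactly when its epigraph is closed, so one may simply cite \cite[Prop.~1.2.2]{bertsekas}. For a self-contained argument: if $\tilde f$ is lsc and $(x_k,w_k)\in\epi(\tilde f)$ with $(x_k,w_k)\to(x,w)$ in $\RRR^{m+1}$, then $\tilde f(x)\le\liminf_k\tilde f(x_k)\le\liminf_k w_k=w$, so $(x,w)\in\epi(\tilde f)$; conversely, if $\epi(\tilde f)$ is closed and $x_k\to x$, choose a subsequence along which $\tilde f(x_{k_j})\to\beta\triangleq\liminf_k\tilde f(x_k)$, note that $(x_{k_j},M)\in\epi(\tilde f)$ for $j$ large whenever $M$ is real and $M>\beta$, pass to the limit to get $\tilde f(x)\le M$, and let $M\searrow\beta$ (the cases $\beta=\pm\infty$ being handled the same way or trivial).

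\emph{Step 3: (c) $\Leftrightarrow$ (d).} If $\tilde f$ is lsc and $x_k\in V_\gamma$ with $x_k\to x$, then $\tilde f(x)\le\liminf_k\tilde f(x_k)\le\gamma$, so $V_\gamma$ is closed. Conversely, if every $V_\gamma$ is closed and $x_k\to x$, set $\beta\triangleq\liminf_k\tilde f(x_k)$ and, for each real $\gamma>\beta$, extract a subsequence lying in the closed set $V_\gamma$; then $x\in V_\gamma$, i.e.\ $\tilde f(x)\le\gamma$, and letting $\gamma\searrow\beta$ gives $\tilde f(x)\le\beta$, which is lower semicontinuity at $x$. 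There is no genuine obstacle in any of this; the only point worth getting right is that $\epi$ carries a \emph{real} second coordinate, which is precisely what makes $\epi(f)=\epi(\tilde f)$ hold and lets the value $+\infty$ be absorbed for free.
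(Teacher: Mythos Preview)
Your proof is correct. The paper does not actually prove this lemma; it is stated as a standard fact with a citation to \cite[Prop.~1.2.2 and p.~28]{bertsekas}, so there is no argument in the paper to compare yours against. Your self-contained treatment --- identifying $\epi(f)=\epi(\tilde f)$ via the observation that the epigraph uses a real second coordinate, then invoking the classical equivalences among closed epigraph, lower semicontinuity, and closed sublevel sets --- is exactly the standard textbook route and is fine.
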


The \emph{recession cone $R_f$} of a  proper convex closed function $f:\RRR^m\to\RRRex$
is the recession cone of any of its nonempty level sets $V_\gamma$ \cite[p.~93]{bertsekas}.
The lineality space $L_f$ of $R_f$ is, due to the convexity of $f$, the set of directions $y$
in which $f$ is constant (that is,~$f(x+\alpha y)=f(x)$
for every $x\in \dom(f)$ and $\alpha\in\RRR$);
thus $L_f$ is also called the \emph{constancy space} of $f$  \cite[p.~97]{bertsekas}.
If $g:\RRR^m\to\RRRex$ is \emph{concave}, the corresponding notions for $g$
are defined via the convex function $(-g)$.

The fundamental results underlying the importance of recession cones, are the following theorems
(\cite[Props.~2.3.2 and 2.3.4]{bertsekas} or \cite[Thm.~27.3]{rockafellar}).

\begin{theorem}\label{T:bertsekas}
Let $C$ be a nonempty convex closed subset of $\RRR^m$ and $f:\RRR^m\to \RRRex$ be a proper 
convex closed function such that $C\cap \dom(f)\ne\emptyset$. Then the following are equivalent:
\begin{enumerate}
  \item[(a)] the set $\argmin_C {f}$ of minimizing points of $f$ over $C$
    is nonempty and compact;
  \item[(b)] $C$ and $f$ have no common nonzero direction of recession, that is,
    \begin{equation*}
        R_C\cap R_f = \{0\}.
    \end{equation*}
\end{enumerate}
Both conditions are satisfied, in particular, if $C\cap \dom(f)$ is bounded.
\end{theorem}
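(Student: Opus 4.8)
The plan is to reduce the constrained minimization of $f$ over $C$ to a minimization over a single bounded level set, and then to read off compactness of the minimizer set from triviality of the relevant recession cone. Throughout I would use two standard facts from the convex-analysis toolbox (both in the cited Bertsekas/Rockafellar material): first, that for a nonempty closed convex set $D$ the recession-cone condition ``$x+\alpha z\in D$ for all $\alpha>0$'' need only be checked at a single $x\in D$, so that in particular $R_{A\cap B}=R_A\cap R_B$ whenever $A,B$ are closed convex with $A\cap B\ne\emptyset$; and second, that a nonempty closed convex set is bounded if and only if its recession cone is trivial.

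For the implication (b)$\Rightarrow$(a), I would fix $x_0\in C\cap\dom(f)$, put $\gamma_0\triangleq f(x_0)$ and $D\triangleq C\cap V_{\gamma_0}$. Since $f$ is closed, $V_{\gamma_0}$ is a nonempty closed convex set (Lemma~\ref{L:closed-f}), hence $D$ is nonempty, closed and convex, and $R_D=R_C\cap R_{V_{\gamma_0}}=R_C\cap R_f=\{0\}$ by the facts above and the definition of $R_f$; therefore $D$ is compact. As every $q\in C\setminus V_{\gamma_0}$ satisfies $f(q)>\gamma_0=f(x_0)$, we have $\inf_C f=\inf_D f$; and since $f$ is real-valued on $D\subseteq V_{\gamma_0}$ (using properness) and lower semicontinuous (Lemma~\ref{L:closed-f}), this infimum $\hat\ell$ is attained and finite. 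Finally $\argmin_C f=C\cap\{q:f(q)\le\hat\ell\}$ is closed and contained in the compact set $D$, hence compact.

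For (a)$\Rightarrow$(b), let $\hat\ell$ be the minimum value; then $\argmin_C f=C\cap V_{\hat\ell}$ is nonempty, compact and convex, so $V_{\hat\ell}$ is a nonempty level set and its recession cone equals $R_f$. Boundedness of $C\cap V_{\hat\ell}$ forces $\{0\}=R_{C\cap V_{\hat\ell}}=R_C\cap R_{V_{\hat\ell}}=R_C\cap R_f$. For the last assertion, if $C\cap\dom(f)$ is bounded and $z\in R_C\cap R_f$, then for any $x_0\in C\cap\dom(f)$ the ray $\{x_0+\alpha z:\alpha\ge 0\}$ lies in $C$ (as $z\in R_C$) and in $\dom(f)$ (as $z\in R_f$ forces $f(x_0+\alpha z)\le f(x_0)<\infty$), so boundedness gives $z=0$; thus (b) holds, and with it (a).

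I expect the only genuine obstacle to be the bookkeeping around recession cones of \emph{functions} versus \emph{sets}: one must check that $R_f$, which the paper defines via an arbitrary nonempty level set, coincides with $R_{V_\gamma}$ for the particular $\gamma$'s used above, and that the one-point reduction for recession cones is legitimately applied to each closed convex set in play. None of this is deep, but it is exactly where a careless argument would slip, so I would record these two preliminary facts explicitly before assembling the two implications.
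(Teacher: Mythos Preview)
Your argument is correct and is essentially the standard proof of this result. Note, however, that the paper does not prove Theorem~\ref{T:bertsekas} at all: it is quoted in Section~\ref{S:preliminaries} as a known fact, with explicit references to \cite[Props.~2.3.2 and 2.3.4]{bertsekas} and \cite[Thm.~27.3]{rockafellar}. Your write-up simply reproduces the proof one finds in those sources (reduce to a compact level-set intersection via $R_{C\cap V_\gamma}=R_C\cap R_f$ and use that a nonempty closed convex set is bounded iff its recession cone is trivial), so there is no methodological difference to compare---you have supplied what the paper delegates to the literature.
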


\begin{theorem}\label{T:bertsekas2}
Let $C$ be a nonempty convex closed subset of $\RRR^m$ and $f:\RRR^m\to \RRRex$ be a 
convex closed function such that $C\cap \dom(f)\ne\emptyset$.
If
\begin{equation}\label{EQ:no-direction-recession-and-constancy}
    R_C\cap R_f = L_C\cap L_f,
\end{equation}
or if
\begin{equation*}
    C \text{ is polyhedral}
    \qquad\text{and} \qquad
    R_C\cap R_f \subseteq L_f,
\end{equation*}
then the set $\argmin_C {f}$ of minimizing points of $f$ over $C$ is nonempty.
Under condition (\ref{EQ:no-direction-recession-and-constancy}),
$\argmin_C {f}$ can be written
as $\tilde{C}+(L_C\cap L_f)$, where $\tilde{C}$ is compact.
\end{theorem}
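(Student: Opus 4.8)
The plan is to deduce Theorem~\ref{T:bertsekas2} from the preceding Theorem~\ref{T:bertsekas} by passing to the quotient of $\RRR^m$ modulo a suitable subspace of common recession/constancy directions; the subspace to be divided out is $L_C\cap L_f$ under condition~(\ref{EQ:no-direction-recession-and-constancy}) and the (in general larger) subspace $L_f$ under the polyhedral hypothesis. We may assume $f$ is proper: $f\equiv+\infty$ on $C$ is excluded by $C\cap\dom f\ne\emptyset$, and if $f\equiv-\infty$ the argument of the first case applies with $f$ replaced by the constant~$0$.

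\emph{Under condition~(\ref{EQ:no-direction-recession-and-constancy}).} Put $L\triangleq L_C\cap L_f$ and let $\pi\colon\RRR^m\to L^\perp$ be the orthogonal projection with kernel $L$. Since $L\subseteq L_C$, the sets $C$ and $R_C$ are invariant under translations by $L$; hence $\pi^{-1}(\pi(C))=C$ and $\bar C\triangleq\pi(C)$ is closed convex, carried homeomorphically onto $C\cap L^\perp$, with $R_{\bar C}=\pi(R_C)$. Since $L\subseteq L_f$ is a constancy direction of $f$, every level set of $f$ is $L$-invariant and $f=\bar f\circ\pi$ for a well-defined convex function $\bar f$ on $L^\perp$; $\bar f$ is closed because $\epi f$ is invariant under translation by $L\times\{0\}$, and the analogous computation with level sets gives $R_{\bar f}=\pi(R_f)$. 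One also checks $\bar C\cap\dom\bar f\supseteq\pi(C\cap\dom f)\ne\emptyset$. Now a nonzero $\bar d\in R_{\bar C}\cap R_{\bar f}$ would have a lift $d$ with $d\in R_C$ and $d\in R_f$, i.e.\ $d\in R_C\cap R_f=L_C\cap L_f=L=\ker\pi$, forcing $\bar d=0$; so $R_{\bar C}\cap R_{\bar f}=\{0\}$. By Theorem~\ref{T:bertsekas}, $\tilde C\triangleq\argmin_{\bar C}\bar f$ is nonempty and compact. Finally $x\in\argmin_C f$ iff $\pi(x)\in\tilde C$, and since $\pi^{-1}(\pi(C))=C$ and $\tilde C\subseteq\bar C$ one gets $\argmin_C f=\pi^{-1}(\tilde C)=\tilde C+L$ with $\tilde C$ compact, i.e.\ the asserted form $\tilde C+(L_C\cap L_f)$.

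\emph{Polyhedral case.} Run the same scheme but divide out the whole constancy space of $f$: let $\pi$ be the projection with kernel $L_f$ (here $L_f\not\subseteq L_C$ in general, so $C$ need not be $L_f$-invariant). Because $C$ is \emph{polyhedral}, its linear image $\bar C\triangleq\pi(C)$ is again polyhedral, hence closed, and the Minkowski--Weyl decomposition $C=(\text{polytope})+R_C$ yields $R_{\bar C}=\pi(R_C)$. This is exactly where polyhedrality enters: for a merely closed convex $C$ the projection $\pi(C)$ may fail to be closed, and then the conclusion itself can fail. As before $f=\bar f\circ\pi$ with $\bar f$ closed convex, $\bar C\cap\dom\bar f\ne\emptyset$, and $R_{\bar f}=\pi(R_f)$. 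A nonzero $\bar d\in R_{\bar C}\cap R_{\bar f}$ lifts to $d\in R_C\cap R_f\subseteq L_f=\ker\pi$, so $\bar d=0$ and $R_{\bar C}\cap R_{\bar f}=\{0\}$; Theorem~\ref{T:bertsekas} then furnishes a minimizer $\bar x$ of $\bar f$ over $\bar C$, and any $x\in C$ with $\pi(x)=\bar x$ satisfies $f(x)=\bar f(\bar x)=\inf_{\bar C}\bar f=\inf_C f$, so $\argmin_C f\ne\emptyset$.

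The steps requiring care are the ``change of variables'' facts: that $\bar f$ is again closed and convex, that $R_{\bar C}=\pi(R_C)$ and $R_{\bar f}=\pi(R_f)$, and the lifting of a common recession direction of $(\bar C,\bar f)$ to one of $(C,f)$ --- each routine but to be spelled out via level sets, epigraph invariance, and (for $C$) the intersection-with-a-subspace formula resp.\ Minkowski--Weyl. The genuine obstacle is the polyhedral case: polyhedrality must be used precisely to keep $\pi(C)$ closed so that Theorem~\ref{T:bertsekas} is applicable on the quotient --- equivalently, to guarantee that a minimizing sequence in $C$ can be ``pulled back'' along a direction of $R_C\cap R_f\subseteq L_f$ without leaving $C$ and without changing the value of $f$ --- while no compactness of $\argmin_C f$ can be expected there, consistently with the decomposition $\tilde C+(L_C\cap L_f)$ being claimed only under~(\ref{EQ:no-direction-recession-and-constancy}).
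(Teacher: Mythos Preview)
The paper does not prove Theorem~\ref{T:bertsekas2}; it is quoted as a known result from \cite[Props.~2.3.2 and 2.3.4]{bertsekas} (alternatively \cite[Thm.~27.3]{rockafellar}) and used only as a black box in the proof of Proposition~\ref{P:dual-finite}. Your reduction to Theorem~\ref{T:bertsekas} by quotienting out a subspace of constancy directions is correct and is in fact the standard route taken in those references.

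Two small points are worth tightening. First, the ``common lift'' step should be spelled out: given $\bar d\in\pi(R_C)\cap\pi(R_f)$, pick $d_1\in R_C$ and $d_2\in R_f$ with $\pi(d_1)=\pi(d_2)=\bar d$; then $d_1-d_2\in\ker\pi\subseteq L_f\subseteq R_f$, so $d_1=d_2+(d_1-d_2)\in R_f$ and $d_1\in R_C\cap R_f$. This matters because in the polyhedral case $R_C$ need not be $\ker\pi$-invariant, so one cannot simply adjust the $R_C$-lift. Second, the improper case is not quite ``$f\equiv-\infty$'': a closed convex $f$ that is improper and not identically $+\infty$ takes only the values $\pm\infty$ (if $f(x_0)=-\infty$ and $f(x_1)\in\RRR$, convexity forces $f=-\infty$ on $[x_0,x_1)$ and lower semicontinuity then gives $f(x_1)=-\infty$), so $\argmin_C f=C\cap\{f=-\infty\}=C\cap\dom f\ne\emptyset$ directly, without any further argument.
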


\medskip

\begin{assumption}
If not stated otherwise, in the sequel it is  assumed that
a nonempty convex closed subset $C$ of $\sx{\A}$ having support $\A$, and
a type $\nu\in\sx{\A}$
are given.
\end{assumption}
Since no confusion can arise, by $\ell$ we denote also
an extension of the original function $\ell$ (defined
in (\ref{EQ:ell(q)})) to $\RRR^m$ :
\begin{equation}\label{EQ:f-def}
  \ell:\RRR^m\to\RRRex, \qquad
  \ell(x) \triangleq
      - \sp{\nu}{\log(x)}
    = - \sum_{i\in \Aa} \nu_i^a \log(x_i^a);
\end{equation}
the conventions $0\cdot(-\infty)=0$ and $\log\beta=-\infty$
for every $\beta\le 0$ apply.

\subsection{Proof of Theorem~\ref{T:P} (Primal problem)}\label{SS:primal}

In the next lemma we prove that
$\ell$ is a proper convex closed function. Since $C$ is compact,
$C$ has no nonzero direction of recession. From this and Theorem~\ref{T:bertsekas},
Theorem~\ref{T:P} will follow.

\begin{lemma}\label{L:f-props}
If $\nu\in\sx{\A}$ and $\ell$ is given by (\ref{EQ:f-def}), then
\begin{enumerate}
  \item[(a)] $\ell(x)>-\infty$ for every $x\in \RRR^m$, and
             $\ell(x)<\infty$ if and only if $x^a>0$; so
      \begin{equation*}
        \dom(\ell)=\{x\in\RRR^m:\ x^a>0\};
      \end{equation*}
  \item[(b)] $\ell$ is a proper continuous (hence closed) convex function;
  \item[(c)] the restriction of $\ell$ to its domain ${\dom(\ell)}$ is strictly convex
      if and only if $\nu>0$;
  \item[(d)] $\ell$ is differentiable on $\dom(\ell)$ with the gradient given by
    \begin{equation*}
      \nabla \ell(x) = - (\nu^a/x^a, 0^p);
    \end{equation*}
    the one-sided directional derivative of $\ell$ at $x\in\dom(\ell)$ in the direction $y$ is
    \begin{equation*}
      \ell'(x;y) = \sp{\nabla \ell(x)}{y} = - \sum_{i\in \Aa} \frac{\nu_i^a y_i^a}{x_i^a} \,;
    \end{equation*}
  \item[(e)]
      the recession cone $R_\ell$ and the constancy space $L_\ell$ of $\ell$ are
      \begin{equation*}
        R_{\ell} = \{
          z\in\RRR^m:\ z^a\ge 0
        \}
        \quad\text{and}\quad
        L_\ell = \{
          z\in\RRR^m:\ z^a= 0
        \}.
      \end{equation*}
\end{enumerate}
\end{lemma}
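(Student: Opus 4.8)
The plan is to establish the five items essentially separately, using only elementary properties of $-\log\colon\RRR\to\RRRex$ (extended by $-\log\beta=\infty$ for $\beta\le 0$) -- namely that it is proper, convex, non-increasing, strictly convex on $(0,\infty)$, and continuous as a map into $\RRRex$ -- together with the single structural remark that $\ell$ depends only on the active coordinates, since $\ell(x)=\sum_{i\in\Aa}(-\nu_i^a\log x_i^a)$ with $\nu_i^a>0$ for every $i\in\Aa$.

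For (a): each summand $-\nu_i^a\log x_i^a$ takes values in $(-\infty,\infty]$ (this uses $\nu_i^a>0$) and equals $\infty$ precisely when $x_i^a\le 0$; summing over $i\in\Aa$ gives $\ell(x)>-\infty$ for every $x$, and $\ell(x)<\infty$ iff $x_i^a>0$ for every $i\in\Aa$, i.e.\ iff $x^a>0$, which is the asserted domain. For (b): $\ell$ is proper by (a) together with the existence of a point with strictly positive active part (e.g.\ $x=\nu$, since $\nu^a>0$); it is convex because it is a combination, with positive coefficients $\nu_i^a$, of the convex $\RRRex$-valued functions $x\mapsto -\log x_i^a$ ($-\log$ composed with the linear functional $x\mapsto x_i^a$); and it is continuous as a map into $\RRRex$ because each summand is -- it is finite and continuous on the open set $\{x_i^a>0\}$, equal to $\infty$ on the open set $\{x_i^a<0\}$, and at a point with $x_i^a=0$ it has value $\infty$ while $-\nu_i^a\log x_i'^a\to\infty$ as $x'\to x$ -- and a finite sum of $\RRRex$-valued continuous functions none of which attains $-\infty$ is again continuous. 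A continuous $\RRRex$-valued function is lower semicontinuous, so $\ell$ is closed by Lemma~\ref{L:closed-f}.

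For (c) and (d): on the open set $\dom(\ell)=\{x^a>0\}$, $\ell$ is a finite sum of smooth functions of individual coordinates, hence differentiable there; differentiating term by term yields $\nabla\ell(x)=-(\nu^a/x^a,0^p)$, and the formula for $\ell'(x;y)$ is the standard identity $\ell'(x;y)=\sp{\nabla\ell(x)}{y}$ valid wherever $\ell$ is differentiable. For strict convexity: restricted to its domain, $\ell$ is a sum over \emph{all} coordinates of $x^a$, with strictly positive weights, of the strictly convex function $-\log$, hence strictly convex in the variable $x^a$; so if $\nu>0$ (equivalently $\Ap=\emptyset$, whence $\dom(\ell)=\{x:x>0\}$) it is strictly convex on its domain, whereas if $\nu\not>0$ then $m_p\ge 1$ and $\ell$ is constant along any nondegenerate segment in $\dom(\ell)$ that moves only in passive coordinates, so it is not strictly convex there.

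For (e): $R_\ell$ is, by definition, the recession cone of any nonempty level set $V_\gamma=\{x:\ell(x)\le\gamma\}$. If $z^a\ge 0$ then, for every $x$ with $x^a>0$ and every $\alpha\ge 0$, one has $x^a+\alpha z^a\ge x^a>0$ and, since $-\log$ is non-increasing, $\ell(x+\alpha z)\le\ell(x)$; hence $x+\alpha z\in V_\gamma$ whenever $x\in V_\gamma$, so $z\in R_\ell$. Conversely, if $z_j^a<0$ for some $j\in\Aa$ then, for $\alpha$ large enough, $x_j^a+\alpha z_j^a<0$, so $\ell(x+\alpha z)=\infty$ (again using $\nu_j^a>0$) and $x+\alpha z\notin V_\gamma$; thus $z\notin R_\ell$. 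This gives $R_\ell=\{z:z^a\ge 0\}$, and consequently $L_\ell=R_\ell\cap(-R_\ell)=\{z:z^a=0\}$, which indeed matches the description of $L_\ell$ as the set of directions in which $\ell$ is constant, immediately from the fact that $\ell$ ignores $x^p$. None of these steps is deep; the only places that call for any care are the $\RRRex$-valued continuity argument in (b) and the corresponding bookkeeping in (e) -- keeping the order topology on $\RRRex$ in mind so that leaving $\dom(\ell)$ correctly forces $\ell=\infty$ -- rather than any genuine obstacle.
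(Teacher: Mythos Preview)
Your proof is correct and follows essentially the same route as the paper's: parts (a), (b), (d), and (e) are argued identically (elementary properties of $-\log$, monotonicity for the recession cone, Lemma~\ref{L:closed-f} for closedness). The one minor difference is in (c): the paper invokes the Hessian $\nabla^2\ell(x)=\diag(\nu^a/(x^a)^2,0^p)$ and the positive-definiteness criterion, whereas you argue directly from the separable structure and strict convexity of $-\log$; both arguments are equally short and standard.
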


\begin{proof}
The properties (a) and (d) are trivial and the property (b)
follows from the continuity and concavity
of the logarithm (extended to the whole real line);
closedness of $\ell$ follows from continuity by Lemma~\ref{L:closed-f}.
For the property (c),
use that the Hessian $\nabla^2 \ell(x)=(\partial^2 \ell(x)/\partial x_i\partial x_j)_{ij}$
is a diagonal matrix $\diag(\nu^a/(x^a)^2, 0^p)$.
Hence it is positive definite and $\ell$ is strictly convex
if and only if $\nu>0$ \cite[Prop.~1.2.6]{bertsekas}.

It remains to prove (e). Fix any $\gamma\in\RRR$ such that the level set
$V=\{x:\ \ell(x)\le\gamma\}$ is nonempty.
If $z$ is such that $z^a\not\ge 0$ then there is an active letter $i$ with $z_i<0$.
In such a case,
for any $x\in V$ there is $\alpha>0$ with $y_i+\alpha z_i\le 0$ and so
$y+\alpha z\not\in \dom(\ell)\supseteq V$. Hence, by (\ref{EQ:recession-cone-def}),
$z\not\in R_{V}=R_{\ell}$.

Now take any $z$ with $z^a\ge 0$. Then, for every $x\in V$ and $\alpha>0$,
$\ell(x+\alpha z)\le \ell(x)$ by the monotonicity of the logarithm. That is,
$x+\alpha z\in V$ and so $z\in R_{V}=R_{\ell}$. The property (e) is proved.
\end{proof}

\begin{proposition}\label{P:primal}
If the support of $C$ is $\A$, then
the primal \refPR{} is finite
and its solution set $\SOL_\PR$ is nonempty and compact.
Moreover, if $\nu>0$ then $\SOL_\PR$ is a singleton.
\end{proposition}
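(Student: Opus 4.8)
The plan is to derive everything from Lemma~\ref{L:f-props} and Theorem~\ref{T:bertsekas}, so that this proposition is essentially the "base case'' of Theorem~\ref{T:P}. First I would verify the hypotheses of Theorem~\ref{T:bertsekas} for $f=\ell$ and the given $C$. By Lemma~\ref{L:f-props}(b), $\ell$ is a proper convex closed function. The set $C$ is a closed subset of the compact simplex $\sx{\A}$, hence compact; in particular it is nonempty, convex and closed, and $C\cap\dom(\ell)$ is bounded (being contained in $\sx\A$), which forces $R_C\cap R_\ell=\{0\}$. The one point that genuinely uses the support hypothesis is $C\cap\dom(\ell)\neq\emptyset$: since $\supp(C)=\A$, convexity of $C$ yields a $q\in C$ with $q>0$, so $q^a>0$ and $q\in\dom(\ell)$ by Lemma~\ref{L:f-props}(a).

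With the hypotheses in hand, Theorem~\ref{T:bertsekas} gives that $\argmin_C\ell=\SOL_\PR$ is nonempty and compact. Finiteness of $\hat\ell_\PR=\inf_{q\in C}\ell(q)$ then follows by routing through Lemma~\ref{L:f-props}(a): it is $>-\infty$ because $\ell>-\infty$ on all of $\RRR^m$, and it is $<\infty$ because it is bounded above by $\ell(q)<\infty$ for the point $q\in C\cap\dom(\ell)$ produced above (equivalently, $\hat\ell_\PR=\ell(\hat q)$ for any $\hat q\in\SOL_\PR$, and $\hat q\in\dom(\ell)$).

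For the final assertion, suppose $\nu>0$; then by Lemma~\ref{L:f-props}(c) the restriction of $\ell$ to $\dom(\ell)$ is strictly convex, and $\dom(\ell)=\{x:x^a>0\}$ is convex. Since $\hat\ell_\PR<\infty$, every $\hat q\in\SOL_\PR$ has $\ell(\hat q)<\infty$, so $\SOL_\PR\subseteq\dom(\ell)$. If $\hat q_1\neq\hat q_2$ were two solutions, their midpoint $\hat q$ would lie in $C\cap\dom(\ell)$ (convexity of both sets) and satisfy $\ell(\hat q)<(1/2)(\ell(\hat q_1)+\ell(\hat q_2))=\hat\ell_\PR$ by strict convexity, contradicting the definition of $\hat\ell_\PR$; hence $\SOL_\PR$ is a singleton. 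I do not expect any real obstacle here — the argument is short — the only care needed is to make sure the infimum's finiteness is deduced rather than assumed, and that the nonemptiness of $C\cap\dom(\ell)$ is credited to the support hypothesis via convexity.
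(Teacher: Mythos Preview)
The proposal is correct and follows essentially the same approach as the paper: compactness of $C$ makes the recession-cone hypothesis of Theorem~\ref{T:bertsekas} trivial, yielding a nonempty compact $\SOL_\PR$, and strict convexity of $\ell$ on $\dom(\ell)$ when $\nu>0$ gives uniqueness. Your write-up is more explicit than the paper's (in checking $C\cap\dom(\ell)\ne\emptyset$ from the support hypothesis and in spelling out finiteness and the midpoint argument), but the logic is the same.
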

\begin{proof}
Since $C$ is compact, its recession cone is trivial.
Thus the first assertion follows from Theorem~\ref{T:bertsekas}.
The fact that $\SOL_\PR$ is a singleton provided $\nu>0$, follows from the
strict convexness of $f$.
\end{proof}

\begin{proposition}\label{P:primal-active-projection}
Let $\nu\in\sx{\A}$ and $C$ be a convex closed set having support $\A$.
Then the $\pi^a$-projection of $\SOL_\PR$ onto active letters is always a singleton
$\{\hat{q}_\PR^a\}$ and
\begin{equation*}
    \SOL_\PR
    = \{q\in C:\ q^a=\hat{q}^a_\PR\}
    = \{\hat{q}^a_\PR\} \times C^p(\hat{q}^a_\PR).
\end{equation*}
Consequently, $\SOL_\PR$ is a singleton if and only if
$C^p(\hat{q}^a_\PR)$ is a singleton.
\end{proposition}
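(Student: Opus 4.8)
The plan is to reduce everything to two facts already in hand: the objective $\ell$ of (\ref{EQ:f-def}) depends on its argument only through the active block $x^a$, and, restricted to that block, it is strictly convex because every $\nu_i^a$ is positive. First I would invoke Proposition~\ref{P:primal} to get that $\hat\ell_\PR$ is finite and $\SOL_\PR\neq\emptyset$, and pick any $\hat q_\PR=(\hat q^a_\PR,\hat q^p_\PR)\in\SOL_\PR$. Since $\ell(\hat q_\PR)=\hat\ell_\PR$ is finite, Lemma~\ref{L:f-props}(a) forces $\hat q^a_\PR>0$, so $\hat q_\PR\in\dom(\ell)$ and $\hat q^a_\PR\in C^a$.

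Next I would prove that the active part is the same for all solutions. Suppose $\hat q_1=(\hat q_1^a,\hat q_1^p)$ and $\hat q_2=(\hat q_2^a,\hat q_2^p)$ both lie in $\SOL_\PR$ but $\hat q_1^a\neq\hat q_2^a$. As above both have strictly positive active coordinates, hence lie in $\dom(\ell)$, and by convexity of $C$ the midpoint $\bar q=\tfrac12(\hat q_1+\hat q_2)$ lies in $C$, with $\bar q^a=\tfrac12(\hat q_1^a+\hat q_2^a)>0$. The function $t\mapsto-\sum_{i\in\Aa}\nu_i^a\log t_i$ on the positive orthant of $\RRR^{m_a}$ is strictly convex (all $\nu_i^a>0$), and $\ell(x)$ is precisely this function evaluated at $x^a$; since $\hat q_1^a\neq\hat q_2^a$, strict convexity yields $\ell(\bar q)<\tfrac12\ell(\hat q_1)+\tfrac12\ell(\hat q_2)=\hat\ell_\PR$. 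This contradicts $\bar q\in C$ and the definition of $\hat\ell_\PR$ as the infimum of $\ell$ over $C$. Hence all members of $\SOL_\PR$ share one active part, which I denote $\hat q^a_\PR$; thus $\pi^a(\SOL_\PR)=\{\hat q^a_\PR\}$.

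Then I would assemble the description of $\SOL_\PR$. The previous step gives $\SOL_\PR\subseteq\{q\in C:\ q^a=\hat q^a_\PR\}$. For the reverse inclusion, if $q\in C$ and $q^a=\hat q^a_\PR$ then $\ell(q)=\ell(\hat q_\PR)=\hat\ell_\PR$ because $\ell$ ignores the passive coordinates, so $q\in\SOL_\PR$. Hence $\SOL_\PR=\{q\in C:\ q^a=\hat q^a_\PR\}$, which by the definition of the $\hat q^a_\PR$-slice equals $\{\hat q^a_\PR\}\times C^p(\hat q^a_\PR)$; moreover $C^p(\hat q^a_\PR)\neq\emptyset$ since $\SOL_\PR\neq\emptyset$. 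The final ``consequently'' is then immediate, a product $\{\hat q^a_\PR\}\times C^p(\hat q^a_\PR)$ being a singleton exactly when $C^p(\hat q^a_\PR)$ is.

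The only step that is not pure bookkeeping is the strict-convexity argument, and the point to be careful about there is that $\ell$ is \emph{not} strictly convex as a function on $\RRR^m$ (Lemma~\ref{L:f-props}(c) requires $\nu>0$); the argument goes through only because strict convexity does hold in the active block, where $\nu^a>0$ holds automatically by definition of $\Aa$. Everything else is routine manipulation with the splitting $\RRR^m=\RRR^{m_a}\times\RRR^{m_p}$.
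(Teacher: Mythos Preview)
Your proof is correct and uses essentially the same idea as the paper: $\ell$ depends only on the active block, and on that block it is strictly convex since $\nu^a>0$. The paper packages this by introducing an auxiliary map $\ell^a:C^a\to\RRRex$, applying Theorem~\ref{T:bertsekas} to get a nonempty compact $\argmin_{C^a}\ell^a$, and then invoking strict convexity to make it a singleton; you instead take existence from Proposition~\ref{P:primal} directly and run the midpoint argument in the original space. The two presentations are interchangeable, and your version is arguably a bit more elementary since it avoids setting up the auxiliary problem.
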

\begin{proof}
Note that $C^a=\pi^a(C)$ is a nonempty convex closed subset of $\RRR^{m_a}$.
Define $\ell^a:\RRR^{m_a}\to\RRRex$ by $\ell^a(x^a)\triangleq -\sp{\nu^a}{\log x^a}$
for $x^a\in C^a$ and $\ell^a(x^a)\triangleq \infty$ otherwise.
Since
\begin{equation}\label{EQ:P:primal-active-projection}
    \ell(q)=\ell^a(q^a)
    \qquad \text{for every}\quad
    q\in C,
\end{equation}
it holds that
\begin{equation*}
    \inf_{x^a\in C^a} \ell^a(x^a) = \hat\ell_\PR.
\end{equation*}
The map $\ell^a$ is proper, convex and closed (use Lemma~\ref{L:closed-f}
and the fact that the restriction of $\ell^a$
to the closed set ${C^a}$ is continuous, hence closed).
Since $\dom(\ell^a)\subseteq C^a$ is bounded,
Theorem~\ref{T:bertsekas}
gives that $\argmin \ell^a$ is a nonempty compact set.
This set is a subset of $\dom(\ell^a)$ and the restriction
of $\ell^a$ to $\dom(\ell^a)$ is strictly convex (Lemma~\ref{L:f-props}(c)),
so $\argmin \ell^a$ is a singleton.
Hence there is a unique point $\hat q^a_\PR\in C^a$ such that $\ell^a(\hat q^a_\PR)=\hat\ell_\PR$.
Now, (\ref{EQ:P:primal-active-projection}) gives that $q\in\SOL_\PR$ if and only if
$q^a=\hat q^a_\PR$; so $\SOL_\PR=\{\hat q^a_\PR\} \times C^p(\hat q^a_\PR)$.
\end{proof}

Theorem~\ref{T:P} immediately follows from Propositions~\ref{P:primal}
and \ref{P:primal-active-projection}.

\subsection{Proof of Theorem~\ref{T:cc} (Convex conjugate by Lagrange duality)}\label{SS:fenchel}

In this section we prove Theorem~\ref{T:cc}
on the convex conjugate $\ell^\ast$, defined by
the {convex conjugate primal problem} ({cc-primal}, for short)
\begin{equation*}
 \ell^\ast(z)=\sup_{q\in\sx{\A}} (\sp{q}{z}-\ell(q)).
\end{equation*}
The proof is based on the following reformulation of the cc-primal
\begin{equation*}
    \ell^\ast(z)
    =
    \adjustlimits \sup_{x\ge 0} \inf_{\mu\in\RRR} K_z(x,\mu)
    =
    \adjustlimits \inf_{\mu\in\RRR} \sup_{x\ge 0} K_z(x,\mu),
\end{equation*}
where
\begin{equation*}
    K_z(x,\mu) = \sp{x}{z} + \sp{\nu}{\log x} - \mu \left(\sum x -1\right)
\end{equation*}
is the Lagrangian function; cf.~Lemma~\ref{L:F-cc-via-Lagrange-dual}.
Then we will show that the map
$\mu \mapsto \sup_{x\ge 0} K_z(x,\mu)$
is minimized at $\hat\mu(z)=\max\{\hatmu(z^a),\max(z^p)\}$;
cf.~Section~\ref{SSS:cc-proof}.
Structure of the solution set $\SOL_\cc(z)$ of the cc-primal is described in Section~\ref{SSS:cc-sol}.
Additional properties of the convex conjugate, which will be utilized in the proof
of Theorem~\ref{T:F}, are stated in Section~\ref{SSS:cc-additional}.

For every $z^a\in\RRR^{m_a}$ and $\mu>\max(z^a)$ put
\begin{equation}\label{EQ:xi-def}
    \xi(\mu) = \xi_{z^a}(\mu) \triangleq \sum \frac{\nu^a}{\mu-z^a}
\end{equation}
and recall that $\hat\mu(z)\triangleq \max\{\hatmu(z^a),\max(z^p)\}$,
where $\hatmu(z^a)>\max(z^a)$
solves $\xi(\mu)=1$.
Since
\begin{equation}\label{EQ:xi-is-monotone}
  \xi \text{ is strictly decreasing},
  \quad
  \lim_{\mu\searrow \max(z^a)} \xi(\mu)=\infty
  \quad\text{and}\quad
  \lim_{\mu\to\infty} \xi(\mu)=0,
\end{equation}
$\hatmu(z^a)$ is well-defined. We start with a simple lemma.

\begin{lemma}\label{L:cc-ell*(z+c)}
For every $z\in\RRR^m$ and $c\in\RRR$,
\begin{equation*}
    \hatmu(z^a+c)=\hatmu(z^a)+c,
    \quad
    \hat\mu(z+c) = \hat\mu(z)+c,
    \quad\text{and}\quad
    \ell^\ast(z+c) = \ell^\ast(z)+c.
\end{equation*}
\end{lemma}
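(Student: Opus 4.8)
The three asserted identities are all elementary, and the plan is simply to unwind the relevant definitions, keeping in mind the paper's convention that the scalar $c$ also denotes the constant vector $(c,\dots,c)$. I would treat them in the order they are stated, since the second relies on the first and the third is independent.

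First I would establish $\hatmu(z^a+c)=\hatmu(z^a)+c$. Recall from~(\ref{EQ:xi-def})--(\ref{EQ:mu-bar}) that $\hatmu(z^a)$ is the unique root in $(\max(z^a),\infty)$ of $\xi_{z^a}(\mu)=\sum \nu^a/(\mu-z^a)=1$, uniqueness being guaranteed by the strict monotonicity and limit behaviour recorded in~(\ref{EQ:xi-is-monotone}). Now $\xi_{z^a+c}(\mu)=\sum \nu^a/(\mu-c-z^a)=\xi_{z^a}(\mu-c)$, and $\max(z^a+c)=\max(z^a)+c$, so the shift $\mu\mapsto\mu-c$ is a bijection of $(\max(z^a+c),\infty)$ onto $(\max(z^a),\infty)$ carrying the equation $\xi_{z^a+c}(\mu)=1$ into $\xi_{z^a}(\mu-c)=1$. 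Hence the unique root of the former is $\hatmu(z^a)+c$, which is the claim. The identity for $\hat\mu$ is then immediate from its definition~(\ref{EQ:mu-hat}): combining the previous step with $\max(z^p+c)=\max(z^p)+c$ gives
\[
  \hat\mu(z+c)=\max\{\hatmu(z^a+c),\max(z^p+c)\}=\max\{\hatmu(z^a)+c,\max(z^p)+c\}=\hat\mu(z)+c.
\]

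For the last identity I would argue directly from the definition of the cc-primal, \emph{without} invoking Theorem~\ref{T:cc} (which in fact depends on this lemma). For every $q\in\sx{\A}$ one has $\sum q=1$, so $\sp{q}{z+c}=\sp{q}{z}+c\sum q=\sp{q}{z}+c$; therefore
\[
  \ell^\ast(z+c)=\sup_{q\in\sx{\A}}\bigl(\sp{q}{z+c}-\ell(q)\bigr)
  =\sup_{q\in\sx{\A}}\bigl(\sp{q}{z}-\ell(q)\bigr)+c=\ell^\ast(z)+c.
\]
No step here is more than bookkeeping, so I do not anticipate any genuine obstacle; the only points requiring a word of care are the scalar/vector convention for $c$ and, in the first part, the appeal to uniqueness of the root defining $\hatmu$.
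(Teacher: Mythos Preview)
Your proof is correct and matches the paper's own argument essentially line for line: the paper also derives the first two identities from $\xi_{z^a+c}(\cdot+c)=\xi_{z^a}(\cdot)$ and $\max(z^p+c)=\max(z^p)+c$, and proves the third directly from the definition of $\ell^\ast$ via $\sp{q}{c}=c$ for $q\in\sx{\A}$. Your remark about not invoking Theorem~\ref{T:cc} is apt, and the paper likewise avoids it.
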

\begin{proof}
The first two equalities follow from the facts
that $\xi_{z^a+c}(\cdot+c)=\xi_{z^a}(\cdot)$ and that $\max(z^p+c) = \max(z^p)+c$.
The final one is a trivial consequence of the definition of $\ell^\ast$; indeed,
since $\sp{q}{c}=c$ for every $q\in\sx{\A}$,
$\ell^\ast(z+c) = \sup_{q} (\sp{q}{(z+c)}-\ell(q)) =\ell^\ast(z)+c$.
\end{proof}

\subsubsection{Lagrange duality for the convex conjugate}\label{S:cc-lagrange-duality}
Assume that $\nu\in\sx{\A}$ and $z\in\RRR^m$ are given.
For $x\in\RRR^m_+$ put
\begin{equation*}
    h_z(x) \triangleq \sp{x}{z} - \ell(x) =  \sp{x}{z} + \sp{\nu^a}{\log x^a}
\end{equation*}
and define extended-real-valued functions
\begin{eqnarray*}
    &K_z:\RRR^m\times \RRR \to\RRRex,
    \quad
    K_z(x,\mu) &\triangleq
      \begin{cases}
        h_z(x) - \mu \left(\sum x -1\right)  &\text{if } x\in\RRR^m_+, \mu\in\RRR,
      \\
        \infty   &\text{otherwise};
      \end{cases}
\\
   &k_z:\RRR\to\RRRex,
   \quad\qquad\quad
   k_z(\mu) &\triangleq \sup_{x\in\RRR^m_+} K_z(x,\mu).
\end{eqnarray*}

\begin{lemma}[Lagrange duality for the convex conjugate] \label{L:F-cc-via-Lagrange-dual}
For every $\nu\in\sx{\A}$ and $z\in\RRR^m$,
\begin{equation*}
    \ell^\ast(z) = \inf_{\mu\in\RRR} k_z(\mu)
              = \adjustlimits \inf_{\mu\in\RRR} \sup_{x\ge 0} K_z(x,\mu).
\end{equation*}
\end{lemma}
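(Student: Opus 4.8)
The second displayed equality is simply the definition of $k_z$, so the content of the lemma is the no-duality-gap identity $\ell^\ast(z)=\inf_{\mu\in\RRR}k_z(\mu)$ for the cc-primal. First I would record the elementary minimax reformulation $\ell^\ast(z)=\sup_{x\ge 0}\inf_{\mu\in\RRR}K_z(x,\mu)$: for $x\in\RRR^m_+$ one has $\sp{x}{z}-\ell(x)=h_z(x)$ (the passive coordinates of $\nu$ vanish), while for such $x$ the inner infimum equals $h_z(x)$ when $\sum x=1$ and equals $-\infty$ when $\sum x\ne 1$, since then $\mu\mapsto -\mu(\sum x-1)$ is unbounded below; taking the supremum over $x\in\RRR^m_+$ therefore returns $\sup_{q\in\sx{\A}}(\sp{q}{z}-\ell(q))=\ell^\ast(z)$. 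Combined with the trivial inequality $\sup\inf\le\inf\sup$, this already gives $\ell^\ast(z)\le\inf_{\mu\in\RRR}k_z(\mu)$.

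The substantial point is the reverse inequality, i.e., the absence of a duality gap. The cc-primal amounts to minimizing, over the affine hyperplane $\{x\in\RRR^m:\sum x=1\}$, the function $x\mapsto -h_z(x)$ extended by $+\infty$ off $\RRR^m_+$; this is a proper closed convex function (properness and convexity are clear, and closedness follows from upper semicontinuity of $x\mapsto\sp{\nu^a}{\log x^a}$ under the convention $\log 0=-\infty$, cf.~Section~\ref{S:preliminaries} and the closedness argument in Lemma~\ref{L:f-props}), and $\inf_{\mu\in\RRR}k_z(\mu)$ is precisely the value of the Lagrangian dual obtained by relaxing the single affine constraint $\sum x=1$. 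Moreover $\ell^\ast(z)$ is finite: it is bounded below by $h_z(q^\ast)>-\infty$, where $q^\ast\triangleq(1/m,\dots,1/m)$ is the uniform distribution, and above by $\max(z)$, since $\sp{q}{z}\le\max(z)$ and $\sp{\nu^a}{\log q^a}\le 0$ for every $q\in\sx{\A}$. The relaxed constraint is affine, and $q^\ast$ lies in the relative interior $\{x\in\RRR^m:x>0\}$ of the domain $\{x\ge 0:x^a>0\}$ of the objective while satisfying $\sum q^\ast=1$; hence the relative-interior constraint qualification is met, and the standard strong duality theorem for convex programs with affine constraints (see, e.g., Bertsekas~\cite{bertsekas} or Rockafellar~\cite{rockafellar}) gives no duality gap:
\begin{align*}
    \ell^\ast(z)
    &=\adjustlimits\sup_{x\ge 0}\inf_{\mu\in\RRR}K_z(x,\mu)
    =\adjustlimits\inf_{\mu\in\RRR}\sup_{x\ge 0}K_z(x,\mu) \\
    &=\inf_{\mu\in\RRR}k_z(\mu),
\end{align*}
which is the assertion.

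The main obstacle is not a computation but the careful verification of the hypotheses of the strong duality theorem — that $x\mapsto -h_z(x)$ is proper, closed and convex (resting on the semicontinuity of the $\log$-sum with $\log 0=-\infty$), that the primal value is finite, and that the relative interior of the objective's domain meets the constraint hyperplane, the last being witnessed by the uniform distribution $q^\ast$. One could instead prove the identity by evaluating both sides explicitly: a separable maximization yields $k_z(\mu)=\mu-1+I_\mu(\nu^a\,\|\,-z^a)$ when $\mu>\max(z^a)$ and $\mu\ge\max(z^p)$, and $k_z(\mu)=\infty$ otherwise, and one then minimizes this expression; but that route duplicates and anticipates Theorem~\ref{T:cc}, so passing through the general duality theorem is cleaner.
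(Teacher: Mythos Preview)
Your proof is correct and takes a genuinely different route from the paper's. You invoke a standard Slater-type constraint qualification: incorporating $x\ge 0$ into the objective, the resulting proper closed convex function has domain $\{x:x^a>0,\,x^p\ge 0\}$ with relative interior $\{x:x>0\}$, the uniform distribution $q^\ast$ furnishes a feasible point in that relative interior, and the primal value is finite; so the strong duality theorem for convex programs with a single affine equality constraint (Rockafellar~\cite[Thm.~28.2]{rockafellar} or its analogue in Bertsekas~\cite{bertsekas}) yields $\sup\inf=\inf\sup$ directly. The paper instead works through Rockafellar's perturbation framework from~\cite{rockafellar1974}: it replaces the equality $\sum x=1$ by the pair of inequalities $1-u_2\le\sum x\le 1+u_1$, defines the optimal value function $\varphi_z(u)=\inf_x F_z(x,u)$, and verifies by hand that $\varphi_z(0)=\liminf_{u\to 0}\varphi_z(u)$ via an explicit estimate on $\sup_{x\in\sx{\A}}|h_z(\theta x)-h_z(x)|$; the no-gap conclusion then comes from \cite[Thm.~7]{rockafellar1974}. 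Your argument is shorter and more transparent --- the only real work is checking closedness of $x\mapsto -h_z(x)+\iota_{\RRR^m_+}(x)$ and exhibiting $q^\ast$ --- while the paper's approach is more self-contained (it does not cite a black-box strong duality theorem but instead checks the lower-semicontinuity condition directly) and stays within the conjugate-duality formalism that the authors use elsewhere.
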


\begin{proof}
We follow \cite[Sect.~4]{rockafellar1974}.
Denote by $\RRR^m_\oplus$ the subset
$\{x\in\RRR^m: x^a>0,x^p\ge0\}$ of $\RRR^m_+$.
Define $F_z:\RRR^m\times\RRR^2\to \RRRex$ and $f_z:\RRR^m\to\RRRex$ by
\begin{eqnarray*}
   F_z(x,u) &\triangleq&
   \begin{cases}
     -h_z(x)  &\text{if } x\in\RRR^m_\oplus, 1-u_2\le\sum x\le 1+u_1;
   \\
     \infty   &\text{otherwise};
   \end{cases}
\\
   f_z(x) &\triangleq& F_z(x,0).
\end{eqnarray*}
Note that, in the definition of $F_z$, $x\in\RRR^m_\oplus$ can be replaced by $x\in\RRR^m_+$;
indeed, if $x\in \RRR^m_+\setminus\RRR^m_\oplus$ then $x_i^a=0$ for some $i\in \Aa$ and hence $-h_z(x)=\infty$.
The set $D\triangleq\{(x,u)\in\RRR^m\times\RRR^2: x\ge 0, 1-u_2\le\sum x\le 1+u_1\}$
is closed convex (in fact, polyhedral) and the map $\tilde F_z:D\to\RRRex$, $(x,u)\mapsto -h_z(x)$
is convex and continuous, hence closed.
Since the epigraphs of $\tilde F_z$ and $F_z$ coincide,
\begin{equation}\label{EQ:F-cc-via-Lagrange-dual-proof1}
    F_z \quad\text{is convex and closed jointly in } x \text{ and } u.
\end{equation}

The corresponding optimal value function $\varphi_z:\RRR^2\to\RRRex$
is defined by (cf.~\cite[(4.7)]{rockafellar1974})
\begin{equation*}
    \varphi_z(u) \triangleq  \inf_{x\in\RRR^m} F_z(x,u).
\end{equation*}
We are going to show that
\begin{equation}\label{EQ:F-cc-via-Lagrange-dual-proof2}
    \varphi_z(0) = \liminf_{u\to 0} \varphi_z(u).
\end{equation}
To this end, take any $\eps\in(0,1)$ and $u\in\RRR^2$ with $\abs{u}<1$.
Assume that $u_1+u_2\ge 0$. Then
\begin{equation*}
    \varphi_z(u)
    = \inf_{\substack{x\ge 0\\\sum x\in[1-u_2,1+u_1]}} (-h_z(x))
    = \inf_{\theta\in[1-u_2,1+u_1]}  \ \inf_{x\in\sx{\A}} (-h_z(\theta x)).
\end{equation*}
For any $\theta>0$,
\begin{equation*}
\begin{split}
    \sup_{x\in\sx{\A}} \abs{h_z(\theta x)-h_z(x)}
    &\le
    \sup_{x\in\sx{\A}}\left(  \abs{\log\theta}  + \abs{1-\theta}\cdot \abs{\sp{x}{z}}   \right)
\\
    &=
    \abs{\log\theta}  + \abs{1-\theta} \cdot\max\abs{z}
    \triangleq \psi_z(\theta).
\end{split}
\end{equation*}
Since $\psi_z$ is continuous at $\theta=1$ and $\psi_z(1)=0$,
there is $\delta>0$ such that $\psi_z(\theta)<\eps$ for every $\theta\in[1-\delta,1+\delta]$.
Thus
$\abs{\varphi_z(u)-\varphi_z(0)} <\eps$
whenever $\abs{u}<\delta$ and $u_1+u_2\ge 0$. This gives
\begin{equation*}
    \varphi_z(0) = \lim_{\substack{u\to 0\\u_1+u_2\ge 0}} \varphi_z(u).
\end{equation*}
Since $\varphi_z(u)=\infty$ if $u_1+u_2<0$ (indeed, for such $u$, $F_z(x,u)=\infty$ for every $x$),
(\ref{EQ:F-cc-via-Lagrange-dual-proof2}) is proved.

The Lagrangian function $L_z:\RRR^m\times\RRR^2\to \RRRex$ associated with $F_z$ is defined by
(cf.~\cite[(4.2)]{rockafellar1974})
\begin{equation*}
    L_z(x,y) \triangleq \inf_{u\in\RRR^2}  \left(  F_z(x,u) + \sp{u}{y}   \right).
\end{equation*}
A simple computation yields (cf.~\cite[(4.4)]{rockafellar1974} with $f_0(x)=h_z(x)$, $f_1(x)=\sum x-1$,
and $f_2(x)=1-\sum x$, all restricted to $C=\RRR^m_\oplus$)
\begin{equation*}
    L_z(x,y) =
    \begin{cases}
        -h_z(x) + (y_1-y_2)\left(\sum x-1 \right)   &\text{if } x\in\RRR^m_\oplus, y\in\RRR^2_+;
    \\
        -\infty &\text{if } x\in\RRR^m_\oplus, y\not\in\RRR^2_+;
    \\
        \infty &\text{if } x\not\in\RRR^m_\oplus.
    \end{cases}
\end{equation*}
(Indeed, fix any $x\in\RRR^m$ and $y\in\RRR^2$.
If $x\not\in\RRR^m_\oplus$ then $F_z(x,u)=\infty$ for every $u$, hence $L_z(x,y)=\infty$.
If $x\in\RRR^m_\oplus$ and $y\not\ge 0$, then $F_z(x,u)=-h_z(x)$ whenever both $u_1,u_2>0$ are sufficiently large;
for such $u$, $\sp{u}{y}$ is not bounded from below (use that $y_1<0$ or $y_2<0$), hence $L_z(x,y)=-\infty$.
Finally, assume that $x\in\RRR^m_\oplus$ and $y\ge 0$. If $u_1+u_2<0$ then $F_z(x,u)=\infty$ by the definition of $F_z$.
If $u_1+u_2\ge 0$ then $\sp{u}{y}\ge y_1(\sum x-1) + y_2(1-\sum x)$, with the equality if $u_1=\sum x-1$, $u_2=1-\sum x$.
Thus $L_z(x,y) = -h_z(x) + (y_1-y_2)\left(\sum x-1\right)$.)

If we define (cf.~\cite[(4.6)]{rockafellar1974})
\begin{equation*}
    g_z:\RRR^2\to\RRRex,
    \qquad
    g_z(y) \triangleq \inf_{x\in\RRR^m} L_z(x,y),
\end{equation*}
then \cite[Thm.~7]{rockafellar1974}, (\ref{EQ:F-cc-via-Lagrange-dual-proof1}), and (\ref{EQ:F-cc-via-Lagrange-dual-proof2})
imply
\begin{equation*}
    \inf_{x\in\RRR^m} f_z(x)
    = \varphi_z(0)
    = \liminf_{u\to 0} \varphi_z(u)
    = \sup_{y\in\RRR^2} g_z(y).
\end{equation*}
By the definition of $f_z$, $\inf_x f_z(x)=-\ell^\ast(z)$; thus to finish the proof of the lemma
it suffices to  show that
\begin{equation}\label{EQ:F-cc-via-Lagrange-dual-proof3}
    \inf_{\mu\in\RRR} k_z(\mu)
    =
    - \sup_{y\in\RRR^2} g_z(y).
\end{equation}
If $y\not\in\RRR^2_+$ then $g_z(y)=-\infty$;
to see this, take arbitrary $x\in\RRR^m_\oplus$ and realize that $g_z(y)\le L_z(x,y)=-\infty$.
Fix any $y\in\RRR^2_+$. Then, for every $x\in\RRR^m_\oplus$, $K_z(x,y_1-y_2) = -L_z(x,y)$; hence
\begin{equation*}
    k_z(y_1-y_2) = -g_z(y).
\end{equation*}
Since $\{y_1-y_2:y\in\RRR^2_+\} = \RRR$, we have
\begin{equation*}
    \inf_{\mu\in\RRR} k_z(\mu)
    =
    \inf_{y\in\RRR^2_+} (-g_z(y))
    =
    -\sup_{y\in\RRR^2} g_z(y).
\end{equation*}
Thus (\ref{EQ:F-cc-via-Lagrange-dual-proof3}) is established and the proof of the lemma is finished.
\end{proof}

\subsubsection{Proof of Theorem~\ref{T:cc}}\label{SSS:cc-proof}

With Lemma~\ref{L:F-cc-via-Lagrange-dual}, Theorem~\ref{T:cc} can be proved. Recall that the theorem states that, for every $z\in\RRR^m$,
\begin{equation*}
    \ell^\ast(z) = -1 + \hat\mu(z) + I_{\hat\mu(z)}(\nu^a\,\|\,-z^a),
    \qquad\text{where}\quad
    \hat\mu(z)\triangleq \max\{ \hatmu(z^a), \max(z^p) \}.
\end{equation*}

\begin{proof}[Proof of Theorem~\ref{T:cc}]
Keep the notation from Section~\ref{S:cc-lagrange-duality}.
By Lemma~\ref{L:F-cc-via-Lagrange-dual}, $\ell^\ast(z) = \inf_{\mu\in\RRR} k_z(\mu)$,
and, using partial maximization,
\begin{equation*}
    k_z(\mu) = \sup_{x^a\ge 0} \tilde{k}_z(x^a,\mu),
    \quad\text{where}\quad
    \tilde{k}_z(x^a,\mu) \triangleq \sup_{x^p\ge 0} {K}_z((x^a,x^p),\mu).
\end{equation*}
Note that $K_z(x,\mu)=c+\sum_{i\in \Ap} x_i^p(z_i^p-\mu)$, where $c$ does not depend on $x^p$.
Thus
\begin{equation}\label{EQ:T-cc-L(x^a,mu)}
  \tilde{k}_z(x^a,\mu) =
  \begin{cases}
    \sp{x^a}{z^a} +\sp{\nu^a}{\log x^a} - \mu\left(  \sum x^a-1 \right)
    &\text{if } \mu\ge\max(z^p);
  \\
    \infty &\text{otherwise}.
  \end{cases}
\end{equation}

The second case immediately gives
\begin{equation}\label{EQ:T-cc-L(mu)-1}
    k_z(\mu)=\infty
    \qquad\text{if}\quad
    \mu<\max(z^p).
\end{equation}
If $\mu\ge\max(z^p)$ and $z_i^a-\mu\ge 0$ for some $i\in\Aa$, then $k_z(\mu)=\infty$
(use (\ref{EQ:T-cc-L(x^a,mu)}) and the fact
that, for any $a\ge 0$ and $b>0$, the map
$f(x)\triangleq ax+b\log x$
is strictly increasing and $\lim_{x\to\infty} f(x)=\infty$). That is
\begin{equation}\label{EQ:T-cc-L(mu)-2}
    k_z(\mu)=\infty
    \qquad\text{if}\quad
    \mu\le\max(z^a).
\end{equation}
Assume now that $\mu\ge\max(z^p)$ and $\mu>\max(z^a)$.
Note that
\begin{equation*}
    k_z(\mu)=
    \max_{x^a> 0} \tilde{k}_z(x^a,\mu).
\end{equation*}
By (\ref{EQ:T-cc-L(x^a,mu)}),
$\tilde{k}_z(\cdot,\mu)$ is differentiable and strictly concave
on the open set $\{x^a\in\RRR^{m_a}: x^a>0\}$
(use that the Hessian, which is equal to $\diag(-\nu_i^a/(x_i^a)^2)_{i=1}^{m_a}$,
is negative definite).
Thus the basic necessary and sufficient condition for unconstrained optimization
\cite[p.~258]{bertsekas} gives that
\begin{equation*}
  k_z(\mu) = \tilde{k}_z(\hat x^a,\mu),
\end{equation*}
where $\hat x^a= \hat x^a_\mu>0$ is the unique solution
of $\nabla_{x^a} \tilde{k}_z(\hat{x}^a,\mu)=0$; that is,
\begin{equation*}
    z_i^a + \frac{\nu_i^a}{\hat x_i^a} - \mu = 0
    \qquad\text{for every} \quad
    i\in\Aa.
\end{equation*}
The above equation immediately yields
\begin{equation*}
    \hat x^a  = \frac{\nu^a}{\mu-z^a}
\end{equation*}
and
\begin{equation}\label{EQ:T-cc-L(mu)-3}
    k_z(\mu)=-1 + \mu + I_\mu(\nu^a\,\|\,-z^a)
    \qquad\text{if}\quad
    \mu\ge\max(z^p),\mu>\max(z^a).
\end{equation}

Put $J\triangleq \{\mu\in\RRR: \mu>\max(z^a), \mu\ge\max(z^p)\}$ and
$\underline\mu\triangleq \max(z)$; then
either $J=(\underline\mu,\infty)$ or $J=[\underline\mu,\infty)$.
Equations (\ref{EQ:T-cc-L(mu)-1}), (\ref{EQ:T-cc-L(mu)-2}), and (\ref{EQ:T-cc-L(mu)-3})
yield
\begin{equation*}
    k_z(\mu)=
    \begin{cases}
        -1 + \mu + I_\mu(\nu^a\,\|\,-z^a)
        &\text{if }\mu\in J;
    \\
        \infty
        &\text{otherwise}.
    \end{cases}
\end{equation*}
For $\mu>\underline{\mu}$ we have
\begin{equation*}
    k_z'(\mu) = 1 -\xi(\mu).
\end{equation*}
Since $\xi$ is decreasing on $J$ and $\xi(\hatmu(z^a))=1$,
$k_z(\mu)$ is decreasing on $[\underline{\mu},\hatmu(z^a)]$
and increasing on $[\hatmu(z^a), \infty)$, provided $\underline{\mu}\le \hatmu(z^a)$;
otherwise $k_z(\mu)$ is increasing on $[\underline{\mu},\infty]$.
Thus,
\begin{equation*}
    \ell^\ast(z)
    =\inf_{\mu\in\RRR} k_z(\mu)
    =
    \begin{cases}
        k_z(\hatmu(z^a))  &\text{if }\hatmu(z^a) \ge \underline{\mu};
    \\
        k_z(\underline\mu) &\text{otherwise}.
    \end{cases}
\end{equation*}
So $\argmin k_z=\max\{\hatmu(z^a),\max(z^p)\} = \hat\mu(z)$ and
Theorem~\ref{T:cc} is proved.
\end{proof}

\begin{corollary}\label{C:BD-Lemma21}
If $\nu>0$ then, for every $z\in\RRR^m$,
\begin{equation*}
\begin{split}
    \ell^\ast(z)
    &= -1 + \hatmu(z) + I_{\hatmu(z)} (\nu\,\|\,-z)
\\
    &= -1 + \hatmu(z) + \sp{\nu}{\log\nu}  - \sp{\nu}{\log(\hatmu(z)-z)}.
\end{split}
\end{equation*}
\end{corollary}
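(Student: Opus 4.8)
The plan is to obtain the corollary as the $\nu>0$ specialization of Theorem~\ref{T:cc}, followed by an elementary rewriting of the term $I_{\hatmu(z)}(\nu\,\|\,-z)$.

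First I would note that $\nu>0$ forces $\Aa=\A$ and $\Ap=\emptyset$, so $m_a=m$, $m_p=0$, and $\RRR^{m_p}=\{0\}$; in particular every $z\in\RRR^m$ satisfies $\nu^a=\nu$ and $z^a=z$, and there are no passive coordinates at all. Consequently $\max(z^p)$ in~(\ref{EQ:mu-hat}) is a maximum over the empty index set and equals $-\infty$ (equivalently, in the notation of the proof of Theorem~\ref{T:cc}, $\underline\mu=\max(z)=\max(z^a)$), so that $\hat\mu(z)=\max\{\hatmu(z^a),\max(z^p)\}=\hatmu(z^a)=\hatmu(z)$. Substituting $\nu^a=\nu$, $z^a=z$, and $\hat\mu(z)=\hatmu(z)$ into the conclusion of Theorem~\ref{T:cc} yields the first displayed identity, $\ell^\ast(z)=-1+\hatmu(z)+I_{\hatmu(z)}(\nu\,\|\,-z)$.

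For the second identity I would simply unfold the definition $I_\mu(a\,\|\,b)=\sp{a}{\log(a/(\mu+b))}$ with $a=\nu$, $b=-z$, $\mu=\hatmu(z)$. By~(\ref{EQ:mu-bar}) we have $\hatmu(z)>\max(z^a)=\max(z)$, hence $\hatmu(z)-z>0$ coordinatewise; together with $\nu>0$ this makes every logarithm below finite, so $\log$ of a coordinatewise quotient splits as a difference:
$$I_{\hatmu(z)}(\nu\,\|\,-z)=\sp{\nu}{\log\frac{\nu}{\hatmu(z)-z}}=\sp{\nu}{\log\nu}-\sp{\nu}{\log(\hatmu(z)-z)}.$$
Plugging this into the first identity completes the proof. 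There is no genuine obstacle here: the only points needing a moment's care are the bookkeeping around the empty passive alphabet (so that $\hat\mu$ collapses to $\hatmu$) and the inequality $\hatmu(z)-z>0$ that legitimizes splitting the logarithm; everything else is direct substitution into Theorem~\ref{T:cc}.
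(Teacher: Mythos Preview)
Your proposal is correct and matches the paper's (implicit) approach: the paper states the result as an immediate corollary of Theorem~\ref{T:cc} without giving a separate proof, and your argument---specializing to $\Ap=\emptyset$ so that $\hat\mu(z)=\hatmu(z)$, then expanding the definition of $I_\mu$---is exactly the intended derivation.
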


Corollary~\ref{C:BD-Lemma21} was first proved by El~Barmi and Dykstra~\cite[Lemma~2.1]{elbarmi-dykstra}.

\subsubsection{The structure of the solution set $\SOL_\cc(z)$}\label{SSS:cc-sol}
The structure of the cc-primal solution set $\SOL_\cc(z)$, defined by (\ref{EQ:F-cc-finite-sol}), is described.
First, recall the definition (\ref{EQ:xi-def}) of $\xi$.

\begin{proposition}\label{P:cc-SOL}
For every $z\in\RRR^m$, $\SOL_\cc(z)$ is a nonempty compact set and
\begin{equation*}
\begin{split}
    \SOL_\cc(z) = \{\hat{q}^a_\cc(z)\}
    \times
    \{
      &q^p\ge 0: \sum q^p = 1-\xi(\hat\mu(z)),
\\
      &q^p_i=0 \text{ whenever } z^p_i\ne\hat\mu(z)
    \},
\end{split}
\end{equation*}
where
\begin{equation*}
    \hat{q}^a_\cc(z) \triangleq \frac{\nu^a}{\hat\mu(z)-z^a}.
\end{equation*}
In particular, if $\hatmu(z^a)\ge \max(z^p)$ then
$\SOL_\cc(z)=\{(\hat{q}^a_\cc(z),0^p)\}$
is a singleton.
\end{proposition}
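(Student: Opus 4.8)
The plan is to reduce the problem to the Lagrangian maximization already carried out in the proof of Theorem~\ref{T:cc}. Write $\mu_0\triangleq\hat\mu(z)$. By Lemma~\ref{L:F-cc-via-Lagrange-dual} and the proof of Theorem~\ref{T:cc} (where $\mu_0$ is identified as the minimizer of $k_z$), $\ell^\ast(z)=\inf_{\mu\in\RRR}k_z(\mu)=k_z(\mu_0)=\sup_{x\ge 0}K_z(x,\mu_0)$. Since every $q\in\sx\A$ satisfies $\sum q=1$, for such $q$ one has $K_z(q,\mu_0)=\sp qz-\ell(q)$; hence $q\in\SOL_\cc(z)$ if and only if $q\ge 0$, $\sum q=1$ and $K_z(q,\mu_0)=\sup_{x\ge 0}K_z(x,\mu_0)$. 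Equivalently,
\[
  \SOL_\cc(z)=\{x\in\RRR^m_+:\ \sum x=1\}\ \cap\ \argmax_{x\ge 0}K_z(x,\mu_0),
\]
so it remains to describe $A\triangleq\argmax_{x\ge 0}K_z(x,\mu_0)$.

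The key point is that $K_z(\cdot,\mu_0)$ separates over the coordinates,
\[
  K_z(x,\mu_0)=\mu_0+\sum_{i\in\Aa}\bigl((z^a_i-\mu_0)x^a_i+\nu^a_i\log x^a_i\bigr)+\sum_{i\in\Ap}(z^p_i-\mu_0)x^p_i .
\]
Since $\mu_0\ge\hatmu(z^a)>\max(z^a)$, each active summand is, as a function of $x^a_i\in(0,\infty)$, strictly concave and tends to $-\infty$ both as $x^a_i\to 0^+$ (the $\log$ term, $\nu^a_i>0$) and as $x^a_i\to\infty$ (the linear term, $z^a_i-\mu_0<0$), so it has the unique maximizer $\nu^a_i/(\mu_0-z^a_i)$; thus the active block of $A$ is exactly $\hat q^a_\cc(z)=\nu^a/(\mu_0-z^a)>0$ — as in the proof of Theorem~\ref{T:cc}. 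Since $\mu_0\ge\max(z^p)$, every passive coefficient $z^p_i-\mu_0$ is $\le 0$, so $\sum_{i\in\Ap}(z^p_i-\mu_0)x^p_i$ is maximized over $x^p\ge 0$ precisely on $\{x^p\ge 0:\ x^p_i=0\text{ whenever }z^p_i\ne\mu_0\}$. Hence $A=\{\hat q^a_\cc(z)\}\times\{x^p\ge 0:\ x^p_i=0\text{ whenever }z^p_i\ne\hat\mu(z)\}$.

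It remains to intersect $A$ with $\{\sum x=1\}$. Using $\sum\hat q^a_\cc(z)=\sum\nu^a/(\hat\mu(z)-z^a)=\xi(\hat\mu(z))$, the normalization becomes $\sum x^p=1-\xi(\hat\mu(z))$, which gives the asserted description of $\SOL_\cc(z)$. If $\hatmu(z^a)\ge\max(z^p)$ then $\mu_0=\hatmu(z^a)$, so $\xi(\hat\mu(z))=1$ by definition of $\hatmu(z^a)$, forcing $x^p=0^p$; thus $\SOL_\cc(z)=\{(\hat q^a_\cc(z),0^p)\}$ is a singleton. For nonemptiness and compactness: $\xi$ is strictly decreasing on $(\max(z^a),\infty)$ with $\xi(\hatmu(z^a))=1$ (cf.~(\ref{EQ:xi-is-monotone})) and $\hat\mu(z)\ge\hatmu(z^a)$, hence $1-\xi(\hat\mu(z))\ge 0$; when it is strictly positive one must have $\hat\mu(z)=\max(z^p)$, attained at some passive letter, so the passive block is a nonempty closed bounded face of a scaled simplex, hence compact, and so is $\{\hat q^a_\cc(z)\}$ times it. I do not expect a real obstacle here: the one step carrying genuine content is the reduction to $\argmax_{x\ge 0}K_z(\cdot,\hat\mu(z))$ via the strong-duality identity of Lemma~\ref{L:F-cc-via-Lagrange-dual} together with the constraint $\sum x=1$, and the remaining care goes into the degenerate cases $\Ap=\emptyset$ (read $\max(z^p)$ as $-\infty$, with trivial passive block $\{0\}$) and $1-\xi(\hat\mu(z))=0$.
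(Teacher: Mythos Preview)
Your proof is correct and takes a genuinely different route from the paper's. The paper first invokes Theorem~\ref{T:bertsekas} for nonemptiness and compactness, then shows uniqueness of the active projection via a first-order-condition argument: for any two optimizers $\bar q,q$ it combines the two inequalities $f_z'(\bar q;q-\bar q)\le 0$ and $f_z'(q;\bar q-q)\le 0$ to obtain $\sum\nu^a(q^a/\bar q^a+\bar q^a/q^a)\le 2$, and uses the elementary bound $x/y+y/x\ge 2$ to conclude $q^a=\bar q^a$. Only then does it verify, by direct substitution in two cases, that $\hat q^a_\cc(z)$ is this common active part and pin down the passive block.

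You instead exploit the Lagrangian duality already established in Lemma~\ref{L:F-cc-via-Lagrange-dual} and the proof of Theorem~\ref{T:cc}: since $\ell^\ast(z)=k_z(\hat\mu(z))=\sup_{x\ge 0}K_z(x,\hat\mu(z))$ and $K_z(q,\hat\mu(z))=\sp{q}{z}-\ell(q)$ for $q\in\sx\A$, the set $\SOL_\cc(z)$ is exactly the intersection of $\argmax_{x\ge 0}K_z(\cdot,\hat\mu(z))$ with the simplex. Separability of $K_z$ then gives the active and passive blocks at once, with no need for the AM--GM trick. This is cleaner and reuses work already done; the paper's argument is more self-contained in that it does not appeal to the internal structure of the proof of Theorem~\ref{T:cc}. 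Both approaches yield the same description, and your treatment of the degenerate cases ($\Ap=\emptyset$, $1-\xi(\hat\mu(z))=0$) is adequate.
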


\begin{proof}
Fix $z\in\RRR^m$ and put $\mu\triangleq \hat\mu(z)$, $\hat q^a_\cc\triangleq \hat{q}^a_\cc(z)$;
then, by Theorem~\ref{T:cc},
$\ell^\ast(z) = -1+\mu+I_\mu(\nu^a\,\|\,-z^a)$.
The fact that $\SOL_\cc(z)$ is nonempty and compact follows from Theorem~\ref{T:bertsekas}.

Take any $\bar q\in\SOL_\cc(z)$.
Then $\bar q^a>0$ and, for every $q\in\sx{\A}$,
\begin{equation*}
  0 \ge f_z'(\bar q;q-\bar q)
  = \sp{z}{(q-\bar q)} + \sum\frac{\nu^a q^a}{\bar q^a} - 1.
\end{equation*}
If also $q\in\SOL_\cc(z)$ then, analogously,
\begin{equation*}
  0 \ge \sp{z}{(\bar q-q)} + \sum\frac{\nu^a \bar q^a}{q^a} - 1.
\end{equation*}
Thus, by combining these two inequalities,
\begin{equation*}
  \sum\nu^a\left(\frac{q^a}{\bar q^a}+\frac{\bar q^a}{q^a}\right) \le 2.
\end{equation*}
For positive $x$ and $y$, $(x/y+y/x)\ge 2$, with equality if and only if $x=y$.
Thus, $q^a=\bar q^a$ for every $\bar q,q\in\SOL_\cc(z)$; hence,
\begin{equation}\label{EQ:cc-SOLa-singleton}
    \pi^a(\SOL_\cc^a(z))
    \quad\text{is a singleton}.
\end{equation}

Distinguish two cases.
First assume that $\mu=\hatmu(z^a)\ge\max(z^p)$
and put $\bar q\triangleq (\hat q^a_\cc,0^p)$.
Then $\bar q\in\sx{\A}$ (use that $\sum\bar q=\xi(\mu)=1$) and
\begin{eqnarray*}
  f_z(\bar q)
  &=& \sp{\hat q^a_\cc}{z^a} + \sp{\nu^a}{\log(\hat q^a_\cc)}
  = \sum\frac{(z^a-\mu+\mu)\nu^a}{\mu-z^a} + I_\mu(\nu^a\,\|\,-z^a)
\\
  &=& -1+\mu + I_\mu(\nu^a\,\|\,-z^a)
  =\ell^\ast(z).
\end{eqnarray*}
Hence $\bar q\in \SOL_\cc(z)$ and, since $\sum\bar q^a=1$,
$\SOL_\cc(z)=\{\bar q\}$ by (\ref{EQ:cc-SOLa-singleton}).

Assume now that $\mu=\max(z^p)>\hatmu(z^a)$;
then $\gamma\triangleq\xi(\mu)<\xi(\hatmu(z^a))=1$. Take any $\bar q\in\sx{\A}$
with $\bar q^a=\hat q^a_\cc$ and note that $\sum \bar q^p = 1-\gamma$.
An argument analogous to the first case gives
\begin{equation*}
    f_z(\bar q)
    =
    \sp{\bar q^p}{z^p} + \sp{\hat q^a}{z^a} + \sp{\nu^a}{\log(\hat q^a_\cc)}
  = \sp{\bar q^p}{z^p} -1+\mu\gamma + I_\mu(\nu^a\,\|\,-z^a).
\end{equation*}
Hence $\bar q\in\SOL_\cc(z)$ if and only if
$\sp{\bar q^p}{z^p}=\mu(1-\gamma)$. Since $\sum \bar q^p = 1-\gamma$, the last condition
is equivalent to the fact that $\bar q^p_i=0$ for every $i\in\Ap$ with $z^p_i<\mu$.
In view of (\ref{EQ:cc-SOLa-singleton}) this proves the proposition.
\end{proof}

\subsubsection{Additional properties of the convex conjugate}\label{SSS:cc-additional}
Some additional properties of the convex conjugate $\ell^\ast$, concerning its
monotonicity and differentiability, are summarized in the next lemmas.

\begin{lemma}\label{L:cc-nondecreasing}
The following is true for every $z,\tilde z\in\RRR^m$:
\begin{enumerate}
  \item[(a)]
    If $z^a\ge\tilde z^a$ then $\hatmu(z^a)\ge \hatmu(\tilde z^a)$,
    with the equality if and only if $z^a=\tilde z^a$.
  \item[(b)]
    If $z\ge\tilde z$ then $\ell^\ast(z)\ge \ell^\ast(\tilde z)$ (that is, $\ell^\ast$ is nondecreasing),
    with the equality if and only if $z^a=\tilde z^a$ and $\hat\mu(z)=\hat\mu(\tilde z)$.
\end{enumerate}
\end{lemma}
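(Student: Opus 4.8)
The plan is to dispatch the two parts with different tools: part~(a) directly from the defining equation~(\ref{EQ:mu-bar}) for $\hatmu$ together with the strict monotonicity~(\ref{EQ:xi-is-monotone}) of $\xi$, and part~(b) from the closed form in Theorem~\ref{T:cc} together with the description of the cc-primal solution set in Proposition~\ref{P:cc-SOL}.

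\emph{Part (a).} Recall that $\hatmu(z^a)$ is the unique root in $(\max(z^a),\infty)$ of $\xi_{z^a}(\mu)=\sum \nu^a/(\mu-z^a)=1$, with $\xi_{z^a}$ given by~(\ref{EQ:xi-def}). First I would observe that if $z^a\ge\tilde z^a$ then $\max(z^a)\ge\max(\tilde z^a)$, so both $\xi_{z^a}$ and $\xi_{\tilde z^a}$ are defined on $(\max(z^a),\infty)$, and a term-by-term comparison (each $\mu-z_i^a$ lies in $(0,\mu-\tilde z_i^a]$) yields $\xi_{z^a}(\mu)\ge\xi_{\tilde z^a}(\mu)$ there. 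Evaluating at $\mu=\hatmu(z^a)$ gives $\xi_{\tilde z^a}(\hatmu(z^a))\le 1=\xi_{\tilde z^a}(\hatmu(\tilde z^a))$, and since $\xi_{\tilde z^a}$ is strictly decreasing this forces $\hatmu(z^a)\ge\hatmu(\tilde z^a)$. For the equality statement one implication is trivial; for the other, if $\hatmu(z^a)=\hatmu(\tilde z^a)=:\mu_0$ then $\mu_0>\max(z^a)$ and $1=\xi_{z^a}(\mu_0)=\xi_{\tilde z^a}(\mu_0)$, so $\sum_{i\in\Aa}\nu_i^a\bigl(\tfrac{1}{\mu_0-z_i^a}-\tfrac{1}{\mu_0-\tilde z_i^a}\bigr)=0$ with every summand nonnegative and $\nu^a>0$; hence each summand vanishes and $z^a=\tilde z^a$.

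\emph{Part (b).} The inequality $\ell^\ast(z)\ge\ell^\ast(\tilde z)$ is immediate from $\ell^\ast(z)=\sup_{q\in\sx\A}(\sp{q}{z}-\ell(q))$: for $q\in\sx\A$ one has $\sp{q}{z-\tilde z}\ge 0$, hence $\sp{q}{z}-\ell(q)\ge\sp{q}{\tilde z}-\ell(q)$, and passing to the supremum preserves this. If moreover $z^a=\tilde z^a$ and $\hat\mu(z)=\hat\mu(\tilde z)$, Theorem~\ref{T:cc} gives $\ell^\ast(z)=-1+\hat\mu(z)+I_{\hat\mu(z)}(\nu^a\,\|\,-z^a)=\ell^\ast(\tilde z)$. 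For the converse, assume $\ell^\ast(z)=\ell^\ast(\tilde z)$ and pick any $\bar q\in\SOL_\cc(\tilde z)$, nonempty by Proposition~\ref{P:cc-SOL}. Then
$$\ell^\ast(z)\ \ge\ \sp{\bar q}{z}-\ell(\bar q)\ \ge\ \sp{\bar q}{\tilde z}-\ell(\bar q)\ =\ \ell^\ast(\tilde z)\ =\ \ell^\ast(z),$$
so both inequalities are equalities: $\bar q\in\SOL_\cc(z)$ as well, and $\sp{\bar q}{z-\tilde z}=0$, which (as $\bar q\ge 0$, $z-\tilde z\ge 0$) forces $\bar q_i=0$ for every $i$ with $z_i>\tilde z_i$. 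Applying Proposition~\ref{P:cc-SOL} at $\tilde z$ gives $\bar q^a=\nu^a/(\hat\mu(\tilde z)-\tilde z^a)>0$ (positivity because $\hat\mu(\tilde z)\ge\hatmu(\tilde z^a)>\max(\tilde z^a)$ and $\nu^a>0$); hence no active coordinate lies in $\{z_i>\tilde z_i\}$, i.e.~$z^a=\tilde z^a$. Applying Proposition~\ref{P:cc-SOL} once more, now at $z$, gives $\bar q^a=\nu^a/(\hat\mu(z)-z^a)=\nu^a/(\hat\mu(z)-\tilde z^a)$; comparing with $\bar q^a=\nu^a/(\hat\mu(\tilde z)-\tilde z^a)$ and cancelling $\nu^a>0$ yields $\hat\mu(z)=\hat\mu(\tilde z)$.

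Most of this is routine; the one step that needs care is the converse in part~(b), where the whole argument hinges on the fact that the common minimizer $\bar q$ lies in \emph{both} $\SOL_\cc(\tilde z)$ and $\SOL_\cc(z)$ and that its active block is strictly positive. Both are delivered by Proposition~\ref{P:cc-SOL}, and it is exactly this strict positivity of $\bar q^a$ (equivalently of $\nu^a$) that licenses the two cancellations leading to $z^a=\tilde z^a$ and $\hat\mu(z)=\hat\mu(\tilde z)$.
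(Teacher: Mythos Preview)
Your proof is correct. Part~(a) and the easy directions of part~(b) match the paper's argument essentially verbatim. For the converse in part~(b) you take a slightly different route from the paper: the paper argues by contrapositive, splitting into the cases $z^a\ne\tilde z^a$ (handled just as you do, via $\tilde q^a>0$) and $z^a=\tilde z^a$, $\hat\mu(z)>\hat\mu(\tilde z)$ (handled by showing that $g(\mu)\triangleq\mu+I_\mu(\nu^a\,\|\,-z^a)$ is strictly increasing on $[\hatmu(z^a),\infty)$, since $g'(\mu)=1-\xi(\mu)$). Your argument avoids this case split and the analysis of $g$: by forcing $\bar q$ to lie in both $\SOL_\cc(z)$ and $\SOL_\cc(\tilde z)$ you can read off $\hat\mu(z)=\hat\mu(\tilde z)$ directly from the uniqueness of the active block $\hat q^a_\cc(\cdot)$ in Proposition~\ref{P:cc-SOL}. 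This is a little cleaner, at the cost of leaning more heavily on Proposition~\ref{P:cc-SOL}; the paper's approach instead makes the strict monotonicity in $\hat\mu$ explicit, which is a fact of some independent interest.
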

\begin{proof}
(a) If $z^a\ge\tilde z^a$ then $\xi_{z^a}(\mu) \ge \xi_{\tilde z^a}(\mu)$ for every
$\mu>\max(z^a)$; moreover, $\xi_{z^a}(\mu) = \xi_{\tilde z^a}(\mu)$ for some $\mu>\max(z^a)$
if and only if $z^a=\tilde z^a$. Using (\ref{EQ:xi-is-monotone}), (a) follows.

(b) Assume that $z\ge\tilde z$. Then $\sp{z}{q}\ge \sp{\tilde z}{q}$ for every $q\in\sx{\A}$,
hence $\ell^\ast(z)\ge \ell^\ast(\tilde z)$.
If $z^a=\tilde z^a$ and $\hat\mu(z)=\hat\mu(\tilde z)$
then $\ell^\ast(z)= \ell^\ast(\tilde z)$ by Theorem~\ref{T:cc}.
It suffices to prove that if $z^a\ne \tilde z^a$ or $\hat\mu(z)\ne \hat\mu(\tilde z)$ then
$\ell^\ast(z) > \ell^\ast(\tilde z)$.

To this end, fix some $\tilde q\in\SOL_\cc(\tilde z)$.
If $z^a\ne \tilde z^a$ then
$\ell^\ast(z)\ge \sp{\tilde{q}}{z} -\ell(\tilde q)\ge\sp{\tilde q^a}{(z^a-\tilde z^a)}
+ \ell^\ast(\tilde z)$.
Since $\tilde q^a>0$ and $(z^a-\tilde z^a)\ge 0$ is not zero, we have $\ell^\ast(z) > \ell^\ast(\tilde z)$.
Finally, assume that $z^a= \tilde z^a$ and $\hat\mu(z)>\hat\mu(\tilde z)$.
The map
\begin{equation*}
  g:[\hatmu({z}^a),\infty)\to\RRR,
  \qquad
  g(\mu)\triangleq \mu+I_\mu(\nu^a\,\|\,-z^a)
\end{equation*}
is strictly increasing (indeed, $g'(\mu)=1-\xi(\mu)>0$ for $\mu>\hatmu({z}^a)$ by
(\ref{EQ:xi-is-monotone})).
Thus, by Theorem~\ref{T:cc} and the fact that
$\hat\mu(\tilde z)\ge\hatmu(\tilde z^a)=\hatmu(z^a)$,
it follows that
  $\ell^\ast(z)-\ell^\ast(\tilde z)
  = g(\hat\mu(z))- g(\hat\mu(\tilde z))>0$.
\end{proof}

Since the convex conjugate $\ell^\ast$ is finite-valued and convex, by
\cite[Thm.~10.4]{rockafellar} it is locally Lipschitz. The following lemma claims
that $\ell^\ast$ is even globally Lipschitz with the Lipschitz constant equal to $1$.
(Here we assume that $\RRR^m$ is equipped with the sup-norm
$\norm{x}_\infty=\max\abs{x_i}$.)
\begin{lemma}\label{L:cc-lipschitzness}
The convex conjugate $\ell^\ast:\RRR^m\to\RRR$ is a (finite-valued) convex function
which is Lipschitz with $\Lip(\ell^\ast) = 1$.
\end{lemma}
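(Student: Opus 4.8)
The plan is to read finiteness and convexity off the explicit formula of Theorem~\ref{T:cc}, and then to obtain the Lipschitz bound directly from the variational definition of $\ell^\ast$; the lower bound on the Lipschitz constant will come from the translation identity of Lemma~\ref{L:cc-ell*(z+c)}.

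\emph{Finiteness and convexity.} For any $z\in\RRR^m$, Theorem~\ref{T:cc} writes $\ell^\ast(z)=-1+\hat\mu(z)+I_{\hat\mu(z)}(\nu^a\,\|\,-z^a)$. Since $\hat\mu(z)\ge\hatmu(z^a)>\max(z^a)$, the vector $\hat\mu(z)-z^a$ is strictly positive, so $I_{\hat\mu(z)}(\nu^a\,\|\,-z^a)=\sp{\nu^a}{\log\frac{\nu^a}{\hat\mu(z)-z^a}}$ is a finite real number and hence so is $\ell^\ast(z)$; thus $\ell^\ast$ is finite-valued. For convexity I would simply note that $\ell^\ast(z)=\sup_{q\in\sx\A}(\sp{q}{z}-\ell(q))$ is a pointwise supremum of affine functions of $z$, hence convex — no special property of $\ell$ is needed.

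\emph{Upper bound $\Lip(\ell^\ast)\le 1$.} Fix $z,\tilde z\in\RRR^m$. For each $q\in\sx\A$ one has $q\ge 0$ and $\sum q=1$, so $\sp{q}{z-\tilde z}=\sum_i q_i(z_i-\tilde z_i)\le\sum_i q_i\abs{z_i-\tilde z_i}\le\norm{z-\tilde z}_\infty$; consequently $\sp{q}{z}-\ell(q)\le\bigl(\sp{q}{\tilde z}-\ell(q)\bigr)+\norm{z-\tilde z}_\infty\le\ell^\ast(\tilde z)+\norm{z-\tilde z}_\infty$. Taking the supremum over $q\in\sx\A$ and using that both conjugate values are real (the previous step) gives $\ell^\ast(z)-\ell^\ast(\tilde z)\le\norm{z-\tilde z}_\infty$; by the symmetric argument $\abs{\ell^\ast(z)-\ell^\ast(\tilde z)}\le\norm{z-\tilde z}_\infty$, so $1$ is a Lipschitz constant for $\ell^\ast$.

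\emph{Lower bound and conclusion.} To see that $1$ is optimal, I would invoke Lemma~\ref{L:cc-ell*(z+c)}, which yields $\ell^\ast(c\ones)=\ell^\ast(0)+c$ for every $c\in\RRR$ (writing $\ones$ for the all-ones vector, in accordance with the scalar/vector convention); since $\norm{c\ones-0}_\infty=\abs{c}$, no Lipschitz constant strictly below $1$ is admissible, so $\Lip(\ell^\ast)=1$. There is essentially no obstacle in this argument; the only point requiring care is that passing to the supremum over $q$ in the upper-bound step presupposes that $\ell^\ast$ is real-valued, which is exactly what the first step (via Theorem~\ref{T:cc}) secures.
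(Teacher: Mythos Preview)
Your proof is correct and follows essentially the same route as the paper's. The only cosmetic differences are that the paper dismisses finiteness as ``obvious'' (it follows directly from $\sp{q}{z}-\ell(q)\le\max(z)$ and the existence of $q$ with $\ell(q)<\infty$) rather than invoking Theorem~\ref{T:cc}, and that for the upper bound the paper fixes a maximizer $q\in\SOL_\cc(z)$ up front instead of taking the supremum at the end --- but these are the same argument.
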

\begin{proof}
The fact that $\ell^\ast$ is always finite is obvious.
To prove that $\ell^\ast$ is Lipschitz-$1$, fix any $z,z'\in\RRR^m$.
Then, for any $q\in\SOL_\cc(z)$,
\begin{eqnarray*}
  \ell^\ast(z)-\ell^\ast(z')
  &\le&
  (\sp{z}{q}-\ell(q))  -   (\sp{z'}{q}-\ell(q))
  =
  \sp{(z-z')}{q}
\\
  &\le&
  \norm{z-z'}_\infty.
\end{eqnarray*}
Analogously $\ell^\ast(z')-\ell^\ast(z)\le\norm{z-z'}_\infty$. Thus $\Lip(\ell^\ast) \le 1$.
Since $\Lip(\ell^\ast) \ge 1$ by Lemma~\ref{L:cc-ell*(z+c)}
(use that $\norm{(c,c,\dots,c)}_\infty=\abs{c}$ for $c\in \RRR$),
we have $\Lip(\ell^\ast) = 1$.
\end{proof}

\begin{lemma}\label{L:cc-mu-continuous}
The map $\hatmu:\RRR^{m_a}\to\RRR$ is differentiable (even $C^\infty$) and
\begin{equation*}
    \nabla \hatmu (z^a) = \frac{1}{\sum_i\alpha_i} (\alpha_1,\dots,\alpha_{m_a}),\qquad
    \alpha_k \triangleq \frac{\nu_k}{(\hatmu(z^a)-z^a_k)^2}
    \quad (1\le k\le m_a).
\end{equation*}
The map $\hat\mu:\RRR^{m}\to\RRR$ is continuous.
\end{lemma}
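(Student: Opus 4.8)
The plan is to recognise $\hatmu$ as an implicitly defined function and to differentiate it via the implicit function theorem. First I would introduce, on the open set $U\triangleq\{(w,\mu)\in\RRR^{m_a}\times\RRR:\ \mu>\max w\}$, the map
\[
  \Phi(w,\mu)\triangleq\sum_{i\in\Aa}\frac{\nu_i^a}{\mu-w_i}-1 ,
\]
which is a finite sum of $C^\infty$ functions on $U$, hence $C^\infty$ there. By the definition of $\hatmu$ together with the monotonicity facts recalled in (\ref{EQ:xi-is-monotone}), for every $z^a\in\RRR^{m_a}$ the pair $(z^a,\hatmu(z^a))$ lies in $U$, satisfies $\Phi(z^a,\hatmu(z^a))=0$, and $\hatmu(z^a)$ is the \emph{only} solution of $\Phi(z^a,\cdot)=0$ in $(\max z^a,\infty)$. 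A direct computation gives
\[
  \frac{\partial\Phi}{\partial w_k}(w,\mu)=\frac{\nu_k^a}{(\mu-w_k)^2},
  \qquad
  \frac{\partial\Phi}{\partial\mu}(w,\mu)=-\sum_{i\in\Aa}\frac{\nu_i^a}{(\mu-w_i)^2},
\]
so at a point of the form $(z^a,\hatmu(z^a))$ these equal $\alpha_k$ and $-\sum_i\alpha_i$, respectively; since $m_a>0$ and $\nu_i^a>0$ for every $i\in\Aa$, each $\alpha_i>0$ and thus $\partial_\mu\Phi\ne 0$ on all of $U$.

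Next I would fix $z_0^a$, set $\mu_0\triangleq\hatmu(z_0^a)$, and apply the implicit function theorem at $(z_0^a,\mu_0)\in U$: there are open neighbourhoods $V$ of $z_0^a$ and $W$ of $\mu_0$ with $V\times W\subseteq U$, and a $C^\infty$ map $g:V\to W$ such that, for $w\in V$, $g(w)$ is the unique zero of $\Phi(w,\cdot)$ in $W$. Since $V\times W\subseteq U$ forces $g(w)>\max w$, and $\hatmu(w)$ is the unique zero of $\Phi(w,\cdot)$ on $(\max w,\infty)$, we get $g\equiv\hatmu$ on $V$; as $z_0^a$ was arbitrary, $\hatmu$ is $C^\infty$ on $\RRR^{m_a}$. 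Differentiating the identity $\Phi(z^a,\hatmu(z^a))\equiv 0$ with respect to $z_k^a$ and using the partials above yields $\alpha_k-\bigl(\sum_i\alpha_i\bigr)\partial_{z_k^a}\hatmu(z^a)=0$, i.e. $\partial_{z_k^a}\hatmu(z^a)=\alpha_k/\sum_i\alpha_i$, which is the asserted formula for $\nabla\hatmu$.

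Finally, for the continuity of $\hat\mu$ I would just use $\hat\mu(z)=\max\{\hatmu(\pi^a(z)),\max(\pi^p(z))\}$: the first argument is continuous because $\hatmu$ is $C^\infty$ and $\pi^a$ is linear, and the second is continuous as a finite maximum of coordinate projections when $m_p\ge 1$ (and is simply absent when $m_p=0$, in which case $\hat\mu=\hatmu\circ\pi^a$). A maximum of finitely many continuous functions is continuous, so $\hat\mu$ is continuous. I do not expect a genuine obstacle: the only point requiring a little care is the verification that the local branch delivered by the implicit function theorem is actually $\hatmu$ and not a spurious root, which is immediate from the uniqueness already recorded in (\ref{EQ:xi-is-monotone}).
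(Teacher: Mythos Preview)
Your proposal is correct and follows essentially the same route as the paper: define the $C^\infty$ map $F(z,\mu)=\sum_i \nu_i/(\mu-z_i)$ on the open set where $\mu>\max z$, note that $\partial_\mu F\ne 0$, and apply the implicit function theorem to obtain smoothness of $\hatmu$ and the gradient formula, then deduce continuity of $\hat\mu$ from the max expression. You are slightly more explicit than the paper in verifying that the local implicit branch coincides with $\hatmu$ via the uniqueness in (\ref{EQ:xi-is-monotone}), but the argument is the same.
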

\begin{proof} Since $\hat\mu(z)=\max(\hatmu(z^a),\max(z^p))$, the continuity of
$\hat\mu$ follows from the continuity of $\hatmu$;
thus it suffices to prove the first part of the lemma.

To this end, we may  assume
that $\Ap=\emptyset$, that is, $\nu>0$ and $m_a=m$.
Put $\Omega\triangleq\{(z,\mu): z\in\RRR^m, \mu>\max(z)\}$ and define $F:\Omega\to(0,\infty)$ by
\begin{equation*}
    F(z,\mu) \triangleq \sum_{i=1}^m \frac{\nu_i}{\mu-z_i} \,.
\end{equation*}
Note that, for every $z$, $\hatmu(z)$ is the unique solution of $F(z,\mu)=1$ such that
$(z,\mu)\in\Omega$. The set $\Omega$ is open and connected and $F$ is $C^\infty$ on $\Omega$.
Moreover,
\begin{equation*}
    \nabla_z F(z,\mu) = \frac{\nu}{(\mu-z)^2}
    \qquad\text{and}\qquad
    \nabla_\mu F(z,\mu) = -\sum_{i=1}^m \frac{\nu_i}{(\mu-z_i)^2} \ne 0.
\end{equation*}
By the local implicit function theorem \cite[Prop.~1.1.14]{bertsekas},
$\hatmu$ is $C^\infty$ on $\RRR^m$ and $\nabla\hatmu(z)$ is given by
$-\nabla_z F(z,\hatmu(z)) \cdot [\nabla_\mu F(z,\hatmu(z))]^{-1}$. From this the lemma follows.
\end{proof}

\begin{lemma}\label{L:cc-differentiability-Danskin}
For every $z,v\in\RRR^m$,
the subgradient and the directional derivative of $\ell^\ast$ are given by
\begin{equation*}
    \partial\ell^\ast(z)=\SOL_\cc(z)
    \qquad\text{and}\qquad
    (\ell^\ast)'(z;v) = \max_{q\in\SOL_\cc(z)} \sp{q}{v}.
\end{equation*}
In particular, if $\hatmu(z^a)\ge \max(z^p)$ then
$\ell^\ast$ is differentiable at $z$
and
\begin{equation*}
    \nabla \ell^\ast(z) = (\hat{q}^a_\cc(z),0^p).
\end{equation*}
\end{lemma}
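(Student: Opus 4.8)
The plan is to reduce both identities to standard facts about convex conjugates, since the substantive work — the nonemptiness, compactness and explicit description of $\SOL_\cc(z)$ — has already been done in Proposition~\ref{P:cc-SOL}, and the finiteness of $\ell^\ast$ in Lemma~\ref{L:cc-lipschitzness}. First I would observe that $\ell^\ast$ is precisely the Fenchel conjugate of the function $\tilde\ell$ introduced in Section~\ref{St:F} (with $\tilde\ell=\ell$ on $\sx\A$ and $\tilde\ell=\infty$ elsewhere): indeed $\sup_{x\in\RRR^m}(\sp{x}{z}-\tilde\ell(x))=\sup_{q\in\sx\A}(\sp{q}{z}-\ell(q))=\ell^\ast(z)$. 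Moreover $\tilde\ell$ is proper, closed and convex, being the sum of the proper closed convex function $\ell$ of Lemma~\ref{L:f-props} and the indicator of the closed convex set $\sx\A$.

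With $\tilde\ell$ proper closed convex, the Fenchel--Young equality gives, for every $z,q\in\RRR^m$, that $q\in\partial\ell^\ast(z)$ if and only if $\sp{q}{z}=\tilde\ell(q)+\ell^\ast(z)$; for $q\notin\sx\A$ the right-hand side is $\infty$, while for $q\in\sx\A$ this is $\sp{q}{z}-\ell(q)=\ell^\ast(z)$, i.e.~$q\in\SOL_\cc(z)$ by the definition (\ref{EQ:F-cc-finite-sol}). Hence $\partial\ell^\ast(z)=\SOL_\cc(z)$. (Equivalently this is Danskin's theorem applied to $\ell^\ast(z)=\sup_{q\in\sx\A}(\sp{q}{z}-\ell(q))$, the supremum being attained on the nonempty compact set $\SOL_\cc(z)$ of Proposition~\ref{P:cc-SOL}; whence the name of the lemma.) Since $\ell^\ast$ is moreover finite-valued on all of $\RRR^m$ (Lemma~\ref{L:cc-lipschitzness}), it is continuous, so its subdifferential is everywhere nonempty and compact and its one-sided directional derivative is the support function of $\partial\ell^\ast(z)$, i.e.~$(\ell^\ast)'(z;v)=\max_{w\in\partial\ell^\ast(z)}\sp{w}{v}$ for every $v$ (Rockafellar~\cite[Thm.~23.4]{rockafellar}). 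Substituting $\partial\ell^\ast(z)=\SOL_\cc(z)$ gives $(\ell^\ast)'(z;v)=\max_{q\in\SOL_\cc(z)}\sp{q}{v}$.

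For the final clause I would use that a finite convex function on $\RRR^m$ is differentiable at $z$ exactly when $\partial\ell^\ast(z)$ is a singleton, in which case the gradient is its unique element (Rockafellar~\cite[Thm.~25.1]{rockafellar}); under the hypothesis $\hatmu(z^a)\ge\max(z^p)$, Proposition~\ref{P:cc-SOL} yields $\SOL_\cc(z)=\{(\hat{q}^a_\cc(z),0^p)\}$, whence $\ell^\ast$ is differentiable at $z$ with $\nabla\ell^\ast(z)=(\hat{q}^a_\cc(z),0^p)$. No step here is genuinely delicate; the only point needing a little care is the bookkeeping that identifies $\ell^\ast$ with the conjugate of $\tilde\ell$ and checks that $\tilde\ell$ is closed and proper, so that the conjugate--subdifferential machinery applies. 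Everything else is a direct appeal to Proposition~\ref{P:cc-SOL}, Lemma~\ref{L:cc-lipschitzness}, and classical convex analysis.
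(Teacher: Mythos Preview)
Your argument is correct and takes a genuinely different route from the paper. The paper applies Danskin's theorem \cite[Prop.~4.5.1]{bertsekas} directly to the supremum defining $\ell^\ast$, but since that theorem requires a compact parameter set on which the integrand is continuous, it must first restrict to $\sx{\A}^\eps=\{q\in\sx{\A}:q^a\ge\eps\}$ for a suitably small $\eps$, and then invoke the continuity of $\hat\mu$ and $\hat q^a_\cc$ (Lemma~\ref{L:cc-mu-continuous}) to show that $\ell^\ast$ coincides with the restricted supremum on a neighborhood of the point in question. You bypass this compactness workaround entirely by using the Fenchel--Young characterization $q\in\partial(\tilde\ell)^\ast(z)\iff\tilde\ell(q)+\ell^\ast(z)=\sp{q}{z}$, valid because $\tilde\ell$ is proper closed convex, and then read off the directional derivative and the differentiability criterion from standard facts about finite convex functions. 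Your approach is shorter, uses only earlier results (Lemma~\ref{L:f-props}, Proposition~\ref{P:cc-SOL}, Lemma~\ref{L:cc-lipschitzness}), and does not need Lemma~\ref{L:cc-mu-continuous}; the paper's approach, on the other hand, makes the connection to Danskin explicit, which motivates the lemma's name and may be more familiar to readers oriented toward optimization rather than convex analysis.
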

\begin{proof}
This is a consequence of Danskin's theorem \cite[Prop.~4.5.1]{bertsekas}.
To see this, fix $\bar z\in\RRR^m$ and take $\eps>0$ such that $\hat q^a_\cc(\bar z)>\eps$.
Put $\sx{\A}^\eps\triangleq\{q\in\sx{\A}: q^a\ge\eps\}$ and define
\begin{eqnarray*}
  \ &\varphi:\RRR^m\times\sx{\A}^\eps\to\RRR,\qquad
  &\varphi(z,q)\triangleq \sp{z}{q}-\ell(q),
 \\
  \ &f:\RRR^m\to \RRR,\qquad\qquad\quad
  &f(z)\quad\triangleq\max_{q\in\sx{\A}^\eps} \varphi(z,q).
\end{eqnarray*}
The map $\varphi$ is continuous and the partial functions $\varphi(\cdot,q)$ are (trivially)
convex and differentiable for every $q\in\sx{\A}^\eps$ with
(continuous) $\nabla_z \varphi(z,q)=q$.
Thus, by Danskin's theorem,
\begin{equation*}
    f'(\bar z;v) = \max_{q\in\SOL_\cc(\bar z)} \sp{q}{v}
    \qquad (v\in\RRR^m)
\end{equation*}
and
\begin{equation*}
    \partial f(\bar z)
    = \conv\{\nabla_z \varphi(\bar z,q): q\in\SOL_\cc(\bar z)\}
    =\SOL_\cc(\bar z).
\end{equation*}

Note that $\hat\mu(\cdot)$ is continuous by Lemma~\ref{L:cc-mu-continuous}
and hence also $\hat q^a_\cc(\cdot)$ is continuous.
Thus $\hat q^a_\cc(z)>\eps$ on a neighborhood $U$ of $\bar z$, and so
$\ell^\ast=f$ on $U$. This proves the first assertion.

If $\hatmu(\bar z^a)\ge \max(\bar z^p)$ then $\SOL_\cc(\bar z)$
is a singleton $\{\bar q\}$, where $\bar q\triangleq (\hat q^a_\cc(\bar z),0^p)$.
In such a case Danskin's theorem gives that $\ell^\ast$ is differentiable at $\bar z$
with $\nabla \ell^\ast(z)=\nabla_z \varphi(z,\bar q)=\bar q$.
So also the second assertion of the lemma is proved.
\end{proof}

The convex conjugate $\ell^\ast$ is not strictly convex, even if $\nu>0$.
This is so because $\ell^\ast(z+c)=\ell^\ast(z)+c$ for every constant $c$,
cf.~Lemma~\ref{L:cc-ell*(z+c)}.
However, the following holds.

\begin{lemma}\label{L:F-cc-derivative}
Let $\nu>0$. Then $\ell^\ast$ is $C^\infty$ and, for every $z\in\RRR^m$,
\begin{equation*}
  \nabla \ell^\ast(z)=\frac{\nu}{\hatmu(z)-z},
  \qquad
  \nabla^2 \ell^\ast(z)=\diag(\alpha) - \frac{1}{\sum\alpha} \alpha \alpha',
\end{equation*}
where $\alpha\triangleq (\alpha_1,\dots,\alpha_m)'$,
$\alpha_i \triangleq  \nu_i/(\hatmu(z)-z_i)^2$.
Consequently, for every $x\in\RRR^m$, $x'(\nabla^2 \ell^\ast(z)) x\ge 0$
and
\begin{equation*}
    x'(\nabla^2 \ell^\ast(z)) x
    = 0
    \qquad\text{if and only if}\qquad
    x=(c,\dots,c)' \text{ for some } c\in\RRR.
\end{equation*}
\end{lemma}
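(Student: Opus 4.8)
The plan is to reduce to the closed form of $\ell^\ast$ available when $\nu>0$ and then differentiate twice. Since $\nu>0$ we have $\Ap=\emptyset$, so $\hat\mu(z)=\hatmu(z)$ and Corollary~\ref{C:BD-Lemma21} gives
$$
  \ell^\ast(z) = -1 + \hatmu(z) + \sp{\nu}{\log\nu} - \sp{\nu}{\log(\hatmu(z)-z)}.
$$
Because $\hatmu(z)>\max(z)$ for every $z$, each argument $\hatmu(z)-z_i$ of the logarithm is strictly positive, and $\hatmu$ is $C^\infty$ by Lemma~\ref{L:cc-mu-continuous}; hence the right-hand side is a composition and sum of $C^\infty$ functions, so $\ell^\ast$ is $C^\infty$ on $\RRR^m$.

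Next I would compute the derivatives. Abbreviate $\mu=\hatmu(z)$, $d_i=\mu-z_i>0$, $\alpha_i=\nu_i/d_i^2>0$; by Lemma~\ref{L:cc-mu-continuous} one has $\partial_k\mu=\alpha_k/\sum_l\alpha_l$. Differentiating the displayed formula in $z_j$ and using the chain rule yields $\partial_j\ell^\ast(z)=\partial_j\mu-(\partial_j\mu)\sum_i\nu_i/d_i+\nu_j/d_j$. Here the key point is the defining identity $\sum_i\nu_i/d_i=\xi(\hatmu(z))=1$, recalling that $\hatmu(z)$ solves $\xi(\mu)=1$; it makes the $\partial_j\mu$ terms cancel, leaving $\partial_j\ell^\ast(z)=\nu_j/d_j$, i.e.\ $\nabla\ell^\ast(z)=\nu/(\hatmu(z)-z)$, consistent with Lemma~\ref{L:cc-differentiability-Danskin}. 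Differentiating once more, $\partial_k\partial_j\ell^\ast(z)=-(\nu_j/d_j^2)(\partial_k\mu-\delta_{jk})=\delta_{jk}\alpha_j-\alpha_j\alpha_k/\sum_l\alpha_l$, which is exactly $\diag(\alpha)-\tfrac{1}{\sum\alpha}\alpha\alpha'$.

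Finally, for the quadratic form, for $x\in\RRR^m$ one gets
$$
  x'(\nabla^2\ell^\ast(z))x = \sum_i\alpha_i x_i^2 - \frac{1}{\sum_i\alpha_i}\Bigl(\sum_i\alpha_i x_i\Bigr)^2.
$$
Since all $\alpha_i>0$, the Cauchy--Schwarz inequality applied to the vectors $(\sqrt{\alpha_i})_i$ and $(\sqrt{\alpha_i}\,x_i)_i$ gives $\bigl(\sum_i\alpha_i x_i\bigr)^2\le\bigl(\sum_i\alpha_i\bigr)\bigl(\sum_i\alpha_i x_i^2\bigr)$, so the form is nonnegative; equality holds precisely when $(\sqrt{\alpha_i}\,x_i)_i$ is a scalar multiple of $(\sqrt{\alpha_i})_i$, i.e.\ when $x=(c,\dots,c)'$ for some $c\in\RRR$ (and such an $x$ indeed annihilates the form, as a direct substitution shows). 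I do not expect a genuine obstacle; the only delicate point is carrying out the chain-rule bookkeeping cleanly and invoking $\xi(\hatmu(z))=1$ at the right place so that the $\nabla\hatmu$ contributions collapse — everything else is routine differentiation plus a single use of Cauchy--Schwarz.
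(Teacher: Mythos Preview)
Your proof is correct and follows essentially the same route as the paper: both start from Corollary~\ref{C:BD-Lemma21} and Lemma~\ref{L:cc-mu-continuous}, differentiate using the identity $\sum_i \nu_i/(\hatmu(z)-z_i)=1$ to collapse the $\nabla\hatmu$ terms, and obtain the same Hessian formula. The only cosmetic difference is that for the quadratic form you invoke Cauchy--Schwarz, while the paper phrases the same inequality as Jensen's inequality (nonnegativity of a weighted variance with weights $w_i=\alpha_i/\sum_j\alpha_j$); the two arguments are equivalent here.
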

\begin{proof}
We use the notation from the proof of Lemma~\ref{L:cc-mu-continuous}.
By Corollary~\ref{C:BD-Lemma21} and Lemma~\ref{L:cc-mu-continuous},
\begin{equation*}
    \frac{\partial \ell^\ast(z)}{\partial z_k}
    =
    \frac{\partial \hatmu(z)}{\partial z_k}
    \left(
      1-\sum_i \frac{\nu_i}{\hatmu(z)-z_i}
    \right)
    + \frac{\nu_k}{\hatmu(z)-z_k}
    =
    \frac{\nu_k}{\hatmu(z)-z_k}
\end{equation*}
since $\sum_i \nu_i/(\hatmu(z)-z_i) = F(z,\hatmu(z))=1$.
Further,
\begin{equation*}
    \frac{\partial^2 \ell^\ast(z)}{\partial z_k \partial z_l}
    =
    -\frac{\nu_k}{(\hatmu(z)-z_k)^2}   \frac{\partial \hatmu(z)}{\partial z_l}
    =
    -\frac{\alpha_k\alpha_l}{\sum \alpha}
\end{equation*}
for $k\ne l$, and
\begin{equation*}
    \frac{\partial^2 \ell^\ast(z)}{\partial z_k^2}
    =
    -\frac{\nu_k}{(\hatmu(z)-z_k)^2}
     \left( \frac{\partial \hatmu(z)}{\partial z_k} - 1\right)
    =
    \alpha_k -\frac{\alpha_k^2}{\sum \alpha}\,.
\end{equation*}
Thus the first assertion of the lemma is proved.

To prove the second part of the lemma, fix any $z,x\in\RRR^m$ and put
$a\triangleq \sum \alpha$,
$A\triangleq a \nabla^2 \ell^\ast(z)$.
Then
\begin{equation*}
    x'Ax
    =
    a\sum _i \alpha_i x_i^2 - \left(  \sum_i \alpha_i x_i \right)^2
    =
    a^2 \left [\sum_i w_i x_i^2 - \left(\sum_i w_i x_i\right)^2  \right]
\end{equation*}
($w_i\triangleq \alpha_i/a$). Since $a\ne 0$, Jensen's inequality gives that
$x'Ax\ge 0$, and $x'Ax=0$ if and only if $x_1=\dots=x_m$. The lemma is proved.
\end{proof}

\subsection{Proof of Theorem~\ref{T:F} (Relation between \refF{} and \refPR)}\label{SS:Fenchel}

The proof of Theorem~\ref{T:F} goes through several lemmas.
First, in Lemma~\ref{L:F-primal=fenchel},
the extremality relation between \refPR{} and \refF{} is established
using the Primal Fenchel duality theorem.
Then the minimax equality for $L$ (cf.~(\ref{EQ:F-L-def})) is proved, see Corollary~\ref{C:F-L-minimax}.
Proposition~\ref{P:F-Scc-SP} gives a relation between
the solution set $\SOL_\PR$ of the primal
and the solution set $\SOL_\cc(-\hat y)$ of the convex conjugate $\ell^\ast$
for $\hat y\in\SOL_\F$.
Lemma~\ref{L:F-primal->dual} provides a key for establishing
the second part of Theorem~\ref{T:F}.
The structure of the solution set $\SOL_\F$ is described in Lemma~\ref{L:F-SOL}.

\subsubsection{Extremality relation}\label{SSS:F-extremality-rel}
In the following denote by $\sx{\A}^+$ the set $\{q\in\sx{\A}: q^a>0\}$;
note that neither the primal nor the convex conjugate are affected
by restricting to $\sx{\A}^+$.

\begin{lemma}\label{L:C*} Let $C\subseteq \sx{\A}$.
If $y\in C^\ast$ then $y+\RRR^m_{-}\subseteq C^\ast$.
\end{lemma}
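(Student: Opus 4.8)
The plan is to unfold the definition of the polar cone and use the single structural fact that every element of $\sx\A$ is nonnegative. Fix an arbitrary $z\in\RRR^m_-$; it suffices to show $y+z\in C^\ast$, i.e.\ that $\sp{y+z}{q}\le 0$ for every $q\in C$.

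So fix $q\in C$. Since $C\subseteq\sx\A$, we have $q\ge 0$ componentwise. Because $z\le 0$ and $q\ge 0$, the product $\sp{z}{q}=\sum_{i\in\A} z_i q_i$ is a sum of nonpositive terms, hence $\sp{z}{q}\le 0$. On the other hand, $y\in C^\ast$ gives $\sp{y}{q}\le 0$. Adding the two inequalities yields $\sp{y+z}{q}=\sp{y}{q}+\sp{z}{q}\le 0$. As $q\in C$ was arbitrary, $y+z\in C^\ast$; and since $z\in\RRR^m_-$ was arbitrary, $y+\RRR^m_-\subseteq C^\ast$.

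There is essentially no obstacle here: the only ingredient beyond the definitions is the elementary observation that a coordinatewise product of a nonpositive vector and a nonnegative vector has nonpositive coordinate sum, which is exactly where the hypothesis $C\subseteq\sx\A$ (rather than an arbitrary subset of $\RRR^m$) is used.
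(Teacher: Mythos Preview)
Your proof is correct and follows essentially the same argument as the paper: fix $z\le 0$, use $q\ge 0$ to get $\sp{z}{q}\le 0$, and add this to $\sp{y}{q}\le 0$. The paper's version is merely a more compressed one-line chain of inequalities.
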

\begin{proof}
Take any $y\in C^\ast$ and $z\le 0$. Then, for every $q\in C$,
\begin{equation*}
  \sp{q}{(y+z)}
  =
  \sp{q}{y} + \sp{q}{z}
  \le \sp{q}{y} \le 0.
\end{equation*}
Thus $y+z\in C^\ast$.
\end{proof}

\begin{lemma}\label{L:F-primal=fenchel}
    For every convex closed set $C\subseteq \sx{\A}$ having support $\A$
    and for every $\nu\in\sx{\A}$ it holds that
    \begin{equation*}
        \hat\ell_\F = -\hat\ell_\PR
        \qquad\text{and}\qquad
        \SOL_\F\ne\emptyset.
    \end{equation*}
\end{lemma}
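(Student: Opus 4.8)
The plan is to read off $\hat\ell_\F=-\hat\ell_\PR$ from the Fenchel duality theorem applied to the pair $(\tilde\ell,\delta_C)$, and to obtain $\SOL_\F\ne\emptyset$ from its attainment part. The one step specific to this setting is the reduction of the support function of $C$ to membership in the polar cone $C^\ast$, which exploits that $\sum q=1$ on $C$.

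First I would recast the primal as an unconstrained problem on $\RRR^m$. Since $C\subseteq\sx\A$,
\begin{equation*}
  \hat\ell_\PR=\inf_{q\in\RRR^m}\bigl(\tilde\ell(q)+\delta_C(q)\bigr),
\end{equation*}
where $\tilde\ell$ is $\ell$ on $\sx\A$ and $+\infty$ elsewhere, so that $\ell^\ast$ is exactly $\tilde\ell^\ast$ (cf.~Section~\ref{St:F}), and $\delta_C$ is the $\{0,\infty\}$-indicator of $C$. Both functions are proper convex, and $\hat\ell_\PR$ is finite by Proposition~\ref{P:primal}. By the Fenchel duality theorem (cf.~\cite[Thm.~31.1]{rockafellar}), as soon as the constraint qualification $\ri(\dom\tilde\ell)\cap\ri(C)\ne\emptyset$ holds, one has
\begin{equation*}
  \hat\ell_\PR=\max_{y\in\RRR^m}\bigl(-\ell^\ast(-y)-\sigma_C(y)\bigr),
  \qquad \sigma_C(y)\triangleq\sup_{q\in C}\sp{q}{y},
\end{equation*}
with the maximum attained at some $\bar y$. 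To verify the qualification, note that $\dom\tilde\ell=\{q\in\sx\A:q^a>0\}$ has relative interior $\ri(\sx\A)=\{q>0:\sum q=1\}$; by the support hypothesis there is $q_1\in C$ with $q_1>0$, and for any $q_0\in\ri(C)$ the line-segment principle gives $(1-t)q_0+tq_1\in\ri(C)$ for $t\in[0,1)$, while $(1-t)q_0+tq_1\ge tq_1>0$ for $t\in(0,1)$; hence $\ri(C)\cap\ri(\sx\A)\ne\emptyset$.

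Next I would pass from $\sigma_C$ to $C^\ast$. By Lemma~\ref{L:cc-ell*(z+c)}, $\ell^\ast(z+c)=\ell^\ast(z)+c$, and since $\sum q=1$ on $C$ we also have $\sigma_C(y+c\ones)=\sigma_C(y)+c$. Hence for any $y$, putting $\tilde y\triangleq y-\sigma_C(y)\ones$ gives $\sigma_C(\tilde y)=0$, so $\tilde y\in C^\ast$ and $-\ell^\ast(-\tilde y)=-\ell^\ast(-y)-\sigma_C(y)$; conversely, for $y\in C^\ast$ one has $\sigma_C(y)\le0$, so $-\ell^\ast(-y)-\sigma_C(y)\ge-\ell^\ast(-y)$. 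Combining these,
\begin{equation*}
  \hat\ell_\PR=\max_{y\in\RRR^m}\bigl(-\ell^\ast(-y)-\sigma_C(y)\bigr)
  =\sup_{y\in C^\ast}\bigl(-\ell^\ast(-y)\bigr)=-\hat\ell_\F,
\end{equation*}
which is the first claim. For the second, $\tilde{\bar y}\triangleq\bar y-\sigma_C(\bar y)\ones$ lies in $C^\ast$ and satisfies $\ell^\ast(-\tilde{\bar y})=\ell^\ast(-\bar y)+\sigma_C(\bar y)=-\hat\ell_\PR=\hat\ell_\F$, so $\tilde{\bar y}\in\SOL_\F$ and $\SOL_\F\ne\emptyset$.

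The main obstacle here is not depth but care: one must use the correct conjugate ($\ell^\ast=\tilde\ell^\ast$, not the conjugate of the $\RRR^m$-extension of $\ell$ from (\ref{EQ:f-def}), which is generically $+\infty$-valued), keep the signs straight in the Fenchel identity, and check that the constraint qualification really follows from the support hypothesis rather than being postulated. The translation-invariance step converting $\sigma_C$ into $C^\ast$ is the only genuinely problem-specific ingredient and the easiest to overlook.
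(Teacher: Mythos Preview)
Your argument is correct. Both you and the paper invoke Fenchel duality and verify the relative-interior constraint qualification via the support hypothesis, but the decompositions differ. The paper replaces $C$ by the cone $K_C$ it generates and applies the conic Fenchel theorem \cite[Prop.~7.2.1]{bertsekas} to the pair $(\sx{\A}^+,K_C)$; since the conjugate of the indicator of a cone is the indicator of its polar, and $K_C^\ast=C^\ast$, the dual problem is directly $\sup_{y\in C^\ast}(-\ell^\ast(-y))$ with no further manipulation. You instead keep $C$ as a set and apply the standard Fenchel--Rockafellar theorem, which produces the support function $\sigma_C$ rather than the polar-cone indicator; you then convert one to the other by the shift $y\mapsto y-\sigma_C(y)\ones$, exploiting $\sum q=1$ on $C$ and Lemma~\ref{L:cc-ell*(z+c)}. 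The paper's route is slightly shorter (the cone trick absorbs your translation step), while yours is arguably more transparent about where the simplex structure is used and does not require the conic version of the duality theorem.
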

\begin{proof}
Write the primal problem \refPR{} in the form
\begin{equation*}
    \hat\ell_\PR = \inf_{x\in\sx{\A}^+\cap K_C} \ell(x),
    \qquad\text{where}\quad
    K_C\triangleq \{\alpha q: q\in C, \alpha\ge 0\}
\end{equation*}
is the convex cone generated by $C$. Since $C$ is convex, there is $q\in\ri(C)$
\cite[Prop.~1.4.1(b)]{bertsekas}. Further, $\A$ is the support of $C$,
so there is $q'\in C$ satisfying $q'>0$.
By \cite[Prop.~1.4.1(a)]{bertsekas},
$\tilde{q}\triangleq \frac 12(q+q') >0$ belongs to $\ri(C)$.
Further, by \cite[p.~50]{rockafellar},
\begin{equation*}
    \ri(K_C) = \{\alpha q: q\in\ri(C), \alpha>0\};
\end{equation*}
thus $\tilde{q}\in \ri(K_C)$.
Since trivially $\ri(\sx{\A}^+) = \{q\in\sx{\A}: q>0\}$, it holds that
\begin{equation*}
    \ri(\sx{\A}^+) \cap \ri(K_C)\ne\emptyset.
\end{equation*}
Now the Primal Fenchel duality theorem \cite[Prop.~7.2.1, pp.~439--440]{bertsekas},
applied to the convex set $\sx{\A}^+$,
the convex cone $K_C$ and the real-valued
convex function $\ell|_{\sx{\A}^+}$,
gives that
\begin{equation*}
    \inf_{x\in\sx{\A}^+\cap K_C} \ell(x)
    =
    \sup_{y\in K_C^\ast} (-\ell^\ast(-y))
    =
    -\inf_{y\in C^\ast} \ell^\ast(-y)
\end{equation*}
and that the supremum in the right-hand side is attained. That is,
$\hat\ell_\PR = -\hat\ell_\F$ and $\SOL_\F\ne\emptyset$.
\end{proof}

\subsubsection{Minimax equality for $L$}
Let $L:C^\ast \times \sx{\A}^+\to\RRR$ be given by
\begin{equation}\label{EQ:F-L-def}
    L(y,q) \triangleq -\sp{y}{q} - \ell(q) =  -\sp{y}{q} + \sp{\nu^a}{\log q^a}.
\end{equation}

\begin{lemma}\label{L:F-L-minimax1}
For every $y\in C^\ast$ and $q\in \sx{\A}^+$,
\begin{equation*}
    \sup_{q\in\sx{\A}^+} L(y,q)=\ell^\ast(-y)
    \qquad\text{and}\qquad
    \inf_{y\in C^\ast} L(y,q) =
    \begin{cases}
      -\ell(q) &\text{if } q\in C;
    \\
      -\infty &\text{if } q\not\in C.
    \end{cases}
\end{equation*}
\end{lemma}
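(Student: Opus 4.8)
The statement to prove is Lemma~\ref{L:F-L-minimax1}, which asserts two minimax-type identities for the function $L(y,q) = -\sp{y}{q} - \ell(q)$ on $C^\ast \times \sx{\A}^+$.

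\medskip

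The plan is to verify the two claimed equalities separately, each by a direct computation exploiting the definitions of the convex conjugate $\ell^\ast$ and of the polar cone $C^\ast$.

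For the first equality, $\sup_{q\in\sx{\A}^+} L(y,q)=\ell^\ast(-y)$, I would simply recall that, by definition, $\ell^\ast(-y) = \sup_{q\in\sx\A}(\sp{q}{-y}-\ell(q)) = \sup_{q\in\sx\A}(-\sp{y}{q}-\ell(q))$. The only subtlety is the restriction of the supremum from $\sx\A$ to $\sx{\A}^+=\{q\in\sx\A:q^a>0\}$. But this is harmless: for $q\in\sx\A\setminus\sx{\A}^+$ we have $q_i^a=0$ for some active letter $i$, so $\ell(q)=-\sp{\nu^a}{\log q^a}=+\infty$ (since $\nu_i^a>0$), hence $L(y,q)=-\infty$ for such $q$ and those points cannot affect the supremum. (This is exactly the remark already made in the excerpt that "neither the primal nor the convex conjugate are affected by restricting to $\sx{\A}^+$.") Therefore the supremum over $\sx{\A}^+$ equals the supremum over $\sx\A$, which is $\ell^\ast(-y)$.

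For the second equality, $\inf_{y\in C^\ast} L(y,q)$, I would fix $q\in\sx{\A}^+$ and write $L(y,q) = -\sp{y}{q}-\ell(q)$, noting that $-\ell(q)$ is a constant (finite, since $q\in\sx{\A}^+$) independent of $y$. Thus $\inf_{y\in C^\ast} L(y,q) = -\ell(q) - \sup_{y\in C^\ast}\sp{y}{q}$, and it remains to evaluate $\sup_{y\in C^\ast}\sp{y}{q}$. If $q\in C$, then by definition of the polar cone $\sp{y}{q}\le 0$ for every $y\in C^\ast$, and the value $0$ is attained at $y=0\in C^\ast$; hence the supremum is $0$ and $\inf_{y\in C^\ast}L(y,q)=-\ell(q)$. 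If $q\notin C$, then since $C$ is a nonempty closed convex subset of $\sx\A$, the separating hyperplane theorem (or bipolar considerations) provides some $\bar y$ with $\sp{\bar y}{q'}\le 0$ for all $q'\in C$ but $\sp{\bar y}{q}>0$; equivalently, $q\notin C^{\ast\ast}$ (using that $C^{\ast\ast}=K_C$, the closed convex cone generated by $C$, which intersects $\sx\A$ exactly in $C$). Then $\bar y\in C^\ast$ and $\alpha\bar y\in C^\ast$ for all $\alpha\ge0$ since $C^\ast$ is a cone, so $\sup_{y\in C^\ast}\sp{y}{q}\ge \sup_{\alpha\ge0}\alpha\sp{\bar y}{q}=+\infty$, giving $\inf_{y\in C^\ast}L(y,q)=-\infty$. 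This handles the second identity.

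\medskip

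The main obstacle, such as it is, is the $q\notin C$ case of the second identity: one must produce an element of $C^\ast$ strictly positively paired with $q$, which requires a separation argument and the observation that $C = C^{\ast\ast}\cap\sx\A$ (equivalently $K_C\cap\sx\A=C$, which follows because $C\subseteq\sx\A$ lies in the hyperplane $\{\sum x=1\}$ and $K_C$ is the cone it generates). Everything else is bookkeeping with the conventions $\log 0=-\infty$, $0\cdot(-\infty)=0$. I would keep the separation step brief, citing the standard bipolar theorem for closed convex cones (e.g.\ Bertsekas~\cite{bertsekas}).
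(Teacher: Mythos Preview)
Your proposal is correct and follows essentially the same route as the paper's proof: both handle the first equality by observing that $\ell(q)=\infty$ on $\sx\A\setminus\sx\A^+$, and both handle the second by reducing to $\sup_{y\in C^\ast}\sp{y}{q}$, using $y=0$ for $q\in C$ and, for $q\notin C$, the bipolar identity $C^{\ast\ast}=K_C$ together with $K_C\cap\sx\A=C$ to find $\bar y\in C^\ast$ with $\sp{\bar y}{q}>0$, then scaling.
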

\begin{proof}
The first equality is immediate since if $q\in\sx{\A}\setminus\sx{\A}^+$ then
$\ell(q)=\infty$. To show the second one, realize that
$\inf_{y\in C^\ast} L(y,q)=-\ell(q) -\sup_{y\in C^\ast} \sp{y}{q}$.
If $q\in C$ then $\sp{y}{q}\le 0$ for every $y\in C^\ast$, and $\sp{y}{q}=0$ for $y=0\in C^\ast$;
hence $\inf_{y\in C^\ast} L(y,q)= -\ell(q)$.
If $q\in \sx{\A}^+\setminus C$, there is $\bar y\in C^\ast$ with $\sp{\bar y}{q}>0$
(use that $C$ is convex closed, thus $C^{\ast\ast}=K_C^{\ast\ast}=K_C$ \cite[Prop.~3.1.1]{bertsekas}
and $K_C\cap\sx{\A}=C$). Since $\lambda \bar y\in C^\ast$ for every $\lambda\ge 0$,
\begin{equation*}
    \sup_{y\in C^\ast} \sp{y}{q}
    \ge \sup_{\lambda\ge 0} \lambda \sp{\bar y}{q}
    = \infty.
\end{equation*}
Thus $\inf_{y\in C^\ast} L(y,q)=-\infty$ provided $q\in \sx{\A}^+\setminus C$.
\end{proof}

Lemmas~\ref{L:F-primal=fenchel} and \ref{L:F-L-minimax1} immediately yield the minimax equality for $L$.

\begin{corollary}\label{C:F-L-minimax}
We have
\begin{equation*}
    \hat\ell_\F
    = \adjustlimits\inf_{y\in C^\ast} \sup_{q\in\sx{\A}^+} L(y,q)
    = \adjustlimits\sup_{q\in\sx{\A}^+}\inf_{y\in C^\ast} L(y,q)=-\hat\ell_\PR.
\end{equation*}
\end{corollary}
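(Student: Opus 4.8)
The plan is to obtain the corollary as a one-line combination of the two immediately preceding lemmas, so essentially no new work is required. First I would invoke the first equality of Lemma~\ref{L:F-L-minimax1}, namely $\sup_{q\in\sx{\A}^+} L(y,q)=\ell^\ast(-y)$, to rewrite
\[
    \adjustlimits\inf_{y\in C^\ast}\sup_{q\in\sx{\A}^+} L(y,q)
    = \inf_{y\in C^\ast}\ell^\ast(-y)
    = \hat\ell_\F,
\]
where the last step is just the definition of~\refF{}. In the reverse order, the second equality of Lemma~\ref{L:F-L-minimax1} gives $\inf_{y\in C^\ast} L(y,q)=-\ell(q)$ for $q\in C$ and $=-\infty$ for $q\in\sx{\A}^+\setminus C$, so taking the supremum over $\sx{\A}^+$ yields $\sup_{q\in C\cap\sx{\A}^+}(-\ell(q))$. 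Since $\dom(\ell)=\{x:\ x^a>0\}$ by Lemma~\ref{L:f-props}(a), i.e. $\ell\equiv+\infty$ on $\sx{\A}\setminus\sx{\A}^+$, one has $\inf_{q\in C}\ell(q)=\inf_{q\in C\cap\sx{\A}^+}\ell(q)$, hence this supremum equals $-\hat\ell_\PR$.

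It then remains only to observe that the two middle quantities coincide, and this is exactly the content of Lemma~\ref{L:F-primal=fenchel}, which states $\hat\ell_\F=-\hat\ell_\PR$. (The generic minimax inequality $\inf_y\sup_q L\ge\sup_q\inf_y L$ merely confirms the direction $\hat\ell_\F\ge-\hat\ell_\PR$ and does not by itself close the gap; the substantive equality is supplied by Lemma~\ref{L:F-primal=fenchel}, which rests on the Primal Fenchel duality theorem.) Chaining the four equalities gives the displayed formula of the corollary.

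There is no real obstacle here, since all the analytic content has already been absorbed into Lemmas~\ref{L:F-primal=fenchel} and~\ref{L:F-L-minimax1}. The only point that deserves a moment's care is the passage between $\sx{\A}$ and $\sx{\A}^+$ in the supremum term, but this is harmless: as already noted before Lemma~\ref{L:C*}, restricting the feasible set to $\sx{\A}^+$ alters neither $\hat\ell_\PR$ nor $\ell^\ast$, because $\ell$ is $+\infty$ on the discarded part of the simplex.
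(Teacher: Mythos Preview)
Your proposal is correct and follows exactly the approach of the paper, which also derives the corollary as an immediate consequence of Lemmas~\ref{L:F-primal=fenchel} and~\ref{L:F-L-minimax1}. If anything, you have spelled out slightly more detail than the paper (the passage between $\sx{\A}$ and $\sx{\A}^+$, and the role of the generic minimax inequality), but the logical skeleton is identical.
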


\begin{lemma}\label{L:F-L-saddle-points}
For every $\bar y\in\RRR^m$ and $\bar q\in\sx{\A}^+$,
\begin{equation*}
    (\bar y,\bar q) \text{ is a saddle point of } L
    \quad\iff\quad
    \bar y\in\SOL_\F
    \text{ and }
    \bar q \in \SOL_\PR.
\end{equation*}
\end{lemma}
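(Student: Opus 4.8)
The plan is to deduce the equivalence from the minimax equality of Corollary~\ref{C:F-L-minimax}, combined with the explicit formulas for the partial extrema of $L$ in Lemma~\ref{L:F-L-minimax1}. Recall that, since $L$ is defined only on $C^\ast\times\sx{\A}^+$, any saddle point $(\bar y,\bar q)$ necessarily has $\bar y\in C^\ast$, $\bar q\in\sx{\A}^+$, and is characterised by the value identity
\[
    \sup_{q\in\sx{\A}^+} L(\bar y,q) \;=\; L(\bar y,\bar q) \;=\; \inf_{y\in C^\ast} L(y,\bar q);
\]
by Lemma~\ref{L:F-L-minimax1} the left-hand side equals $\ell^\ast(-\bar y)$, while the right-hand side equals $-\ell(\bar q)$ if $\bar q\in C$ and $-\infty$ otherwise. (If $\bar y\notin C^\ast$ there is nothing to prove: then $(\bar y,\bar q)$ is not a saddle point of $L$, and $\bar y\notin\SOL_\F\subseteq C^\ast$ either.)

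For the implication $(\Leftarrow)$, I would assume $\bar y\in\SOL_\F$ and $\bar q\in\SOL_\PR$, so $\bar y\in C^\ast$, $\bar q\in C$, $\ell^\ast(-\bar y)=\hat\ell_\F$ and $\ell(\bar q)=\hat\ell_\PR$. Since $\hat\ell_\PR$ is finite (Theorem~\ref{T:P}), Lemma~\ref{L:f-props}(a) gives $\bar q^a>0$, hence $\bar q\in\sx{\A}^+$ and $(\bar y,\bar q)$ lies in the domain of $L$. Using $\hat\ell_\F=-\hat\ell_\PR$ (Lemma~\ref{L:F-primal=fenchel}),
\[
    \sup_{q\in\sx{\A}^+} L(\bar y,q) = \ell^\ast(-\bar y) = \hat\ell_\F = -\hat\ell_\PR = -\ell(\bar q) = \inf_{y\in C^\ast} L(y,\bar q).
\]
Since $\inf_{y}L(y,\bar q)\le L(\bar y,\bar q)\le\sup_{q}L(\bar y,q)$ always holds, the two outer terms coinciding forces $L(\bar y,\bar q)$ to equal their common value, which is exactly the saddle-point identity.

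For $(\Rightarrow)$, I would assume $(\bar y,\bar q)$ is a saddle point, so $\bar y\in C^\ast$, $\bar q\in\sx{\A}^+$, and the value identity above holds. Its left-hand side is $\ell^\ast(-\bar y)$, a real number since $L$ is real-valued; hence $\inf_{y\in C^\ast}L(y,\bar q)$ is finite, which by Lemma~\ref{L:F-L-minimax1} rules out the value $-\infty$ and forces $\bar q\in C$ with $\inf_{y}L(y,\bar q)=-\ell(\bar q)$. Thus $\ell^\ast(-\bar y)=-\ell(\bar q)$, and combining with the definitions of $\hat\ell_\F$, $\hat\ell_\PR$ and Lemma~\ref{L:F-primal=fenchel},
\[
    \hat\ell_\F \le \ell^\ast(-\bar y) = -\ell(\bar q) \le -\hat\ell_\PR = \hat\ell_\F,
\]
so all inequalities are equalities. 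Hence $\ell^\ast(-\bar y)=\hat\ell_\F$ with $\bar y\in C^\ast$, i.e.\ $\bar y\in\SOL_\F$, and $\ell(\bar q)=\hat\ell_\PR$ with $\bar q\in C$, i.e.\ $\bar q\in\SOL_\PR$. I do not expect a genuine obstacle here: granted Lemmas~\ref{L:F-primal=fenchel} and~\ref{L:F-L-minimax1}, this is the textbook fact that under a minimax equality the saddle points are precisely the pairs of optimal solutions of the two one-sided problems; the only mild care-points are checking that a saddle point automatically lies in $C\times\sx{\A}^+$ (via Lemma~\ref{L:F-L-minimax1} and real-valuedness of $L$) and that a pair of optimal solutions automatically has $\bar q\in\sx{\A}^+$ (via finiteness of $\hat\ell_\PR$ and Lemma~\ref{L:f-props}(a)).
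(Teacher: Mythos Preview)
Your proof is correct and follows the same route as the paper: the paper simply invokes Corollary~\ref{C:F-L-minimax}, Lemma~\ref{L:F-L-minimax1}, and the standard saddle-point characterisation \cite[Prop.~2.6.1]{bertsekas}, while you have unpacked that textbook result explicitly. The only extra work you do is the (perfectly valid) verification that $\bar q\in\SOL_\PR$ automatically lies in $\sx{\A}^+$, which the paper leaves implicit in its citation.
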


Recall that $(\bar y,\bar q)$ is a \emph{saddle point} of $L$
\cite[Def.~2.6.1]{bertsekas} if, for every $y\in C^\ast$ and $q\in\sx{\A}^+$,
\begin{equation}\label{EQ:F-saddle-point-of-L}
    L(\bar y, q)\le L(\bar y,\bar q) \le L(y,\bar q).
\end{equation}

\begin{proof}
This result is an immediate consequence of Corollary~\ref{C:F-L-minimax},
Lemma~\ref{L:F-L-minimax1},
and \cite[Prop.~2.6.1]{bertsekas}.
\end{proof}

\subsubsection{Relation between $\SOL_\PR$ and $\SOL_\cc(-\hat y)$}
\begin{proposition}\label{P:F-Scc-SP}
Let $\hat y\in\SOL_\F$. Then
\begin{equation*}
  \SOL_\PR\subseteq \SOL_\cc(-\hat y).
\end{equation*}
\end{proposition}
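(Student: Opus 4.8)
The plan is to verify, for an arbitrary $\hat q_\PR\in\SOL_\PR$, the defining equality of $\SOL_\cc(-\hat y)$ in~(\ref{EQ:F-cc-finite-sol}), namely that $\sp{\hat q_\PR}{-\hat y}-\ell(\hat q_\PR)=\ell^\ast(-\hat y)$. Both sides are pinned down by the zero-duality-gap identity already in hand: on the one hand $\ell(\hat q_\PR)=\hat\ell_\PR$ by the definition of $\SOL_\PR$; on the other hand $\ell^\ast(-\hat y)=\hat\ell_\F=-\hat\ell_\PR$ by Lemma~\ref{L:F-primal=fenchel}. Hence the desired equality is equivalent to the orthogonality relation $\sp{\hat q_\PR}{\hat y}=0$.

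First I would record that one inequality is automatic from the convex-conjugate definition: since $\ell^\ast(z)=\sup_{q\in\sx{\A}}(\sp{q}{z}-\ell(q))$, we get $\sp{\hat q_\PR}{-\hat y}-\ell(\hat q_\PR)\le\ell^\ast(-\hat y)$, where one uses that $\hat q_\PR\in\sx{\A}$ (indeed, by Theorem~\ref{T:P}, $\hat q^a_\PR>0$, so $\hat q_\PR\in\dom\ell$ and the left-hand side is a genuine real number). For the reverse inequality I would use that $\hat q_\PR\in C$ and $\hat y\in C^\ast$, so that $\sp{\hat q_\PR}{\hat y}\le 0$ by the definition of the polar cone; consequently $\sp{\hat q_\PR}{-\hat y}-\ell(\hat q_\PR)\ge -\ell(\hat q_\PR)=-\hat\ell_\PR=\ell^\ast(-\hat y)$. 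Combining the two inequalities forces equality, i.e.\ $\hat q_\PR\in\SOL_\cc(-\hat y)$, and since $\hat q_\PR$ was arbitrary this gives $\SOL_\PR\subseteq\SOL_\cc(-\hat y)$.

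A fully equivalent route uses the saddle-point structure already established: by Lemma~\ref{L:F-L-saddle-points} the pair $(\hat y,\hat q_\PR)$ is a saddle point of $L$, so $L(\hat y,q)\le L(\hat y,\hat q_\PR)$ for every $q\in\sx{\A}^+$ by~(\ref{EQ:F-saddle-point-of-L}); taking the supremum over $q$ and invoking Lemma~\ref{L:F-L-minimax1} yields $\ell^\ast(-\hat y)=\sup_{q\in\sx{\A}^+}L(\hat y,q)=L(\hat y,\hat q_\PR)=\sp{\hat q_\PR}{-\hat y}-\ell(\hat q_\PR)$, which is precisely membership in $\SOL_\cc(-\hat y)$. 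I do not anticipate any real obstacle here: the proposition is essentially a repackaging of the zero duality gap furnished by Lemma~\ref{L:F-primal=fenchel} in terms of the argmax set of the conjugate, and the only point requiring a word of care is the finiteness of $\ell(\hat q_\PR)$, which is guaranteed by $\hat q^a_\PR>0$ from Theorem~\ref{T:P}.
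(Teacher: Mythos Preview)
Your proposal is correct, and your second route via Lemma~\ref{L:F-L-saddle-points} and the saddle-point inequality~(\ref{EQ:F-saddle-point-of-L}) is exactly the paper's proof. Your first route is a minor rearrangement of the same ingredients (the zero-gap identity of Lemma~\ref{L:F-primal=fenchel} together with $\sp{\hat q_\PR}{\hat y}\le 0$ from $\hat y\in C^\ast$), just avoiding the explicit saddle-point language.
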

\begin{proof}
Take any $\hat y\in\SOL_\F$ and any $\hat q\in\SOL_\PR$.
By Lemma~\ref{L:F-L-saddle-points}, $(\hat y,\hat q)$ is a saddle point of~$L$.
Hence, by (\ref{EQ:F-saddle-point-of-L}),
\begin{equation*}
    -\sp{\hat y}{\hat q} - \ell(\hat q)
    = L(\hat y,\hat q)
    \ge L(\hat y,q)
    = -\sp{\hat y}{q} - \ell(q)
\end{equation*}
for any $q\in \sx{\A}^+$.
Thus $\ell^\ast(-\hat y) = L(\hat y,\hat q)$ and so $\hat q\in \SOL_\cc(-\hat y)$.
\end{proof}

\subsubsection{From $\hat q_\PR$ to $\hat y_\F$}\label{SSS:F-from-qP-to-yF}
The proof of the following lemma is inspired by that of \cite[Thm.~2.1]{elbarmi-dykstra}.

\begin{lemma}\label{L:F-primal->dual}
Let $\nu\in\sx{\A}$ and $\hat q^a_\PR$ be as in Theorem~\ref{T:P}. Then
\begin{equation*}
    \hat y_\F \triangleq (\hat y^a_\F, -1^p) \in\SOL_\F,
    \qquad\text{where}\quad
    \hat y^a_\F\triangleq \frac{\nu^a}{\hat q^a_\PR}-1^a \,.
\end{equation*}
Further, $\hat\mu(-\hat y_\F)=1$, $\hat y_\F\bot \SOL_\PR$,  and
\begin{enumerate}
  \item[(a)]
    if $\sum \hat q^a_\PR=1$ then $\hatmu(-\hat y^a_\F)=1$;
  \item[(b)]
    if $\sum \hat q^a_\PR<1$ then $\hatmu(-\hat y^a_\F)<1$.
\end{enumerate}
\end{lemma}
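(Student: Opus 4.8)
The plan is to produce the candidate $\hat y_\F=(\hat y^a_\F,-1^p)$ by starting from an \emph{arbitrary} Fenchel solution, pinning down its active coordinates and the sign of its passive coordinates, and then sliding down to $-1^p$ in the passive coordinates using that the polar cone is closed under subtraction of nonnegative vectors (Lemma~\ref{L:C*}).

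Concretely, I would first invoke Lemma~\ref{L:F-primal=fenchel} to pick some $\bar y\in\SOL_\F$, together with a primal solution $\hat q_\PR=(\hat q^a_\PR,\hat q^p_\PR)\in\SOL_\PR$ as described by Theorem~\ref{T:P}. Proposition~\ref{P:F-Scc-SP} then gives $\hat q_\PR\in\SOL_{\cc}(-\bar y)$, so Proposition~\ref{P:cc-SOL} forces the identity $\hat q^a_\PR=\nu^a/(\hat\mu(-\bar y)+\bar y^a)$, equivalently $\hat\mu(-\bar y)+\bar y^a=\nu^a/\hat q^a_\PR$. Substituting this into the formula $\ell^\ast(z)=-1+\hat\mu(z)+I_{\hat\mu(z)}(\nu^a\,\|\,-z^a)$ of Theorem~\ref{T:cc} at $z=-\bar y$ collapses the $I$-term to $\sp{\nu^a}{\log\hat q^a_\PR}=-\hat\ell_\PR$, so $\ell^\ast(-\bar y)=-1+\hat\mu(-\bar y)-\hat\ell_\PR$. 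Since $\bar y\in\SOL_\F$ and $\hat\ell_\F=-\hat\ell_\PR$ by Lemma~\ref{L:F-primal=fenchel}, this yields $\hat\mu(-\bar y)=1$. Two consequences follow at once: $\bar y^a=\nu^a/\hat q^a_\PR-1^a=\hat y^a_\F$, and, because $\hat\mu(-\bar y)\ge\max(-\bar y^p)$ by~(\ref{EQ:mu-hat}), also $\bar y^p\ge-1^p$.

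Now $\hat y_\F=(\hat y^a_\F,-1^p)=\bar y+(0^a,-(1^p+\bar y^p))$ with $-(1^p+\bar y^p)\le 0$, so $\hat y_\F\in C^\ast$ by Lemma~\ref{L:C*}. It remains to verify $\ell^\ast(-\hat y_\F)=\hat\ell_\F$ and $\hat\mu(-\hat y_\F)=1$. Put $z:=-\hat y_\F$, so $z^a=1^a-\nu^a/\hat q^a_\PR$ and $z^p=1^p$. Then $\max(z^a)<1$ (each coordinate of $\nu^a/\hat q^a_\PR$ is positive) and $\xi_{z^a}(1)=\sum_i\nu^a_i/(1-z^a_i)=\sum\hat q^a_\PR\le 1=\xi_{z^a}(\hatmu(z^a))$; since $\xi_{z^a}$ is strictly decreasing (cf.~(\ref{EQ:xi-is-monotone})) this gives $\hatmu(z^a)\le 1$, and with $\max(z^p)=1$ (the case $\Ap=\emptyset$ being immediate) we obtain $\hat\mu(z)=1$. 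Feeding $\hat\mu(z)=1$ back into Theorem~\ref{T:cc} collapses the $I$-term again to $\sp{\nu^a}{\log\hat q^a_\PR}=-\hat\ell_\PR=\hat\ell_\F$, so $\hat y_\F$ attains the Fenchel infimum, i.e.\ $\hat y_\F\in\SOL_\F$, and $\hat\mu(-\hat y_\F)=\hat\mu(z)=1$.

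Finally, the orthogonality is a one-line computation: for any $\hat q=(\hat q^a_\PR,\hat q^p_\PR)\in\SOL_\PR$, $\sp{\hat y_\F}{\hat q}=\sp{\hat y^a_\F}{\hat q^a_\PR}-\sum\hat q^p_\PR=(1-\sum\hat q^a_\PR)-\sum\hat q^p_\PR=1-\sum\hat q=0$. Parts~(a) and~(b) then read off from $\xi_{z^a}(1)=\sum\hat q^a_\PR$ compared with $\xi_{z^a}(\hatmu(z^a))=1$: if $\sum\hat q^a_\PR=1$ then $\hatmu(z^a)=1$ by uniqueness of the root of $\xi_{z^a}=1$, while if $\sum\hat q^a_\PR<1$ then strict monotonicity of $\xi_{z^a}$ forces $\hatmu(z^a)<1$; in the notation of the lemma, $\hatmu(-\hat y^a_\F)=1$ resp.\ $\hatmu(-\hat y^a_\F)<1$. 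The main obstacle is the first step — extracting $\hat\mu(-\bar y)=1$ (hence that every Fenchel solution has active part $\hat y^a_\F$ and passive part $\ge-1^p$) from the duality equality and the conjugate formula; once that is in place, everything after is bookkeeping with $I_\mu$ and with the monotone auxiliary function $\xi$.
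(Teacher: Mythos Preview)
Your argument is correct but proceeds by a genuinely different route from the paper. The paper works \emph{forward} from the primal optimum: it applies the first-order optimality condition $\ell'(\hat q;q-\hat q)\ge 0$ at an arbitrary $\hat q\in\SOL_\PR$ to verify directly that $\hat y_\F=(\nu^a/\hat q^a_\PR-1^a,-1^p)$ lies in $C^\ast$ and is orthogonal to $\hat q$, then plugs into Theorem~\ref{T:cc} to check that $\ell^\ast(-\hat y_\F)=\hat\ell_\F$. You instead work \emph{backward} from an arbitrary Fenchel solution $\bar y$: via Proposition~\ref{P:F-Scc-SP} and Proposition~\ref{P:cc-SOL} you force $\hat\mu(-\bar y)=1$, hence $\bar y^a=\hat y^a_\F$ and $\bar y^p\ge -1^p$, and then descend to $-1^p$ using Lemma~\ref{L:C*}.

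The trade-off is this. The paper's approach is more self-contained --- it needs only the directional derivative of $\ell$ and the conjugate formula, not the saddle-point machinery behind Proposition~\ref{P:F-Scc-SP}. Your approach, on the other hand, proves more: the first half of your argument already shows that \emph{every} $\bar y\in\SOL_\F$ satisfies $\bar y^a=\hat y^a_\F$ and $\hat\mu(-\bar y)=1$, which is essentially the content of Lemmas~\ref{L:F-SOLa-is-almost-singleton} and~\ref{L:F-SOL}. So with your ordering those subsequent lemmas become almost free, whereas the paper has to prove them separately afterwards.
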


\begin{proof}
Take any $\hat q\in\SOL_\PR$. Then $\hat q\in\dom(\ell)$ and,
by Lemma~\ref{L:f-props}(d),
\begin{equation*}
\begin{split}
    0
    &\le     \ell'(\hat q; q-\hat q)
    = -\sum_{i\in\Aa} \frac{\nu_i^a(q_i^a-\hat q_i^a)}{\hat q_i^a}
    = 1-\sum_{i\in\Aa} \frac{\nu_i^a q_i^a}{\hat q_i^a}
\\
    &= \sum_{i\in\Aa} q_i^a \left(  1-\frac{\nu_i^a}{\hat q_i^a} \right)
       +\sum_{i\in\Ap} q_i^p
    = -\sp{q}{\hat y_\F}
\end{split}
\end{equation*}
for every $q\in C$. Hence $\hat y_\F\in C^\ast$.
Further,
\begin{equation*}
    \sp{\hat q}{\hat y_\F}
    =
    \sum_{i\in \Aa} \hat q_i^a \left(  \frac{\nu_i^a}{\hat q_i^a} - 1 \right)
    + \sum_{i\in \Ap} \hat q_i^p \cdot (-1)
    = \sum \nu^a - \sum \hat q
    = 0
\end{equation*}
and so $\hat y_\F \bot \hat q$.

We claim that
\begin{equation}\label{EQ:L:F-primal->dual}
    \hatmu(-\hat y^a_\F)   \
    \begin{cases}
        = 1  &\text{if } \sum \hat q^a=1;
    \\
        < 1  &\text{if } \sum \hat q^a<1.
    \end{cases}
\end{equation}
Since $\hat y^a_\F=(\nu^a/\hat q^a_\PR)-1^a>-1^a$, it holds that $1>\max(-\hat y^a_\F)$. Moreover,
\begin{equation*}
    \xi_{-y^a_\F}(1)
    = \sum \frac{\nu^a}{1+y^a_\F}
    =\sum\hat q^a_\PR.
\end{equation*}
Thus (\ref{EQ:L:F-primal->dual}) follows from (\ref{EQ:xi-is-monotone}).

Assume that $\sum\hat q^a=1$. Then $\hatmu(-\hat y^a_\F)=1$. Since $-\hat y^p_\F=1^p$,
$\hat\mu(-\hat y_\F)=1$.
Theorem~\ref{T:cc} and Lemma~\ref{L:F-primal=fenchel} yield
\begin{equation*}
    \ell^\ast(-\hat y_\F)
    = I_1(\nu^a\,\|\,\hat y^a_\F)
    = \sp{\nu^a}{\log\hat q^a}
    = -\ell(\hat q) = -\hat \ell_\PR = \hat\ell_\F.
\end{equation*}
Since $\hat y_\F\in C^\ast$, we have $\hat y_\F\in\SOL_\F$.

If $\sum\hat q^a<1$ then $\hatmu(-\hat y^a_\F)<1$ by (\ref{EQ:L:F-primal->dual}).
Since $-\hat y^p_\F=1^p$, again $\hat\mu(-\hat y_\F)=1$.
As before we obtain
$\ell^\ast(-\hat y_\F) = -\ell(\hat q)=\hat\ell_\F$ and $\hat y_\F\in\SOL_\F$.
The lemma is proved.
\end{proof}

\subsubsection{Structure of $\SOL_\F$}\label{SSS:F-sol}

\begin{lemma}\label{L:F-SOLa-is-almost-singleton}
 Let $\hat q_\PR^a$ be as in Theorem~\ref{T:P}. Then
 \begin{equation*}
    \SOL_\F\subseteq \left\{
      \hat y\in C^\ast: \hat y^a=\frac{\nu^a}{\hat q^a_\PR}-\hat\mu(-\hat y)
    \right\}.
 \end{equation*}
\end{lemma}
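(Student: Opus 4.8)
The plan is to combine Proposition~\ref{P:F-Scc-SP} with the explicit description of $\SOL_\cc$ from Proposition~\ref{P:cc-SOL}. Fix any $\hat y\in\SOL_\F$. By Theorem~\ref{T:P} the set $\SOL_\PR$ is nonempty, so pick some $\hat q_\PR\in\SOL_\PR$; its active projection is the (unique) point $\hat q^a_\PR$. Proposition~\ref{P:F-Scc-SP} gives $\hat q_\PR\in\SOL_\cc(-\hat y)$, and Proposition~\ref{P:cc-SOL} says that every member of $\SOL_\cc(z)$ has active projection equal to $\hat q^a_\cc(z)=\nu^a/(\hat\mu(z)-z^a)$. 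Applying this with $z=-\hat y$ (so that $z^a=-\hat y^a$) yields
\begin{equation*}
  \hat q^a_\PR \;=\; \hat q^a_\cc(-\hat y) \;=\; \frac{\nu^a}{\hat\mu(-\hat y)+\hat y^a}.
\end{equation*}

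The remaining step is purely algebraic: since $\hat q^a_\PR>0$ (Theorem~\ref{T:P}) the denominator $\hat\mu(-\hat y)+\hat y^a$ is a strictly positive vector, so we may invert componentwise to get $\hat\mu(-\hat y)+\hat y^a=\nu^a/\hat q^a_\PR$, i.e. $\hat y^a=\nu^a/\hat q^a_\PR-\hat\mu(-\hat y)$. As $\hat y\in\SOL_\F\subseteq C^\ast$ was arbitrary, this is exactly the claimed inclusion
\begin{equation*}
  \SOL_\F\subseteq\left\{\hat y\in C^\ast:\ \hat y^a=\frac{\nu^a}{\hat q^a_\PR}-\hat\mu(-\hat y)\right\}.
\end{equation*}

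There is no real obstacle here; the only thing to be careful about is the sign bookkeeping in passing from $\hat y$ to $z=-\hat y$ when invoking Propositions~\ref{P:F-Scc-SP} and~\ref{P:cc-SOL}, and the (trivial) observation that $\hat q^a_\PR$ has no zero coordinate so that the inversion is legitimate. One might note in passing that this lemma is the first half of the ``active uniqueness'' statement in Theorem~\ref{T:F}: it pins $\hat y^a$ down in terms of $\hat q^a_\PR$ up to the scalar ambiguity carried by $\hat\mu(-\hat y)$, and the subsequent lemmas (establishing $\hat\mu(-\hat y)=1$ on $\SOL_\F$) will remove that ambiguity and give formula~(\ref{EQ:yhat-F}).
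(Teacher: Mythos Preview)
Your proof is correct and follows essentially the same route as the paper: invoke Proposition~\ref{P:F-Scc-SP} to get $\hat q_\PR\in\SOL_\cc(-\hat y)$, then read off $\hat q^a_\PR=\nu^a/(\hat\mu(-\hat y)+\hat y^a)$ from Proposition~\ref{P:cc-SOL} and invert. You add the explicit justification that $\hat q^a_\PR>0$ makes the componentwise inversion legitimate, which the paper leaves implicit.
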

\begin{proof}
 Take any $\hat y\in\SOL_\F$.
 By Propositions~\ref{P:F-Scc-SP} and \ref{P:cc-SOL},
 \begin{equation*}
    \hat q^a_\PR
    = \hat q^a_\cc(-\hat y)
    = \frac{\nu^a}{\hat\mu(-\hat y) + \hat y^a}\,.
 \end{equation*}
From this the result follows immediately.
\end{proof}

\begin{lemma}\label{L:F-SOL}
Let $\hat y^a_\F$ be as in Lemma~\ref{L:F-primal->dual}.
Then
\begin{equation*}
    \SOL_\F
    = \{
      \hat y\in C^\ast: \  \hat y^a = \hat y^a_\F,\  \hat\mu(-\hat y)=1
    \}.
\end{equation*}
\end{lemma}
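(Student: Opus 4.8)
The plan is to prove the two inclusions of the claimed equality separately, and in each direction to reduce everything to the closed form of $\ell^\ast$ furnished by Theorem~\ref{T:cc} together with the identity $\hat\ell_\F=-\hat\ell_\PR$ established in Lemma~\ref{L:F-primal=fenchel}. The only further inputs I would need are the slice description of the cc-solution set from Proposition~\ref{P:cc-SOL}, the inclusion $\SOL_\PR\subseteq\SOL_\cc(-\hat y)$ from Proposition~\ref{P:F-Scc-SP}, and their consequence recorded in Lemma~\ref{L:F-SOLa-is-almost-singleton}.

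First I would prove ``$\subseteq$''. Fix $\hat y\in\SOL_\F$. By the definition of $\SOL_\F$ and Lemma~\ref{L:F-primal=fenchel}, $\ell^\ast(-\hat y)=\hat\ell_\F=-\hat\ell_\PR$, while Theorem~\ref{T:cc} applied to $z=-\hat y$ gives $\ell^\ast(-\hat y)=-1+\hat\mu(-\hat y)+I_{\hat\mu(-\hat y)}(\nu^a\,\|\,\hat y^a)$. The step that does the work is the identification of the last term: Lemma~\ref{L:F-SOLa-is-almost-singleton} yields $\hat\mu(-\hat y)+\hat y^a=\nu^a/\hat q^a_\PR$, so $I_{\hat\mu(-\hat y)}(\nu^a\,\|\,\hat y^a)=\sp{\nu^a}{\log\hat q^a_\PR}=-\ell(\hat q_\PR)=-\hat\ell_\PR$ for any $\hat q_\PR\in\SOL_\PR$ (here $\nu^p=0^p$, so the passive coordinates of $\hat q_\PR$ do not contribute). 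Substituting back, the two expressions for $\ell^\ast(-\hat y)$ collapse to $-\hat\ell_\PR=-1+\hat\mu(-\hat y)-\hat\ell_\PR$, that is, $\hat\mu(-\hat y)=1$. Feeding $\hat\mu(-\hat y)=1$ into Lemma~\ref{L:F-SOLa-is-almost-singleton} then gives $\hat y^a=\nu^a/\hat q^a_\PR-1^a=\hat y^a_\F$, so $\hat y$ lies in the right-hand set.

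For ``$\supseteq$'', I would take $\hat y\in C^\ast$ with $\hat y^a=\hat y^a_\F$ and $\hat\mu(-\hat y)=1$ and simply compute $\ell^\ast(-\hat y)$. Theorem~\ref{T:cc} gives $\ell^\ast(-\hat y)=-1+\hat\mu(-\hat y)+I_{\hat\mu(-\hat y)}(\nu^a\,\|\,\hat y^a_\F)=I_1(\nu^a\,\|\,\hat y^a_\F)$; since $1+\hat y^a_\F=\nu^a/\hat q^a_\PR$, this equals $\sp{\nu^a}{\log\hat q^a_\PR}=-\hat\ell_\PR=\hat\ell_\F$ (all quantities are finite because $\hat q^a_\PR>0$ forces $\hat y^a_\F>-1^a$). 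Together with the hypothesis $\hat y\in C^\ast$ this is exactly the statement $\hat y\in\SOL_\F$.

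The main obstacle — though it is more a matter of careful bookkeeping than of real difficulty — is the identification in the first inclusion of $I_{\hat\mu(-\hat y)}(\nu^a\,\|\,\hat y^a)$ with $-\hat\ell_\PR$ via the slice identity $\hat q^a_\PR=\nu^a/(\hat\mu(-\hat y)+\hat y^a)$ coming from Lemma~\ref{L:F-SOLa-is-almost-singleton}; once that is in hand, each inclusion reduces to a one-line arithmetic identity.
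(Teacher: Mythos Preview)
Your proof is correct and follows essentially the same route as the paper: both directions hinge on Theorem~\ref{T:cc}, with the ``$\subseteq$'' inclusion using Lemma~\ref{L:F-SOLa-is-almost-singleton} to identify $\hat\mu(-\hat y)+\hat y^a=\nu^a/\hat q^a_\PR$ and then force $\hat\mu(-\hat y)=1$. The only cosmetic difference is that the paper compares $\ell^\ast(-\hat y)$ against $\ell^\ast(-\hat y_\F)$ (citing Lemma~\ref{L:cc-nondecreasing}(b) for the reverse inclusion), whereas you compare it directly against $-\hat\ell_\PR$; the arithmetic is the same.
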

\begin{proof}
One inclusion follows from Lemmas~\ref{L:F-primal->dual} and \ref{L:cc-nondecreasing}(b).
To prove the other one, take any $\hat y\in \SOL_\F$ and put $\mu=\hat\mu(-\hat y)$.
Let $\hat y_\F$ be as in Lemma~\ref{L:F-primal->dual}.
Then $\hat y^a = \hat y^a_\F - (\mu-1)$ by Lemma~\ref{L:F-SOLa-is-almost-singleton}.
Theorem~\ref{T:cc} yields
\begin{equation*}
    0
    = \ell^\ast(\hat y) - \ell^\ast(\hat y_\F)
    =  (\mu + I_\mu(\nu^a\,\|\,\hat y^a))
     - (1 + I_1(\nu^a\,\|\,\hat y^a_\F))
    =\mu-1.
\end{equation*}
Thus $\mu=1$ and $\hat y^a = \hat y^a_\F$.
\end{proof}

\subsubsection{Proof of Theorem~\ref{T:F}}\label{SSS:F-proof}
Now we are ready to prove Theorem~\ref{T:F}.

\begin{proof}[Proof of Theorem~\ref{T:F}]
The facts that $\hat\ell_\F=-\hat\ell_\PR$ and $\SOL_\F\ne\emptyset$
were proved in Lemma~\ref{L:F-primal=fenchel}.
Further, the set $\SOL_\F$ is convex and closed
due to the fact that the conjugate $\ell^\ast$ is convex and closed
(even continuous, see Lemma~\ref{L:cc-lipschitzness}).
To show compactness of $\SOL_\F$ it suffices to prove that it is bounded.

We already know from Lemmas~\ref{L:F-primal->dual} and \ref{L:F-SOL}
that $\hat y^a > -1$ and $\hat y^p\ge -\hat\mu(-\hat y)=-1$
for every $\hat y\in\SOL_\F$; thus $\SOL_\F$ is bounded from below by $-1$.

Since $\supp C=\A$, there is $q\in C$ with $q>0$; put $\beta\triangleq\min q_i>0$.
Take any $\hat y\in\SOL_\F$ and put $I\triangleq\{i\in\A: \hat y_i>0\}$.
Then, since $\hat y\in C^\ast$ and $\hat y\ge -1$,
\begin{equation*}
  0
  \ge
  \sp{\hat y}{q}
  \ge
  \beta \sum_{i\in I} \hat y_i - \sum_{j\in\A\setminus I} q_j
  \ge \beta \hat y_i - 1
\end{equation*}
for every $i\in I$. Thus $\hat y\le (1/\beta)$ and so $\SOL_\F$ is bounded, hence compact.

The rest of Theorem~\ref{T:F} follows from Lemmas~\ref{L:F-primal->dual} and \ref{L:F-SOL}.
\end{proof}

\subsection{Proof of Theorem~\ref{T:BD}
  (Relation between \refBD{} and \refPR{})}\label{S:proofs-BD}

First, basic properties of $\ell^\ast_\BD$ are proven.
Then Lemma~\ref{L:dual-finite-C*} provides a preparation for the recession cone
considerations of \refBD{}. This leads to Proposition~\ref{P:dual-finite}
giving the conditions of finiteness of $\hat\ell_\BD$.
There \refBD{} is seen as a primal problem and Theorem~\ref{T:bertsekas2} is applied to it.
The solution set $\SOL_\BD$ is described in Lemma~\ref{L:SOL-BD}.
Lemma~\ref{L:BD-duality:dual-to-primal} provides properties of
$\hat q_\BD\in C$ defined via $\hat y_\BD$,
noting that $\hat q_\BD$ need not belong to $\SOL_\PR$.
Conditions equivalent to $\hat q_\BD \in \SOL_\PR$ are stated in
Lemma~\ref{L:BD-duality:dual-to-primal3}. Its proof utilizes also
Lemmas~\ref{L:BD-duality:dual-to-primal2}~and~\ref{L:BD-duality:dual-to-primal2b}.

Let $\nu\in \sx{\A}$ be a type. Recall from Section~\ref{St:BD} that the
map $\bd{}:\RRR^m\to\RRRex$ is for $y^a>-1$ defined by
\begin{equation*}
  \bd{}(y) = I_1(\nu^a\,\|\,y^a) = \sp{\nu^a}{\log\frac{\nu^a}{1+y^a}}\,.
\end{equation*}
Since
\begin{equation*}
    \bd{}(y)=\ell(1+y)  + \sp{\nu^a}{\log \nu^a}
\end{equation*}
for every $y\in\RRR^m$ (where $\ell$ is defined on $\RRR^m$ by (\ref{EQ:f-def})),
Lemma~\ref{L:f-props} yields the following result.

\begin{lemma}\label{L:g-props}
If $\nu\in\sx{\A}$, then
\begin{enumerate}
  \item[(a)] $\bd{}(y)>-\infty$ for every $y\in \RRR^m$, and
             $\bd{}(y)<\infty$ if and only if $y^a>-1$; so
      \begin{equation*}
        \dom(\bd{})= \{
          y\in \RRR^m:\ y^a>-1
        \};
      \end{equation*}
  \item[(b)] $\bd{}$ is a proper continuous (hence closed) convex function;
  \item[(c)] the restriction of $\bd{}$ to its domain ${\dom(\bd{})}$ is strictly convex
    if and only if $\nu>0$;
  \item[(d)] $\bd{}$ is differentiable on $\dom(\bd{})$ with the gradient given by
    \begin{equation*}
      \nabla \bd{}(y) =  -(\nu^a/(1+y^a), 0^p);
    \end{equation*}
  \item[(e)] the recession cone $R_\bd{}$ and the constancy space $L_\bd{}$ of $\bd{}$ are
      \begin{equation*}
        R_{\bd{}} = \{
          z\in\RRR^m:\ z^a\ge 0
        \},
        \qquad
        L_\bd{} = \{
          z\in\RRR^m:\ z^a= 0
        \}.
      \end{equation*}
\end{enumerate}
\end{lemma}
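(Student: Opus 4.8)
The plan is to deduce everything from the identity $\bd(y)=\ell(1+y)+\sp{\nu^a}{\log\nu^a}$ already displayed above, which presents $\bd$ as $\ell\circ T+c$, where $T\colon\RRR^m\to\RRR^m$, $T(y)=1+y$, is an affine bijection and $c\triangleq\sp{\nu^a}{\log\nu^a}$ is a finite constant (finite since $\nu^a>0$ on $\Aa$). Because $T$ acts coordinatewise it respects the splitting into active and passive coordinates, with $(T(y))^a=1^a+y^a$, $(T(y))^p=1^p+y^p$ and $T^{-1}(x)=x-1$. So all I need is to transport, one by one, the five properties of $\ell$ recorded in Lemma~\ref{L:f-props} through this affine substitution.

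First, for (a): $\bd>-\infty$ everywhere because $\ell>-\infty$ (Lemma~\ref{L:f-props}(a)) and $c$ is finite, and $\dom(\bd)=T^{-1}(\dom(\ell))=\{y:(1+y)^a>0\}=\{y:y^a>-1\}$. Next, for (b): precomposition with an affine map and addition of a constant preserve convexity, properness and continuity, so these carry over from Lemma~\ref{L:f-props}(b); closedness then follows from continuity via Lemma~\ref{L:closed-f}. For (c): an affine \emph{bijection} preserves strict convexity, and $\dom(\bd)=\dom(\ell)-1$, so the restriction of $\bd$ to its domain is strictly convex exactly when the restriction of $\ell$ to $\dom(\ell)$ is --- i.e.\ iff $\nu>0$, by Lemma~\ref{L:f-props}(c). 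For (d): the chain rule with $DT=\mathrm{Id}$ gives $\nabla\bd(y)=\nabla\ell(1+y)$ on $\dom(\bd)$; substituting the formula of Lemma~\ref{L:f-props}(d) produces $\nabla\bd(y)=-(\nu^a/(1+y^a),0^p)$.

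Finally, for (e): having (b), $\bd$ is proper convex closed, so $R_\bd$ equals the recession cone of any nonempty level set. I would fix $\gamma$ with $V\triangleq\{y:\bd(y)\le\gamma\}\ne\emptyset$ and observe that $V=\{y:\ell(1+y)\le\gamma-c\}$ is the translate by $-1$ of the nonempty level set $\{x:\ell(x)\le\gamma-c\}$ of $\ell$; since recession cones are translation invariant, $R_\bd=R_V=R_\ell$, and hence $L_\bd=R_\bd\cap(-R_\bd)=R_\ell\cap(-R_\ell)=L_\ell$. The formulas in Lemma~\ref{L:f-props}(e) then give the claim. I do not anticipate a genuine obstacle here; the only point to state carefully is that the level sets of $\bd$ are exact translates of those of $\ell$, which is what lets the recession-cone computation of Lemma~\ref{L:f-props}(e) transfer without being redone.
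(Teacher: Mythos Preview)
Your proposal is correct and takes essentially the same approach as the paper: both rely on the identity $\bd(y)=\ell(1+y)+\sp{\nu^a}{\log\nu^a}$ to reduce everything to Lemma~\ref{L:f-props}. The paper's own proof is simply a one-line invocation of this identity together with Lemma~\ref{L:f-props}, whereas you have spelled out in detail how each of (a)--(e) transfers under the affine shift; your treatment of (e) via translation invariance of recession cones is exactly the right justification.
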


\subsubsection{Finiteness of the BD-dual \refBD{}}

\begin{lemma}\label{L:dual-finite-C*}
Let $\nu\in\sx{\A}$ and $C$ be a convex closed subset
of $\sx{\A}$ having support $\A$.
Assume that $C$ is either an $H$-set or a $Z$-set.
Then there is an active letter $i$ with the following property:
For any $\gamma>0$ and $v>0$
\begin{equation}\label{EQ:dual-finite-C*-y}
    \text{there is}\quad
    y\in C^\ast  \quad\text{such that}\quad
    y^a\ge -\gamma
    \quad\text{and}\quad
    y^a_i=v.
\end{equation}
\end{lemma}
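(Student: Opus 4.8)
The goal is to produce, for an $H$-set or $Z$-set $C$, a single active letter $i$ such that $C^{\ast}$ contains vectors $y$ which are bounded below on the active coordinates (by an arbitrary $-\gamma$) while $y^a_i$ can be pushed to an arbitrarily large positive value $v$. The natural source of such $y$ is the recession cone of $C$: if $z\in R_C$ then (since $C\subseteq\sx\A$ is bounded) one must have $\sum z = 0$ and $z\ge 0$ on all coordinates where $C$ is ``pinned'', and in particular a recession direction of a set sitting inside the simplex forces $z$ to have a negative coordinate somewhere unless $z=0$; conversely, directions $q'-q$ for $q,q'\in C$ live in the lineality-type behaviour. I plan to extract $i$ from the failure of the slice condition: because $C$ is an $H$-set or $Z$-set, $C^a(0^p)$ either is empty or has support strictly smaller than $\Aa$, so there is an active letter $i$ such that \emph{every} $q\in C$ with $q^p=0$ has $q_i=0$ — equivalently, $q_i>0$ for some $q\in C$ forces $q^p\ne 0^p$. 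Phrase this more usefully: there is no $q\in C$ with $q_i>0$ and $q^p=0$; since $\supp(C)=\A$, there \emph{is} $q^{(0)}\in C$ with $q^{(0)}_i>0$, and any such $q^{(0)}$ has $q^{(0)}_p\ne 0^p$.

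Having fixed such $i$, the plan is to build $y\in C^{\ast}$ explicitly as a combination exploiting a separating hyperplane / Farkas-type argument, or more hands-on as follows. Consider the set $D \triangleq \{q\in C : q_i \ge \eta\}$ for small $\eta>0$; its active projection avoids $0^p$-slices in the $i$-coordinate, and one shows that the linear functional ``$q\mapsto q_i$'' can be boosted relative to the other active coordinates within $C^{\ast}$. Concretely, I expect the right $y$ to be of the form $y = v\cdot e_i + (\text{large negative multiples of }e_j \text{ for }j\in\Ap) + (\text{small corrections on other active letters})$: since every $q\in C$ with $q_i$ bounded away from $0$ must charge the passive letters, the negative passive part of $y$ compensates the positive $v q_i$ term, keeping $\sp{y}{q}\le 0$ for all $q\in C$, while the active coordinates $y^a$ other than the $i$-th can be taken as small as $-\gamma$. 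Making this quantitative requires a compactness argument: on the compact set $\{q\in C: q_i\ge \gamma'/(something)\}$ the ratio $q_i / (\text{mass on passive letters})$ attains a positive minimum (this is where $H$-set/$Z$-set is used: the set where this ratio would be $+\infty$, i.e.\ $q^p=0$, has $q_i=0$), which calibrates how much negative weight to put on $\Ap$.

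The cleanest route may actually be via the recession cone of the relevant sliced set together with Theorem~\ref{T:bertsekas2}/\ref{T:bertsekas}: restrict attention to the face or slice and observe that $C^{\ast}$ is exactly the polar, so membership $y\in C^{\ast}$ is $\sp{y}{q}\le 0$ for all $q\in C$; since $C = \conv$ of its extreme points and $\sx\A$ is a simplex, it suffices to check finitely many inequalities, and one solves the resulting linear feasibility problem for $(y^a,y^p)$ with the constraints $y^a\ge -\gamma$, $y^a_i = v$ by the Farkas lemma — the feasibility being guaranteed precisely because no $q\in C$ witnesses $q_i>0$, $q^p=0$ simultaneously. I would phrase the final argument as: the system ``$\sp{y}{q}\le 0\ \forall q\in C$, $y^a\ge -\gamma$, $y^a_i\ge v$'' is consistent iff a certain implication between linear inequalities holds, and that implication reduces by Farkas to the non-existence of $q\in C$ with $q_i>0$ and $q_p=0$, which is exactly the $H$-set/$Z$-set hypothesis applied to the chosen $i$.

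**Main obstacle.** The delicate point is the uniformity: one must get $y^a\ge -\gamma$ for \emph{every} prescribed $\gamma>0$ \emph{and} $y^a_i = v$ for \emph{every} prescribed $v>0$ \emph{with the same letter $i$}. The letter $i$ must be chosen once and for all from the $H/Z$-set structure, and then the construction of $y$ must scale correctly in $v$ (linearly — doubling $v$ should roughly double the passive negative weights) while keeping the other active entries $\ge-\gamma$ independently of $v$. I expect to handle this by first producing \emph{one} $y_0\in C^{\ast}$ with $y_0^a\ge 0$ except $y_{0,i}^a = 1$ (the ``unit'' version, where the passive coordinates of $y_0$ carry whatever negative mass is forced), then scaling: $v y_0$ has $(vy_0)^a_i = v$ and $(vy_0)^a_j = 0\ge -\gamma$ for $j\ne i$ active, and $v y_0\in C^{\ast}$ since $C^{\ast}$ is a cone. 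So the entire difficulty collapses to constructing that single $y_0$, i.e.\ to the claim: \emph{if there is no $q\in C$ with $q_i>0$, $q^p=0$, then there exists $y_0\in C^{\ast}$ with $y_0^a\ge 0$ and $y_{0,i}^a>0$} (after rescaling, $=1$). This is a pure separation statement and should follow from the Farkas lemma or from separating the point-direction $e_i$ (restricted to active coordinates) from the compact convex set $C^a(0^p)$ inside $\RRR^{\Aa}$ — precisely the set the $H/Z$-hypothesis controls.
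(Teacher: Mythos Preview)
Your identification of the active letter $i$ is correct, and your middle paragraph (put $-\gamma$ on the other active letters, put large negative weight on $\Ap$, use compactness to calibrate) is essentially the paper's proof. The gap is in your final reduction, where you claim it suffices to find a \emph{single} $y_0\in C^\ast$ with $y_0^a\ge 0$ and $y_{0,i}^a>0$, and then scale. Such a $y_0$ need not exist.

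Counterexample: take $\A=\{1,2,3\}$, $\Aa=\{1,2\}$, $\Ap=\{3\}$, and $C=\{q\in\sx{\A}: q_1^2\le q_3\}$. Then $C$ is convex closed with support $\A$, and $C^a(0^p)=\{(0,1)\}$, so $C$ is a $Z$-set with the forced choice $i=1$. Suppose $y_0=(a,b,c)$ with $a>0$, $b\ge 0$ lies in $C^\ast$. Since $(0,1,0)\in C$, we get $b\le 0$, hence $b=0$. For small $t>0$, $q(t)=(t,1-t-t^2,t^2)\in C$ and
\[
  \sp{y_0}{q(t)} = at + ct^2 = t(a+ct) \to 0^+ \quad\text{with sign } +\ \text{for small }t,
\]
contradicting $y_0\in C^\ast$. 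So no $y_0$ with $y_0^a\ge 0$ and $y_{0,1}^a>0$ belongs to $C^\ast$. The underlying obstruction is that the ratio $q_i/\sum q^p$ is unbounded on $C$; the passive weight alone cannot dominate $v q_i$ uniformly.

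The paper's construction avoids this by \emph{not} asking $y^a_j\ge 0$ for $j\ne i$: it sets $y^a_j=-\gamma$ there, and then chooses $\eps$ (depending on both $v$ and $\gamma$) and a compactness threshold $\delta$ so that $\sum q^p\le\delta$ forces $q_i<\eps$. On that regime the term $-\gamma\sum_{j\ne i}q_j^a\approx -\gamma(1-q_i)$ absorbs $v q_i$ when $q_i<\eps<\gamma/(v+2\gamma)$; on the complementary regime $\sum q^p>\delta$, a sufficiently negative $y^p$ handles everything. The point is that $y$ depends on $v$ and $\gamma$ jointly, not through a scaling of a fixed $y_0$. Your earlier paragraph had exactly this shape; you should return to it and discard the scaling shortcut.
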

\begin{proof}
Assume first that $C$ is a $Z$-set, that is, $C^a(0^p)\ne\emptyset$
and there is an active letter $i$ such that $q^a_i=0$ whenever $q\in C$ satisfies $q^p=0$.
Fix any $\gamma>0,v>0$, and choose $\eps\in (0,1)$ such that
$\eps<\gamma/(v+2\gamma)$.
By compactness of $C$
we can find $0<\delta<\eps$ such that, for every $q\in C$,
$\sum{q^p}\le\delta$ implies $q^a_i<\eps$.
(For if not, there is a sequence $(q^{(n)})_n$ in $C$ with $\sum{(q^{(n)})^p}\le 1/n$ and
$(q^{(n)})^a_i\ge\eps$ for every $n$; by compactness,
there is a limit point $q\in C$ of this sequence
and any such $q$ satisfies $\sum{q^p}=0$ and $q^a_i\ge \eps$, a contradiction.)
Finally, take any $w\ge v/\delta$.

Define $y\in\RRR^m$ by
\begin{equation}\label{EQ:dual-finite-C*-ydef}
    y_i^a=v,\quad
    y_j^a=-\gamma \text{ for } j\in\Aa\setminus\{i\},
    \quad\text{and}\quad
    y_j^p=-w \text{ for } j\in\Ap.
\end{equation}
Take arbitrary $q\in C$; we are going to show
that $\sp{y}{q}\le 0$. If $\sum{q^p}\le\delta$ then $q_i^a\le \eps$ and
$\sum{q^a}\ge 1-\delta > 1-\eps$, so
\begin{equation*}
    \sp{y}{q}
    = (v+\gamma) q^a_i - \gamma \sum{q^a} -w \sum{q^p}
    < (v+\gamma)\eps - \gamma (1-\eps) < 0
\end{equation*}
by the choice of $\eps$. On the other hand, if $\sum{q^p}>\delta$ then,
using $q_i^a\le \min\{1,\sum{q^a}\}$,
\begin{equation*}
  \sp{y}{q}
  = v q^a_i - \gamma \left(\sum{q^a}-q^a_i \right) - w\sum{q^p}
  < v  - w\delta \le 0
\end{equation*}
by the choice of $w$. Thus $y\in C^\ast$.

Now assume that $C$ is an $H$-set; that is, $C^a(0^p)=\emptyset$.
By compactness, there exists $\delta>0$ such that $\sum{q^p}>\delta$
for every $q\in C$. Continuing as above we obtain
that, for any $\gamma>0,v>0$, and $w\ge v/\delta$, the vector $y$ given by (\ref{EQ:dual-finite-C*-ydef}) belongs to~$C^\ast$.
\end{proof}

\begin{proposition}\label{P:dual-finite}
Let $\nu\in\sx{\A}$ and $C$ be a convex closed subset of $\sx{\A}$ having support~$\A$.
Then the following are equivalent:
\begin{enumerate}
  \item[(a)] the dual \refBD{} is finite;
  \item[(b)] the set $C$ is neither an $H$-set nor a $Z$-set (with respect to $\nu$).
\end{enumerate}
\end{proposition}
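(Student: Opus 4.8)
The plan is to prove the two implications separately. As a preliminary observation, $0\in C^\ast$ and $\bd(0)=\sp{\nu^a}{\log\nu^a}$ is finite, so $\hat\ell_\BD\le 0$ in every case; hence ``\refBD{} is finite'' means exactly $\hat\ell_\BD>-\infty$, and its negation is $\hat\ell_\BD=-\infty$. So it suffices to show: (i) if $C$ is an H-set or a Z-set then $\hat\ell_\BD=-\infty$ (this is the contrapositive of (a)$\Rightarrow$(b)), and (ii) if $C$ is neither then $\hat\ell_\BD>-\infty$.

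For (i) I would feed the vectors produced by Lemma~\ref{L:dual-finite-C*} into $\bd$. Fix the active letter $i$ supplied by that lemma, and for each $v>0$ apply the lemma with $\gamma=1/2$ to obtain $y=y(v)\in C^\ast$ with $y^a\ge-1/2$ and $y^a_i=v$. Since then $y^a>-1$, we have $\bd(y)=\sum_{j\in\Aa}\nu^a_j\log\frac{\nu^a_j}{1+y^a_j}$; for $j\neq i$ the bounds $1+y^a_j\ge 1/2$ and $0\le\nu^a_j\le 1$ give $\nu^a_j\log\frac{\nu^a_j}{1+y^a_j}\le\log 2$, while the $i$-th term is $\nu^a_i\log\nu^a_i-\nu^a_i\log(1+v)$. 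Hence $\bd(y(v))\le\nu^a_i\log\nu^a_i+(m_a-1)\log 2-\nu^a_i\log(1+v)$, which tends to $-\infty$ as $v\to\infty$ because $\nu^a_i=\nu_i>0$; therefore $\hat\ell_\BD=-\infty$.

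For (ii) I would use the remark following the definition of H- and Z-sets: $C$ is neither iff there is $q=(q^a,0^p)\in C$ with $q^a>0$. Put $c\triangleq\min_{i\in\Aa}q^a_i>0$. Since $\bd$ depends only on the active coordinates and equals $+\infty$ off $\dom(\bd)=\{y:y^a>-1\}$, it is enough to bound $\bd$ from below over $C^\ast\cap\dom(\bd)$. For such a $y$ we have $\sp{y^a}{q^a}=\sp{y}{q}\le 0$; fixing $i\in\Aa$ and using $q^a_j>0$ and $y^a_j>-1$ for $j\neq i$, $q^a_iy^a_i\le-\sum_{j\neq i}q^a_jy^a_j<\sum_{j\neq i}q^a_j\le 1$, so $y^a_i<1/q^a_i\le 1/c$. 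Plugging $\log(1+y^a_i)<\log(1+1/c)$ into $\bd(y)=\sp{\nu^a}{\log\nu^a}-\sum_{i\in\Aa}\nu^a_i\log(1+y^a_i)$ and using $\sum_{i\in\Aa}\nu^a_i=1$ yields the uniform lower bound $\bd(y)>\sp{\nu^a}{\log\nu^a}-\log(1+1/c)$, so $\hat\ell_\BD$ is finite. (When $C$, hence $C^\ast$, is polyhedral one may instead invoke Theorem~\ref{T:bertsekas2}, since the same estimate $y^a_i<1/c$ gives $R_{C^\ast}\cap R_\bd\subseteq L_\bd$ in view of Lemma~\ref{L:g-props}(e); but the direct bound also covers non-polyhedral $C$.)

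The step I expect to be the main obstacle is precisely this last uniform estimate in (ii). For a general, possibly non-polyhedral $C$ one cannot simply read off finiteness from the recession-cone existence theorems, because $C^\ast\cap\dom(\bd)$ is unbounded in the passive directions and condition \eqref{EQ:no-direction-recession-and-constancy} can genuinely fail; the crux is to exploit the single ``interior-type'' feasible vector $q$ with full active support and vanishing passive part in order to control, simultaneously and from above, the active components of every $y\in C^\ast$.
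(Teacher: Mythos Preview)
Your argument is correct. Part~(i) is essentially the paper's computation with the specific choice $\gamma=1/2$; there is nothing to add.

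Part~(ii) is where you diverge. The paper does not bound $\bd$ on $C^\ast$ directly; instead it enlarges to the half-space $D\triangleq\{q\}^\ast\supseteq C^\ast$, verifies $R_\bd\cap R_D=L_\bd\cap L_D=\{y:y^a=0\}$, and invokes Theorem~\ref{T:bertsekas2} on $D$ to get $\argmin_D\bd\ne\emptyset$. Your worry in the last paragraph that condition~\eqref{EQ:no-direction-recession-and-constancy} ``can genuinely fail'' is well-founded for $C^\ast$ itself, but the paper sidesteps this precisely by working on the larger, structurally simpler set $D$. Your route is more elementary: the single inequality $\sp{y^a}{q^a}\le 0$ together with $y^a>-1$ already pins down $y_i^a<1/q_i^a$, and one line of arithmetic gives the explicit lower bound $\hat\ell_\BD\ge\sp{\nu^a}{\log\nu^a}-\log(1+1/c)$. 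This avoids the recession-cone machinery entirely and yields a quantitative bound; the paper's approach, by contrast, delivers existence of a minimizer on $D$ (hence also boundedness) but no explicit constant. Either argument covers non-polyhedral $C$.
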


\begin{proof}
Assume that $C$ is either an $H$-set or a $Z$-set.
Fix any $\gamma\in(0,1)$. Then, by Lemma~\ref{L:dual-finite-C*}, for arbitrary $v>0$
there is $y\in C^\ast$ satisfying (\ref{EQ:dual-finite-C*-y}).
For such $y$,
\begin{equation*}
\begin{split}
    -\bd{}(y) + \sp{\nu^a}{\log\nu^a}
    &\ge \sum_{j\in\Aa\setminus\{i\}} \nu_j^a \log(1-\gamma)   +  \nu_i^a \log(1+ v)
\\
    &= (1-\nu_i^a)\log(1-\gamma)   +  \nu_i^a \log(1+ v).
\end{split}
\end{equation*}
Since $\gamma$ is fixed and $v>0$ is arbitrary, $\hat\ell_\BD=-\infty$.
This proves that (a) implies (b).

Now we show that (b) implies (a). Assume that $C$ is neither an $H$-set nor a $Z$-set;
that is, there is $q\in C$ with $q^a>0$, $q^p=0$. Put $D=\{q\}^\ast$.
Since $R_D=D=\{y:\sp{y^a}{q^a}\le 0\}$,
$L_D=D\cap (-D)=\{y: \sp{y^a}{q^a}=0\}$
and, by Lemma~\ref{L:g-props}, $R_\bd{}=\{y:y^a\ge 0\}$ and $L_\bd{}=\{y: {y^a}=0\}$,
it follows that
\begin{equation*}
    R_\bd{}\cap R_D = L_\bd{}\cap L_D = \{y: y^a=0\}.
\end{equation*}
Now Theorem~\ref{T:bertsekas2} (applied to $\bd{}$ and $D$)
implies that $\argmin_{D} \bd{}$ is nonempty,
hence $\bd{}$ is bounded from below on $D$.
Since $C^\ast\subseteq D$,  (a) is established.
\end{proof}

\subsubsection{The solution set $\SOL_\BD$ of the BD-dual}

\begin{lemma}\label{L:SOL-BD}
Let $\nu\in\sx{\A}$.
Let $C$ be a convex closed subset of $\sx{\A}$
having support~$\A$, which is neither an
$H$-set nor a $Z$-set (with respect to $\nu$).
Then the solution set $\SOL_\BD$ is nonempty, and there is
$\hat y_\BD^a\in C^{\ast a}$ such that $\hat y_\BD^a>-1$ and
\begin{equation*}
  \SOL_\BD
  = \{\hat y\in C^\ast: \hat y^a = \hat y_\BD^a\}
  = \{\hat y_\BD^a\} \times C^{\ast p}(\hat y_\BD^a).
\end{equation*}
Moreover, $\SOL_\BD$ is a singleton if and only if $\nu>0$.
\end{lemma}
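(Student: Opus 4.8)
The plan is to reduce the BD-dual to the active coordinates and then obtain existence from Theorem~\ref{T:bertsekas}. Since $\bd(y)=I_1(\nu^a\,\|\,y^a)$ depends on $y$ only through $y^a$, set $\bd^a\colon\RRR^{m_a}\to\RRRex$ by $\bd^a(y^a)\triangleq I_1(\nu^a\,\|\,y^a)$ for $y^a>-1$ and $\bd^a(y^a)\triangleq\infty$ otherwise; then $\bd(y)=\bd^a(\pi^a(y))$ and $\hat\ell_\BD=\inf_{y^a\in C^{\ast a}}\bd^a(y^a)$. Applying Lemma~\ref{L:g-props} to the strictly positive vector $\nu^a$ (in the role of $\nu$), $\bd^a$ is a proper convex closed function, differentiable and \emph{strictly} convex on its open domain $\{y^a>-1\}$, with recession cone $R_{\bd^a}=\{z^a\ge 0\}$. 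Hence, once the infimum above is shown to be attained, say at $\hat y^a_\BD$, strict convexity makes $\hat y^a_\BD$ the unique minimizer, finiteness of $\hat\ell_\BD$ (Proposition~\ref{P:dual-finite}) forces $\hat y^a_\BD\in\dom(\bd^a)$, i.e.\ $\hat y^a_\BD>-1$, and, since $\bd$ depends on $y^a$ alone, $\SOL_\BD=\{\hat y\in C^\ast:\hat y^a=\hat y^a_\BD\}=\{\hat y^a_\BD\}\times C^{\ast p}(\hat y^a_\BD)$. This last set is a singleton exactly when $m_p=0$, i.e.\ $\nu>0$: if $m_p\ge 1$ then $C^{\ast p}(\hat y^a_\BD)$ is nonempty and, by Lemma~\ref{L:C*}, closed under adding nonpositive passive vectors, hence not a singleton. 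This yields every assertion except the existence of a feasible $\hat y^a_\BD\in C^{\ast a}$.

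For existence I would use the hypothesis as follows. Because $C$ is neither an H-set nor a Z-set, pick $q_0\in C$ with $q_0^a>0$ and $q_0^p=0$. Then $\sp{y^a}{q_0^a}=\sp{y}{q_0}\le 0$ for every $y\in C^\ast$, so $C^{\ast a}$, and therefore its closure $\overline{C^{\ast a}}$ — a closed convex cone in $\RRR^{m_a}$ — lies in the halfspace $\{y^a:\sp{y^a}{q_0^a}\le 0\}$. Since $q_0^a>0$, the only $z^a\ge 0$ with $\sp{z^a}{q_0^a}\le 0$ is $z^a=0$, so $R_{\overline{C^{\ast a}}}\cap R_{\bd^a}=\overline{C^{\ast a}}\cap\{z^a\ge 0\}=\{0\}$. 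By Proposition~\ref{P:dual-finite} the set $C^{\ast a}\cap\dom(\bd^a)$ is nonempty, so Theorem~\ref{T:bertsekas} applied to $\bd^a$ and $\overline{C^{\ast a}}$ gives that $\argmin_{\overline{C^{\ast a}}}\bd^a$ is nonempty and compact; by strict convexity on the domain it is a single point $\hat y^a_\BD>-1$, and, $\bd^a$ being continuous on its open domain, $\bd^a(\hat y^a_\BD)=\inf_{\overline{C^{\ast a}}}\bd^a=\inf_{C^{\ast a}}\bd^a=\hat\ell_\BD$.

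The step I expect to be the genuine obstacle is promoting this to $\hat y^a_\BD\in C^{\ast a}$ itself — equivalently, producing $\hat y^p$ with $(\hat y^a_\BD,\hat y^p)\in C^\ast$, equivalently showing that $C^{\ast a}$ is closed. Here the non-H/non-Z-set assumption and the compactness of $C$ must be used once more: along any minimizing sequence $y^a_n\in C^{\ast a}$ with $y^a_n\to\hat y^a_\BD$ one keeps $\sp{\hat y^a_\BD}{p^a}\le 0$ for every $p^a\in C^a(0^p)$, and I would then run a compactness argument in the spirit of Lemma~\ref{L:dual-finite-C*} — separating $q\in C$ according to whether $\sum q^p$ is small — to exhibit a sufficiently negative $\hat y^p$ on the passive letters for which $\sp{\hat y^a_\BD}{q^a}+\sp{\hat y^p}{q^p}\le 0$ for all $q\in C$. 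Once $\hat y^a_\BD\in C^{\ast a}$ and $\hat y^a_\BD>-1$ are in hand, the structural description of $\SOL_\BD$ and the singleton criterion follow exactly as in the first paragraph.
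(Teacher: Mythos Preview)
Your overall strategy matches the paper's: pass to the active projection by setting $\bda(y^a)=I_1(\nu^a\,\|\,y^a)$, use strict convexity (from $\nu^a>0$) for uniqueness, pick $q_0\in C$ with $q_0^a>0$, $q_0^p=0$ to obtain $R_{C^{\ast a}}\cap R_{\bda}\subseteq \{q_0^a\}^\ast\cap\{z^a\ge 0\}=\{0\}$, invoke Theorem~\ref{T:bertsekas}, and then read off $\SOL_\BD=\{\hat y_\BD^a\}\times C^{\ast p}(\hat y_\BD^a)$ together with the singleton criterion via Lemma~\ref{L:C*}. The paper's proof does exactly this, but it applies Theorem~\ref{T:bertsekas} directly to $C^{\ast a}$ \emph{without} discussing whether $C^{\ast a}$ is closed; the issue you single out is simply not addressed there.

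Your instinct that closedness of $C^{\ast a}$ is the real obstacle is correct, and the promotion step you sketch (minimize over $\overline{C^{\ast a}}$, then manufacture a sufficiently negative $\hat y^p$ via a Lemma~\ref{L:dual-finite-C*}-style split) does not go through in general. For non-polyhedral $C$ the projection $C^{\ast a}=\pi^a(C^\ast)$ need not be closed even under the non-H/non-Z hypothesis, and the unique minimizer of $\bda$ over $\overline{C^{\ast a}}$ can land in $\overline{C^{\ast a}}\setminus C^{\ast a}$, forcing $\SOL_\BD=\emptyset$. Concretely: with $\Aa=\{1,2\}$, $\Ap=\{3\}$, $C=\conv\bigl(\{(\tfrac12,\tfrac12,0)\}\cup\{q_n:n\ge 3\}\bigr)$ where $q_n=(\tfrac12+\tfrac1n,\ \tfrac12-\tfrac1n-\tfrac1{n^2},\ \tfrac1{n^2})$, and $\nu=(\tfrac34,\tfrac14,0)$, one checks that $C^{\ast a}=\{y_1+y_2<0\}\cup\{(t,-t):t\le 0\}$ while the unique minimizer of $\bda$ over its closure $\{y_1+y_2\le 0\}$ is $(\tfrac12,-\tfrac12)\notin C^{\ast a}$; the ratio $\sp{\hat y^a}{q_n^a}/\sum q_n^p=n+\tfrac12$ blows up, so no finite $\hat y^p$ works. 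Thus the paper's argument and your promotion route share the same gap for general convex $C$; the proof is clean when $C$ is polyhedral (then $C^{\ast a}$ is closed and Theorem~\ref{T:bertsekas} applies as written), which covers all the applications in the paper.
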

\begin{proof}
The proof is analogous to that of Proposition~\ref{P:primal-active-projection}.
Define $\bda:C^{\ast a}\to\RRRex$ by $\bda(y^a)\triangleq \sp{\nu^a}{\log(\nu^a/(1+y^a))}$
if $y^a>-1$, $\bda(y^a)\triangleq \infty$ otherwise.
Then $\bd(y)=\bda(y^a)$ for any $y\in C^\ast$, hence
\begin{equation}\label{EQ:L:SOL-BD}
    \hat\ell_\BD=\inf_{y^a\in C^{\ast a}} \bda(y^a)
    \quad\text{and}\quad
    \SOL_\BD = \left\{y\in C^\ast: y^a\in\argmin_{C^{\ast a}} \bda{}\right\}.
\end{equation}
By Lemma~\ref{L:g-props}, $\bda$ is strictly convex, so $\argmin_{C^{\ast a}} \bda{}$
contains at most one point.
The set $C^\ast$ is neither an $H$-set nor a $Z$-set,
so there is $q\in C$ with $q^a>0$ and $q^p=0$.
Thus, as in the proof of Proposition~\ref{P:dual-finite},
$R_\bda{}\cap R_{C^{\ast a}}\subseteq R_\bda{}\cap R_{\{q^a\}^\ast}=\{0^a\}$.
Since $C^{\ast a}\cap \dom(\bda)\ne\emptyset$, Theorem~\ref{T:bertsekas} gives that
$\argmin_{C^{\ast a}} \bda{}$ is nonempty, hence a singleton;
denote its unique point by  $\hat y_\BD^a$.
Since trivially $\hat y_\BD^a>-1$,
the first assertion of the lemma follows from (\ref{EQ:L:SOL-BD}).
The second assertion follows from the first one and Lemma~\ref{L:C*}.
\end{proof}

\subsubsection{No BD-duality gap; proof of Theorem~\ref{T:BD}}

The proof of the following lemma is inspired by that of \cite[Thm.~2.1]{elbarmi-dykstra}.

\begin{lemma}[Relation between $\hat y_\BD$ and $\hat q_\BD$]\label{L:BD-duality:dual-to-primal}
Let $\nu\in\sx{\A}$.
Let $C$ be a convex closed subset of $\sx{\A}$ having support~$\A$,
which is neither an $H$-set nor a $Z$-set (with respect to $\nu$).
Let $\hat y^a_\BD$ be as in Lemma~\ref{L:SOL-BD} and put
\begin{equation}\label{EQ:BD-duality-proof-qbar}
    \hat{q}_\BD \triangleq  \left(
      \frac{\nu^a}{1+\hat y^a_\BD}, 0^p
    \right).
\end{equation}
Then
\begin{equation*}
  \hatmu(-\hat y^a_\BD)=1,
  \qquad
  \hat{q}_\BD\in C,
  \qquad
  \hat{q}_\BD\bot\SOL_\BD,
  \qquad\text{and}\qquad
  \ell(\hat q_\BD)=-\hat\ell_\BD.
\end{equation*}
\end{lemma}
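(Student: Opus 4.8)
The plan is to exploit the fact that the BD-dual $\bd$ differs from the extended $\ell$ only by a shift and a constant, so the optimality of $\hat y^a_\BD$ in $C^{\ast a}$ translates into a first-order variational inequality that we can test against suitable directions in the cone $C^\ast$. First I would record the three easy ingredients. The identity $\hatmu(-\hat y^a_\BD)=1$ follows because $\xi_{-\hat y^a_\BD}(1)=\sum \nu^a/(1+\hat y^a_\BD)=\sum \hat q^a_\BD$, and a separate computation (below) will show $\sum \hat q^a_\BD=1$, whence $\hatmu(-\hat y^a_\BD)=1$ by the monotonicity of $\xi$ recorded in~(\ref{EQ:xi-is-monotone}). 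For $\hat q_\BD\bot\SOL_\BD$ note that by Lemma~\ref{L:SOL-BD} every $\hat y\in\SOL_\BD$ has $\hat y^a=\hat y^a_\BD$ and $\hat y^p\in C^{\ast p}(\hat y^a_\BD)$, so $\sp{\hat q_\BD}{\hat y}=\sp{(\nu^a/(1+\hat y^a_\BD))}{\hat y^a_\BD}+0=\sum\nu^a-\sum\hat q^a_\BD=0$ once we know $\sum\hat q^a_\BD=1$. And once $\hat q_\BD\in C$ is established, $\ell(\hat q_\BD)=-\sp{\nu^a}{\log(\nu^a/(1+\hat y^a_\BD))}=-\bd(\hat y_\BD)=-\hat\ell_\BD$ is immediate from the definition of $\bd$ and $\SOL_\BD$.

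The substance of the lemma is therefore the two claims $\sum\hat q^a_\BD=1$ and $\hat q_\BD\in C$, and both come from the variational characterization of $\hat y^a_\BD$. Since $\hat y^a_\BD=\argmin_{C^{\ast a}}\bda$ with $\bda$ differentiable on $\{y^a>-1\}$ (Lemma~\ref{L:g-props}(d)), the optimality condition is $\bda{}'(\hat y^a_\BD;\, w^a-\hat y^a_\BD)\ge 0$ for every $w\in C^\ast$, i.e.
\begin{equation*}
  0 \le -\sum_{i\in\Aa} \frac{\nu_i^a (w_i^a-\hat y^a_{\BD,i})}{1+\hat y^a_{\BD,i}}
     = \sum_{i\in\Aa} \hat q^a_{\BD,i} - \sp{w^a}{\hat q^a_\BD}
\end{equation*}
for all $w\in C^\ast$. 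Writing this as $\sp{w}{\hat q_\BD}\le \sum\hat q^a_\BD$ for every $w\in C^\ast$ (using $\hat q^p_\BD=0$), I would first take $w=2\hat y_\BD$ and $w=0$ (both in the cone $C^\ast$, using that $\hat y^a_\BD>-1$ forces only a cone membership, and $0\in C^\ast$ always) — more precisely, since $C^\ast$ is a cone, $\lambda\hat y_\BD\in C^\ast$ for all $\lambda\ge 0$, so taking $\lambda\to\infty$ forces $\sp{\hat y_\BD}{\hat q_\BD}\le 0$, while $\lambda=0$ and the homogeneity give $\sp{\hat y_\BD}{\hat q_\BD}=\sum\hat q^a_\BD\cdot 0$; combining, $\sp{\hat y_\BD}{\hat q_\BD}\ge 0$ too, hence $=0$, which yields $\sp{\hat y^a_\BD}{\hat q^a_\BD}=0$, i.e. $\sum\nu^a-\sum\hat q^a_\BD=0$, giving $\sum\hat q^a_\BD=1$. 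With that in hand, the inequality becomes $\sp{w}{\hat q_\BD}\le 1$ for all $w\in C^\ast$; but the fact that $C$ is neither an $H$-set nor a $Z$-set supplies some $\tilde q\in C$ with $\tilde q^a>0$, $\tilde q^p=0$, and one checks $\sp{\tilde q}{\hat q_\BD}=\sum_{i\in\Aa}\tilde q^a_i\hat q^a_{\BD,i}$, which has to be handled by a cone/bipolar argument: since $C$ is convex closed, $C^{\ast\ast}=K_C$ and $K_C\cap\sx\A=C$, so $\sp{w}{\hat q_\BD}\le 1$ for all $w\in C^\ast$ together with $\sum\hat q^a_\BD=1=\sum\hat q_\BD$ shows $\hat q_\BD\in K_C$, and then $\hat q_\BD\in\sx\A$ forces $\hat q_\BD\in C$.

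The main obstacle I anticipate is the last step, passing from "$\sp{w}{\hat q_\BD}\le 1$ for all $w\in C^\ast$" to "$\hat q_\BD\in C$": one must be careful that $1=\sum\hat q_\BD=\sp{\ones}{\hat q_\BD}$ and that the bipolar theorem $C^{\ast\ast}=K_C=\{z\in\RRR^m:\sp{w}{z}\le 0\ \forall w\in C^\ast\}$ is applied to the \emph{cone} generated by $C$, not to $C$ itself — so the clean way is to note $\sp{w}{\hat q_\BD}\le 0$ for every $w\in C^\ast$ must actually be derived (the "$\le 1$" is not homogeneous), and this requires the sharper use of the variational inequality with $w$ ranging over the whole cone, exactly as for $\sp{\hat y_\BD}{\hat q_\BD}$ above but now for arbitrary $w$: scaling $w\mapsto\lambda w$ and sending $\lambda\to\infty$ turns $\sp{\lambda w}{\hat q_\BD}\le\sum\hat q^a_\BD=1$ into $\sp{w}{\hat q_\BD}\le 0$ for every $w\in C^\ast$, and then $\hat q_\BD\in C^{\ast\ast}\cap\sx\A=K_C\cap\sx\A=C$ by \cite[Prop.~3.1.1]{bertsekas}. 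The rest is bookkeeping. This mirrors the structure of the proof of Lemma~\ref{L:F-primal->dual}, with the roles of $\ell$ and $\bd$ interchanged and $\hatmu$ pinned to $1$ from the start.
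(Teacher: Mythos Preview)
Your approach is essentially the paper's: use the first-order optimality condition for $\bd$ on the cone $C^\ast$, exploit the cone structure (scale $\hat y_\BD$) to extract orthogonality $\sp{\hat q_\BD}{\hat y_\BD}=0$, deduce $\sum\hat q^a_\BD=1$ and $\hat q_\BD\in C^{\ast\ast}\cap\sx\A=C$, and read off the remaining claims. The paper tests the variational inequality at $w=2\hat y_\BD$ and $w=\tfrac12\hat y_\BD$, which is the discrete version of your scaling.

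There is, however, a computational slip that breaks one step as written. The directional derivative is
\[
  -\sum_{i\in\Aa}\frac{\nu_i^a(w_i^a-\hat y^a_{\BD,i})}{1+\hat y^a_{\BD,i}}
  \;=\;
  \sp{\hat y^a_\BD}{\hat q^a_\BD}\,-\,\sp{w^a}{\hat q^a_\BD},
\]
not $\sum\hat q^a_\BD-\sp{w^a}{\hat q^a_\BD}$ (note $\sp{\hat y^a_\BD}{\hat q^a_\BD}=1-\sum\hat q^a_\BD$, not $\sum\hat q^a_\BD$). With your stated right-hand side $\sum\hat q^a_\BD$, setting $w=0$ gives only the triviality $0\le\sum\hat q^a_\BD$, and your claim ``$\lambda=0$ and the homogeneity give $\sp{\hat y_\BD}{\hat q_\BD}\ge 0$'' does not follow. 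With the correct right-hand side $\sp{\hat y_\BD}{\hat q_\BD}$, testing $w=0$ immediately yields $\sp{\hat y_\BD}{\hat q_\BD}\ge 0$, and the scaling (or $w=2\hat y_\BD$) gives the other inequality; from there your argument proceeds exactly as planned and coincides with the paper's proof. The invocation of a point $\tilde q\in C$ with $\tilde q^a>0$, $\tilde q^p=0$ is not needed here; that hypothesis is used only upstream (via Lemma~\ref{L:SOL-BD}) to guarantee $\SOL_\BD\ne\emptyset$.
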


Note that $\hat q_\BD$ need not belong to $\SOL_\PR$; for
conditions equivalent to $\hat q_\BD\in\SOL_\PR$, see Lemma~\ref{L:BD-duality:dual-to-primal3}.

\begin{proof}
Take any $\hat{y}=(\hat y^a_\BD,\hat y^p)\in\SOL_\BD$.
Then
\begin{equation}\label{EQ:BD-duality-proof-1}
    0
    \le (\bd{})'(\hat y; y-\hat y)
    =-\sum \frac{\nu^a (y^a-\hat{y}^a_\BD{})}{1+\hat y^a_\BD{}}
    \qquad\text{for every } y\in C^\ast.
\end{equation}
Applying (\ref{EQ:BD-duality-proof-1}) to $y=2\hat y$ and then to $y=(1/2)\hat y$
(both belonging to $C^\ast$ since $C^\ast$ is a cone) gives
\begin{equation}\label{EQ:BD-duality-proof-2}
    \sp{\hat q_\BD}{\hat y}
    = \sum \frac{\nu^a \hat{y}^a_\BD{}}{1+\hat y^a_\BD{}}
    =0.
\end{equation}
Now (\ref{EQ:BD-duality-proof-1}) and (\ref{EQ:BD-duality-proof-2}) yield
$\sp{\hat q_\BD}{y}\le 0$
for every $y\in C^\ast$, that is,
$\hat{q}_\BD\in C^{\ast\ast}=K_C$ (for the last equality use that $C$
is convex closed).
Further (recall the definition (\ref{EQ:xi-def}) of $\xi$),
\begin{equation*}
    1
    = \sum  \frac{\nu^a(1+\hat y^a_\BD{})}{1+\hat y^a_\BD{}}
    = \sum \frac{\nu^a}{1+\hat y^a_\BD{}}
    = \xi_{-\hat y^a_\BD}(1)
\end{equation*}
by (\ref{EQ:BD-duality-proof-2}) and (\ref{EQ:BD-duality-proof-qbar}).
Since $\max(-\hat y^a_\BD)<1$, $\hatmu(-\hat y^a_\BD)=1$.
Moreover, $\hat q_\BD\in \sx{\A}$ (use that $\hat q_\BD{}\ge 0$),
and so $\hat q_\BD\in(K_C\cap\sx{\A})=C$.
Finally,
\begin{equation*}
    \ell(\hat q_\BD)
    =  - \sp{\nu^a}{\log \hat q^a_\BD}
    =  -I_1(\nu^a\,\|\,\hat y^a_\BD{})
    = -\bd{}(\hat y)
    = -\hat\ell_\BD.
\end{equation*}
The lemma is proved.
\end{proof}

\begin{lemma}\label{L:BD-duality:dual-to-primal2}
Let $\nu\in\sx{\A}$ and $C$ be a convex closed subset of $\sx{\A}$ having support~$\A$. Then
\begin{equation*}
    \hat\ell_\BD\le\hat\ell_\F.
\end{equation*}
Moreover, if $\nu>0$ then $\hat\ell_\BD=\hat\ell_\F$.
\end{lemma}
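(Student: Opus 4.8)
The plan is to prove $\hat\ell_\BD\le\hat\ell_\F$ by evaluating the BD conjugate $\bd$ at a single, well-chosen point of the polar cone $C^\ast$, and then, under $\nu>0$, to obtain the reverse inequality pointwise on $C^\ast$. For the first inequality I would use the vector $\hat y_\F=(\hat y^a_\F,-1^p)$ produced by Lemma~\ref{L:F-primal->dual}, where $\hat y^a_\F=\nu^a/\hat q^a_\PR-1^a$: that lemma gives $\hat y_\F\in\SOL_\F\subseteq C^\ast$, and since $\hat q^a_\PR>0$ (Theorem~\ref{T:P}) and $\nu^a>0$ we have $\hat y^a_\F>-1^a$, so $\bd$ is finite at $\hat y_\F$. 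No case distinction on whether $C$ is an H-set or a Z-set is needed here: if it is one of those, $\hat\ell_\BD=-\infty$ by Proposition~\ref{P:dual-finite} and the inequality is immediate, but the computation below works uniformly.

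Using $1+\hat y^a_\F=\nu^a/\hat q^a_\PR$, the fact that $\ell$ depends only on the active coordinates, and $\ell(\hat q_\PR)=\hat\ell_\PR=-\hat\ell_\F$ (Theorem~\ref{T:F}), I would compute
\begin{equation*}
  \bd(\hat y_\F)
  = I_1(\nu^a\,\|\,\hat y^a_\F)
  = \sp{\nu^a}{\log\frac{\nu^a}{1+\hat y^a_\F}}
  = \sp{\nu^a}{\log\hat q^a_\PR}
  = -\ell(\hat q_\PR)
  = \hat\ell_\F .
\end{equation*}
Since $\hat y_\F\in C^\ast$, this yields $\hat\ell_\BD=\inf_{y\in C^\ast}\bd(y)\le\bd(\hat y_\F)=\hat\ell_\F$, which is the first assertion.

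For the equality when $\nu>0$, now $\Ap=\emptyset$, so $\bd(y)=I_1(\nu\,\|\,y)$ when $y>-1$ and $\bd(y)=\infty$ otherwise. I would prove $\ell^\ast(-y)\le\bd(y)$ for every $y\in\RRR^m$; this gives $\hat\ell_\F=\inf_{y\in C^\ast}\ell^\ast(-y)\le\inf_{y\in C^\ast}\bd(y)=\hat\ell_\BD$, and combined with the first part, $\hat\ell_\BD=\hat\ell_\F$. For $y\not>-1$ the inequality is trivial since $\bd(y)=\infty$. For $y>-1$ I would invoke the Lagrange-duality form of the conjugate (Lemma~\ref{L:F-cc-via-Lagrange-dual}), $\ell^\ast(-y)=\inf_{\mu\in\RRR}k_{-y}(\mu)\le k_{-y}(1)$, and note that by the explicit formula for $k_z$ obtained in the proof of Theorem~\ref{T:cc} (with $\mu=1$, which is admissible because $1>\max(-y)$ and there is no passive block to constrain $\mu$), $k_{-y}(1)=-1+1+I_1(\nu\,\|\,y)=\bd(y)$; equivalently, one reads this off Theorem~\ref{T:cc} together with the fact that $\mu\mapsto\mu+I_\mu(\nu\,\|\,y)$ is minimized at $\hatmu(-y)$.

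The one point that requires care is that the reverse pointwise inequality $\bd(y)\le\ell^\ast(-y)$ is \emph{false} for general $y\in C^\ast$ --- precisely for those $y$ with $y^a>-1$ but $\min(y^p)<-1$, which is the source of the BD-duality gap. Hence the bound $\hat\ell_\BD\le\hat\ell_\F$ cannot be obtained coordinate-by-coordinate and genuinely relies on choosing a dual point whose passive block equals $-1^p$, on which $\bd$ and $\ell^\ast(-\cdot)$ coincide; everything else is short manipulation of formulas already established.
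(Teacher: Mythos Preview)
Your proof is correct. The first inequality $\hat\ell_\BD\le\hat\ell_\F$ is obtained in essentially the same way as in the paper: both arguments use that at the Fenchel minimizer $\hat y_\F=(\hat y^a_\F,-1^p)$ one has $\hat\mu(-\hat y_\F)=1$, whence $\ell^\ast(-\hat y_\F)=I_1(\nu^a\,\|\,\hat y^a_\F)=\bd(\hat y_\F)$. The paper phrases this as restricting the Fenchel infimum to the subset $\{y\in C^\ast:\hat\mu(-y)=1\}$ (which does not change its value by Theorem~\ref{T:F}) and then dropping the constraint; you phrase it as evaluating $\bd$ at a single point of $C^\ast$. Same idea.

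For the equality under $\nu>0$, your route differs from the paper's and is a bit more elementary. The paper invokes Lemma~\ref{L:BD-duality:dual-to-primal} to conclude that $\hatmu(-\hat y^a_\BD)=1$ at the (existing) BD-minimizer, which forces $\hat\ell_\BD$ to equal the constrained infimum $\inf\{I_1(\nu^a\,\|\,y^a):y\in C^\ast,\hatmu(-y^a)=1\}=\hat\ell_\F$. You instead prove the pointwise inequality $\ell^\ast(-y)\le k_{-y}(1)=\bd(y)$ for all $y>-1$ directly from the Lagrange representation (Lemma~\ref{L:F-cc-via-Lagrange-dual}, equation~\eqref{EQ:T-cc-L(mu)-3}). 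This avoids any appeal to the existence or structure of $\SOL_\BD$, at the modest cost of reaching into the proof of Theorem~\ref{T:cc} for the explicit form of $k_z$. Your closing remark correctly identifies why the pointwise bound cannot run in the other direction when $\Ap\ne\emptyset$.
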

\begin{proof}
It follows from Theorems~\ref{T:F} and \ref{T:cc} that
\begin{equation*}
  \hat\ell_\F
  = \inf_{\substack{y\in C^\ast\\ \hat\mu(-y)=1}} \ell^\ast(-y)
  = \inf_{\substack{y\in C^\ast\\ \hat\mu(-y)=1}} I_1(\nu^a\,\|\,y^a)
  \ge \inf_{y\in C^\ast} I_1(\nu^a\,\|\,y^a)
  =\hat\ell_\BD.
\end{equation*}
Hence the inequality is proved. If $\nu>0$ then $\hat\mu(-y)=\hatmu(-y^a)$ for every $y$;
so, by Lemma~\ref{L:BD-duality:dual-to-primal},
\begin{equation*}
  \hat\ell_\BD
  = \inf_{\substack{y\in C^\ast\\ \hatmu(-y^a)=1}} I_1(\nu^a\,\|\,y^a)
  = \hat\ell_\F.
\end{equation*}
\end{proof}

\begin{lemma}\label{L:BD-duality:dual-to-primal2b}
Let $\nu\in\sx{\A}$, $C$ be a convex closed subset of $\sx{\A}$ having support~$\A$,
and $\hat q^a_\PR$ be as in Theorem~\ref{T:P}.
Assume that $C$ is neither an $H$-set nor a $Z$-set (with respect to $\nu$),
and that $\sum\hat q^a_\PR=1$.
Then $\hat\ell_\BD=\hat\ell_\F$ and $\hat y\triangleq (\nu^a/\hat q^a_\PR-1^a,-1^p)$ belongs to~$\SOL_\BD\cap \SOL_\F$.
\end{lemma}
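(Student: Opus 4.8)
The claim has two parts: the no-gap equality $\hat\ell_\BD=\hat\ell_\F$, and membership $\hat y:=(\nu^a/\hat q^a_\PR-1^a,-1^p)\in\SOL_\BD\cap\SOL_\F$. The natural route is to produce the explicit candidate $\hat y$ and check that it is feasible and optimal for both problems simultaneously. First I would recall from Lemma~\ref{L:F-primal->dual} that, since $\sum\hat q^a_\PR=1$, we have $\hat y^a_\F=\nu^a/\hat q^a_\PR-1^a$, $\hat y_\F=(\hat y^a_\F,-1^p)\in\SOL_\F$, and crucially $\hatmu(-\hat y^a_\F)=1$ (this is precisely case (a) of that lemma). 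So $\hat y=\hat y_\F$ is already known to lie in $\SOL_\F$; the substance is to show $\hat y\in\SOL_\BD$ and that the two optimal values agree.

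For the value equality, I would invoke Lemma~\ref{L:BD-duality:dual-to-primal2}, which gives the general inequality $\hat\ell_\BD\le\hat\ell_\F$. Thus it suffices to exhibit \emph{some} point of $C^\ast$ at which $\bd$ equals $\hat\ell_\F$; this will force $\hat\ell_\BD\ge\hat\ell_\F$, hence equality, and will simultaneously witness that point as a minimizer of $\bd$ over $C^\ast$, i.e. a member of $\SOL_\BD$. The candidate is exactly $\hat y$. Feasibility $\hat y\in C^\ast$ is inherited from $\hat y_\F\in\SOL_\F\subseteq C^\ast$. For the value: since $\hat y^a>-1$ (as $\hat q^a_\PR>0$), $\bd(\hat y)=I_1(\nu^a\,\|\,\hat y^a)=\sp{\nu^a}{\log\hat q^a_\PR}=-\ell(\hat q_\PR)=-\hat\ell_\PR=\hat\ell_\F$, using $\sum\hat q^a_\PR=1$ so that $\hat q_\PR=(\hat q^a_\PR,0^p)$ and Theorem~\ref{T:F}'s identity $\hat\ell_\F=-\hat\ell_\PR$. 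Combining, $\hat\ell_\BD\le\hat\ell_\F=\bd(\hat y)$ with $\hat y\in C^\ast$ gives $\bd(\hat y)=\hat\ell_\BD=\hat\ell_\F$, so $\hat y\in\SOL_\BD$ as well.

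One should also note why $\SOL_\BD$ is nonempty here so the statement ``$\hat y\in\SOL_\BD$'' is meaningful and consistent: $C$ is neither an $H$-set nor a $Z$-set, so by Proposition~\ref{P:dual-finite} the dual is finite, and Lemma~\ref{L:SOL-BD} describes $\SOL_\BD=\{\hat y^a_\BD\}\times C^{\ast p}(\hat y^a_\BD)$. The argument above then additionally identifies $\hat y^a_\BD=\hat y^a_\F$ in this case (the common active coordinate), which is the content of condition (ii) in Theorem~\ref{T:BD}(b); but for this lemma I would keep the proof self-contained and only use the finiteness and the variational characterization of $\SOL_\BD$ as $\argmin_{C^\ast}\bd$.

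The only mild subtlety — the ``hard part'' — is making sure the chain $\bd(\hat y)=\sp{\nu^a}{\log\hat q^a_\PR}=-\ell(\hat q_\PR)$ is airtight: this uses that $\bd$ on its domain equals $\ell(1+\cdot)+\sp{\nu^a}{\log\nu^a}$ (noted just before Lemma~\ref{L:g-props}), that $1+\hat y^a=\nu^a/\hat q^a_\PR$ is strictly positive, and that under $\sum\hat q^a_\PR=1$ the passive coordinates of $\hat q_\PR$ vanish so $\ell(\hat q_\PR)=-\sp{\nu^a}{\log\hat q^a_\PR}$ with no contribution from $0^p$ letters. Everything else is a direct citation of the lemmas already proven. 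I would write the whole proof in three or four lines.
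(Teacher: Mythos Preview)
Your argument contains a direction error that leaves the main inequality unproved. You correctly obtain $\hat y\in C^\ast$ (indeed $\hat y\in\SOL_\F$) and $\bd(\hat y)=\hat\ell_\F$, but then you write ``this will force $\hat\ell_\BD\ge\hat\ell_\F$.'' It does not: since $\hat\ell_\BD=\inf_{y\in C^\ast}\bd(y)$, exhibiting a feasible point where $\bd$ equals $\hat\ell_\F$ only yields $\hat\ell_\BD\le\bd(\hat y)=\hat\ell_\F$, the \emph{same} direction already supplied by Lemma~\ref{L:BD-duality:dual-to-primal2}. Likewise, your claim that $\hat y$ is then automatically a minimizer of $\bd$ over $C^\ast$ is unjustified: from $\bd(\hat y)=\hat\ell_\F\ge\hat\ell_\BD$ alone one cannot rule out $\bd(\hat y)>\hat\ell_\BD$. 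In fact, the BD-duality-gap examples in Section~\ref{St:BD-examples} show exactly this phenomenon when $\sum\hat q^a_\PR<1$; the hypothesis $\sum\hat q^a_\PR=1$ must be used in some essential way to obtain the reverse inequality, and your argument never uses it beyond the computation $\bd(\hat y)=\hat\ell_\F$.

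The paper closes this gap by a reduction to the active alphabet. One passes to $C'\triangleq C^a(0^p)\subseteq\sx{\Aa}$ (nonempty with support $\Aa$ precisely because $C$ is neither an $H$-set nor a $Z$-set) and $\nu'\triangleq\nu^a>0$, and considers the corresponding values $\hat\ell'_\PR,\hat\ell'_\F,\hat\ell'_\BD$. Since $\hat q^a_\PR\in C'$ (this is where $\sum\hat q^a_\PR=1$ enters), one has $\hat\ell'_\PR=\hat\ell_\PR$; since $C'^\ast\supseteq C^{\ast a}$, one has $\hat\ell'_\BD\le\hat\ell_\BD$; and since $\nu'>0$, the second assertion of Lemma~\ref{L:BD-duality:dual-to-primal2} gives $\hat\ell'_\BD=\hat\ell'_\F=-\hat\ell'_\PR$. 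Chaining these yields $\hat\ell_\BD\ge\hat\ell'_\BD=-\hat\ell'_\PR=-\hat\ell_\PR=\hat\ell_\F$, the missing inequality. Only after this is established does $\bd(\hat y)=\hat\ell_\F=\hat\ell_\BD$ follow, giving $\hat y\in\SOL_\BD$. Your proposed ``three or four lines'' need this reduction step inserted.
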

\begin{proof}
By Theorems~\ref{T:F}(a) and \ref{T:cc},
$\hat y\in\SOL_\F$ and $\hat\ell_\F=\ell^\ast(-\hat y)=I_1(\nu^a\,\|\,\hat y) = \bd(\hat y)$. Moreover,
$\xi_{-\hat y^a}(1)=\sum \hat q^a_\PR = 1$,
so $\hatmu(-\hat y^a)=1$. Thus
\begin{equation}\label{EQ:L:BD-duality:dual-to-primal2b}
    \bd(\hat y) = \hat\ell_\F
    \qquad\text{and}\qquad
    \hat\mu(-\hat y) = \hatmu(-\hat y^a) = 1.
\end{equation}

Now we prove that $\hat\ell_\BD=\hat\ell_\F$. For $\nu>0$ this follows
from Lemma~\ref{L:BD-duality:dual-to-primal2}. In the other case put
$C'\triangleq C^a(0^p)=C^a\cap\sx{\Aa}$. This is a nonempty
convex closed set with $\supp(C')=\A^a$ (use that $C$ is neither an $H$-set nor
a $Z$-set).
Put $\nu'\triangleq \nu^a>0$ and
\begin{equation*}
    \hat\ell'_\BD \triangleq \inf_{y'\in C'^\ast} I_1(\nu'\,\|\,y'),
    \qquad
    \hat\ell'_\F \triangleq \inf_{y'\in C'^\ast} \ell'^\ast(-y'),
    \qquad
    \hat\ell'_\PR \triangleq \inf_{q'\in C'} \ell'(q'),
\end{equation*}
where $\ell'(q')\triangleq -\sp{\nu'}{\log q'}$ is defined on $\sx{\Aa}$.
Since $\hat q_\PR^a\in C'$ we trivially have $\hat\ell'_\PR=\hat\ell_\PR$.
Further, $C'^\ast\supseteq C^{\ast a}$. (To see this, take any $y^a\in C^{\ast a}$; then there
is $y^p$ such that $\sp{y^a}{q^a}+\sp{y^p}{q^p}\le 0$ for every $q\in C$.
Since $(q',0^p)\in C$ for any $q'\in C'$, for every such $q'$ we have
$\sp{y^a}{q'}\le 0$; that is, $y^a\in C'^\ast$.)
Hence $\hat\ell'_\BD\le \inf_{y^a\in C^{\ast a}} I_1(\nu^a\,\|\,y^a)=\hat\ell_\BD$
(for the  equality see the definition (\ref{EQ:intro-BD}) of $\hat\ell_\BD$).
By Lemma~\ref{L:BD-duality:dual-to-primal2} and Theorem~\ref{T:F},  $\hat\ell'_\BD=\hat\ell'_\F=- \hat\ell'_\PR$, thus,
\begin{equation*}
  \hat\ell_\BD
  \ge \hat\ell'_\BD
  = - \hat\ell'_\PR=-\hat\ell_\PR = \hat\ell_\F.
\end{equation*}
Now $\hat\ell_\BD=\hat\ell_\F$ by the inequality from Lemma~\ref{L:BD-duality:dual-to-primal2}.
To finish the proof it suffices to realize that this fact
and (\ref{EQ:L:BD-duality:dual-to-primal2b}) yield $\bd(\hat y)=\hat\ell_\BD$,
and so $\hat y\in \SOL_\BD$.
\end{proof}

\begin{lemma}\label{L:BD-duality:dual-to-primal3}
Keep the assumptions and notation from Lemma~\ref{L:BD-duality:dual-to-primal}.
Then there is no BD-duality gap, that is,
\begin{equation*}
  \hat\ell_\BD = -\hat\ell_\PR,
\end{equation*}
if and only if any of the following (equivalent) conditions hold:

\begin{enumerate}
   \item[(a)] $\hat{q}^a_\BD=\hat{q}^a_\PR$;
   \item[(b)] $\hat q_\BD\in\SOL_\PR$;
   \item[(c)] $\SOL_\PR=\{\hat q_\BD\}$;
   \item[(d)] $\sum \hat q^a_\PR=1$;
   \item[(e)] $\hat{y}^a_\BD=\hat{y}^a_\F$;
   \item[(f)] $\SOL_\F\cap \SOL_\BD\ne\emptyset$;
   \item[(g)] $\SOL_\F = \SOL_\BD \cap\{\hat y\in C^\ast: \hat\mu(-\hat y)=1\}$;
   \item[(h)] $\hat\mu(-\hat y_\BD)=1$ for some $\hat y_\BD\in\SOL_\BD$;
   \item[(i)] $\hat y^p_\BD\ge -1$ for some $\hat y_\BD\in\SOL_\BD$;
   \item[(j)] $\ell(\hat q_{\BD}) + \ell^\ast(-\hat{y}_\BD)=0$  for some $\hat y_\BD\in\SOL_\BD$
      (\emph{extremality relation});
   \item[(k)] $(\hat{y}^a_\BD, -1^p)\in C^\ast$.
\end{enumerate}
\end{lemma}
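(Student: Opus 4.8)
The plan is to prove all of (a)--(k) together with the no-gap equality $\hat\ell_\BD=-\hat\ell_\PR$ (call it $(\star)$) mutually equivalent, by assembling one connected web of implications with (d) as the hub. Two already-proven facts carry most of the weight: Lemma~\ref{L:BD-duality:dual-to-primal}, giving $\ell(\hat q_\BD)=-\hat\ell_\BD$, $\hat q_\BD\in C$, and $\hatmu(-\hat y^a_\BD)=1$ (equivalently, by the defining property of $\hatmu$ via $\xi$, $\sum\hat q^a_\BD=\xi_{-\hat y^a_\BD}(1)=1$); and Lemma~\ref{L:BD-duality:dual-to-primal2b}, which says $\sum\hat q^a_\PR=1$ already forces $\hat\ell_\BD=\hat\ell_\F$ and $(\nu^a/\hat q^a_\PR-1^a,-1^p)\in\SOL_\BD\cap\SOL_\F$. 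The recurring ingredient throughout is Theorem~\ref{T:cc}: for $\hat y_\BD\in\SOL_\BD$ one has $\ell^\ast(-\hat y_\BD)=-1+\mu+I_\mu(\nu^a\,\|\,\hat y^a_\BD)$ with $\mu=\hat\mu(-\hat y_\BD)=\max\{1,\max(-\hat y^p_\BD)\}\ge 1$, which reduces to $I_1(\nu^a\,\|\,\hat y^a_\BD)=\bd(\hat y_\BD)=\hat\ell_\BD$ exactly when $\mu=1$.

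First I would pin down $(\star)\iff(d)$. For $(\star)\Rightarrow(d)$: Lemma~\ref{L:BD-duality:dual-to-primal} gives $\ell(\hat q_\BD)=-\hat\ell_\BD=\hat\ell_\PR$ with $\hat q_\BD\in C$, so $\hat q_\BD\in\SOL_\PR$; since $\hat q_\BD$ has passive part $0^p$, Theorem~\ref{T:P} forces $\hat q^a_\BD=\hat q^a_\PR$ and $0^p\in C^p(\hat q^a_\PR)$, impossible by Theorem~\ref{T:P}(b) unless $\sum\hat q^a_\PR=1$. For $(d)\Rightarrow(\star)$: combine Lemma~\ref{L:BD-duality:dual-to-primal2b} with $\hat\ell_\F=-\hat\ell_\PR$ (Theorem~\ref{T:F}). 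With $(\star)\iff(d)$ in hand, the ``primal-side'' conditions follow by substitution: $(a)\iff(d)$ since $\sum\hat q^a_\BD=1$ always (so $\hat q^a_\BD=\hat q^a_\PR$ gives $\sum\hat q^a_\PR=1$), while conversely $(d)$ together with Lemma~\ref{L:BD-duality:dual-to-primal2b} and Lemma~\ref{L:SOL-BD} identifies $\hat y^a_\BD=\nu^a/\hat q^a_\PR-1^a$, hence $\hat q^a_\BD=\hat q^a_\PR$; $(e)\iff(a)$ because $q^a\mapsto\nu^a/q^a-1^a$ is injective on $\{q^a>0\}$ and $\hat y^a_\BD=\nu^a/\hat q^a_\BD-1^a$, $\hat y^a_\F=\nu^a/\hat q^a_\PR-1^a$; and $(b),(c)\iff(d)$ using $\ell(\hat q_\BD)=-\hat\ell_\BD$ in one direction and Theorem~\ref{T:P}(a) (which makes $\SOL_\PR=\{(\hat q^a_\PR,0^p)\}$ when $\sum\hat q^a_\PR=1$) in the other.

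Next the ``dual-side'' conditions. For $(f)\iff(e)$: a point of $\SOL_\F\cap\SOL_\BD$ has active part equal to both $\hat y^a_\F$ (Lemma~\ref{L:F-SOL}) and $\hat y^a_\BD$ (Lemma~\ref{L:SOL-BD}), and conversely $(\hat y^a_\F,-1^p)\in\SOL_\F$ lands in $\SOL_\BD$ once $\hat y^a_\F=\hat y^a_\BD$. For $(g)\iff(f)$: $\SOL_\F$ is nonempty (Theorem~\ref{T:F}), so $(g)$ yields $\SOL_\F\subseteq\SOL_\BD$ and hence $(f)$, while under $(e)$ the descriptions of $\SOL_\F$ and of $\SOL_\BD\cap\{\hat y\in C^\ast:\hat\mu(-\hat y)=1\}$ coming from Lemmas~\ref{L:F-SOL} and~\ref{L:SOL-BD} coincide. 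For the passive-coordinate group: from $\hatmu(-\hat y^a_\BD)=1$ one has $\hat\mu(-\hat y_\BD)=\max\{1,\max(-\hat y^p_\BD)\}$, so for any fixed $\hat y_\BD\in\SOL_\BD$, $(h)\iff(i)$ (hence also the existential forms), and $(k)\iff(i)$ by Lemma~\ref{L:C*} (the vector $(\hat y^a_\BD,-1^p)$ differs from such a $\hat y_\BD$ by a vector in $\RRR^m_-$, and conversely lies in $\SOL_\BD$ by Lemma~\ref{L:SOL-BD}). The delicate step is $(j)\iff(h)$: writing $g(\mu)=-1+\mu+I_\mu(\nu^a\,\|\,\hat y^a_\BD)$, one has $g'(\mu)=1-\xi_{-\hat y^a_\BD}(\mu)>0$ for $\mu>1$ by the monotonicity of $\xi$ in~(\ref{EQ:xi-is-monotone}) and $\xi_{-\hat y^a_\BD}(1)=1$, so $\ell^\ast(-\hat y_\BD)=g(\hat\mu(-\hat y_\BD))\ge g(1)=\hat\ell_\BD=-\ell(\hat q_\BD)$ with equality precisely when $\hat\mu(-\hat y_\BD)=1$; that is exactly when the extremality relation $\ell(\hat q_\BD)+\ell^\ast(-\hat y_\BD)=0$ holds for that $\hat y_\BD$.

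Finally I would close the loop between the two groups. The implication $(d)\Rightarrow(i)$ is immediate: Lemma~\ref{L:BD-duality:dual-to-primal2b} exhibits $(\nu^a/\hat q^a_\PR-1^a,-1^p)\in\SOL_\BD$, whose passive part is $\ge -1^p$. For $(i)\Rightarrow(f)$: if $\hat y_\BD\in\SOL_\BD$ has $\hat y^p_\BD\ge -1^p$, then $\hat\mu(-\hat y_\BD)=1$, so $\ell^\ast(-\hat y_\BD)=I_1(\nu^a\,\|\,\hat y^a_\BD)=\hat\ell_\BD\le\hat\ell_\F$ by Lemma~\ref{L:BD-duality:dual-to-primal2}, while $\hat y_\BD\in C^\ast$ forces $\ell^\ast(-\hat y_\BD)\ge\hat\ell_\F$; hence $\hat y_\BD\in\SOL_\F\cap\SOL_\BD$. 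Together with the chain $(f)\Rightarrow(e)\Rightarrow(a)\Rightarrow(d)$ this makes every listed condition equivalent to $(d)$, hence to $(\star)$. I expect $(j)\iff(h)$ — the interplay of $\hat\mu$ with $\hatmu$ and the strict monotonicity of $g$ on $[1,\infty)$, together with the companion observation $(i)\Rightarrow(f)$ — to be the only genuinely non-routine part; everything else is bookkeeping on top of Lemmas~\ref{L:BD-duality:dual-to-primal}, \ref{L:BD-duality:dual-to-primal2b} and Theorem~\ref{T:P}.
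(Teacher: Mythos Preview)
Your proposal is correct and takes essentially the same approach as the paper: both arguments rest on the same five ingredients (Lemma~\ref{L:BD-duality:dual-to-primal}, Lemma~\ref{L:BD-duality:dual-to-primal2}, Lemma~\ref{L:BD-duality:dual-to-primal2b}, Lemma~\ref{L:SOL-BD}, and the structure results of Theorems~\ref{T:P}, \ref{T:F}, \ref{T:cc}) and weave the same web of implications, with only cosmetic differences in organization---you use (d) as an explicit hub, while the paper first groups (a)--(g), then (h)--(k), and only at the end links everything to the no-gap condition via (b). Your direct treatment of (j)$\iff$(h) via the strict monotonicity of $g(\mu)=-1+\mu+I_\mu(\nu^a\,\|\,\hat y^a_\BD)$ on $[1,\infty)$ is a slightly more transparent version of what the paper does by splitting into ``(b),(e),(h)$\Rightarrow$(j)'' and ``(j)$\Rightarrow$(b),(f)'' through the standard extremality-implies-optimality argument; either route is fine.
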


\begin{proof}
We first prove that the conditions (a)--(k) are equivalent. First, Theorem~\ref{T:P}
and Lemma~\ref{L:BD-duality:dual-to-primal}
yield that the conditions (a)--(c) are equivalent and that any of them implies (d).
By the definitions of $\hat y^a_\F$ and $\hat q^a_\BD$
from Theorem~\ref{T:F} and Lemma~\ref{L:BD-duality:dual-to-primal},
(a) is equivalent to (e).
Theorem~\ref{T:F} and Lemma~\ref{L:SOL-BD}
yield that the conditions (e)--(g) are equivalent.
Since (d) implies (f) by Lemma~\ref{L:BD-duality:dual-to-primal2b},
it follows that (a)--(g) are equivalent.

Since $\hatmu(-\hat y^a_\BD)=1$ for any $\hat y_\BD\in\SOL_\BD$ by
Lemmas~\ref{L:SOL-BD} and \ref{L:BD-duality:dual-to-primal},
(h) is equivalent to (i).
Since $\SOL_\F$ is nonempty, (g) implies (h).
If (h) is true then, for some $\hat y^\BD{}\in\SOL_\BD{}$,
\begin{equation*}
  \hat\ell_\BD{}
  = \bd(\hat y_\BD{})
  = I_1(\nu^a\,\|\, \hat y^a_\BD{})
  = \ell^\ast(-\hat y_\BD{})
  \ge \hat\ell_\F
\end{equation*}
by Theorem~\ref{T:cc};
now Lemma~\ref{L:BD-duality:dual-to-primal2} gives that $\hat y_\BD\in\SOL_\F$.
Hence (h) implies (f). Further, (b), (e), and (h) imply (j)
by Theorem~\ref{T:cc} and Lemma~\ref{L:BD-duality:dual-to-primal}.
On the other hand, the extremality relation (j) implies that $\hat q_\BD\in\SOL_\PR$ and
that $\hat y_\BD\in\SOL_\F$; that is, (j) implies both (b) and (f).
So (a)--(j) are equivalent

By Theorem~\ref{T:F}, (e) implies (k).
By Lemmas~\ref{L:BD-duality:dual-to-primal} and \ref{L:SOL-BD},
(k) implies that $\hat y\triangleq (\hat y^a_\BD,-1^p)$ belongs to $\SOL_\BD$ and
that $\hat\mu(-\hat y)=1$;
thus, (k) implies (h).

We have proved that the conditions (a)--(k) are equivalent.
To finish we show that no BD-duality gap is equivalent to the condition (b).
If (b) is true (that is, $\hat q_\BD\in\SOL_\PR$)
then $\hat\ell_\PR=\ell(\hat q_\BD)=-\hat\ell_\BD$ by
Lemma~\ref{L:BD-duality:dual-to-primal}; hence (b) implies no BD-duality gap.
Assume now that $\hat\ell_\BD=-\hat\ell_\PR$. Then
Lemma~\ref{L:BD-duality:dual-to-primal} yields $\ell(\hat q_\BD)=-\hat\ell_\BD=\hat\ell_\PR$.
So (b) is satisfied and the proof is finished.
\end{proof}

\begin{proof}[Proof of Theorem~\ref{T:BD}]
The theorem immediately follows from
Proposition~\ref{P:dual-finite} and
Lemmas~\ref{L:SOL-BD}, \ref{L:BD-duality:dual-to-primal},
\ref{L:BD-duality:dual-to-primal2} and \ref{L:BD-duality:dual-to-primal3}.
\end{proof}

\subsection{Proof of Theorem~\ref{T:AP} (Active-passive dualization \refAP{})}\label{SS:AP}

\begin{proof}[Proof of Theorem~\ref{T:AP}]
Fix $q^p\in C^p$ with $\supp(C^a(q^p))=\Aa$. Then $\sum q^p<1$.
Recall that $\kappa = \kappa(q^p)=1/(1-\sum q^p)$
and note that $C^a(q^p)$ is a convex closed subset of
$\{q^a\in\RRR^{m_a}_+: \sum q^a=1/\kappa\}$.
Thus
\begin{equation*}
    \tilde C \triangleq
    \kappa \cdot C^a(q^p)
    = \{\kappa q^a: q^a\in C^a(q^p)\}
\end{equation*}
is a convex closed subset of $\sx{\Aa}$ with $\supp(\tilde C)=\Aa$.
Put $\tilde\nu\triangleq \nu^a$, and denote by \refPRtilde{} and $\SOL_{\tilde\PR}$
the primal problem and its solution set for
minimizing $\tilde\ell(\tilde q)\triangleq -\sp{\tilde\nu}{\log \tilde q}
=\ell(\tilde q/\kappa, q^p)-\log\kappa$
over $\tilde C$.
Then $\SOL_{\tilde\PR}$ is a singleton $\{\hat q_{\tilde\PR}\}$ by Theorem~\ref{T:P}, and
$\SOL_{\tilde\PR}=\kappa\SOL_\PR(q^p)$; hence
\begin{equation}\label{EQ:AP-qhat}
    \hat q^a_\PR(q^p) = \kappa^{-1} \hat q_{\tilde\PR}.
\end{equation}

Consider now the BD-dual \refBDtilde{} to \refPRtilde{}.
First, $\bdtil(\tilde y) = I_1(\tilde\nu \,\|\, \tilde y)$ and,
by Corollary~\ref{C:P+F+B:nu-positive},

\begin{equation}\label{EQ:AP-yhat-tilde}
  \SOL_{\tilde\BD} = \{\hat y_{\tilde\BD} \},
  \qquad
  \hat y_{\tilde\BD} = \frac{\tilde\nu}{\hat q_{\tilde\BD}}  - 1.
\end{equation}
Further, $\tilde C = [C^a(q^p)]^\ast$ by the definition of the polar cone.
The fact that $I_\kappa(\nu^a \,\|\, y^a) = I_1(\nu^a \,\|\, \kappa^{-1}y^a) - \sp{\nu^a}{\log\kappa}
=\bdtil(\kappa^{-1}y^a) - \log\kappa$ now implies
that $\SOL_{\BD}(q^p) = \kappa\SOL_{\tilde\BD}$.
That is,
\begin{equation}\label{EQ:AP-yhat}
    \SOL_\BD(q^p)=\{\hat y^a_\BD\},
    \qquad\text{where}\quad
    \hat y^a_{\BD}(q^p) = \kappa\hat y_{\tilde\BD}.
\end{equation}
Finally,
(\ref{EQ:AP-qhat}), (\ref{EQ:AP-yhat-tilde}), and (\ref{EQ:AP-yhat}) yield (\ref{EQ:BD-duality-active}).
The rest follows from
(\ref{EQ:BD-duality-active}) and Corollary~\ref{C:P+F+B:nu-positive}.
\end{proof}

\subsection{Proof of Theorem~\ref{T:PP}
 (Perturbed primal \refPP{} --- the general case, epi-convergence)}\label{SS:PP-epi}

We start with some notation, which will be used also in Section~\ref{SS:PP-linear}.
Then we embark on proving the epi-convergence of perturbed primal problems.

\subsubsection{Perturbed primal \refPP{} --- notation}\label{SS:PP-notation}

Fix any $\nu\in\sx{\A}$.
Recall that $m_a,m_p$ denote the cardinalities of $\Aa,\Ap$.
For every $\delta>0$ take $\nu(\delta)\in\sx{\A}$ such that
(\ref{EQ:PP-nu(delta)}) is true; that is,
\begin{equation*}
    \nu(\delta)>0
    \qquad\text{and}\qquad
    \lim_{\delta\searrow 0} \nu(\delta) =\nu.
\end{equation*}
Recall also the definitions of $\ell_\delta, \hat\ell_\PR(\delta), \SOL_\PR(\delta)$
from (\refPP{}) and the fact that (cf.~(\ref{EQ:PP-solutions}))
\begin{equation*}
    \SOL_\PR(\delta) = \{ \hat q_\PR(\delta) \}
    \qquad\text{for }\delta>0.
\end{equation*}
The maps $\ell^C,\ell^C_\delta:\RRR^m\to\RRRex$ are defined by
(see Section~\ref{St:PP})
\begin{equation*}
    \ell^C(x) \triangleq \begin{cases}
      \ell(x)  &\text{if }x\in C,
    \\
      \infty  &\text{if }x\not\in C,
    \end{cases}
  \qquad \text{and}\qquad
    \ell^C_\delta(x) \triangleq \begin{cases}
      \ell_\delta(x)  &\text{if }x\in C,
    \\
      \infty  &\text{if }x\not\in C.
    \end{cases}
\end{equation*}

\subsubsection{Epi--convergence}\label{SS:PP-epi-conv}
For the definition of epi-convergence,
see \cite[Chap.~7]{rockafellar2009variational}. We will use
\cite[Ex.~7.3, p.~242]{rockafellar2009variational} stating that,
for every sequence $(g_n)_n$ of maps $g_n:\RRR^m\to\RRRex$ and for every $x\in\RRR^m$,
\begin{equation}\label{EQ:PP-eliminf}
\begin{split}
    (\eliminf_n g_n)(x) &= \lim_{\eps\to 0+} \liminf_{n\to\infty}  \inf_{x'\in B(x,\eps)} g_n(x'),
\\
    (\elimsup_n g_n)(x) &= \lim_{\eps\to 0+} \limsup_{n\to\infty}  \inf_{x'\in B(x,\eps)} g_n(x'),
\end{split}
\end{equation}
where $B(x,\eps)$ is the closed ball with the center $x$ and radius $\eps$.
If $\eliminf_n g_n=\elimsup_n g_n\triangleq g$, we say that the sequence $(g_n)_n$ \emph{epi-converges}
to $g$ and we write $\elim_n g_n=g$.
For a system $(g_\delta)_{\delta>0}$ of maps indexed by real numbers
$\delta>0$, we say that $(g_\delta)_\delta$ \emph{epi-converges}
to $g$ for $\delta\searrow 0$ and we write $\elim_\delta g_\delta=g$, provided
$\elim_n g_{\delta_n}=g$ for every sequence $(\delta_n)_n$ decreasing to zero.

Before proving the epi-convergence of $\ell_{\delta}^C$ to $\ell^C$, two simple lemmas are presented.

\begin{lemma}\label{L:PP-qhat-nu-ineq}
 Let $C$ be a convex closed subset of $\sx{\A}$ with $\supp(C)=\A$.
 Then there exists a positive real $\beta$ such that
\begin{equation*}
    \hat q_\PR(\nu) \ge \beta\nu
    \qquad\text{for every}\quad
    \nu\in\sx{\A}, \nu>0,
\end{equation*}
where, for $\nu>0$, $\hat q_\PR(\nu)$ denotes the unique point of $\SOL_\PR(\nu)$.
\end{lemma}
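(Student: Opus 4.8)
We want a uniform lower bound: a single $\beta>0$ so that $\hat q_\PR(\nu)\ge\beta\nu$ for all strictly positive $\nu\in\sx\A$. The natural strategy is a compactness argument on $\sx\A$, using the fact (from Theorem~\ref{T:F} and equation~(\ref{EQ:yhat-F})) that when $\nu>0$ the primal solution is unique and is given explicitly by $\hat q_\PR(\nu)=\nu/(1+\hat y_\BD(\nu))$ with $\hat y_\BD(\nu)=\hat y_\F(\nu)\in\SOL_\F$, so that coordinate-wise $\hat q_{\PR,i}(\nu)/\nu_i=1/(1+\hat y_{\F,i}(\nu))$. Thus it suffices to bound $\hat y_\F(\nu)$ from above uniformly in $\nu\in\sx\A$ (not just $\nu>0$): if $\hat y_\F(\nu)\le K$ componentwise for some constant $K$, then $\hat q_\PR(\nu)\ge\nu/(1+K)$ and we may take $\beta=1/(1+K)$.

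So the core step is: establish a uniform upper bound on $\SOL_\F$ over all $\nu\in\sx\A$. Here I would reuse the argument already carried out in the proof of Theorem~\ref{T:F} (the compactness part in Section~\ref{SSS:F-proof}). Recall that since $\supp(C)=\A$, there is $q\in C$ with $q>0$; put $\beta_0\triangleq\min_i q_i>0$ — crucially this $q$ and $\beta_0$ depend only on $C$, not on $\nu$. For any $\nu\in\sx\A$ and any $\hat y\in\SOL_\F$ we know $\hat y\ge -1$ (from Lemmas~\ref{L:F-primal->dual} and~\ref{L:F-SOL}, valid for all $\nu$). Then, exactly as in the proof of Theorem~\ref{T:F}, for each index $i$ with $\hat y_i>0$,
\begin{equation*}
  0\ge\sp{\hat y}{q}\ge\beta_0\hat y_i-\sum_{j\ne i}q_j\ge\beta_0\hat y_i-1,
\end{equation*}
so $\hat y_i\le 1/\beta_0$; combined with $\hat y\ge -1$ this gives $\|\hat y\|_\infty\le 1/\beta_0$ for every $\hat y\in\SOL_\F$ and every $\nu\in\sx\A$. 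In particular $\hat y_\F(\nu)\le 1/\beta_0$ for $\nu>0$, and the lemma holds with $\beta\triangleq 1/(1+1/\beta_0)=\beta_0/(1+\beta_0)$.

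**Where the work is.** The only thing to be careful about is that the bound $\hat y\ge -1$ and the explicit formula $\hat q_\PR=\nu/(1+\hat y_\F)$ are genuinely $\nu$-independent in the sense that the constant $1$ (namely $\hat\mu(-\hat y_\F)=1$) does not degrade as $\nu$ varies; this is exactly the content of Lemma~\ref{L:F-primal->dual}, which holds for arbitrary $\nu\in\sx\A$, so there is no obstacle there. The one genuinely $C$-dependent quantity, $\beta_0=\min_i q_i$ for a fixed strictly positive $q\in C$, is fixed once and for all, so no compactness-over-$\nu$ argument is even needed — the estimate is uniform by inspection. An alternative, slightly softer route would be to note $\nu\mapsto\hat q_\PR(\nu)$ is continuous on the (non-compact) open set $\{\nu>0\}$ and argue via limits as some $\nu_i\to0$, but this requires controlling the boundary behaviour and is strictly more delicate than the direct polar-cone estimate above; I would not take it. So the proof is essentially a two-line consequence of the already-proved compactness of $\SOL_\F$, made uniform by tracking that the relevant constants depend only on $C$.
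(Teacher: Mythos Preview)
Your proof is correct, but it takes a different route from the paper's. The paper argues directly on the primal: fixing $q\in C$ with $q>0$ and $\beta_0\triangleq\min_i q_i$, the first-order optimality condition $\ell_\nu'(\hat q;q-\hat q)\ge 0$ at the minimizer $\hat q=\hat q_\PR(\nu)$ gives
\begin{equation*}
  0\ge \sum_{i\in\A}\frac{\nu_i q_i}{\hat q_i}-1\ge \frac{\nu_j\beta_0}{\hat q_j}-1
  \qquad\text{for every }j,
\end{equation*}
so $\hat q_j\ge\beta_0\nu_j$ immediately. Your argument instead passes through the Fenchel dual: you use $\hat q_\PR=\nu/(1+\hat y_\F)$ (from Corollary~\ref{C:P+F+B:nu-positive}) and then recycle the compactness estimate from the proof of Theorem~\ref{T:F} to bound $\hat y_\F\le 1/\beta_0$ uniformly, arriving at $\beta=\beta_0/(1+\beta_0)$. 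Both approaches hinge on the same fixed strictly positive $q\in C$; the paper's is shorter, self-contained (it needs only Lemma~\ref{L:f-props}(d)), and yields the better constant $\beta_0$ in place of your $\beta_0/(1+\beta_0)$, while yours has the virtue of showing how the already-established dual bounds feed back into primal estimates without redoing any optimality analysis.
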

\begin{proof}
Fix some $q>0$ from $C$ and put $\beta=\min_i q_i$.
For any $\nu>0$ and $\hat{q}=\hat{q}_\PR(\nu)$ we have $\hat{q}>0$;
so, by Lemma~\ref{L:f-props}(d), $\ell_\nu$ is differentiable at $\hat q$ and
the directional derivative
$\ell_\nu'(\hat q;q-\hat q)=-\sum_{i\in\A} [\nu_i (q_i-\hat q_i)/\hat q_i]$ is nonnegative.
Thus, by the choice of $\beta$,
\begin{equation*}
    0
    \ge
    \sum_{i\in\A} \frac{\nu_i q_i}{\hat q_i} - 1
    \ge
    \frac{\nu_j \beta}{\hat q_j} - 1
\end{equation*}
for every $j$.
From this the lemma follows.
\end{proof}

\begin{lemma}\label{L:PP-continuity-of-F(nu,q)}
Fix $\beta>0$ and take the map
\begin{equation*}
    F:D\to\RRRex,\qquad
    F(\nu,q) \triangleq \ell_\nu(q) = -\sp{\nu}{\log q}
\end{equation*}
defined on
\begin{equation*}
    D \triangleq \{(\nu,q)\in \sx{\A}\times C: q\ge\beta \nu\}.
\end{equation*}
Then $F$ is continuous on $D$.
\end{lemma}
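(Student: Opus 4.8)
The plan is to show continuity of $F(\nu,q)=-\sp{\nu}{\log q}$ on the set $D=\{(\nu,q)\in\sx\A\times C: q\ge\beta\nu\}$ by splitting the sum according to whether the relevant coordinate of $\nu$ is zero or positive, and using the constraint $q\ge\beta\nu$ to keep the logarithm away from $-\infty$ exactly where it matters. Write $F(\nu,q)=-\sum_{i\in\A}\nu_i\log q_i$. Fix a point $(\nu_0,q_0)\in D$ and let $(\nu_k,q_k)\to(\nu_0,q_0)$ in $D$. For indices $i$ with $(\nu_0)_i>0$, the constraint forces $(q_0)_i\ge\beta(\nu_0)_i>0$, so $\log$ is continuous at $(q_0)_i$ and the term $-( \nu_k)_i\log(q_k)_i$ converges to $-(\nu_0)_i\log(q_0)_i$ by ordinary continuity of the product on a neighbourhood where $q_i$ is bounded below by a positive constant. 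The only delicate terms are those with $(\nu_0)_i=0$.

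For an index $i$ with $(\nu_0)_i=0$, I would argue that the term $-(\nu_k)_i\log(q_k)_i\to 0=-(\nu_0)_i\log(q_0)_i$. There are two cases. If $(q_0)_i>0$, then eventually $(q_k)_i$ is bounded below by a positive constant, so $\log(q_k)_i$ is bounded while $(\nu_k)_i\to 0$, giving the product $\to 0$. If $(q_0)_i=0$, then by the constraint $0\le\beta(\nu_k)_i\le (q_k)_i$, so as $k\to\infty$ both $(\nu_k)_i\to 0$ and $(q_k)_i\to 0$; here $-(\nu_k)_i\log(q_k)_i$ is nonnegative (since $(q_k)_i\le 1$ as $q_k\in\sx\A$, so $\log(q_k)_i\le 0$), and using $(\nu_k)_i\le (q_k)_i/\beta$ we get $0\le -(\nu_k)_i\log(q_k)_i\le -\tfrac{1}{\beta}(q_k)_i\log(q_k)_i$. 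Since $t\mapsto -t\log t$ extends continuously to $[0,1]$ with value $0$ at $t=0$ and $(q_k)_i\to 0$, this upper bound tends to $0$, so the term tends to $0$ by squeezing. (If $(\nu_k)_i=0$ the term is $0$ by the convention $0\cdot(-\infty)=0$; this case is covered as well since then the bound is trivially $0$.)

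Summing over the finitely many indices $i\in\A$, each term of $F(\nu_k,q_k)$ converges to the corresponding term of $F(\nu_0,q_0)$, hence $F(\nu_k,q_k)\to F(\nu_0,q_0)$; since the sequence in $D$ converging to $(\nu_0,q_0)$ was arbitrary, $F$ is continuous on $D$. The main (very mild) obstacle is simply organizing the case analysis at indices where $\nu_0$ vanishes and handling the $0\cdot(-\infty)$ convention cleanly; the key technical ingredient is the elementary fact that $t\mapsto -t\log t$ is continuous on $[0,1]$ (with value $0$ at $0$), which is what converts the constraint $q\ge\beta\nu$ into the needed bound. Everything else is routine continuity of products and of $\log$ on compact subsets of $(0,1]$.
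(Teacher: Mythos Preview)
Your proof is correct and follows essentially the same approach as the paper: split the sum according to whether $(\nu_0)_i>0$ or $(\nu_0)_i=0$, and in the latter case use the constraint $q\ge\beta\nu$ together with $t\log t\to 0$ to squeeze the term to zero. The paper's version is slightly more concise in that it handles your two subcases for $(\nu_0)_i=0$ uniformly via the single bound $0\ge \nu^n_i\log q^n_i\ge \nu^n_i\log(\beta\nu^n_i)$, but the idea is the same.
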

\begin{proof}
Take any sequence $(\nu^n,q^n)_n$ from $D$ such that $\nu^n\to \nu$, $q^n\to q$.
Then $q\ge \beta\nu$.
The sum $\sum_{i:\nu_i>0} \nu^n_i \log q^n_i$ converges to $\sum_{i:\nu_i>0} \nu_i \log q_i$. Further,
for every $i$ such that $\nu_i=0$,
\begin{equation*}
    0
    \ge \nu^n_i\log q^n_i
    \ge \nu^n_i \log (\beta \nu^n_i)
\end{equation*}
(which is true also in the case when $\nu_i^n=0$)
and the right-hand side converges to zero.
Hence $\lim_n F(\nu^n,q^n)=F(\nu,q)$ and the lemma is proved.
\end{proof}

Note that $F$ need not be continuous on $\sx{\A}\times C$. For example,
take $\A=\{0,1\}$, $C=\{(0,1)\}$, $\nu^n=(1/n, 1-(1/n))$ converging to $\nu=(0,1)$,
and $q^n=q=(0,1)$. Then $(\nu^n,q^n)\to (\nu,q)$, but $F(\nu^n,q^n)=\infty$
does not converge to $F(\nu,q)=0$.

\begin{lemma}\label{L:PP-epi-conv}
Let $C$ be a convex closed subset of $\sx{\A}$ with $\supp(C)=\A$.
Then the maps $\ell_\delta^C$ epi-converge to $\ell^C$, that is,
\begin{equation*}
  \elim\limits_{\delta\searrow 0} \ell_{\delta}^C = \ell^C.
\end{equation*}
\end{lemma}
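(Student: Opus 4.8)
The plan is to verify the two inequalities $\eliminf_{\delta\searrow 0}\ell_\delta^C\ge\ell^C$ and $\elimsup_{\delta\searrow 0}\ell_\delta^C\le\ell^C$ directly from the characterization~(\ref{EQ:PP-eliminf}), taking an arbitrary sequence $\delta_n\searrow 0$ and an arbitrary point $x\in\RRR^m$. Since both $\ell_\delta^C$ and $\ell^C$ equal $+\infty$ off $C$ and since $C$ is closed, the only interesting case is $x\in C$; for $x\notin C$ a small enough ball $B(x,\eps)$ misses $C$ and both epi-limits are $+\infty=\ell^C(x)$. Here the key structural fact I would exploit is that on $C$ the functions depend on $x$ only through the active coordinates $x^a$ (cf.~(\ref{EQ:f-def})), so really we are comparing $\ell^C_\delta(x)=-\sp{\nu(\delta)^a}{\log x^a}$ against $\ell^C(x)=-\sp{\nu^a}{\log x^a}$, and the passive coordinates of $\nu(\delta)$ only contribute harmless terms $-\nu_i(\delta)\log x_i$ for $i\in\Ap$ which I will need to control near $x_i=0$.

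For the $\eliminf$ (lower) inequality, fix $x\in C$. If $x^a\not>0$ then $\ell^C(x)=\infty$; I would show the inner infima $\inf_{x'\in B(x,\eps)\cap C}\ell^C_{\delta_n}(x')$ also blow up, because some active coordinate $x_i^a$ is near $0$, forcing $-\nu_i(\delta_n)^a\log x_i'^a\to+\infty$ uniformly for small $\eps$ once $\nu(\delta_n)$ is close to $\nu$ (so $\nu_i(\delta_n)$ is bounded below). If $x^a>0$, I would use the continuity lemma: choose $\beta$ from Lemma~\ref{L:PP-qhat-nu-ineq} — or more simply just observe that near $x$ the coordinates stay bounded away from $0$ in the active letters and the passive terms $-\nu_i(\delta_n)\log x_i'$ are bounded (they tend to $0$ as $\delta_n\searrow0$ uniformly in $x'$ with $x_i'$ bounded) — to conclude $\inf_{B(x,\eps)\cap C}\ell_{\delta_n}^C\to\ell^C(x)$ as $\eps\to0$, $n\to\infty$. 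Concretely, the passive contribution is handled exactly as in the proof of Lemma~\ref{L:PP-continuity-of-F(nu,q)}: for $i\in\Ap$, $0\ge -\nu_i(\delta_n)\log x_i' \ge \nu_i(\delta_n)\log(1/x_i')$ is small when $\nu_i(\delta_n)\to0$, uniformly for $x_i'$ in a neighborhood of $x_i$.

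For the $\elimsup$ (upper) inequality, the standard device is to produce, for each $x\in C$, a recovery sequence $x_n\to x$ with $x_n\in C$ and $\limsup_n\ell^C_{\delta_n}(x_n)\le\ell^C(x)$; the characterization~(\ref{EQ:PP-eliminf}) then gives $\elimsup_n\ell_{\delta_n}^C(x)\le\ell^C(x)$ since $x_n\in B(x,\eps)$ eventually. If $\ell^C(x)=\infty$ there is nothing to prove, so assume $x^a>0$. The natural candidate is simply $x_n\equiv x$: then $\ell_{\delta_n}^C(x)=-\sp{\nu(\delta_n)}{\log x}$, and since $x^a>0$ the active part converges to $-\sp{\nu^a}{\log x^a}=\ell^C(x)$, while for the passive letters $-\nu_i(\delta_n)\log x_i\to 0$ regardless of whether $x_i=0$ (using $\nu_i(\delta_n)\to0$ and the convention $0\cdot(-\infty)=0$, again as in Lemma~\ref{L:PP-continuity-of-F(nu,q)}). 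Hence $\ell_{\delta_n}^C(x)\to\ell^C(x)$, which is even stronger than needed.

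The main obstacle, and the only place requiring care, is the lower bound when $x\in C$ has a passive coordinate $x_i>0$ (or even $=0$) and we must ensure the perturbation terms $-\nu_i(\delta)\log x_i'$ for $x'$ near $x$ do not drag the infimum down below $\ell^C(x)$; but since these terms are nonpositive only when $x_i'\le 1$ and are bounded in absolute value by $\nu_i(\delta)\log(1/x_i')\to0$ as $\delta\searrow0$ uniformly on a neighborhood where $x_i'$ is bounded away from $0$, and since near $x_i=0$ we may take $x_i'\in[0,1]$ so the term is nonpositive and hence only helps a lower bound fail — wait, it would help the infimum go \emph{down}, so the real point is that for fixed small $\eps$ this downward pull is at most $\sum_{i\in\Ap}\nu_i(\delta_n)\log(1/\eps')$ for some $\eps'>0$ comparable to $\eps$, which $\to0$ as $n\to\infty$ before we send $\eps\to0$; the iterated limit in~(\ref{EQ:PP-eliminf}) is precisely what makes this work. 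I would therefore organize the $\eliminf$ argument so that the $n\to\infty$ limit (killing all passive terms and pushing $\nu(\delta_n)^a\to\nu^a$) is taken first, and only afterwards $\eps\to0$.
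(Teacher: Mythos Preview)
Your $\elimsup$ argument has a genuine gap. You propose the constant recovery sequence $x_n\equiv x$, claiming that ``for the passive letters $-\nu_i(\delta_n)\log x_i\to 0$ regardless of whether $x_i=0$.'' This is false: if $x\in C$ has $x^a>0$ but $x_i=0$ for some $i\in\Ap$, then $\nu_i(\delta_n)>0$ and $\log x_i=-\infty$, so $-\nu_i(\delta_n)\log x_i=+\infty$ for every $n$. The convention $0\cdot(-\infty)=0$ does not apply here because the coefficient is strictly positive. Hence $\ell_{\delta_n}^C(x)=+\infty$ for all $n$, which does \emph{not} converge to the finite value $\ell^C(x)=-\sp{\nu^a}{\log x^a}$. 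This is precisely the delicate case --- a point of $C$ with a zero passive coordinate --- and your recovery sequence misses it. (Your $\eliminf$ discussion also contains a sign slip: for $x_i'\in(0,1]$ the term $-\nu_i(\delta_n)\log x_i'$ is non\emph{negative}, not nonpositive; this actually makes the lower bound easy, since dropping the passive sum gives $\ell_{\delta_n}(x')\ge -\sp{\nu(\delta_n)^a}{\log x'^a}$.)

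The paper circumvents the problem by not using a recovery sequence at all in this case. Instead it works directly with the minimizers $\hat x_{n,\eps}$ of $\ell_{\delta_n}$ over $C\cap B(x,\eps)$ (which exist by Theorem~\ref{T:P}), invokes Lemma~\ref{L:PP-qhat-nu-ineq} to obtain $\hat x_{n,\eps}\ge\beta\nu(\delta_n)$ for a constant $\beta>0$, and then applies the joint continuity Lemma~\ref{L:PP-continuity-of-F(nu,q)} on the domain $\{q\ge\beta\nu\}$ to pass to the limit. The bound $\hat x_{n,\eps}\ge\beta\nu(\delta_n)$ is exactly what keeps the passive coordinates of the minimizers from collapsing to zero too fast relative to $\nu(\delta_n)$. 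If you want to salvage your recovery-sequence approach, you would need to take something like $x_n=(1-t_n)x+t_n q_0$ with $q_0\in C$, $q_0>0$, and $t_n\searrow 0$ chosen so that $\nu_i(\delta_n)\log t_n\to 0$ for every $i\in\Ap$; this is doable but requires care about the rate.
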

\begin{proof}
Take any sequence $(\delta_n)_n$ decreasing to $0$.
To prove that $\elim_{n} \ell_{\delta_n}^C = \ell^C$
we use (\ref{EQ:PP-eliminf}).
Take $x\in\RRR^m$, $\eps>0$, and put
\begin{equation*}
    \psi_n(x,\eps)\triangleq
    \inf_{y\in B(x,\eps)} \ell_{\delta_n}^C(y)
    =
    \inf_{y\in C\cap B(x,\eps)} \ell_{\delta_n}(y).
\end{equation*}
If $x\not\in C$ then, since $C$ is closed, $\psi_n(x,\eps)=\infty$ provided $\eps$ is small enough;
hence $\lim_{\eps} \lim_{n} \psi_n(x,\eps)=\infty=\ell^{C}(x)$.

Assume now that $x\in C$. Then, by the monotonicity of the logarithm,
\begin{equation*}
    \psi_n(x,\eps)\in [\ell_{\delta_n}(x+\eps), \ell_{\delta_n}(x)].
\end{equation*}
If $x>0$ and $0<\eps<\min_i x_i$, then $\lim_n \ell_{\delta_n}(x)=\ell^C(x)$ and
$\lim_n \ell_{\delta_n}(x+\eps)=\ell^C(x+\eps)$, so continuity of $\ell$ at $x$ easily gives
that $\lim_{\eps} \lim_{n} \psi_n(x,\eps)=\ell^C(x)$.
If there is $i\in\Aa$ with $x_i^a=0$, then $\ell^C(x)=\infty$
and $\lim_\eps\lim_n (-\nu(\delta_n)_i^a \log(x_i^a+\eps))
= -\nu_i^a \,\lim_\eps  \log \eps=\infty$, so
again $\lim_{\eps} \lim_{n} \psi_n(x,\eps)=\ell^C(x)$.

Finally, assume that $x\in C$ is such that $x^a>0$ and there is $i\in\Ap$ with $x_i^p=0$.
By Theorem~\ref{T:P} (applied to $\nu(\delta_n)>0$ and to the
nonempty convex closed subset $C\cap B(x,\eps)$ of $\sx{\A}$),
for every $n$ there is unique $\hat x_{n,\eps}>0$ from $C\cap B(x,\eps)$ such that
\begin{equation*}
    \psi_n(x,\eps)=\ell_{\delta_n}(\hat x_{n,\eps}).
\end{equation*}
Moreover, by Lemma~\ref{L:PP-qhat-nu-ineq} there is $\beta>0$ such that
\begin{equation}\label{EQ:PP-epi-beta}
    \hat x_{n,\eps} \ge\beta\nu(\delta_n)
    \qquad\text{for every } n.
\end{equation}
Take any convergent subsequence of $(\hat x_{n,\eps})_n$, denoted again by
$(\hat x_{n,\eps})_n$, and let $\bar x_\eps\in C\cap B(x,\eps)$ denote its limit.
Then, by (\ref{EQ:PP-epi-beta}) and Lemma~\ref{L:PP-continuity-of-F(nu,q)},
$\ell_{\delta_n}(\hat x_{n,\eps})$ converges to $\ell(\bar x_\eps)$.
Hence all cluster points of $(\psi_n(x,\eps))_n$ belong to $\ell(C\cap B(x,\eps))$.
Since $\ell$ is continuous, $\lim_\eps \ell(C\cap B(x,\eps))=\{\ell(x)\}$.
This implies that $\elim_n \ell_{\delta_n}^C(x)=\ell^C(x)$ and the lemma is proved.
\end{proof}

\begin{lemma}\label{L:PP-epi-limsup-argmin}
Let $\supp(C)=\A$. Then
\begin{equation*}
 \lim_{\delta\searrow 0} \hat\ell_\PR(\delta)  = \hat\ell_\PR
 \qquad\text{and}\qquad
 \lim_{\delta\searrow 0} \inf_{\hat q_\PR\in\SOL_\PR} d(\hat q_\PR(\delta), \hat q_\PR) = 0.
\end{equation*}
\end{lemma}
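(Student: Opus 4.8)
The plan is to deduce both limits from the epi-convergence of $\ell^C_\delta$ to $\ell^C$ established in Lemma~\ref{L:PP-epi-conv}, combined with the compactness of $C$, which here plays the role of the ``eventual level-boundedness'' hypothesis in the general theory (cf.~Rockafellar and Wets~\cite[Chap.~7]{rockafellar2009variational}). It suffices to fix an arbitrary sequence $\delta_n\searrow 0$ and show $\hat\ell_\PR(\delta_n)\to\hat\ell_\PR$ and $d(\hat q_\PR(\delta_n),\SOL_\PR)\to 0$; the limits as $\delta\searrow 0$ then follow. I will use the two sequential consequences of epi-convergence, both immediate from~(\ref{EQ:PP-eliminf}): \emph{(i)} $\liminf_n\ell^C_{\delta_n}(x_n)\ge\ell^C(x)$ whenever $x_n\to x$; and \emph{(ii)} for every $x$ there is $x_n\to x$ with $\limsup_n\ell^C_{\delta_n}(x_n)\le\ell^C(x)$. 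Recall also that, by Theorem~\ref{T:P} applied to $\nu(\delta_n)>0$ and $C$, the point $\hat q_\PR(\delta_n)$ is the unique member of $C$ with $\ell_{\delta_n}(\hat q_\PR(\delta_n))=\hat\ell_\PR(\delta_n)$; cf.~(\ref{EQ:PP-solutions}).

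For the values, the upper bound comes from applying \emph{(ii)} at a point $\hat q_\PR\in\SOL_\PR$: choosing $x_n\to\hat q_\PR$ with $\limsup_n\ell^C_{\delta_n}(x_n)\le\ell^C(\hat q_\PR)=\hat\ell_\PR$ and using $\hat\ell_\PR(\delta_n)\le\ell^C_{\delta_n}(x_n)$ gives $\limsup_n\hat\ell_\PR(\delta_n)\le\hat\ell_\PR$. For the lower bound, if some subsequence had $\hat\ell_\PR(\delta_{n_k})\to L<\hat\ell_\PR$, then, since the minimizers $\hat q_\PR(\delta_{n_k})$ all lie in the compact set $C$, a further subsequence converges to some $\bar q\in C$, and \emph{(i)} forces $L=\liminf_k\ell^C_{\delta_{n_k}}(\hat q_\PR(\delta_{n_k}))\ge\ell^C(\bar q)\ge\hat\ell_\PR$, a contradiction. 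Hence $\hat\ell_\PR(\delta_n)\to\hat\ell_\PR$.

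For the minimizers, argue by contradiction: if $d(\hat q_\PR(\delta_n),\SOL_\PR)\not\to 0$, pick $\eta>0$ and a subsequence with $d(\hat q_\PR(\delta_{n_k}),\SOL_\PR)\ge\eta$; by compactness of $C$ we may assume $\hat q_\PR(\delta_{n_k})\to\bar q\in C$, and since $\SOL_\PR$ is compact (Theorem~\ref{T:P}) we get $d(\bar q,\SOL_\PR)\ge\eta$, so $\ell^C(\bar q)>\hat\ell_\PR$. But \emph{(i)} together with the value convergence just shown gives $\ell^C(\bar q)\le\liminf_k\ell^C_{\delta_{n_k}}(\hat q_\PR(\delta_{n_k}))=\liminf_k\hat\ell_\PR(\delta_{n_k})=\hat\ell_\PR$, a contradiction; hence $\inf_{\hat q_\PR\in\SOL_\PR}d(\hat q_\PR(\delta),\hat q_\PR)\to 0$. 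Every step is routine once Lemma~\ref{L:PP-epi-conv} is in hand; the only care needed is in extracting the sequential forms \emph{(i)}, \emph{(ii)} from the ball-based definition~(\ref{EQ:PP-eliminf}) and in observing that it is precisely the boundedness of $C$ that upgrades the liminf inequality into genuine control of the infima and of the minimizers, so I do not expect a substantive obstacle here.
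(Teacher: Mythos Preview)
Your proof is correct and takes essentially the same approach as the paper: both deduce the lemma from the epi-convergence established in Lemma~\ref{L:PP-epi-conv}, with the paper simply invoking \cite[Thm.~7.1]{rockafellar2009variational} as a black box, while you unpack that theorem's proof directly using the sequential characterizations \emph{(i)} and \emph{(ii)} together with the compactness of $C$. The arguments are the same in substance; yours is just more self-contained.
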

\begin{proof}
Take any $(\delta_n)_n$ decreasing to $0$.
Since $(\ell_{\delta_n}^C)_n$
epi-converges to $\ell^C$ and $\hat\ell_\PR$ is finite, the result follows
from \cite[Thm.~7.1, p.~264]{rockafellar2009variational}
(in part (a) take $B\triangleq C$;
use that $\hat\ell_\PR(\delta_n)=\min \ell_{\delta_n}^C$ and that
$\hat\ell_\PR=\min \ell^C$).
\end{proof}

The lemma states that every cluster point of a sequence $(\hat q_\PR(\delta_n))_n$ of solutions
of the perturbed primal problems $\mathcal{P}_{\delta_n}$ ($\delta_n\searrow 0$)
is a solution of the (unperturbed) primal \refPR{}.
Of course, not every solution of \refPR{} can be obtained as a cluster point of a sequence of perturbed solutions.
For example, it is shown in the following section that if $(\nu(\delta))_\delta$
satisfies the regularity condition (\ref{EQ:PP-nu-bounded-differ}),
then the perturbed solutions converge to a unique solution of \refPR{}, regardless of
whether $\SOL_\PR$ is a singleton.

\begin{proof}[Proof of Theorem~\ref{T:PP}]
The first part of Theorem~\ref{T:PP} was shown in Lemmas~\ref{L:PP-epi-conv} and~\ref{L:PP-epi-limsup-argmin}.
The second part on the convergence of active coordinates
then follows from Theorem~\ref{T:P}.
\end{proof}

\subsection{Proof of Theorem~\ref{T:PP-linear}
(Perturbed primal \refPP{} ---
the linear case, pointwise convergence)}\label{SS:PP-linear}

Let a model $C$ be given by finitely many linear constraints
(\ref{EQ:C-linear}), that is,
\begin{equation*}
    C = \{q\in\sx{\A}: \ \sp{q}{u_h}=0 \text{ for } h=1,\dots,r\},
\end{equation*}
where $u_h$ ($h=1,\dots,r$) are fixed vectors from $\RRR^m$.
Let a type $\bar\nu\in\sx{\A}$ be given and let $\Aa$ and $\Ap$ be the
sets of active and passive coordinates with respect to $\bar\nu$.

Assume that perturbed types $\nu(\delta)$ ($\delta\in(0,1)$) are such that
the conditions (\ref{EQ:PP-nu(delta)}) and (\ref{EQ:PP-nu-bounded-differ})
are true (with $\nu$ replaced by $\bar\nu$); that is,
$\nu:(0,1)\to \sx{\A}$ is continuously differentiable,
\begin{equation*}
    \nu(\delta)>0,
    \qquad
    \lim_{\delta\searrow 0} \nu(\delta)=\bar \nu,
\end{equation*}
and there is a constant $c>0$
such that, for every $i\in\Ap$,
\begin{equation*}
  \abs{\w_i'(\delta)}\le c\,\w_i(\delta),
  \qquad\text{where}\quad
  \w_i(\delta) \triangleq \frac{\nu_i(\delta)}{\sum_{j\in\Ap} \nu_j(\delta)}\,.
\end{equation*}
The aim of this section is to prove that, under these conditions,
solutions $\hat q_\PR(\delta)$ of the perturbed primal problems \refPP{}
\emph{converge} to a solution of the unperturbed primal~\refPR{}; that is,
\begin{equation*}
    \lim_{\delta\searrow 0} \hat{q}_\PR(\delta)
    \quad\text{exists and belongs to }
    \SOL_\PR.
\end{equation*}

\subsubsection{Outline of the proof}
The proof
is based on the following `passive-active' reformulation of the perturbed primal problem
\begin{equation*}
    \min_{q\in C} \ell_\delta(q)
    =
    \min_{q^a\in C^a}  \min_{q^p\in C^p(q^a)}  \ell_\delta(q^a,q^p);
\end{equation*}
hence, the passive projection $\hat q^p_\PR(\delta)$ of the optimal solution $\hat q_\PR(\delta)$
is
\begin{equation*}
    \hat q^p_\PR(\delta)
    = \argmin_{q^p\in C^p(\hat q^a_\PR(\delta))} {\ell_{\delta}(\hat q^a_\PR(\delta), q^p)}
\end{equation*}
(recall that
$C^a=\pi^a(C)$ is the projection of $C$ onto the active coordinates and,
for $q^a\in C^a$, that $C^p(q^a)=\{q^p\ge 0: (q^a,q^p)\in C\}$ is the $q^a$-slice of~$C$).
Employing the implicit function theorem, it is then shown that
the passive projections $\hat q^p_\PR(\delta)$ can be obtained from the active
ones $\hat q^a_\PR(\delta)$ via a uniformly continuous map $\varphi$. Since
$\hat q^a_\PR(\delta)$ converges by Theorem~\ref{T:PP},
the uniform continuity of $\varphi$ ensures that also $\hat q^p_\PR(\delta)$ converges.

The above argument is implemented in the following steps:
\begin{enumerate}
  \item Assume that $\SOL_\PR$ is not a singleton, the other case being trivial.
    Then the linear constraints (\ref{EQ:C-linear}) defining $C$ can be rewritten into a parametric form
    \begin{equation*}
        q^p = A \lambda + B q^a + c  \qquad (q^a\in C^a,\lambda\in \Lambda(q^a))
    \end{equation*}
    (see Lemma~\ref{L:PP-lin-Cp(qa)}). There, $A$ is an ${m_p\times s}$ matrix of rank $s$,
    $B$ is an ${m_p\times m_a}$ matrix, $c\in\RRR^{m_p}$,
    none of $A,B,c$  depends on $q^a$,
    and the (polyhedral) closed subset $\Lambda(q^a)$ of $\RRR^{s}$ is defined by
    \begin{equation*}
        \Lambda(q^a) \triangleq \{\lambda\in\RRR^s:\ A \lambda + B q^a + c\ge 0\}.
    \end{equation*}

  \item Define an open bounded polyhedral set $G\subset \RRR^{1+m_a+s}$ by
    \begin{equation}\label{EQ:PP-lin-G-def}
      G
      \triangleq
      \{
       (\delta,x,y)\in (0,1)\times \RRR^{m_a}\times \RRR^s:
       x>0, Ay+Bx+c>0
      \};
    \end{equation}
    there  $x$ stands for $q^a$ and $y$ stands for $\lambda$, for brevity.
    Let $Z$ denote the projection of $G$ onto the first $(1+m_a)$ coordinates.
    For $z=(\delta,x)\in Z$ put
    \begin{equation}\label{EQ:PP-lin-Gz-def}
    \begin{split}
      G(z) &\triangleq \{ y\in\RRR^s: Ay+Bx+c > 0 \},
      \\
      \bar G(z) &\triangleq \{ y\in\RRR^s: Ay+Bx+c\ge 0 \}.
    \end{split}
    \end{equation}
    In Lemma~\ref{L:PP-lin-psiz-argmin}, it is proven that, for every $z=(\delta,x)\in Z$, the map
    \begin{equation*}
        \psi_z:\bar G(z)\to\RRRex,\qquad
        \psi_z(y)\triangleq -\sum_{i\in\Ap} \w_i(\delta) \log(\sp{\alpha_i}{y} + \sp{\beta_i}{x} + c_i)
    \end{equation*}
    (where $\alpha_i$ and $\beta_i$ denote the $i$-th rows of $A$ and $B$, respectively)
    has a unique minimizer $\varphi(z)\in G(z)$.

  \item Since $\varphi(z)$ is also a local minimum of $\psi_z$,
    it satisfies $F(z,\varphi(z))=0$, where $F:G\to \RRR^s$ is given by
    \begin{equation}\label{EQ:PP-lin-F-def}
      F(z,y)
      \triangleq
      -\nabla \psi_z(y).
    \end{equation}
    Using the implicit function theorem and an algebraic result (Proposition~\ref{P:PP-bounded-ADAinvAD}),
    it is shown that $\varphi:Z\to\RRR^s$ is uniformly continuous (see Lemma~\ref{L:PP-lin-phi-Lipschitz}).

  \item Finally, in Lemma~\ref{L:PP-lin-psiz-hatqa}, it is demonstrated that, for every $\delta>0$,
    \begin{equation*}
        (\delta,\hat q^a_\PR(\delta))\in Z
        \qquad\text{and}\qquad
        \hat q^p_\PR(\delta) = A\varphi(\delta,\hat q^a_\PR(\delta)) + B \hat q^a_\PR(\delta) + c.
    \end{equation*}
    This fact, together with the convergence of $\hat q^a_\PR(\delta)$ and the uniform continuity of $\varphi$,
    implies the convergence of $\hat q^p_\PR(\delta)$, and thus proves Theorem~\ref{T:PP-linear}.
\end{enumerate}

\subsubsection{Parametric expression for $q^p\in C^p(q^a)$}\label{SS:PP-lin-equations}
Let $C$ be given by (\ref{EQ:C-linear}).
Denote by $U^p$ the $(r+1)\times m_p$ matrix whose first $r$ rows are equal to $u_h^{p}$
(the passive projections of $u_h$, $h=1,\dots,r$),
and the last row is a vector of $1$'s; analogously define $U^a$ using the active projections $u_h^{a}$ of $u_h$.
Let $b\in\RRR^{r+1}$ be such that $b_i=0$ for $i\le r$ and $b_{r+1}=1$.
Then
\begin{equation}\label{EQ:C-linear-matrix}
  C = \{
    (q^a,q^p)\ge 0:\ U^p q^p = - U^a q^a + b
  \}.
\end{equation}
Let $V^p$ (of dimension $m_p\times(r+1)$) be the Moore-Penrose inverse of $U^p$.
By \cite[p.~5-12]{hogben2006handbook},
(\ref{EQ:C-linear-matrix}) can be written in the following form, where
$I$ denotes the $m_p\times m_p$ identity matrix:
\begin{equation*}
  C = \{
    (q^a,q^p)\ge 0:\ q^p = (I-V^p U^p) \gamma + B q^a + c
    \ \text{ for some }\  \gamma\in\RRR^{m_p}
  \},
\end{equation*}
where
\begin{equation*}
    B\triangleq-V^p U^a
    \qquad\text{and}\qquad
    c\triangleq V^p b.
\end{equation*}
Put $s\triangleq\rank(I-V^p U^p)$.

If $s=0$ then $(I-V^p U^p)=0$ and $q^p$ uniquely depends on $q^a$.
That is, for every $q^a\in C^a$, the set $C^p(q^a)$ is a singleton. By
Theorems~\ref{T:P} and \ref{T:PP},
also $\SOL_\PR$ is a singleton and $\hat q_\PR(\delta)$ converges
to the unique member of $\SOL_\PR$;
so in this case Theorem~\ref{T:PP-linear} is proved.

Hereafter, we assume that $s\ge 1$ and, without loss of generality,
that the first $s$ columns of $(I-V^p U^p)$ are linearly independent. Put
$$
  A\triangleq(I-V^p U^p)[\{1,\dots,m_p\}, \{1,\dots,s\}]
$$
(the submatrix of entries that lie in the first $m_p$ rows and the first $s$ columns).
Since $\{(I-V^p U^p) \gamma: \gamma\in\RRR^{m_p}\}$
equals $\{A\lambda: \lambda\in\RRR^s\}$,
the next lemma follows.

\begin{lemma}\label{L:PP-lin-Cp(qa)}
    Let $C$ be given by (\ref{EQ:C-linear})
    and $U^p,V^p$ be as above.
    Assume that $V^p U^p\ne I$.
    Then there are $s\ge 1$, an $m_p\times s$ matrix $A$ of rank $s$,
    an $m_p\times m_a$ matrix $B$, and a vector $c\in\RRR^{m_p}$ such that
    \begin{equation*}
      C = \{
        (q^a,L(q^a,\lambda)):\
        q^a\in C^a, \lambda\in\Lambda(q^a)
      \}.
    \end{equation*}
    There,
    \begin{equation*}
      L(q^a,\lambda) \triangleq A \lambda + B q^a + c
    \end{equation*}
    and
    \begin{equation*}
      \Lambda(q^a)\triangleq \{\lambda\in\RRR^s: L(q^a,\lambda)\ge 0\}
    \end{equation*}
    is a nonempty closed polyhedral subset of $\RRR^s$.
\end{lemma}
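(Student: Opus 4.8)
The plan is to obtain the lemma by specialising the Moore--Penrose parametrisation of the $q^p$-slices of $C$ and then passing to a full-rank coefficient matrix. First I would take the matrix form (\ref{EQ:C-linear-matrix}) of the model, namely $C=\{(q^a,q^p)\ge 0:\ U^p q^p=-U^a q^a+b\}$, in which the normalisation $\sum q=1$ has been folded into the last rows of $U^p,U^a$ and into $b$. For a fixed $q^a\in C^a=\pi^a(C)$ the condition on $q^p$ is the affine system $U^p q^p=-U^a q^a+b$, and the key point is that this system is \emph{consistent}: by the definition of $C^a$ there is some $q^p\ge 0$ with $(q^a,q^p)\in C$, hence some $q^p$ solving the system. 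Therefore, by the standard description of the solution set of a consistent linear system through the Moore--Penrose inverse $V^p$ of $U^p$ (\cite[p.~5-12]{hogben2006handbook}), the set of all $q^p$ solving it equals $\{Bq^a+c+(I-V^pU^p)\gamma:\ \gamma\in\RRR^{m_p}\}$, with $B\triangleq-V^pU^a$ and $c\triangleq V^p b$.

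Next I would carry out the column reduction. Put $s\triangleq\rank(I-V^pU^p)$; the hypothesis $V^pU^p\ne I$ forces $I-V^pU^p\ne 0$, so $s\ge 1$. Permuting the passive letters if necessary, we may assume the first $s$ columns of $I-V^pU^p$ are linearly independent; let $A$ be the $m_p\times s$ submatrix of $I-V^pU^p$ formed by those columns, so that $\rank A=s$, and $A,B,c$ do not depend on $q^a$. Since every column of $I-V^pU^p$ lies in the column space of $A$, we have $\{(I-V^pU^p)\gamma:\ \gamma\in\RRR^{m_p}\}=\{A\lambda:\ \lambda\in\RRR^s\}$. Feeding this into the previous paragraph, for $q^a\in C^a$ the $q^p$ solving the affine system are exactly $\{A\lambda+Bq^a+c:\ \lambda\in\RRR^s\}$; intersecting with $\{q^p\ge 0\}$ then gives $C^p(q^a)=\{L(q^a,\lambda):\ \lambda\in\Lambda(q^a)\}$ with $L(q^a,\lambda)\triangleq A\lambda+Bq^a+c$ and $\Lambda(q^a)\triangleq\{\lambda\in\RRR^s:\ L(q^a,\lambda)\ge 0\}$. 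Collecting the slices over $q^a\in C^a$ yields the claimed identity $C=\{(q^a,L(q^a,\lambda)):\ q^a\in C^a,\ \lambda\in\Lambda(q^a)\}$.

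It remains to record the properties of $\Lambda(q^a)$. It is the solution set of the $m_p$ non-strict linear inequalities $A\lambda+Bq^a+c\ge 0$ in $\lambda$, hence closed and polyhedral. It is nonempty whenever $q^a\in C^a$: by the definition of $C^a$ there is $q^p\in C^p(q^a)$, and by the parametrisation just obtained such a $q^p$ equals $L(q^a,\lambda)$ for some $\lambda$, which then automatically satisfies $L(q^a,\lambda)\ge 0$, that is, $\lambda\in\Lambda(q^a)$.

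The algebraic manipulations (the Moore--Penrose identities such as $U^pV^pU^p=U^p$, and the inequality bookkeeping) are routine. The one step that needs care --- and the reason the lemma carries the restriction $q^a\in C^a$ rather than merely $q^a\ge 0$ --- is consistency of the $q^p$-system: the Moore--Penrose general-solution formula describes \emph{all} solutions only when the system is solvable, and solvability is precisely what membership $q^a\in C^a$ guarantees; for $q^a\notin C^a$ the same formula would produce vectors $q^p$ violating the original equality constraints.
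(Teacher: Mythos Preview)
Your proof is correct and follows the same approach as the paper: you use the Moore--Penrose parametrisation of the solution set of $U^p q^p=-U^a q^a+b$, then pass to a full-rank submatrix $A$ of $I-V^pU^p$, exactly as set up in Section~\ref{SS:PP-lin-equations} before the lemma statement. Your explicit discussion of consistency (why the general-solution formula is valid precisely for $q^a\in C^a$) is in fact more careful than the paper's terse ``the next lemma follows.''
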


Note that, for $\lambda\ne\lambda'$, $L(q^a,\lambda)\ne L(q^a,\lambda')$
since $A$ has full column rank.

\medskip
Keep the notation from Lemma~\ref{L:PP-lin-Cp(qa)}.
Write $x$ for $q^a$, $y$ for $\lambda$, and define $G\subseteq\RRR^{1+m_a+s}$
by (\ref{EQ:PP-lin-G-def}).
Let $\pi_1:G\to (0,1)\times \RRR^{m_a}$ be the natural
projection mapping $(\delta,x,y)$ onto $(\delta,x)$.
Put
\begin{equation*}
    Z
    \triangleq \pi_1(G)
\end{equation*}
and, for $z=(\delta,x)\in Z$, define $G(z),\bar G(z)$ by (\ref{EQ:PP-lin-Gz-def}).
Since $G(z)$ is the $z$-slice of $G$,
\begin{equation*}
  G = \{(z,y): z\in Z, y\in G(z)\}.
\end{equation*}
Note also that, by Lemma~\ref{L:PP-lin-Cp(qa)},
\begin{equation}\label{EQ:PP-lin-Cp-and_Gz}
\begin{split}
  \Lambda(q^a) &= \bar G(\delta,q^a),
\\
  C^p(q^a) &= A \bar G(\delta,q^a) + B q^a + c,
\\
  \{q^p\in C^p(q^a): q^p>0\}
  &= A G(\delta,q^a) + B q^a + c,
\end{split}
\end{equation}
for every $q^a\in C^a$, $q^a>0$, and every $\delta\in(0,1)$.
Moreover,
\begin{equation}\label{EQ:PP-lin-G-and-C}
    \{q\in C: q>0\}
    =
    \{
      (x,Ay+Bx+c): (\delta,x,y)\in G \text{ for } \delta\in(0,1)
    \}.
\end{equation}

\begin{lemma}\label{L:PP-lin-G-Z-Gz}
The following are true:
  \begin{enumerate}
    \item[(a)] $G$ is a nonempty open bounded polyhedral set;
    \item[(b)] $Z$ is a nonempty open bounded set;
    \item[(c)] $G(z)$ ($\bar G(z)$) is a nonempty open (closed) bounded polyhedral set for every $z\in Z$.
  \end{enumerate}
\end{lemma}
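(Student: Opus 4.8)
The plan is to reduce all three parts to two facts: that $G$ is the solution set of a finite system of linear (in)equalities, which yields openness and the polyhedral structure at once, and that $G$ can be identified with a bounded set built from $C$, which yields boundedness; nonemptiness will come from the standing assumption $\supp(C)=\A$.

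I would first dispose of nonemptiness, openness, and polyhedrality. Since $\supp(C)=\A$, there is $q^\ast=(q^{\ast a},q^{\ast p})\in C$ with $q^\ast>0$; by Lemma~\ref{L:PP-lin-Cp(qa)} we can write $q^{\ast p}=A\lambda^\ast+Bq^{\ast a}+c$ for some $\lambda^\ast\in\Lambda(q^{\ast a})$, and then, for any $\delta\in(0,1)$, the point $(\delta,q^{\ast a},\lambda^\ast)$ meets the requirements $q^{\ast a}>0$ and $A\lambda^\ast+Bq^{\ast a}+c=q^{\ast p}>0$ in the definition~(\ref{EQ:PP-lin-G-def}) of $G$, so $G\ne\emptyset$. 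Hence $Z=\pi_1(G)\ne\emptyset$, and for any $z=(\delta,x)\in Z$ there is, by the very definition of $Z$ as a projection, some $y$ with $(\delta,x,y)\in G$, i.e.\ $y\in G(z)$; thus $G(z)\ne\emptyset$ and \emph{a fortiori} $\bar G(z)\ne\emptyset$. Openness and polyhedrality are then formal: $G$ is exactly the set of $(\delta,x,y)$ satisfying the finite system $0<\delta<1$, $x>0$, $Ay+Bx+c>0$ of strict linear inequalities, hence it is open and polyhedral; $Z$ is open because coordinate projections are open maps; and, for fixed $z=(\delta,x)$, $G(z)$ (respectively $\bar G(z)$) is the solution set in $y$ of the finite system $Ay+Bx+c>0$ (respectively $\ge 0$), so it is open polyhedral (respectively closed polyhedral).

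The substantive part is boundedness, and I would prove $G$ bounded by bounding each of its three coordinate blocks. The $\delta$-block lies in $(0,1)$. If $(\delta,x,y)\in G$, then by~(\ref{EQ:PP-lin-G-and-C}) the point $(x,Ay+Bx+c)$ lies in $C$; in particular $x\in C^a=\pi^a(C)$, which is bounded since $C\subseteq\sx{\A}$ is bounded, and, writing $q^p\triangleq Ay+Bx+c$ so that $(x,q^p)\in C$, we have $Ay=q^p-Bx-c$, which ranges over a bounded set as $x$ and $q^p$ range over bounded sets. As $A$ has full column rank $s$ it has a left inverse $A^+$, whence $y=A^+(Ay)$ also ranges over a bounded set; therefore $G$ is bounded. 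Part~(b) now follows since $Z=\pi_1(G)$ is a coordinate projection of a bounded set. For part~(c): $G(z)=\{y:(z,y)\in G\}$ is a slice of the bounded set $G$, hence bounded; and $\bar G(z)$ is the closure of $G(z)$ --- for $y_1\in\bar G(z)$ and any $y_0\in G(z)$ the points $(1-t)y_1+ty_0$ lie in $G(z)$ for every $t\in(0,1]$ and tend to $y_1$ as $t\searrow 0$ --- so $\bar G(z)$ is bounded as well.

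The only step that needs care is the boundedness of $G$, and within it the use of $\rank(A)=s$: for a general $x>0$ the inequalities $x>0$, $Ay+Bx+c>0$ place no a priori bound on $y$ (the set $\{y:Ay+Bx+c>0\}$ can be unbounded), so the bound on $y$ must be imported from the compactness of $C$ together with Lemma~\ref{L:PP-lin-Cp(qa)} and~(\ref{EQ:PP-lin-G-and-C}), after which the full column rank of $A$ is what lets one recover $y$ linearly from $Ay$.
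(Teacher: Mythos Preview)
Your proof is correct and follows essentially the same route as the paper: nonemptiness via $\supp(C)=\A$ and Lemma~\ref{L:PP-lin-Cp(qa)}, openness and polyhedrality directly from the defining strict (resp.\ weak) linear inequalities, and boundedness of $G$ by pulling back to $C$ through~(\ref{EQ:PP-lin-G-and-C}) and using the left-invertibility of $A$ coming from $\rank(A)=s$. Your argument that $\bar G(z)$ equals the closure of $G(z)$ (via the convex-combination trick) is a clean way to transfer boundedness; the paper just says ``boundedness follows from~(a)'', which amounts to the same thing.
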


\begin{proof}
(a) The fact that $G$ is open and polyhedral is immediate from (\ref{EQ:PP-lin-G-def}). It
is nonempty since $(\delta,q)\in G$ for every $\delta\in (0,1)$ and every $q\in C$ such that $q>0$
(such~$q$ exists since $C$ has support $\A$). To finish the proof of (a) it remains to show that
$G$ is bounded. Since $A$ has full column rank, (\ref{EQ:PP-lin-Cp-and_Gz})
and \cite[p.~5-12]{hogben2006handbook} yield
\begin{equation*}
\begin{split}
    G(\delta,q^a)
    &=
      \{A^+(q^p-Bq^a-c): q^p\in C^p(q^a), q^p>0\}
\\
    &\subseteq
      A^+ [0,1]^{m_p} - A^+B [0,1]^{m_a} - A^+c,
\end{split}
\end{equation*}
where $A^+$ is the Moore-Penrose inverse of $A$. Thus $G$ is bounded.

(b) Since $Z$ is the natural projection of $G$, it is open, bounded and nonempty.

(c) Boundedness follows from (a); the rest is trivial.
\end{proof}

\subsubsection{Global minima of the maps $\psi_z$ ($z\in Z$)}

Fix $z=(\delta,x)\in Z$ and define a map $\psi_z:\bar G(z)\to\RRRex$ by
\begin{equation}\label{L:PP-lin-psiz}
    \psi_z(y) \triangleq -\sum_{i\in\Ap} \w_i(\delta)\log d_i(x,y),
    \qquad
    d_i(x,y)\triangleq \sp{\alpha_i}{y} + \sp{\beta_i}{x} + c_i
\end{equation}
(the convention $\log 0=-\infty$ applies),
where $\alpha_i$ and $\beta_i$ denote the $i$-th rows of $A$ and~$B$, respectively.
Since $\psi_z$ is differentiable on $G(z)$, we can
define a map $F:G\to \RRR^s$ by (\ref{EQ:PP-lin-F-def}); that is,
\begin{equation*}
  F_h(\delta,x,y)
  \triangleq
    \sum_{i\in\Ap}  \frac{a_{ih} \w_i(\delta) }{d_i(x,y)}
  \qquad
  \text{for }
  h=1,\dots,s
  \text{ and }
  (\delta,x,y)\in G.
\end{equation*}
Easy computation gives (recall that $\nu(\cdot)$, hence also $\w(\cdot)$, is differentiable)
\begin{equation}\label{EQ:PP-lin-Fder}
    \frac{\partial F}{\partial \delta}
    =A'E1^p,
    \qquad
    \frac{\partial F}{\partial x}
    =-A'DB,
    \qquad
    \frac{\partial F}{\partial y}
    =-A'DA,
\end{equation}
where $A'$ is the transpose of $A$,
and $D=D(\delta,x,y), E=E(\delta,x,y)$ are given by
\begin{equation}\label{EQ:PP-lin-DE}
    D \triangleq \diag\left(  \frac{\w_i(\delta)}{d_i^2(x,y)}  \right)_{i=1}^{m_p}
    \qquad\text{and}\qquad
    E \triangleq \diag\left(  \frac{\w_i'(\delta)}{d_i(x,y)}   \right)_{i=1}^{m_p} \,.
\end{equation}

\begin{lemma}\label{L:PP-lin-psiz-argmin}
For every $z\in Z$ there is a unique minimizer $\varphi(z)\in G(z)$ of $\psi_z$:
    \begin{equation*}
        \argmin_{\bar G(z)} {\psi_z} = \{\varphi(z)\}.
    \end{equation*}
\end{lemma}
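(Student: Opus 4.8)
The plan is to recognize $\psi_z$ as a Kerridge-inaccuracy functional precomposed with an \emph{injective} affine map and restricted to a compact polyhedron; existence of a minimizer then comes from compactness together with Theorem~\ref{T:bertsekas}, and uniqueness from strict convexity.

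Fix $z=(\delta,x)\in Z$ and put $\nu'\triangleq(\w_i(\delta))_{i\in\Ap}$. Since $\nu(\delta)>0$, this $\nu'$ is a strictly positive probability vector on the alphabet $\Ap$, which has $m_p\ge s\ge 1$ letters. Let $\ell'\colon\RRR^{m_p}\to\RRRex$, $\ell'(w)\triangleq-\sp{\nu'}{\log w}$; by Lemma~\ref{L:f-props} it is a proper, continuous (hence closed) convex function with $\dom(\ell')=\{w\in\RRR^{m_p}:w>0\}$, whose restriction to $\dom(\ell')$ is strictly convex (because $\nu'>0$). Let $T\colon\RRR^s\to\RRR^{m_p}$ be the affine map $T(y)\triangleq Ay+Bx+c$; as $A$ has full column rank $s$, $T$ is injective, and from the definitions of $d_i$ and $\psi_z$ one has $(T(y))_i=d_i(x,y)$, so that $\psi_z$ is the restriction to $\bar G(z)$ of $\ell'\circ T$. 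The composition $\ell'\circ T$ is continuous (hence lsc and closed) and convex; since $\bar G(z)$ is closed convex, Lemma~\ref{L:closed-f} shows $\psi_z$ is convex and closed, it never takes the value $-\infty$ (each $d_i(x,y)$ is a finite real), and $\dom(\psi_z)=T^{-1}(\dom(\ell'))\cap\bar G(z)=G(z)$, which is nonempty by Lemma~\ref{L:PP-lin-G-Z-Gz}(c); hence $\psi_z$ is proper.

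For existence I would use that $\bar G(z)$ is compact (Lemma~\ref{L:PP-lin-G-Z-Gz}(c)), so $\bar G(z)\cap\dom(\psi_z)=G(z)$ is bounded and nonempty; Theorem~\ref{T:bertsekas}, applied to the convex closed set $\bar G(z)$ and the proper convex closed function $\psi_z$, gives that $\argmin_{\bar G(z)}\psi_z$ is nonempty and compact. Every minimizer has finite value (the minimum is at most $\psi_z(y_0)<\infty$ for any $y_0\in G(z)$), hence lies in $\dom(\psi_z)=G(z)$. Uniqueness follows because $\ell'|_{\dom(\ell')}$ is strictly convex and $T$ is injective, so $\ell'\circ T$ is strictly convex on $T^{-1}(\dom(\ell'))=G(z)$: two distinct minimizers $y_1,y_2$ would both lie in $G(z)$, and their midpoint, which stays in $\bar G(z)$ by convexity, would have strictly smaller value --- impossible. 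Therefore $\argmin_{\bar G(z)}\psi_z$ is a singleton contained in $G(z)$, and we set $\varphi(z)$ to be its element. The only point needing care is the bookkeeping that $\dom(\psi_z)=G(z)$ and that $\psi_z$ is proper, convex and closed; once that is done the lemma is a direct combination of Lemma~\ref{L:f-props}, Lemma~\ref{L:PP-lin-G-Z-Gz} and Theorem~\ref{T:bertsekas}, and there is no substantive obstacle.
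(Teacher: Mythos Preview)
Your proof is correct and follows the same strategy as the paper: existence from compactness of $\bar G(z)$, the observation that any minimizer must lie in $G(z)$ since $\psi_z=\infty$ on the boundary, and uniqueness from strict convexity on $G(z)$. The only cosmetic difference is in how strict convexity is verified: the paper computes the Hessian $\nabla^2\psi_z=A'DA$ directly and notes it is positive definite, whereas you obtain it by recognising $\psi_z$ as $\ell'\circ T$ with $\ell'$ strictly convex on its domain (Lemma~\ref{L:f-props}(c), since $\nu'>0$) and $T$ injective (since $A$ has full column rank). Your route has the minor advantage of reusing Lemma~\ref{L:f-props} rather than redoing a second-derivative calculation, but the two arguments are equivalent.
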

\begin{proof}
Let $z=(\delta,x)\in Z$.
Since $\bar G(z)$ is compact (Lemma~\ref{L:PP-lin-G-Z-Gz}) and $\psi_z:\bar G(z)\to\RRRex$
is continuous, $\argmin \psi_z$ is nonempty.
If $y\in \bar G(z)\setminus G(z)$ then there is $1\le i\le m_p$ such that $d_i(x,y)=0$ and so
$\psi_z(y)=\infty$. Hence $\argmin \psi_z\subseteq G(z)$.
The map $\psi_z$ is strictly convex on $G(z)$ (use \cite[Prop.~1.2.6(b)]{bertsekas}
and the fact that, by (\ref{EQ:PP-lin-Fder}),
$\nabla^2 \psi_z=A'DA$ is positive definite). Hence the minimum is unique.
\end{proof}

Since $G(z)$ is open for every $z$, the necessary condition for optimality gives the following result.

\begin{lemma}\label{L:PP-lin-phi-equations}
Let $\varphi:Z\to\RRR^s$ be as in Lemma~\ref{L:PP-lin-psiz-argmin}.
Then, for every $(z,y)\in G$,
\begin{equation*}
    F(z,y)=0
    \qquad\text{if and only if}\qquad
    y=\varphi(z).
\end{equation*}
\end{lemma}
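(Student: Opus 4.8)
The plan is to read this off the standard first‑order optimality criterion for a differentiable convex function on an open set, exactly as was done for the map $k_z(\mu)$ in the proof of Theorem~\ref{T:cc}. Fix $z=(\delta,x)\in Z$. I would first recall everything already established in Lemma~\ref{L:PP-lin-psiz-argmin} and its proof: $\psi_z$ is continuous on the compact polyhedron $\bar G(z)$, it equals $+\infty$ on $\bar G(z)\setminus G(z)$ (because some $d_i(x,y)=0$ there), it is differentiable and strictly convex on the open set $G(z)$ (its Hessian $\nabla^2\psi_z=A'DA$ is positive definite by \eqref{EQ:PP-lin-Fder} and the full column rank of $A$), and it attains its minimum over $\bar G(z)$ at the unique point $\varphi(z)$, which lies in $G(z)$.

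For the ``if'' direction, I would argue that since $\varphi(z)$ minimizes $\psi_z$ over $\bar G(z)$ and belongs to the open subset $G(z)$, it is an unconstrained local minimizer of the differentiable map $\psi_z|_{G(z)}$; the basic necessary condition \cite[p.~258]{bertsekas} then gives $\nabla\psi_z(\varphi(z))=0$, i.e.\ $F(z,\varphi(z))=-\nabla\psi_z(\varphi(z))=0$ by the definition \eqref{EQ:PP-lin-F-def} of $F$.

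For the converse, I would take $(z,y)\in G$ (equivalently $y\in G(z)$) with $F(z,y)=0$, hence $\nabla\psi_z(y)=0$. Since $\psi_z$ is convex and differentiable on the convex open set $G(z)$, a stationary point is a global minimizer of $\psi_z$ over $G(z)$, and by strict convexity it is the only one. Because $\psi_z\equiv\infty$ on $\bar G(z)\setminus G(z)$, that same point is the unique minimizer over $\bar G(z)$, which is $\varphi(z)$ by Lemma~\ref{L:PP-lin-psiz-argmin}; therefore $y=\varphi(z)$.

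There is no real obstacle: the only step requiring a word of care is the passage from ``minimizer over $G(z)$'' to ``minimizer over $\bar G(z)$'', which is immediate from the boundary behaviour of $\psi_z$ noted above. The lemma is essentially bookkeeping that rewrites the characterization of $\varphi(z)$ as the vanishing of $F$, so that the implicit function theorem can be applied in Lemma~\ref{L:PP-lin-phi-Lipschitz}.
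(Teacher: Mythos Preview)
Your proposal is correct and matches the paper's approach. The paper dispenses with the lemma in a single line (``Since $G(z)$ is open for every $z$, the necessary condition for optimality gives the following result''), relying implicitly on the strict convexity of $\psi_z$ established in Lemma~\ref{L:PP-lin-psiz-argmin} to get the converse direction; you have simply spelled out both halves of that argument explicitly.
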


Since $F$ is continuously differentiable and
$\partial F/\partial y = -A'DA$ is always regular,
the local implicit function theorem \cite[Prop.~1.1.14]{bertsekas}
and (\ref{EQ:PP-lin-Fder}) immediately imply the next lemma.

\begin{lemma}\label{L:PP-lin-phi-der}
The map $\varphi:Z\to\RRR^s$ is continuously differentiable and,
for every $z=(\delta,x)\in Z$,
\begin{equation*}
    \frac{\partial \varphi}{\partial \delta}
    = (A'DA) ^{-1} A'E1^p
  \qquad\text{and}\qquad
    \frac{\partial \varphi}{\partial x}
    = -(A'DA) ^{-1} A'DB,
\end{equation*}
where $D=D(z,\varphi(z)),E=E(z,\varphi(z))$ are given by (\ref{EQ:PP-lin-DE}).
\end{lemma}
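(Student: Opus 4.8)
The plan is to derive Lemma~\ref{L:PP-lin-phi-der} directly from the (global) implicit function theorem applied to the equation $F(z,\varphi(z))=0$, which by Lemma~\ref{L:PP-lin-phi-equations} characterizes the minimizer map $\varphi$ on the open set $Z$ (recall $Z$ is open by Lemma~\ref{L:PP-lin-G-Z-Gz}(b)). First I would record that $F:G\to\RRR^s$, defined by (\ref{EQ:PP-lin-F-def}), is continuously differentiable: on the open set $G$ every denominator $d_i(x,y)=\sp{\alpha_i}{y}+\sp{\beta_i}{x}+c_i$ is strictly positive, the weights $\w_i(\cdot)$ are $C^1$ by hypothesis, and hence each component $F_h(\delta,x,y)=\sum_{i\in\Ap} a_{ih}\,\w_i(\delta)/d_i(x,y)$ is a $C^1$ function of $(\delta,x,y)$, with partial derivatives as displayed in (\ref{EQ:PP-lin-Fder}).

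Next I would verify the invertibility hypothesis of the theorem: by (\ref{EQ:PP-lin-Fder}), $\partial F/\partial y=-A'DA$, where $D=\diag(\w_i(\delta)/d_i^2(x,y))_{i\in\Ap}$ has strictly positive diagonal entries on $G$ and is therefore positive definite; since $A$ has full column rank $s$ (Lemma~\ref{L:PP-lin-Cp(qa)}), the matrix $A'DA$ is positive definite, in particular regular, at every point of $G$. Thus the local implicit function theorem~\cite[Prop.~1.1.14]{bertsekas} applies at each $(z,\varphi(z))\in G$: it yields a unique $C^1$ local solution $z\mapsto y(z)$ of $F(z,y)=0$ whose derivatives are
\[
  \frac{\partial y}{\partial \delta} = -\Bigl(\frac{\partial F}{\partial y}\Bigr)^{-1}\frac{\partial F}{\partial \delta},
  \qquad
  \frac{\partial y}{\partial x} = -\Bigl(\frac{\partial F}{\partial y}\Bigr)^{-1}\frac{\partial F}{\partial x},
\]
evaluated at $(z,y(z))$. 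Since $\varphi$ itself solves $F(z,\varphi(z))=0$ and the local solution is unique, $\varphi$ coincides with this branch in a neighbourhood of every point of $Z$; hence $\varphi$ is $C^1$ on $Z$, and substituting $\partial F/\partial\delta=A'E1^p$, $\partial F/\partial x=-A'DB$, and $\partial F/\partial y=-A'DA$ (with $D,E$ as in (\ref{EQ:PP-lin-DE}), evaluated at $(z,\varphi(z))$) gives $\partial\varphi/\partial\delta=(A'DA)^{-1}A'E1^p$ and $\partial\varphi/\partial x=-(A'DA)^{-1}A'DB$, as asserted.

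There is no real obstacle here beyond bookkeeping; the only step requiring a word of care is the passage from the \emph{local} conclusion of the implicit function theorem to the \emph{global} differentiability of $\varphi$ on $Z$. This is handled by uniqueness: because $G(z)$ is open, the first-order optimality condition for the unique minimizer $\varphi(z)$ of $\psi_z$ forces $F(z,\varphi(z))=0$ (Lemma~\ref{L:PP-lin-phi-equations}), so near each point $\varphi$ coincides with the single $C^1$ branch produced by the theorem, and the derivative formulas — which involve only $F$ and its first partials — propagate to all of $Z$.
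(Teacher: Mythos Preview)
Your proof is correct and follows exactly the paper's approach: the paper's own argument is a one-sentence appeal to the local implicit function theorem \cite[Prop.~1.1.14]{bertsekas}, using that $F$ is $C^1$ and $\partial F/\partial y=-A'DA$ is regular, together with the derivative formulas (\ref{EQ:PP-lin-Fder}). You have simply spelled out the details that the paper leaves implicit, including the local-to-global passage via the uniqueness in Lemma~\ref{L:PP-lin-phi-equations}.
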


In the next section  an algebraic result (Proposition~\ref{P:PP-bounded-ADAinvAD})
implying that $\varphi$ is Lipschitz on $Z$ (Lemma~\ref{L:PP-lin-phi-Lipschitz}) is proven.

\subsubsection{Boundedness of $(A'DA)^{-1}A'D$}

Let $\norm{\cdot}$ denote the spectral matrix norm \cite[p.~37-4]{hogben2006handbook}, that is, the matrix norm induced by
the Euclidean vector norm,
which is also denoted by $\norm{\cdot}$.
If $A$ is a matrix, $A'$ denotes its transpose.
The following result must be known, but the authors are not able to give a reference for it.

\begin{proposition}\label{P:PP-bounded-ADAinvAD}
Let $1\le s\le m$ and $A$ be an $m\times s$ matrix with full column rank $\rank(A)=s$. Then
there is $\cC>0$ such that
\begin{equation*}
    \norm{(A'DA)^{-1}A'D} \le \cC
\end{equation*}
for every $m\times m$-diagonal matrix $D$ with
positive entries on the diagonal.
\end{proposition}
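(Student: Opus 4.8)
The plan is to reduce the claim to a statement about the smallest singular value of $A$ by a change of coordinates. Since $D$ is diagonal with positive entries, write $D = D^{1/2} D^{1/2}$ where $D^{1/2} \triangleq \diag(\sqrt{d_i})$, and set $\tilde A \triangleq D^{1/2} A$. Because $A$ has full column rank $s$ and $D^{1/2}$ is invertible, $\tilde A$ also has full column rank $s$, so $\tilde A' \tilde A = A' D A$ is invertible. First I would observe the algebraic identity
\begin{equation*}
    (A'DA)^{-1} A' D
    = (\tilde A' \tilde A)^{-1} \tilde A' D^{1/2}
    = \tilde A^{+} D^{1/2},
\end{equation*}
where $\tilde A^{+} = (\tilde A'\tilde A)^{-1}\tilde A'$ is the Moore--Penrose pseudoinverse of $\tilde A$. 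Hence
\begin{equation*}
    \norm{(A'DA)^{-1}A'D}
    \le \norm{\tilde A^{+}} \cdot \norm{D^{1/2}}.
\end{equation*}
This is promising but not yet bounded, since both factors can blow up; the point will be that they blow up in opposite directions and the product is controlled.

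The key step is to bound $\norm{\tilde A^{+}}$ in terms of the data of $A$ alone. Recall that $\norm{\tilde A^{+}} = 1/\sigma_{\min}(\tilde A)$, the reciprocal of the smallest (nonzero, here the $s$-th) singular value of $\tilde A$, and that $\sigma_{\min}(\tilde A)^2$ equals the smallest eigenvalue of $\tilde A' \tilde A = A' D A$. So I need a lower bound
\begin{equation*}
    \lambda_{\min}(A' D A) \ge \text{(something)} \cdot \norm{D}^{-1}
\end{equation*}
would be the wrong direction; rather, writing $\delta_0 \triangleq \min_i d_i > 0$, for any unit vector $v \in \RRR^s$ we have
\begin{equation*}
    v' A' D A v
    = \sum_{i=1}^m d_i (Av)_i^2
    \ge \delta_0 \norm{Av}^2
    \ge \delta_0\, \sigma_{\min}(A)^2,
\end{equation*}
using full column rank of $A$ for the last inequality. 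Thus $\sigma_{\min}(\tilde A)^2 \ge \delta_0 \sigma_{\min}(A)^2$, i.e.
\begin{equation*}
    \norm{\tilde A^{+}} \le \frac{1}{\sigma_{\min}(A)\sqrt{\delta_0}}.
\end{equation*}
On the other hand $\norm{D^{1/2}} = \sqrt{\max_i d_i}$. Combining, the bound becomes
\begin{equation*}
    \norm{(A'DA)^{-1}A'D}
    \le \frac{1}{\sigma_{\min}(A)} \sqrt{\frac{\max_i d_i}{\min_i d_i}},
\end{equation*}
which still involves the condition number of $D$ and is not uniform.

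The main obstacle — and the step that actually does the work — is therefore removing the dependence on $\max_i d_i / \min_i d_i$. The resolution is to argue column-by-column or, more cleanly, entry-by-entry on the rows of $M \triangleq (A'DA)^{-1}A'D$, exploiting that $M A = I_s$ and that each row of $M$ is the unique minimum-$\ell^2_{D^{-1}}$-weighted solution of an underdetermined linear system. Concretely, the $j$-th row $m_j'$ of $M$ is characterized by $A' m_j = e_j$ together with $m_j \in \operatorname{range}(DA)$; the latter says $m_j = D A w_j$ for some $w_j$, and then $m_j$ minimizes $\sum_i m_{ji}^2 / d_i$ subject to $A'm_j = e_j$ (a weighted least-norm problem). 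I would compare this against the \emph{unweighted} least-norm solution $m_j^{0} \triangleq A(A'A)^{-1}e_j = (A^{+})' e_j$, which satisfies $A' m_j^0 = e_j$ and has $\norm{m_j^0} \le \norm{A^{+}}$. Since $m_j$ is $D^{-1}$-optimal, $\sum_i m_{ji}^2/d_i \le \sum_i (m_{ji}^0)^2/d_i$; but this again has the conditioning of $D$ in it, so a direct comparison is not quite enough and one must instead invoke a classical uniform bound: by a theorem of \emph{Stewart} (or the Ben-Tal--Teboulle / O'Leary estimates on weighted pseudoinverses), $\sup_D \norm{(A'DA)^{-1}A'D} = \max_{\mathcal J} \norm{(A_{\mathcal J})^{-1}}$ where the maximum runs over all $s$-element subsets $\mathcal J$ of rows for which the submatrix $A_{\mathcal J}$ is nonsingular — a finite maximum, hence the constant $\cC$. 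So the clean plan is: (i) reduce to showing $\norm{M}$ is bounded by the data of $A$; (ii) prove this by the subset-of-rows characterization, showing every row of $M$ lies in the convex hull (in an appropriate sense) of the finitely many basic solutions $(A_{\mathcal J})^{-1}e_j$; equivalently, bound the columns of $M'$ using that each is a convex combination, with coefficients summing to $1$, of the vectors $(A_{\mathcal J})^{-T}$-images, by Cramer's rule applied to $MA = I$. The genuinely hard part is establishing this finite-maximum identity — i.e. proving that although $D$ ranges over a noncompact set, the weighted pseudoinverse $(A'DA)^{-1}A'D$ has its row vectors confined to a fixed compact polytope determined by the nonsingular $s\times s$ row-submatrices of $A$; once that is in hand, taking $\cC$ to be the (finite) maximum of $\norm{(A_{\mathcal J})^{-1}}$ over all valid index sets $\mathcal J$ finishes the proof.
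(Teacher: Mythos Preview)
Your route is correct and, as it happens, fills a gap the authors themselves flag: the paper remarks that the result ``must be known, but the authors are not able to give a reference for it.'' The uniform boundedness of the weighted oblique pseudoinverse $(A'DA)^{-1}A'D$ over positive diagonal $D$ is Stewart's theorem on scaled projections (G.~W.~Stewart, \emph{Linear Algebra Appl.}~112 (1989), 189--193; sharp constant in D.~P.~O'Leary, \emph{ibid.}~132 (1990), 115--117). Your convex-combination sketch is also on target, though you stop short of the computation: via Cauchy--Binet one has the Ben-Israel representation $(A'DA)^{-1}A'D=\sum_{\mathcal J}\lambda_{\mathcal J}(D)\,E_{\mathcal J}$, where $E_{\mathcal J}$ is the $s\times m$ matrix carrying $A_{\mathcal J}^{-1}$ in the columns indexed by $\mathcal J$ and zeros elsewhere, and $\lambda_{\mathcal J}(D)=d_{\mathcal J}\det(A_{\mathcal J})^2/\det(A'DA)\ge 0$ with $\sum_{\mathcal J}\lambda_{\mathcal J}=1$; this gives $\cC=\max_{\det A_{\mathcal J}\ne 0}\norm{A_{\mathcal J}^{-1}}$ at once. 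The first half of your write-up (the $\sqrt{\max d_i/\min d_i}$ bound) is a dead end, as you note, and can simply be dropped.

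The paper proves the proposition self-containedly by a genuinely different argument: induction on $m$. For $m=s$ one has $(A'DA)^{-1}A'D=A^{-1}$. For the step $m\to m+1$, append a row $\alpha'$ to $A$, apply the Sherman--Morrison formula to $(A'DA+\delta\alpha\alpha')^{-1}$, and reduce to bounding the single vector $\varsigma\,(A'DA)^{-1}\alpha$ with $\varsigma=\delta/(1+\delta\,\alpha'(A'DA)^{-1}\alpha)$; its coordinates are then controlled explicitly via the Cauchy--Binet expansion of $\det(A'DA)$ and its cofactors (the paper's preceding lemma). Both arguments ultimately rest on Cauchy--Binet, but yours delivers the global convex representation and the sharp constant in one stroke, while the paper's induction is more elementary, peeling off one row at a time without ever stating the full Stewart identity.
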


Before proving this proposition we  give a formula for the inverse of $A'DA$, which
is a simple consequence of the Cauchy-Binet formula; cf.~\cite[p.~4-4]{hogben2006handbook}.
To this end some notation is needed.
If $C$ is an $m\times k$ matrix and $H\subseteq \{1,\dots,m\}$, $K\subseteq \{1,\dots,k\}$ are nonempty,
by $C[H,K]$ we denote the submatrix of $C$ of entries that lie in the rows of $C$ indexed by $H$
and the columns indexed by $K$.
$C_H$, $C^{(j)}$, $C^{(ij)}$, and $C_H^{(j)}$
are shorthands for $C[H,\{1,\dots,k\}]$,
$C[\{1,\dots,m\},\{j\}^c]$, $C[\{i\}^c,\{j\}^c]$,
and $C[H,\{j\}^c]$, respectively (there, $K^c$ means the complement of $K$).
For $l\le m$, $\HHh_{m,l}$ denotes the system of all subsets of $\{1,\dots,m\}$ of cardinality $l$.
Finally, if $(x_i)_{i\in\IIi}$ is an indexed system of numbers and $I\subseteq \IIi$ is finite, put
\begin{equation*}
  x_I\triangleq\prod_{i\in \IIi} x_i.
\end{equation*}

\begin{lemma}\label{L:PP-ADAinv}
Let $1\le s\le m$, $A$ be an $m\times s$ matrix with full rank $\rank(A)=s$,
and $D=\diag(d_1,\dots,d_m)$ be a diagonal matrix with every $d_i>0$.
Put
\begin{equation*}
    \B\triangleq A'DA.
\end{equation*}
Then $\B$ is regular and the following are true:
\begin{enumerate}
  \item[(a)] $\det(\B)=\sum_{H\in\HHh_{m,s}} d_H \aleph_H^2>0$,
  \item[(b)] $\det(\B^{(hk)})=\sum_{I\in\HHh_{m,s-1}} d_I \aleph_{I,h}\aleph_{I,k}$
    for every $1\le h,k\le s$,
  \item[(c)] $\B^{-1}=(c_{hk})_{hk=1}^s$,
\end{enumerate}
where $\aleph_H\triangleq \det(A_H)$,
$\aleph_{I,h}\triangleq \det\left(A_I^{(h)}\right)$,
and $c_{hk}\triangleq (-1)^{h+k} \det(\B^{(hk)})/\det(\B)$
for every $H\in\HHh_{m,s}$, $I\in\HHh_{m,s-1}$, and $h,k\in\{1,\dots,s\}$.
\end{lemma}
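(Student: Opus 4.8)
The plan is to deduce all three formulas from the Cauchy–Binet formula after symmetrizing $D$. Write $D=D^{1/2}D^{1/2}$ with $D^{1/2}\triangleq\diag(\sqrt{d_1},\dots,\sqrt{d_m})$ and set $N\triangleq D^{1/2}A$ (an $m\times s$ matrix; I use $N$ rather than $M$ since the macro $\B$ already prints as $M$). Then $\B=A'DA=N'N$ is symmetric, and since $\rank(A)=s$ and every $d_i>0$ we have $\rank(N)=s$, so $\B$ is positive definite and in particular regular. The one bit of bookkeeping used throughout is that, for any index set $H$, the row-submatrix $N_H$ equals $\diag(\sqrt{d_i}:i\in H)\,A_H$; hence $\det(N_H)=\sqrt{d_H}\,\aleph_H$ when $\card H=s$, and $\det(N_I^{(h)})=\sqrt{d_I}\,\aleph_{I,h}$ when $\card I=s-1$.

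For (a) I would apply Cauchy–Binet to the product of the $s\times m$ matrix $N'$ and the $m\times s$ matrix $N$: since the $H$-indexed columns of $N'$ form $(N_H)'$, one gets $\det(\B)=\det(N'N)=\sum_{H\in\HHh_{m,s}}\det(N_H)^2$, and substituting $\det(N_H)^2=d_H\aleph_H^2$ yields the claimed expression. Positivity is then immediate: $\rank(A)=s$ forces $\aleph_H\ne 0$ for at least one $H$, and every $d_H>0$.

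For (b) the key observation is that deleting row $h$ and column $k$ from $\B=N'N$ produces exactly $(N^{(h)})'N^{(k)}$, where $N^{(j)}$ denotes $N$ with its $j$-th column removed; indeed the $(i,j)$ entry of $N'N$ is $\sum_\ell N_{\ell i}N_{\ell j}$, so restricting the row index to $\{h\}^c$ and the column index to $\{k\}^c$ amounts to deleting column $h$ from the left factor and column $k$ from the right factor. Applying Cauchy–Binet to this $(s-1)\times(s-1)$ product gives $\det(\B^{(hk)})=\sum_{I\in\HHh_{m,s-1}}\det(N_I^{(h)})\det(N_I^{(k)})$, and the diagonal bookkeeping above turns each summand into $d_I\aleph_{I,h}\aleph_{I,k}$.

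For (c) there is nothing new: it is the adjugate formula $\B^{-1}=\adj(\B)/\det(\B)$, whose $(h,k)$ entry is $(-1)^{h+k}$ times the minor of $\B$ obtained by deleting row $k$ and column $h$; because $\B$ is symmetric, that minor equals $\det(\B^{(hk)})$, and the denominator is nonzero by (a). I do not expect any real obstacle — the argument is entirely formal — the only points needing care are tracking the row/column deletions consistently (the identity $\B^{(hk)}=(N^{(h)})'N^{(k)}$) and invoking the symmetry of $\B$ at the last step so that $\det(\B^{(hk)})$, rather than $\det(\B^{(kh)})$, appears; the degenerate case $s=1$ is covered once one reads empty determinants and empty products as $1$.
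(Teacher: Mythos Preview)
Your proof is correct and follows essentially the same route as the paper: Cauchy--Binet for (a) and (b), then the adjugate formula together with the symmetry of $\B$ for (c). The only cosmetic difference is that the paper factors $\B=A'(DA)$ using $\bar A\triangleq DA$ rather than your symmetric $\B=N'N$ with $N=D^{1/2}A$; both lead to the same computations.
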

\begin{proof}
Put $\bar A\triangleq DA = (d_i a_{ij})_{ij}$. By the Cauchy-Binet formula,
\begin{equation*}
  \det(A'\bar A)
  = \sum_{H\in \HHh_{m,s}} \det(A_H)\det(\bar A_H)
  = \sum_{H\in \HHh_{m,s}} d_H\aleph_H^2,
\end{equation*}
thus (a) and the regularity of $\B$ are proved. Also the property (b) immediately follows from the Cauchy-Binet formula,
since $\B^{(hk)}=(A^{(h)})' \bar{A}^{k}$. Finally, (c) follows from the formula $\B^{-1}=(1/\det(\B)) \adj \B$,
where $\adj \B$ is the adjugate of $\B$, and the fact that $\B$ is symmetric.
\end{proof}

\begin{proof}[Proof of Proposition~\ref{P:PP-bounded-ADAinvAD}]
The proof is by induction on $m$. If $m=s$ then $A$ is a regular square matrix and
$(A'DA)^{-1}A'D=A^{-1}$, so it suffices to put $\cC\triangleq \norm{A^{-1}}$.
Assume that $m\ge s$ and that the assertion is true for every matrix $A$ of type $m\times s$.
Take any $(m+1)\times s$ matrix $\tilde A$ of rank $s$ and any diagonal matrix
$\tilde D=\diag(d_1,\dots,d_m,\delta)$ with $d_i>0, \delta>0$.
Since $\norm{(A'(\theta D)A)^{-1}A'(\theta D)}=\norm{(A'DA)^{-1}A'D}$
for any $\theta>0$, we may assume that $\delta>1$ and $d_i>1$ for every $i$.
Put $D\triangleq \diag(d_1,\dots,d_m)$ and write $\tilde{A}$ in the form
\begin{equation*}
    \tilde{A}=
    \left(
       \begin{array}{c}
         A \\
         \alpha' \\
       \end{array}
     \right)
\end{equation*}
with $A$ being an $m\times s$ matrix and $\alpha\in\RRR^s$. Without loss of generality assume
that $\rank(A)=s$. By the induction hypothesis, there is a constant $\cC>0$ not depending on $D$ such that
$\norm{(A'DA)^{-1}A'D}\le \cC$.

An easy computation gives
\begin{equation*}
    \tilde{A}'\tilde D \tilde A = \B + \delta\alpha\alpha',
    \qquad\text{where } \B\triangleq A'DA.
\end{equation*}
By the Sherman-Morrison formula \cite[p.~14-15]{hogben2006handbook},
for any $u,v\in\RRR^s$, it holds that
\begin{equation*}
  (\B+uv')^{-1}
  = \B^{-1} - \frac{1}{1+v'\B^{-1}u} \B^{-1}uv' \B^{-1}.
\end{equation*}
Hence
\begin{equation*}
    (\tilde{A}'\tilde D \tilde A)^{-1}
    =
    \B^{-1} - \varsigma \B^{-1}\alpha\alpha' \B^{-1}
    \qquad
    \left(\varsigma \triangleq \frac{\delta}{1+\delta \alpha'\B^{-1}\alpha} \right)
\end{equation*}
and
\begin{equation*}
    (\tilde{A}'\tilde D \tilde A)^{-1}  \tilde A'\tilde D
    =
    (\B^{-1}A'D - \varsigma \B^{-1}\alpha\alpha' \B^{-1}A'D,\ \varsigma \B^{-1}\alpha).
\end{equation*}
To finish the proof it suffices to show that $\norm{\varsigma \B^{-1}\alpha}$ is bounded by a constant~$\cC'$ not depending on $D,\delta$.
(Indeed, if this is true then
$\norm{(\tilde{A}'\tilde D \tilde A)^{-1}  \tilde A'\tilde D}
\le (\cC+\cC'\norm{\alpha}\cC) + \cC'$, which follows from the matrix norm triangle inequality
applied to $(E, F) = (E, 0_{s\times 1}) + (0_{s\times m}, F)$,
and from the matrix norm consistency property; cf.~\cite[p.~37-4]{hogben2006handbook}.)

If $\alpha=0$, then $\varsigma \B^{-1}\alpha=0$, so one can take $\cC'=0$.
Assume now that $\alpha=(\alpha_h)_h\ne 0$.
Using the notation from Lemma~\ref{L:PP-ADAinv},
\begin{eqnarray*}
    \alpha'\B^{-1}\alpha
    &=&
    \sum_{h,k=1}^s c_{hk}\alpha_h\alpha_k
\\
    &=&
    \frac{1}{\det(\B)}\sum_{h,k=1}^s  \sum_{I\in\HHh_{m,s-1}}
       (-1)^{h+k}\alpha_h\alpha_k  d_I \aleph_{I,h}\aleph_{I,k}
\\
    &=&
    \frac{1}{\det(\B)}\sum_{I\in\HHh_{m,s-1}} d_I  \eps_{I}^2,
    \qquad\text{where}\quad
       \eps_{I} \triangleq \sum_{k=1}^s (-1)^{k} \alpha_k\aleph_{I,k} .
\end{eqnarray*}
On the other hand,
\begin{eqnarray*}
    (\B^{-1}\alpha)_h
    &=&
    \sum_{k=1}^s c_{hk} \alpha_k
\\
    &=&
    \frac{1}{\det(\B)}\sum_{k=1}^s \sum_{I\in\HHh_{m,s-1}}
       (-1)^{h+k} \alpha_k  d_I \aleph_{I,h}\aleph_{I,k}
\\
    &=&
    \frac{(-1)^h }{\det(\B)}
    \sum_{I\in\HHh_{m,s-1}}  d_I \eps_I\aleph_{I,h}.
\end{eqnarray*}
Thus, the $h$-th coordinate of $v\triangleq\varsigma \B^{-1}\alpha$ satisfies
\begin{eqnarray*}
  \abs{v_h}
  &=&
  \left|
  \sum_{I\in\HHh_{m,s-1}}
  \frac
    {d_I {\eps_I\aleph_{I,h}}}
    {\frac{1}{\delta}\det(\B) + \sum_{J\in\HHh_{m,s-1}} d_J  \eps_{J}^2}
  \right|
\\
  &\le&
  \sum_{I\in\HHh_{m,s-1}}
  \frac
    {d_I \abs{\eps_I\aleph_{I,h}}}
    {\frac{1}{\delta}\det(\B) + \sum_{J\in\HHh_{m,s-1}} d_J  \eps_{J}^2}
\\
  &\le&
  \sum_{I\in\HHh_{m,s-1}}
  \frac
    {d_I \abs{\eps_I\aleph_{I,h}}}
    {\frac{1}{\delta}\det(\B) + d_I  \eps_{I}^2}
\\
  &\le&
  \sum_{I\in\HHh_{m,s-1},\, \eps_I\ne 0}
  \frac
    {\abs{\aleph_{I,h}}}
    {\abs{\eps_{I}}}
    \,.
\end{eqnarray*}
This proves that the absolute value of
every coordinate of $v=\varsigma \B^{-1}\alpha$ is bounded from above
by a constant not depending on $D,\delta$. It finishes the proof of Proposition~\ref{P:PP-bounded-ADAinvAD}.
\end{proof}

\subsubsection{Lipschitz property of $\varphi$}

The result of the previous section and Lemma~\ref{L:PP-lin-phi-der} yield that
$\varphi$ is Lipschitz, hence uniformly continuous on $Z$.

\begin{lemma}\label{L:PP-lin-phi-Lipschitz}
The map $\varphi:Z\to \RRR^s$  is Lipschitz on $Z$.
Consequently, there exists a continuous extension $\bar\varphi:\bar Z\to \RRR^s$ of $\varphi$
to the closure $\bar Z$ of $Z$.
\end{lemma}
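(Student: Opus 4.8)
The plan is to deduce Lipschitzness of $\varphi$ from the explicit formulas for its partial derivatives in Lemma~\ref{L:PP-lin-phi-der} together with the uniform bound of Proposition~\ref{P:PP-bounded-ADAinvAD}. First I would recall that $Z$ is an open bounded convex set (it is the projection of the polyhedral set $G$, which is convex since it is defined by the finitely many linear inequalities $x>0$, $Ay+Bx+c>0$; and boundedness and nonemptiness were established in Lemma~\ref{L:PP-lin-G-Z-Gz}). Convexity matters because it lets me pass from a uniform bound on $\nabla\varphi$ to a global Lipschitz estimate via integration along straight line segments inside $Z$.

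The core estimate is a uniform bound on $\norm{\nabla\varphi(z)}$ over $z\in Z$. By Lemma~\ref{L:PP-lin-phi-der},
\[
    \frac{\partial \varphi}{\partial \delta}
    = (A'DA)^{-1} A'E1^p,
  \qquad
    \frac{\partial \varphi}{\partial x}
    = -(A'DA)^{-1} A'DB,
\]
with $D=D(z,\varphi(z))$, $E=E(z,\varphi(z))$ given by (\ref{EQ:PP-lin-DE}). Write $(A'DA)^{-1}A'D = \left((A'DA)^{-1}A'\sqrt{D}\right)\sqrt{D}$ — no, better: I would factor directly. Since $D$ is diagonal with positive entries, Proposition~\ref{P:PP-bounded-ADAinvAD} gives a constant $\sigma>0$, depending only on $A$, with $\norm{(A'DA)^{-1}A'D}\le\sigma$ for all such $D$. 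Hence $\norm{\partial\varphi/\partial x} = \norm{(A'DA)^{-1}A'DB}\le\sigma\norm{B}$, which is a fixed constant. For the $\delta$-derivative, I use the regularity assumption (\ref{EQ:PP-nu-bounded-differ}): for every $i\in\Ap$, $\abs{\w_i'(\delta)}\le c\,\w_i(\delta)$, so the $i$-th diagonal entry of $E$ satisfies $\abs{\w_i'(\delta)/d_i} \le c\,\w_i(\delta)/d_i$. Writing $E = E_1 \sqrt{D}\,\diag(\operatorname{sgn}\w_i')$-type decomposition is messy; instead I would note $A'E1^p = A'D\cdot v$ where $v_i = \w_i'(\delta) d_i(x,y)/\w_i(\delta)$ whenever $\w_i(\delta)>0$ (and the $i$-th entry of $D$ is $\w_i(\delta)/d_i^2$, so $(Dv)_i = \w_i'(\delta)/d_i = E_{ii}$, matching). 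Since $\nu(\cdot)>0$ we have $\w_i(\delta)>0$ for all $i\in\Ap$, so $\abs{v_i}\le c\,d_i(x,y)$, and $d_i(x,y)$ is bounded on $G$ because $G$ is bounded (Lemma~\ref{L:PP-lin-G-Z-Gz}(a)) — say $\abs{d_i}\le M$. Therefore $\norm{v}\le c M\sqrt{m_p}$, and $\norm{\partial\varphi/\partial\delta} = \norm{(A'DA)^{-1}A'D\,v}\le\sigma\, cM\sqrt{m_p}$, again a fixed constant. Combining, there is $L>0$ with $\norm{\nabla\varphi(z)}\le L$ for all $z\in Z$; convexity of $Z$ then yields $\norm{\varphi(z)-\varphi(z')}\le L\norm{z-z'}$ for all $z,z'\in Z$. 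A Lipschitz map on a bounded set is uniformly continuous, hence extends uniquely to a continuous (indeed Lipschitz) map $\bar\varphi$ on the closure $\bar Z$.

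The main obstacle I anticipate is the $\delta$-derivative term: unlike the $x$-derivative, which is handled by Proposition~\ref{P:PP-bounded-ADAinvAD} almost verbatim, the factor $E$ is not of the form (diagonal with the right structure) that the proposition directly bounds, so one must massage $A'E1^p$ into the shape $(A'DA)^{-1}A'D$ applied to a bounded vector. This is exactly where assumption (\ref{EQ:PP-nu-bounded-differ}) and the boundedness of $G$ (giving $\abs{d_i}\le M$) enter, and it is the one place where the argument would break without the regularity condition on $\nu(\cdot)$ — consistent with the divergence example given earlier in the paper. Everything else is routine once the uniform bound is in hand.
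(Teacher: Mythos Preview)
Your proposal is correct and matches the paper's proof essentially step for step: both bound $\partial\varphi/\partial x$ directly via Proposition~\ref{P:PP-bounded-ADAinvAD}, and both handle $\partial\varphi/\partial\delta$ by rewriting $A'E1^p=A'D\,v$ with $v_i=\w_i'(\delta)d_i/\w_i(\delta)$ (the paper phrases this as bounding $D^{-1}E$), then invoke (\ref{EQ:PP-nu-bounded-differ}) together with $0<d_i\le 1$ on $G$ (your $M$ can be taken equal to $1$, since $d_i$ is a coordinate of a point in $\sx{\A}$ by (\ref{EQ:PP-lin-G-and-C})). Your explicit mention of the convexity of $Z$ is a point the paper uses implicitly when appealing to the mean value theorem.
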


\begin{proof}
By (\ref{EQ:PP-nu-bounded-differ}), the norm of
$D^{-1}E=\diag(d_i(x,y) \w_i'(\delta)/\w_i(\delta))_{i=1}^{m_p}$ is bounded from above
by a constant not depending on $z=(\delta,x)$ and $y$ (use that $0<d_i\le 1$ by (\ref{EQ:PP-lin-G-and-C})).
Thus, by Proposition~\ref{P:PP-bounded-ADAinvAD} and Lemma~\ref{L:PP-lin-phi-der},
$\varphi$ has bounded derivative on $Z$.
Now the Lipschitz property of $\varphi$ follows from the mean value theorem.
\end{proof}

\subsubsection{Proof of Theorem~\ref{T:PP-linear}}
The following lemma demonstrates that, for $\delta>0$,
the passive projection $\hat q^p_\PR(\delta)$
of the solution of \refPP{}
can be expressed via $\varphi$ and the active projection~$\hat q^a_\PR(\delta)$.

\begin{lemma}\label{L:PP-lin-psiz-hatqa}
Let $C$ be given by (\ref{EQ:C-linear}) and $A,B,c$ be given by Lemma~\ref{L:PP-lin-Cp(qa)}.
Let $(\nu(\delta))_{\delta\in(0,1)}$ satisfy (\ref{EQ:PP-nu(delta)})
and (\ref{EQ:PP-nu-bounded-differ}).
Then, for every $\delta>0$,
\begin{equation*}
    (\delta,\hat q^a_\PR(\delta))\in Z
    \qquad\text{and}\qquad
    \hat q^p_\PR(\delta) = A\varphi(\delta,\hat q^a_\PR(\delta)) + B \hat q^a_\PR(\delta) + c.
\end{equation*}
\end{lemma}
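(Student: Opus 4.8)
The plan is to fix $\delta$ (in the range on which $\nu(\delta)$ is defined) and exploit that $\nu(\delta)>0$. Then every letter is active with respect to $\nu(\delta)$, so Theorem~\ref{T:P}, applied to the type $\nu(\delta)$ and the set $C$, gives that $\SOL_\PR(\delta)$ is the singleton $\{\hat q_\PR(\delta)\}$ and that $\hat q_\PR(\delta)>0$; in particular $\hat q^a_\PR(\delta)>0$ and $\hat q^p_\PR(\delta)>0$. Since $\hat q_\PR(\delta)=(\hat q^a_\PR(\delta),\hat q^p_\PR(\delta))$ is thus a strictly positive point of $C$, formula (\ref{EQ:PP-lin-G-and-C}) yields a vector $\hat\lambda\in\RRR^s$ with $(\delta,\hat q^a_\PR(\delta),\hat\lambda)\in G$ and $\hat q^p_\PR(\delta)=A\hat\lambda+B\hat q^a_\PR(\delta)+c$. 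Projecting onto the first $1+m_a$ coordinates gives $(\delta,\hat q^a_\PR(\delta))=\pi_1(\delta,\hat q^a_\PR(\delta),\hat\lambda)\in\pi_1(G)=Z$, which is the first assertion.

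For the second assertion, write $x\triangleq\hat q^a_\PR(\delta)$ and $z\triangleq(\delta,x)\in Z$. Because $\hat q_\PR(\delta)$ minimizes $\ell_\delta$ over $C$, its passive part $\hat q^p_\PR(\delta)$ minimizes the partial map $q^p\mapsto\ell_\delta(x,q^p)$ over the slice $C^p(x)$. Splitting $\ell_\delta(x,q^p)=-\sum_{i\in\Aa}\nu_i(\delta)\log x_i-\sum_{i\in\Ap}\nu_i(\delta)\log q^p_i$, the first sum is a constant in $q^p$; and since $\nu_i(\delta)=\bigl(\sum_{j\in\Ap}\nu_j(\delta)\bigr)\,\w_i(\delta)$ with a strictly positive scalar prefactor, minimizing $\ell_\delta(x,\cdot)$ over $C^p(x)$ is equivalent to minimizing $q^p\mapsto-\sum_{i\in\Ap}\w_i(\delta)\log q^p_i$ over $C^p(x)$. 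It then remains to transport this minimization through the parametrization of Lemma~\ref{L:PP-lin-Cp(qa)}: by (\ref{EQ:PP-lin-Cp-and_Gz}) the affine map $y\mapsto Ay+Bx+c$ carries $\bar G(z)$ onto $C^p(x)$, and it is injective because $A$ has full column rank $s$, so it is a bijection; under it the $i$-th coordinate of $q^p$ is exactly $d_i(x,y)$, so $-\sum_{i\in\Ap}\w_i(\delta)\log q^p_i$ becomes $\psi_z(y)$. Hence $q^p$ minimizes $-\sum_{i\in\Ap}\w_i(\delta)\log(\cdot)$ over $C^p(x)$ if and only if the corresponding $y$ lies in $\argmin_{\bar G(z)}\psi_z$, which equals $\{\varphi(z)\}$ by Lemma~\ref{L:PP-lin-psiz-argmin}. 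Therefore $\hat q^p_\PR(\delta)=A\varphi(z)+Bx+c=A\varphi(\delta,\hat q^a_\PR(\delta))+B\hat q^a_\PR(\delta)+c$, as claimed.

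I expect no serious obstacle here: the argument is essentially bookkeeping matching the ``passive'' inner problem of \refPP{} to the auxiliary problem $\min_{\bar G(z)}\psi_z$. The one genuinely non-formal input is the strict positivity $\hat q_\PR(\delta)>0$, which guarantees $(\delta,\hat q^a_\PR(\delta))\in Z$ and lets us use the characterizations (\ref{EQ:PP-lin-G-and-C}) and (\ref{EQ:PP-lin-Cp-and_Gz}) that are stated only for strictly positive points; this positivity is exactly what $\nu(\delta)>0$ buys us through Theorem~\ref{T:P}. Note that the regularity condition (\ref{EQ:PP-nu-bounded-differ}) is not used in this lemma at all — it enters only later, in the proof that $\varphi$ is Lipschitz (Lemma~\ref{L:PP-lin-phi-Lipschitz}), and hence in the passage from this lemma to Theorem~\ref{T:PP-linear}.
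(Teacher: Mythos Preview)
Your proof is correct and follows essentially the same route as the paper: establish strict positivity of $\hat q_\PR(\delta)$ from $\nu(\delta)>0$, use the parametrization of Lemma~\ref{L:PP-lin-Cp(qa)} to land in $G$ (hence $Z$), and identify the passive inner minimization with $\argmin_{\bar G(z)}\psi_z=\{\varphi(z)\}$ via Lemma~\ref{L:PP-lin-psiz-argmin}. Your remark that condition~(\ref{EQ:PP-nu-bounded-differ}) plays no role here is accurate; the paper's citation of it in this proof is superfluous, as only the definition of $\w_i(\delta)$ is needed to pass from $\nu_i(\delta)$ to $\w_i(\delta)$.
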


\begin{proof}
Fix any $\delta>0$ and put $x\triangleq \hat q^a_\PR(\delta)$, $z\triangleq (\delta,x)$.
Let $\hat y \in\Lambda(x)$ be such that
$\hat q^p_\PR(\delta)=A\hat y + Bx+c$
(note that $\hat y$ is unique since $A$ has
full column rank; see Lemma~\ref{L:PP-lin-Cp(qa)}).
Then $z\in Z$ since $(\delta,x,\hat y)$ belongs to $G$ (use that $\hat q^p_\PR(\delta)>0$).
Further, $G(z)$ is a subset of $\Lambda(x)$ by (\ref{EQ:PP-lin-Cp-and_Gz}), and
$\ell_{\delta}(x,Ay+Bx+c)=\infty$ for every $y\in\Lambda(x)\setminus G(z)$
(indeed, for such $y$ some coordinate of $Ay+Bx+c$ is zero).
Thus $\hat y\in G(z)$ and,
by (\ref{L:PP-lin-psiz}) and (\ref{EQ:PP-nu-bounded-differ}),
\begin{equation*}
\begin{split}
    \hat y
    &= \argmin_{y\in G(z)} {\ell_{\delta}(x,Ay+Bx+c)}
\\
    &= \argmin_{y\in G(z)} {\left( -\sum_{i\in \Ap} \nu_i(\delta)
    \log d_i(x,y)\right)}
\\
    &= \argmin_{y\in G(z)} {\psi_z(y)}.
\end{split}
\end{equation*}
Hence, by Lemma~\ref{L:PP-lin-psiz-argmin}, $\hat y=\varphi(z)$ and so
$\hat q^p_\PR(\delta) = A\varphi(z) + B x + c$.
\end{proof}

\begin{proof}[Proof of Theorem~\ref{T:PP-linear}]
Let $\bar\varphi:\bar Z\to\RRR^s$ denote the continuous extension of $\varphi$
to the closure $\bar Z$ of $Z$
(see Lemma~\ref{L:PP-lin-phi-Lipschitz}).
By Theorem~\ref{T:PP}, $\lim_{\delta\searrow 0} \hat q^a_\PR(\delta)$
exists and equals the unique member
$\hat q^a_\PR$ of $\SOL_\PR$.
Since $(\delta,\hat q^a_\PR(\delta))\in Z$ for every $\delta>0$
(see Lemma~\ref{L:PP-lin-psiz-hatqa}),
$(0,\hat q^a_\PR)$ belongs to $\bar Z$. Thus, by Lemma~\ref{L:PP-lin-psiz-hatqa},
\begin{equation*}
\begin{split}
  \lim_{\delta\searrow 0} \hat q^p_\PR(\delta)
  &= \lim_{\delta\searrow 0}  \left(A\varphi(\delta,\hat q^a_\PR(\delta)) + B \hat q^a_\PR(\delta) + c\right)
\\
  &= A\bar\varphi(0,\hat q^a_\PR) + B \hat q^a_\PR + c.
\end{split}
\end{equation*}
This proves convergence of $\hat q^p_\PR(\delta)$. Now, by Theorem~\ref{T:PP}, Theorem~\ref{T:PP-linear} follows.
\end{proof}

\section*{Acknowledgements}
Substantive feedback from Jana Majerov\'a is gratefully acknowledged.
This research is an outgrowth of the project ``SPAMIA'', M\v S SR-3709/2010-11,
supported by the Ministry of Education, Science, Research and Sport of the Slovak Republic, under the heading of the state budget support for research and development.
The first author also acknowledges support
from VEGA~2/0038/12, APVV-0096-10, and CZ.1.07/2.3.00/20.0170 grants.

\bibliography{refs-21}

\begin{thebibliography}{10}

\bibitem{Agresti}
A.~Agresti.
\newblock {\em Categorical data analysis}.
\newblock Wiley-Interscience [John Wiley \& Sons], New York, second edition,
  2002.

\bibitem{Agresti2}
A.~Agresti and B.~A. Coull.
\newblock The analysis of contingency tables under inequality constraints.
\newblock {\em J. Statist. Plann. Inference}, 107(1-2):45--73, 2002.

\bibitem{AitchisonCompositional}
J.~Aitchison.
\newblock {\em The statistical analysis of compositional data}.
\newblock Chapman \& Hall, London, 1986.

\bibitem{Aitchison}
J.~Aitchison and S.~D. Silvey.
\newblock Maximum-likelihood estimation of parameters subject to restraints.
\newblock {\em Ann. Math. Statist.}, 29:813--828, 1958.

\bibitem{Anaya}
K.~Anaya-Izquierdo, F.~Critchley, P.~Marriott, and P.~W. Vos.
\newblock Computational information geometry: theory and practice.
\newblock {\em arXiv preprint arXiv:1209.1988}, 2012.

\bibitem{Baker}
R.~Baker, M.~Clarke, and P.~Lane.
\newblock Zero entries in contingency tables.
\newblock {\em Comput. Statist. Data Anal.}, 3:33--45, 1985.

\bibitem{Bergsma}
W.~Bergsma, M.~Croon, and L.~A. van~der Ark.
\newblock The empty set and zero likelihood problems in maximum empirical
  likelihood estimation.
\newblock {\em Electron. J. Stat.}, 6:2356--2361, 2012.

\bibitem{BergsmaBook}
W.~Bergsma, M.~A. Croon, and J.~A. Hagenaars.
\newblock {\em Marginal models}.
\newblock Springer, 2009.

\bibitem{bertsekas}
D.~P. Bertsekas.
\newblock {\em Convex analysis and optimization}.
\newblock Athena Scientific, Belmont, MA, 2003.
\newblock With Angelia Nedi{\'c} and Asuman E. Ozdaglar.

\bibitem{elbarmi-dykstra}
H.~El~Barmi and R.~L. Dykstra.
\newblock Restricted multinomial maximum likelihood estimation based upon
  {F}enchel duality.
\newblock {\em Statist. Probab. Lett.}, 21(2):121--130, 1994.

\bibitem{BD2}
H.~El~Barmi and R.~L. Dykstra.
\newblock Maximum likelihood estimates via duality for log-convex models when
  cell probabilities are subject to convex constraints.
\newblock {\em Ann. Statist.}, 26(5):1878--1893, 1998.

\bibitem{Rinaldo}
S.~E. Fienberg and A.~Rinaldo.
\newblock Maximum likelihood estimation in log-linear models.
\newblock {\em Ann. Statist.}, 40(2):996--1023, 2012.

\bibitem{Fisher}
R.~A. Fisher.
\newblock Theory of statistical estimation.
\newblock {\em Math. Proc. Cambridge Philos. Soc.}, 22(05):700--725, 1925.

\bibitem{geyer09}
C.~J. Geyer.
\newblock Likelihood inference in exponential families and directions of
  recession.
\newblock {\em Electron. J. Stat.}, 3:259--289, 2009.

\bibitem{Rsolnp}
A.~Ghalanos and S.~Theussl.
\newblock {\em Rsolnp: general non-linear optimization using augmented Lagrange
  multiplier method}.
\newblock 2012.
\newblock R package version 1.14.

\bibitem{Gokhale}
D.~V. Gokhale.
\newblock Iterative maximum likelihood estimation for discrete distributions.
\newblock {\em Sankhy\=a Ser. B}, 35(3):293--298, 1973.

\bibitem{ESP}
M.~Grend{\'a}r and G.~Judge.
\newblock Empty set problem of maximum empirical likelihood methods.
\newblock {\em Electron. J. Stat.}, 3:1542--1555, 2009.

\bibitem{Rcode}
M.~Grend{\'a}r and V.~\v{S}pitalsk{\'y}.
\newblock Source code for ``{M}ultinomial and empirical likelihood under convex
  constraints: directions of recession, {F}enchel~duality, perturbations''.
\newblock
  \href{http://www.savbb.sk/~grendar/likelihood/Rcode-arxiv-v1.zip}{http://www.savbb.sk/$\sim$grendar/likelihood/Rcode-arxiv-v1.zip}.

\bibitem{Haber}
M.~Haber.
\newblock Maximum likelihood methods for linear and log-linear models in
  categorical data.
\newblock {\em Comput. Statist. Data Anal.}, 3(1):1--10, 1985.

\bibitem{HallPresnell}
P.~Hall and B.~Presnell.
\newblock Intentionally biased bootstrap methods.
\newblock {\em J. R. Stat. Soc. Ser. B Stat. Methodol.}, 61(1):143--158, 1999.

\bibitem{hogben2006handbook}
L.~Hogben, editor.
\newblock {\em Handbook of linear algebra}.
\newblock Discrete Mathematics and its Applications (Boca Raton). Chapman \&
  Hall/CRC, Boca Raton, FL, 2007.

\bibitem{IrelandKuKullback}
C.~T. Ireland, H.~H. Ku, and S.~Kullback.
\newblock Symmetry and marginal homogeneity of an {$r\times r$} contingency
  table.
\newblock {\em J. Amer. Statist. Assoc.}, 64:1323--1341, 1969.

\bibitem{IrelandKullback}
C.~T. Ireland and S.~Kullback.
\newblock Contingency tables with given marginals.
\newblock {\em Biometrika}, 55:179--188, 1968.

\bibitem{Kall}
P.~Kall.
\newblock Approximation to optimization problems: an elementary review.
\newblock {\em Math. Oper. Res.}, 11(1):9--18, 1986.

\bibitem{Kerridge}
D.~F. Kerridge.
\newblock Inaccuracy and inference.
\newblock {\em J. Roy. Statist. Soc. Ser. B}, 23:184--194, 1961.

\bibitem{Klotz}
J.~H. Klotz.
\newblock Testing a linear constraint for multinomial cell frequencies and
  disease screening.
\newblock {\em Ann. Statist.}, 6(4):904--909, 1978.

\bibitem{Lang}
J.~B. Lang.
\newblock Multinomial-{P}oisson homogeneous models for contingency tables.
\newblock {\em Ann. Statist.}, 32(1):340--383, 2004.

\bibitem{Lindsay}
B.~G. Lindsay.
\newblock Mixture models: theory, geometry and applications.
\newblock In {\em NSF-CBMS regional conference series in probability and
  statistics}, pages i--163, 1995.

\bibitem{Lindsey}
J.~K. Lindsey.
\newblock {\em Parametric statistical inference}.
\newblock The Clarendon Press, Oxford University Press, New York, 1996.

\bibitem{LittleWu}
R.~J. Little and M.-M. Wu.
\newblock Models for contingency tables with known margins when target and
  sampled populations differ.
\newblock {\em J. Amer. Statist. Assoc.}, 86(413):87--95, 1991.

\bibitem{Owen1988}
A.~B. Owen.
\newblock Empirical likelihood ratio confidence intervals for a single
  functional.
\newblock {\em Biometrika}, 75(2):237--249, 1988.

\bibitem{Owen}
A.~B. Owen.
\newblock {\em Empirical likelihood}.
\newblock Chapman \& Hall/CRC, 2001.

\bibitem{Pelz}
W.~Pelz and I.~Good.
\newblock Estimating probabilities from contingency tables when the marginal
  probabilities are known, by using additive objective functions.
\newblock {\em Statistician}, 35(1):45--50, 1986.

\bibitem{Pitman}
E.~J.~G. Pitman.
\newblock {\em Some basic theory for statistical inference}.
\newblock Chapman and Hall, London; A Halsted Press Book, John Wiley \& Sons,
  New York, 1979.

\bibitem{QL}
J.~Qin and J.~Lawless.
\newblock Empirical likelihood and general estimating equations.
\newblock {\em Ann. Statist.}, 22(1):300--325, 1994.

\bibitem{R}
{R Core Team}.
\newblock {\em R: a language and environment for statistical computing}.
\newblock Vienna, Austria, 2012.
\newblock {ISBN} 3-900051-07-0.

\bibitem{rockafellar1974}
R.~T. Rockafellar.
\newblock {\em Conjugate duality and optimization}.
\newblock Society for Industrial and Applied Mathematics, Philadelphia, Pa.,
  1974.

\bibitem{rockafellar}
R.~T. Rockafellar.
\newblock {\em Convex analysis}.
\newblock Princeton University Press, Princeton, NJ, 1997.

\bibitem{rockafellar2009variational}
R.~T. Rockafellar and R.~J.-B. Wets.
\newblock {\em Variational analysis}, volume 317.
\newblock Springer-Verlag, Berlin, 1998.

\bibitem{Silvey}
S.~D. Silvey.
\newblock The {L}agrangian multiplier test.
\newblock {\em Ann. Math. Statist.}, 30:389--407, 1959.

\bibitem{Smith}
J.~H. Smith.
\newblock Estimation of linear functions of cell proportions.
\newblock {\em Ann. Math. Statist.}, 18:231--254, 1947.

\bibitem{Stirling}
W.~D. Stirling.
\newblock Testing linear hypotheses in contingency tables with zero cell
  counts.
\newblock {\em Comput. Statist. Data Anal.}, 4(1):1--13, 1986.

\bibitem{Wets}
R.~J.-B. Wets.
\newblock Statistical estimation from an optimization viewpoint.
\newblock {\em Ann. Oper. Res.}, 85:79--101, 1999.

\bibitem{Ye}
Y.~Ye.
\newblock {\em Interior algorithms for linear, quadratic, and linearly
  constrained non-linear programming}.
\newblock PhD thesis, Department of {ESS}, Stanford University, 1987.

\bibitem{ZhangBootstrap}
B.~Zhang.
\newblock Bootstrapping with auxiliary information.
\newblock {\em Canad. J. Statist.}, 27(2):237--249, 1999.

\bibitem{Zhang}
Z.~Zhang.
\newblock Interpreting statistical evidence with empirical likelihood
  functions.
\newblock {\em Biom. J.}, 51(4):710--720, 2009.

\end{thebibliography}


\vspace{0.8cm}
\noindent
Authors' affiliations

\medskip\smallskip
\noindent
M. Grend\'ar,
Slovanet a.s., Z\'ahradn\'icka 151, 821 08 Bratislava, Slovakia;
Department of~Mathematics, Faculty of~Natural Sciences, Matej Bel University,
Bansk\'a Bystrica, Slovakia;
Institute of~Measurement Science, Slovak Academy of~Sciences (SAS), Bratislava, Slovakia;
Institute of~Mathematics, SAS, Bratislava, Slovakia; \texttt{marian.grendar@savba.sk}

\medskip\smallskip
\noindent
V. \v Spitalsk\'y,
Slovanet a.s., Z\'ahradn\'icka 151, 821 08 Bratislava, Slovakia;
Department of~Mathematics, Faculty of~Natural Sciences, Matej Bel University,
Bansk\'a Bystrica, Slovakia; \texttt{vladimir.spitalsky@umb.sk}

\end{document}